\documentclass[a4paper,oneside, reqno]{amsart} 
%\documentclass[11pt, a4paper,oneside, reqno]{amsart} 
%reqno means "right equation numbering". An amsart option to label equations on right hand side. 
%\input{paperpreamble} 
\usepackage{tikz} 
\usepackage{bm} %bold symbols/letters in equations 
\usepackage{listings} 
\usepackage{enumitem} %environments for lists 
\usepackage{wrapfig} 
\usepackage{subcaption} 
\usepackage{float} 
\usepackage{amssymb, amsthm} 
\usepackage{amsmath, mathtools}

\captionsetup[subfigure]{labelfont=rm}
%\captionsetup{labelfont=rm}

% Add commenting/todo notes 
\usepackage{todonotes} 

\usetikzlibrary{intersections}
\usetikzlibrary{decorations.markings}

% Trying to raise \chi to make it more readable with subscripts. Can still use the original \chi command by typing \oldchi. 
\let\oldchi\chi 
\renewcommand*{\chi}{\protect\raisebox{2pt}{$\oldchi$}} 

% New commands, operators and symbols 
% Operators 
 %For categories 
\newcommand{\cone}{C}

\DeclareMathOperator{\tor}{Tor}
\DeclareMathOperator{\st}{st} %For the star of a simplex 
\DeclareMathOperator{\stellar}{ss} %Stellar subdivision
\DeclareMathOperator{\sd}{sd} %Star deletion

\DeclareMathOperator{\link}{lk}
\DeclareMathOperator{\Hom}{Hom} %For homomorphisms of maps
 %For Ext
 %same as doing \mathrm{bideg}
 %same as doing \mathrm{mdeg}
 %For limits of connected sums
\DeclareMathOperator{\myempty}{\emptyset} %\varnothing \emptyset \text{\O}
\DeclareMathOperator{\suchthat}{\ | \ }
 %for sgn of a permutation
%%% Custom degree, overline 
\DeclareMathOperator{\mydeg}{\overline{\deg}} 
\newcommand{\doubleoverline}[1]{{\overline{\overline{#1}}}} 

% Theorems 
\newtheorem{prop}{Proposition}[section] 
\newtheorem{lemma}[prop]{Lemma} 
 
\newtheorem{theorem}[prop]{Theorem} 

\newtheorem{cor}[prop]{Corollary}

\theoremstyle{remark} 
\newtheorem{remark}[prop]{Remark} 
\theoremstyle{definition} 
\newtheorem{defn}[prop]{Definition} 

\theoremstyle{definition} 
\newtheorem{exmp}[prop]{Example} 
\theoremstyle{definition} 
\newtheorem{cons}[prop]{Construction} 

% Numbering 
\numberwithin{equation}{section}% standard style numbering, nothing special here 

%Define complementary colours (see hcolor, {cmyk}{0.75, 0.34, 0.46, 0.08}, in lms class)
%\definecolor{colour1}{RGB}{30,144,255} 
\definecolor{colour1}{RGB}{34,167,240} %blue-ish
%\definecolor{colour2}{RGB}{230,87,34}  
%\definecolor{colour2}{RGB}{248,148,6} %orange-ish
\definecolor{colour2}{RGB}{240,206,34} %yellow-ish
%\definecolor{colour3}{RGB}{0,165,102}  %green-ish
\definecolor{colour3}{RGB}{240,34,99}  %pink/red-ish
%\DeclareMathOperator{\Hom}{Hom} %For homomorphisms of maps 

%opening 
\title{Non-trivial higher Massey products in moment-angle complexes} 
%\author{Jelena Grbi\'{c}}
%\address{School of Mathematical Sciences, University of Southampton, UK}
%\ead{J.Grbic@soton.ac.uk}

%\author{Abigail Linton}
%\address{School of Mathematical Sciences, University of Southampton, UK}
%\ead{A.Linton@soton.ac.uk}

\author{Jelena Grbi\'{c}}
\address{School of Mathematical Sciences, University of Southampton, UK}
\email{J.Grbic@soton.ac.uk}

\author{Abigail Linton}
\address{School of Mathematical Sciences, University of Southampton, UK}
\email{A.Linton@soton.ac.uk}

\begin{document}

	\begin{abstract}
		As part of various obstruction theories, non-trivial Massey products have been studied in symplectic and complex geometry, commutative algebra and topology for a long time. 
		We introduce a general approach to constructing non-trivial Massey products in the cohomology of moment-angle complexes, using homotopy theoretical and combinatorial methods.
		Our approach sets a unifying way of constructing higher Massey products of arbitrary cohomological classes and generalises all existing examples of non-trivial Massey products in moment-angle complexes.
		As a result, we obtain explicit constructions of infinitely many non-formal manifolds that appear in topology, complex geometry and algebraic geometry.
	\end{abstract}
	\maketitle 
	
	\section{Introduction}
	
	A moment-angle complex $\mathcal{Z}_{\mathcal{K}}$ over a simplicial complex $\mathcal{K}$ on $m$ vertices is built from ordered products of discs and circles in $\mathbb{C}^m$ that are glued together along the face category of $\mathcal{K}$. 
	% 		For a simplicial complex $\mathcal{K}$ on $m$ vertices, a moment-angle complex $\mathcal{Z}_{\mathcal{K}}$ over  $\mathcal{K}$ is a collection of ordered products of discs and circles in $\mathbb{C}^m$, each product associated to a simplex in $\mathcal{K}$, that are glued together along the face category of $\mathcal{K}$. 
	The coordinate $T^m$-action on $\mathbb{C}^m$ descends to a natural $T^m$ action on moment-angle complexes.
	If $\mathcal{K}$ is a triangulation of a sphere, the moment-angle complex $\mathcal{Z}_\mathcal{K}$ is a manifold that admits a smooth complex structure.
	%There is a natural $m$-torus action on moment-angle complexes and %the complex geometry of moment-angle complexes is well understood
	These manifolds also generalise many well-known smooth complex manifolds such as Hopf and Calabi-Eckmann manifolds. %, for example they are smooth complex.
	% 	When the simplicial complex is a .... then the moment-angle complex is transversal kahler complex manifold (panovs paper), which means its properties can be studied. well-understood complex geometry (panov's paper has references) 
	% 	- mention toric topology
	% 	- different approaches from different fields. 
	% 	- the link to $U(\mathcal{K})$ (coordinate subspace arrangement?)
	
	Massey products are higher operations in the homology of differential graded algebras.
	In the context of commutative algebra, supposing $\mathbf{k}$ is a field or $\mathbb{Z}$, the cohomology algebra of $\mathcal{Z}_\mathcal{K}$ is isomorphic to the Tor-algebra $\tor_{\mathbf{k}[m]}(\mathbf{k}[\mathcal{K}], \mathbf{k})$ of the face ring $\mathbf{k}[\mathcal{K}]$, due to \cite{TorusActionsEMProof} and \cite[Theorem~1]{BaskakovProduct}.
	%, which in turn is the cohomology of the Koszul complex of $\mathbf{k}[\mathcal{K}]$.
	The face ring $\mathbf{k}[\mathcal{K}]$ is Golod if all Massey products in $\tor_{\mathbf{k}[m]}(\mathbf{k}[\mathcal{K}], \mathbf{k})$ vanish.
	Hence, Massey products in $\mathcal{Z}_\mathcal{K}$ are obstructions to Golodness of $\mathbf{k}[\mathcal{K}]$.
	From the perspective of complex geometry, by identifying $\mathcal{Z}_\mathcal{K}$ with
	%a quotient of 
	the complement $U(\mathcal{K})$ of a coordinate subspace arrangement %in $\mathbb{C}^m$
	corresponding to $\mathcal{K}$, moment-angle complexes are LVM manifolds~\cite{BosioMeersseman,PanovUstinovsky} %(which generalise Hopf and Calabi-Eckmann manifolds) 
	when $\mathcal{K}$ is the boundary of the dual of a simple polytope.
	%More generally, $\mathcal{Z}_\mathcal{K}$ is an LVMB manifold  when $\mathcal{K}$ is the underlying simplicial complex of a simplicial fan $\Sigma$ (or equivalently when $\mathcal{K}$ is a rationally starshaped sphere) \cite{PanovUstinovsky, Tambour}.
	Massey products are obstructions to the formality of these manifolds.
	The combinatorial approach to Massey products in moment-angle complexes has been used to prove cohomological rigidity of L\"obell manifolds~\cite{rigidity}, which are built from 3-dimensional polytopes in the Pogorelov class.
	%\todo{mention LS category?}
	However, currently, most known examples of Massey products in moment-angle complexes are sporadic due to how difficult they are to calculate explicitly.	
	
	The first non-trivial Massey products in moment-angle complexes were discovered by  Baskakov~\cite{Baskakov}, who constructed an infinite family of triple Massey products. %from the join of three spheres.
	Limonchenko~\cite{Limonchenko} constructed the first family of non-trivial $n$-Massey products for $n\geqslant 2$ on lowest-degree classes in moment-angle complexes.
	Families of non-trivial Massey products in moment-angle complexes associated to special geometric direct families of $2$-truncated cubes (flag nestohedra) are due to Buchstaber and Limonchenko \cite{directfamilies_BL}, who also applied these families to the differentials in Eilenberg-Moore and Milnor spectral sequences. 
	In \cite{Limonchenko_multiwedge}, Limonchenko constructs non-trivial higher Massey products in highly-connected moment-angle complexes by using the simplicial multiwedge operation (or J-construction), which takes a simplicial complex and builds a new one that has the same combinatorial structure as the original.

	Using combinatorics and homotopy theory, we give the first systematic and unifying approach for constructing non-trivial Massey products in the cohomology of moment-angle complexes.
	We show that the combinatorics of $\mathcal{K}$ encodes Massey products.
	By doing this, we expose some of the structural behaviour of Massey products with respect to combinatorial operations, and spark the ability to construct concrete examples of non-trivial Massey products in commutative algebra, complex geometry and combinatorics, as well as toric topology.
	
	Our starting point
	is the cup product, which is a $2$-Massey product.
	The categorical product of simplicial complexes is the join, which is mirrored by the product of moment-angle complexes $\mathcal{Z}_{\mathcal{K}_1*\mathcal{K}_2}=\mathcal{Z}_{\mathcal{K}_1} \times \mathcal{Z}_{\mathcal{K}_2}$ and  the existence of a non-trivial cup product in the cohomology of  $\mathcal{Z}_{\mathcal{K}_1*\mathcal{K}_2}$.
	Unlike cup products, Massey products are higher operations so certain $(n-1)$-Massey products must be trivial in order to define $n$-Massey products.
	%higher Massey products are a set; if the set has more than one element, we say the Massey product has non-trivial indeterminacy. 
	
	%Most known examples of Massey products in moment-angle complexes are sporadic. 
	There is a classification result for $3$-Massey products of cohomological classes in lowest degree in moment-angle complexes~\cite[Theorem 6.1.1]{DenhamSuciu},~\cite{LowestDegreeClassification}, but it vitally relies on the fact that the lowest degree classes are represented combinatorially by cycles in the  $1$-skeleton of $\mathcal{K}$.
	This technique does not generalise to higher dimensions since it is unknown how to combinatorially realise an arbitrary $n$-cycle. So far there has not been a systematic way to construct triple Massey, or any $n$-Massey, products of higher dimensional classes.
	We give two constructions that address these drawbacks.
	
	In Construction~\ref{cons: simplicial complex for n-Massey}, to construct non-trivial $n$-Massey products in moment-angle complexes, we start with the join of $n$ simplicial complexes $\mathcal{K}_i$. To trivialise the lower Massey products, we systematically remove  certain simplices from the join by an operation called star deletion and call the constructed simplicial complex $\mathcal{K}$.
	We show that $\mathcal{Z}_\mathcal{K}$ has a non-trivial $n$-Massey product in Theorem~\ref{thm: joins}. 
	It is important to emphasise that we do not impose any restrictions on $n$-arity of these Massey products, on the choice of simplicial complexes $\mathcal{K}_i$ for any $i$, nor on the dimension of classes in the Massey product.
	This construction generalises Baskakov's~\cite{Baskakov} family of non-trivial triple Massey products in the cohomology of moment-angle complexes, taking triangulations of spheres for $\mathcal{K}_1, \mathcal{K}_2$ and $\mathcal{K}_3$.
	Also it generalises Limonchenko's~\cite[Theorem 2]{Limonchenko} family of $n$-Massey products, which are built by removing simplices from the join of $n$ $0$-spheres.
	
	Notably, our construction produces the first examples of non-trivial Massey products on torsion classes, as well as examples with non-trivial indeterminacy. Such an example is constructed by star deleting simplices in the join of the projective plane $\mathbb{R}P^2$ and two copies of the $0$-sphere, as illustrated in Example~\ref{ex: joins torsion example}.
	We also create the first infinite families of higher Massey products with non-trivial indeterminacy in moment-angle complexes, on arbitrary cohomological classes, by extending our construction in Section~\ref{sec: infinite families non-trivial indeterminacy}.
	
	The topological properties and homotopy type of $\mathcal{K}$ do not determine the topology of the moment-angle complex $\mathcal{Z}_\mathcal{K}$. 
	However, unexpectedly, in Construction~\ref{cons: edge contraction} we deform $\mathcal{K}$ up to homotopy to create a new simplicial complex $\mathcal{L}$
	such that $\mathcal Z_{\mathcal{L}}$ has an explicitly constructed $n$-Massey product if $\mathcal Z_{\mathcal{K}}$ has an $n$-Massey product. Crucially, the Massey product in $\mathcal{Z}_\mathcal{L}$ can be of different dimensional cohomological classes to those in $\mathcal{Z}_\mathcal{K}$. In this construction, the simplicial complex $\mathcal L$  has the same homotopy type as $\mathcal K$ and is obtained by systematically ``stretching'' certain simplices of $\mathcal K$. 
	%then we also build new non-trivial Massey products by changing the combinatorics of $\mathcal{K}$ up to homotopy. 
	%	In Construction~\ref{cons: edge contraction}, we explicitly construct Massey products in a moment-angle complex $\mathcal{L}$  where $\mathcal{L}$ is a simplicial complex  that has the same homotopy type as $\mathcal{K}$ and maps to $\mathcal{K}$  by contracting edges of $\mathcal{L}$.
	In Theorem~\ref{thm: edge contractions Massey}, we show that these Massey products are non-trivial, even if they have non-trivial indeterminacy. 
	
	Consequently,  we can construct infinite families of non-trivial Massey products from known examples by ``stretching" simplices  in a controlled way.
	For example, from each of the obstruction graphs  in the classification of lowest-degree triple Massey products in moment-angle complexes \cite{DenhamSuciu, LowestDegreeClassification}, we obtain infinite families of non-trivial triple Massey products of higher dimensional classes.
	We give an alternative proof of known examples of non-trivial triple Massey products in moment-angle manifolds, such as those associated with Pogorelov polytopes~\cite{Liz} and permutahedra or stellohedra~\cite{LimonchenkoFlagNestohedra, Limonchenko_multiwedge} using ``stretched" obstruction graphs.
	Also, the two constructions, Constructions~\ref{cons: simplicial complex for n-Massey} and~\ref{cons: edge contraction}, can be combined to create new higher Massey products. We use this to create $k$-Massey products in moment-angle manifolds associated with $n$-dimensional permutahedra and stellohedra for every $k<n$, including Massey products with non-trivial indeterminacy.
	
	Even though it has been known for decades that Massey products are important obstructions in many fields, we have the first general methods to calculate and construct $n$-Massey products of classes in any degree, for any $n$, including Massey products with non-trivial indeterminacy.
	The first infinite family of examples of non-formal spaces or non-Golod face rings were constructed by Limonchenko~\cite[Theorem~4.10]{Limonchenko_multiwedge} using moment-angle complexes associated to graph associahedra. There are other explicit families constructed in \cite{Limonchenko}, \cite{Limonchenko_multiwedge} and \cite{directfamilies_BL}.
	More generally, our framework constructs infinitely many families of such examples, confirming that non-trivial higher Massey products are much more common in moment-angle complexes and moment-angle manifolds than previously thought.
	
	Furthermore, these techniques do not just apply to moment-angle complexes. We study Massey products in moment-angle complexes via combinatorics; 
	one key fact to do this is that the cohomology of $\mathcal{Z}_\mathcal{K}$ decomposes into a direct sum of cohomology groups of full subcomplexes of $\mathcal{K}$~\cite[Theorem 1]{BaskakovProduct}.
	For a topological pair $(X,A)$, a polyhedral product $(X,A)^\mathcal{K}$ is a generalisation of a moment-angle complex since   $\mathcal{Z}_\mathcal{K}=(D^2, S^1)^\mathcal{K}$. 
	%this can be generalised to topological spaces $X$ and their cone $\cone X$.
	In the case of a topological space $A$ and its cone $\cone A$, Bahri, Bendersky, Cohen and Gitler \cite[Theorem 1.12]{BBCG_CupProducts} showed that the cohomology of $(\cone A, A)^\mathcal{K}$ also decomposes in terms of $H^*(A)$ and the cohomology of full subcomplexes of $\mathcal{K}$ when $H^*(A)$ satisfies the strong K\"unneth formula. 
	Using this decomposition and our constructions, it is possible to produce non-trivial Massey products in $(\cone A, A)^\mathcal{K}$ by incorporating cohomological classes of $A$ to the classes we construct in the cohomology of full subcomplexes of $\mathcal{K}$ in order to create Massey products in $\mathcal{Z}_\mathcal{K}$.

	\section{Preliminaries}
	\subsection{Moment-angle complexes}
	%%%%%%%%%%%%%%%%%%%%%%%%%%%%%%%%%%%%%%%%% 
	%%% Background
	%%%%%%%%%%%%%%%%%%%%%%%%%%%%%%%%%%%%%%%%% 
	Let $\mathcal{K}$ be a simplicial complex on the vertex set $[m]=\{1, \ldots, m\}$. The \textit{moment-angle complex} $\mathcal{Z}_\mathcal{K}$ \cite[Definition 3.2.1]{TorusActionsCombinatorics} is 
	\[
	\mathcal{Z}_\mathcal{K}=\bigcup\limits_{\sigma \in \mathcal{K}} \left( D^2, S^1 \right)^\sigma \subset (D^2)^m
	\]
	where 
	$
	(D^2, S^1)^\sigma = %\{ (x_1, ..., x_m)\in  (D^2)^m \colon x_i\in S^1 \text{ for } i\notin I\}.
	\prod_{i=1}^{m} Y_i
	$
	for $Y_i=D^2$ if $i\in \sigma$, and $Y_i=S^1$ if~$i\notin \sigma$.
	A moment-angle complex $\mathcal{Z}_\mathcal{K}$ is a manifold if $\mathcal{K}$ is a triangulation of a sphere.
	
	In this paper, all coefficients are in $\mathbf{k}$, which is a field or $\mathbb{Z}$. 
	As a subspace of the polydisc, $\mathcal{Z}_\mathcal{K}$ has a cellular decomposition that induces a multigrading on the cellular cochain groups $C^*(\mathcal{Z}_\mathcal{K})$. 
	%There is a cellular decomposition of $\mathcal{Z}_\mathcal{K}$ that gives a multigrading on $C^*(\mathcal{Z}_\mathcal{K})$. 
	%Let $\mathbf{k}$ be a field or $\mathbb{Z}$. Coefficients are in $\mathbf{k}$, which is a field or $\mathbb{Z}$.
	%\cite{BaskakovProduct, ToricTopology, Hochster}%]\label{thm: full Hochster's}
	For $J\subset [m]$, the \textit{full subcomplex} $\mathcal{K}_J$ is $\{\sigma\in \mathcal{K} \ | \ \sigma\subset J\}$.
	Let $\widetilde{C}^{*}(\mathcal{K}_J)$ be the augmented simplicial cochain complex.
	The cohomology ring of $\mathcal{Z}_\mathcal{K}$ can be expressed in combinatorial terms.
	%The following theorem is a combination of results by Hochster~\cite{Hochster}, Buchstaber-Panov~\cite{TorusActionsCombinatorics}, and Baskakov~\cite{BaskakovProduct}.
	
	\begin{theorem}\cite{BaskakovProduct} \label{thm: full Hochster's}
		There is an isomorphism of cochains \[\widetilde{C}^{*-1}(\mathcal{K}_J)\to C^{*-|J|, 2J}(\mathcal{Z}_\mathcal{K})\subset C^{*+|J|}(\mathcal{Z}_\mathcal{K})\]
		that induces an isomorphism of algebras 
		\begin{equation}\label{eq: hochster}
			H^*(\mathcal{Z}_\mathcal{K}) \cong \bigoplus_{J \subset [m]} \widetilde{H}^*(\mathcal{K}_J)
		\end{equation}
		where $\widetilde{H}^{-1}(\mathcal{K}_{\varnothing})=\mathbf{k}$.
	\end{theorem}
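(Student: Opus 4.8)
The plan is to work entirely with cellular cochains coming from the product CW structure that $(D^2)^m$ induces on $\mathcal Z_\mathcal K$. Give $D^2$ the CW structure with one cell in each dimension — the basepoint $1$, the open arc $T=S^1\setminus\{1\}$, and the open $2$-cell $D$ — so that the cells of $(D^2)^m$ are indexed by pairs of disjoint subsets $I,L\subseteq[m]$: the cell $\kappa(I,L)$ carries the arc $T$ in the coordinates of $I$, the disc $D$ in the coordinates of $L$, and the basepoint elsewhere. Then $\kappa(I,L)$ lies in $\mathcal Z_\mathcal K$ precisely when $L\in\mathcal K$; it has dimension $|I|+2|L|$, multidegree $2$ on the coordinates of $L$ and $1$ on those of $I$, and we write $J:=I\sqcup L$ for its \emph{support}. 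On a single factor $\partial D=\pm T$ and $\partial T=0$, so the cellular differential is $\partial\kappa(I,L)=\sum_{j\in L}\pm\,\kappa(I\cup\{j\},L\setminus\{j\})$, which stays inside $\mathcal Z_\mathcal K$ because $\mathcal K$ is closed under faces.

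The first observation is that the support $J=I\sqcup L$ is preserved by $\partial$, hence by the dual coboundary $\delta$; therefore $C^*(\mathcal Z_\mathcal K)=\bigoplus_{J\subseteq[m]}C^{*,2J}$ splits as a direct sum of subcomplexes, where $C^{*,2J}$ is spanned by the cochains $\kappa^*(I,L)$ with $I\sqcup L=J$ and $L\in\mathcal K$. I would then identify $C^{*,2J}$ with the augmented simplicial cochain complex of the full subcomplex $\mathcal K_J$ by sending a simplex $\sigma\in\mathcal K_J$ to the dual cell $\kappa^*(J\setminus\sigma,\sigma)$: since $\dim\kappa(J\setminus\sigma,\sigma)=|J|+|\sigma|$, this carries $\widetilde C^{|\sigma|-1}(\mathcal K_J)$ onto $C^{|J|+|\sigma|}(\mathcal Z_\mathcal K)$, the empty simplex $\varnothing$ corresponds to the cell $\kappa^*(J,\varnothing)$ (explaining the augmentation), and the summand $J=\varnothing$ produces the unit in degree $0$, so that $\widetilde H^{-1}(\mathcal K_\varnothing)=\mathbf k$. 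Checking that $\delta$ matches the simplicial coboundary up to a fixed Koszul sign convention yields the asserted cochain isomorphism $\widetilde C^{*-1}(\mathcal K_J)\to C^{*-|J|,2J}(\mathcal Z_\mathcal K)$, and summing over $J$ and passing to cohomology gives the additive part of~\eqref{eq: hochster} — the topological form of Hochster's theorem.

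For the ring structure I would compute cup products using a cellular diagonal approximation on $(D^2)^m$ obtained as the tensor product of diagonal approximations on the factors. On a single $D^2$ the chain-map condition leaves the coefficient of $T\otimes T$ in $\Delta_*(D)$ free, and I would take it to be $0$, which is legitimate since $D^2$ is contractible; with this choice the tensor-product diagonal restricts to the subcomplex $\mathcal Z_\mathcal K$. A coordinate-by-coordinate inspection then shows that $\kappa^*(I_1,L_1)\smile\kappa^*(I_2,L_2)$ vanishes unless the supports $J_1$ and $J_2$ are disjoint, in which case it equals $\pm\,\kappa^*(I_1\cup I_2,\,L_1\cup L_2)$ when $L_1\cup L_2\in\mathcal K$ and $0$ otherwise. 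Translating through the identification above, this says: the product of a class supported on $J_1$ with one supported on $J_2$ is zero when $J_1\cap J_2\neq\varnothing$, and when $J_1\cap J_2=\varnothing$ it is the simplicial join product $\widetilde H^{p}(\mathcal K_{J_1})\otimes\widetilde H^{q}(\mathcal K_{J_2})\to\widetilde H^{p+q+1}(\mathcal K_{J_1}*\mathcal K_{J_2})$ followed by the restriction map $\widetilde H^{p+q+1}(\mathcal K_{J_1}*\mathcal K_{J_2})\to\widetilde H^{p+q+1}(\mathcal K_{J_1\sqcup J_2})$ dual to the inclusion of simplicial complexes $\mathcal K_{J_1\sqcup J_2}\hookrightarrow\mathcal K_{J_1}*\mathcal K_{J_2}$. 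This is precisely the product carried by $\bigoplus_{J}\widetilde H^*(\mathcal K_J)$, so the additive isomorphism is an isomorphism of algebras.

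The additive statement is essentially careful bookkeeping; the main obstacle is the multiplicative comparison, and specifically the signs — both the Koszul signs in the cellular differential on $\mathcal Z_\mathcal K$ and the shuffle signs in the tensor-product diagonal — which must be tracked precisely enough to see that the isomorphism respects products on the nose rather than merely up to sign, and that the resulting join-then-restrict operation on $\bigoplus_J\widetilde H^*(\mathcal K_J)$ is the intended one. An alternative to the explicit diagonal approximation is to identify $C^*(\mathcal Z_\mathcal K)$ with a Koszul-type complex computing $\tor_{\mathbf k[v_1,\dots,v_m]}(\mathbf k[\mathcal K],\mathbf k)$ and invoke Hochster's computation of this $\tor$, but making that an isomorphism of rings, and valid over $\mathbf k=\mathbb Z$, reduces to the same sign analysis.
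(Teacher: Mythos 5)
The paper does not prove this statement---it is imported verbatim from Baskakov's work (the reference \cite{BaskakovProduct}, with the standard treatment in Buchstaber--Panov)---so there is no internal proof to compare against; your outline is precisely the argument of that cited source. The decomposition of the cellular cochains of $(D^2,S^1)^{\mathcal K}$ by support $J=I\sqcup L$, the identification of each summand with the shifted augmented cochain complex $\widetilde C^{*-1}(\mathcal K_J)$ via $\chi_\sigma\mapsto\kappa^*(J\setminus\sigma,\sigma)$, and the computation of the cup product through the coordinatewise diagonal approximation with vanishing $T\otimes T$ component are all correct and are exactly the standard steps. You also correctly identify where the genuine content sits: the legitimacy of choosing the $T\otimes T$ coefficient to be zero (which follows from contractibility of $D^2$ and is what forces products of classes with overlapping supports to vanish on the nose) and the sign bookkeeping needed to match the resulting product with the explicit formula \eqref{eq: cochain product}.
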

	
	We refer to the cohomology decomposition \eqref{eq: hochster} as Hochster's formula~\cite{Hochster}.
	Let $C_p(K_J)$ be simplicial chain complex for $\mathcal{K}_J$.
	The cochain group $C^{p}(\mathcal{K}_J)=\Hom (C_p(\mathcal{K}_J), \mathbf{k})$ has a basis of $\chi_L$ for a $p$-simplex $L \in \mathcal{K}_J$,  where $\chi_L$ takes the value $1$ on $L$ and $0$ otherwise. %on any other simplex of $\mathcal{K}_J$. 
	A subset $J\subset [m]$ has an order inherited from $[m]$.
	If $j$ is the $r$th element of $J$, define
	\begin{equation}\label{eq: varepsilon}
		\varepsilon (j, J)=(-1)^{r-1}
	\end{equation}
	and for $L\subset J$, define
	$
	\varepsilon(L, J)=\prod_{j\in L} \varepsilon (j, J)$.
	For simplices $L=\{l_1, \ldots, l_p\}$, $M=\{m_1, \ldots, m_q\}$, we denote $\{l_1, \ldots, l_p, m_1, \ldots, m_q\}$ by $L\cup M$.
	The product on $\bigoplus_{J\subset [m]} \widetilde{H}^*(\mathcal{K}_J)$ is induced by $C^{p-1} (\mathcal{K}_I) \otimes C^{q-1} (\mathcal{K}_J) \to C^{p+q-1} (\mathcal{K}_{I\cup J})$,
	%For two simplices $L=\{l_0, \ldots, l_p \}, M=\{m_0, \ldots, m_q \}\in \mathcal{K}$, $L\cap M=\myempty$, let $L\cup M$ denote the simplex on the vertices $\{l_0, \ldots, l_p, m_0, \ldots, m_q\}$. Then there is a product on $\bigoplus_{J\subset [m]} \widetilde{H}^*(\mathcal{K}_J)$ given by Baskakov \cite{BaskakovProduct},
	\begin{equation} \label{eq: cochain product}
		\chi_L \otimes \chi_M \mapsto 
		\begin{cases}
			c_{L\cup M}\; \chi_{L\cup M} & \text{if } I\cap J = \varnothing,  \\
			0 & \text{otherwise} 
		\end{cases}
	\end{equation}
	where
	$
	c_{L\cup M} = \varepsilon(L,I) \;\varepsilon(M,J) \;\zeta\; \varepsilon(L\cup M, I\cup J)
	$
	and $
	\zeta= \prod_{k\in I\setminus L} \varepsilon(k,k\cup J\setminus M)$.
	
	For a cochain $a\in C^p(\mathcal{K}_J)$, let the \textit{support of $a$} be the set $S_a$ of $p$-simplices $\sigma\in \mathcal{K}_J$ such that
	$
	a= \sum_{\sigma \in S_a} a_{\sigma} \chi_{\sigma}
	$
	for a nontrivial coefficient $a_{\sigma}\in \mathbf{k}$. For a cohomology class $\alpha \in \widetilde{H}^p(\mathcal{K}_J)$, we say that $\alpha$ is \textit{supported on} $\mathcal{K}_J$.
	
	\begin{lemma} \label{lem: multiplication of general cochains}
		For a simplicial complex $\mathcal{K}$, let $a \in C^{p}(\mathcal{K}_I)$ and $b\in C^{q}(\mathcal{K}_J)$. Let the order of vertices in $\mathcal{K}$ be such that $i<j$ for every $i\in I$ and $j\in J$. Suppose that
		$
		a= \sum_{\sigma\in S_a} a_{\sigma} \chi_{\sigma}$ and $b= \sum_{\tau\in S_b} b_{\tau} \chi_{\tau}
		$
		for $p$-simplices
		$\sigma \in S_a\subset \mathcal{K}_I$, $q$-simplices $\tau \in S_b\subset \mathcal{K}_J$ and coefficients $a_{\sigma}, b_{\tau}\in \mathbf{k}$.  Then the product $ab\in C^{p+q+1}(\mathcal{K}_{I\cup J})$ is given by 
		\[
		ab= (-1)^{|I|(q+1)} \sum_{\sigma\in S_a} \sum_{\tau\in S_b} a_{\sigma} b_{\tau} \chi_{\sigma \cup \tau}.
		\]
	\end{lemma}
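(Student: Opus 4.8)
The plan is to reduce the statement of Lemma~\ref{lem: multiplication of general cochains} to the single-basis-element product formula~\eqref{eq: cochain product} by bilinearity, and then to evaluate the sign factor $c_{\sigma \cup \tau}$ explicitly under the hypothesis that all of $I$ precedes all of $J$ in the vertex order. First I would observe that, since the product on $\bigoplus_{J} \widetilde{C}^*(\mathcal{K}_J)$ is $\mathbf{k}$-bilinear, it suffices to show that for a single $p$-simplex $\sigma \in \mathcal{K}_I$ and a single $q$-simplex $\tau \in \mathcal{K}_J$ we have $\chi_\sigma \chi_\tau = (-1)^{|I|(q+1)} \chi_{\sigma \cup \tau}$, where the product is taken in the sense of~\eqref{eq: cochain product} with the degree shift from Theorem~\ref{thm: full Hochster's} (the isomorphism $\widetilde{C}^{*-1}(\mathcal{K}_J) \to C^{*-|J|,2J}(\mathcal{Z}_\mathcal{K})$ is what turns a $p$-cochain and a $q$-cochain into a $(p+q+1)$-cochain, accounting for the $\chi_{\sigma\cup\tau} \in C^{p+q+1}(\mathcal{K}_{I\cup J})$ on the left-hand side rather than $C^{p+q}$). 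Note also that $I \cap J = \varnothing$ automatically here since every element of $I$ is strictly less than every element of $J$, so the first case of~\eqref{eq: cochain product} always applies and there is no vanishing to worry about.

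The core computation is then to show $c_{\sigma \cup \tau} = (-1)^{|I|(q+1)}$. Recall $c_{\sigma\cup\tau} = \varepsilon(\sigma, I)\,\varepsilon(\tau, J)\,\zeta\,\varepsilon(\sigma\cup\tau, I\cup J)$ with $\zeta = \prod_{k \in I \setminus \sigma} \varepsilon(k, k \cup J \setminus \tau)$. Because $I$ lies entirely below $J$, each element $j \in J$ occupies the same ordinal position in $J$ as it does in $I \cup J$ shifted by $|I|$, so $\varepsilon(j, I\cup J) = (-1)^{|I|}\varepsilon(j, J)$; multiplying over $j \in \tau$ gives $\varepsilon(\tau, I\cup J) = (-1)^{|I|q}\varepsilon(\tau, J)$. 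Similarly each $i \in I$ has the same position in $I$ as in $I\cup J$, so $\varepsilon(\sigma, I \cup J) = \varepsilon(\sigma, I)$. Hence $\varepsilon(\sigma\cup\tau, I\cup J) = \varepsilon(\sigma, I\cup J)\varepsilon(\tau, I\cup J) = \varepsilon(\sigma, I)(-1)^{|I|q}\varepsilon(\tau, J)$, and substituting back, the factors $\varepsilon(\sigma,I)^2 = 1$ and $\varepsilon(\tau,J)^2 = 1$ cancel, leaving $c_{\sigma\cup\tau} = (-1)^{|I|q}\,\zeta$. It then remains to show $\zeta = (-1)^{|I\setminus\sigma|} = (-1)^{|I|-p}$: for $k \in I\setminus\sigma$, since $k$ is below every element of $J$, $k$ is the first element of the ordered set $k \cup J\setminus\tau$, so $\varepsilon(k, k\cup J\setminus\tau) = (-1)^0 = 1$ — wait, that gives $\zeta = 1$, so I would instead get $c_{\sigma\cup\tau} = (-1)^{|I|q}$, and I must recheck the degree-shift bookkeeping, since the target exponent is $|I|(q+1)$ not $|I|q$; the discrepancy of $(-1)^{|I|}$ should come from the Koszul sign in the chain-level isomorphism of Theorem~\ref{thm: full Hochster's} when comparing the internal product on $\mathcal{Z}_\mathcal{K}$ with the combinatorial product, i.e. from the relation between the cochain degree $p$ in $\widetilde{C}^{p}(\mathcal{K}_I)$ and the cochain degree $p - |I| + |I|\cdot 1$ used implicitly, or from an extra sign in how $\chi_{\sigma\cup\tau}$ is normalised as an element of $C^{p+q+1}$.

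The main obstacle, and the step deserving the most care, is exactly this sign bookkeeping: keeping straight which $\varepsilon$'s are computed relative to $I$, to $J$, or to $I\cup J$, and correctly accounting for the degree shift $\ast \mapsto \ast - |J|$ in Theorem~\ref{thm: full Hochster's} so that a product landing in $\widetilde{C}^{p+q+1}(\mathcal{K}_{I\cup J})$ (rather than $\widetilde{C}^{p+q}$) is reconciled with formula~\eqref{eq: cochain product}, which is stated for maps $C^{p-1}(\mathcal{K}_I)\otimes C^{q-1}(\mathcal{K}_J) \to C^{p+q-1}(\mathcal{K}_{I\cup J})$. In particular I expect the clean identity $\zeta = 1$ under the ordering hypothesis, the identity $\varepsilon(\sigma\cup\tau, I\cup J) = (-1)^{|I|q}\varepsilon(\sigma,I)\varepsilon(\tau,J)$, and one residual $(-1)^{|I|}$ from the grading conventions in Theorem~\ref{thm: full Hochster's}, which together yield the claimed coefficient $(-1)^{|I|(q+1)} = (-1)^{|I|q}\cdot(-1)^{|I|}$. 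Once the single-simplex case is verified, the general statement follows immediately by summing over $\sigma \in S_a$ and $\tau \in S_b$ with coefficients $a_\sigma b_\tau$, since every cross term $\chi_\sigma\chi_\tau$ carries the same universal sign $(-1)^{|I|(q+1)}$ independent of the particular simplices.
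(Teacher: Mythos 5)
Your overall strategy is exactly the paper's: reduce to a single pair $\chi_\sigma\chi_\tau$ by bilinearity, note $\zeta=1$ because each $k\in I\setminus\sigma$ is the smallest element of $k\cup J\setminus\tau$, and extract the sign from the positional shift $\varepsilon(j,I\cup J)=(-1)^{|I|}\varepsilon(j,J)$. But there is a concrete arithmetic error at the key step: a $q$-simplex $\tau$ has $q+1$ vertices, not $q$, so multiplying the shift over $j\in\tau$ gives $\varepsilon(\tau,I\cup J)=(-1)^{|I|(q+1)}\varepsilon(\tau,J)$, not $(-1)^{|I|q}\varepsilon(\tau,J)$. With the correct count, $c_{\sigma\cup\tau}=\varepsilon(\sigma,I)^2\,\varepsilon(\tau,J)^2\,(-1)^{|I|(q+1)}=(-1)^{|I|(q+1)}$ and the lemma follows immediately from \eqref{eq: cochain product} with no further adjustment.

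Because of this miscount you are left with a phantom discrepancy of $(-1)^{|I|}$, which you then attribute to ``the Koszul sign in the chain-level isomorphism of Theorem~\ref{thm: full Hochster's}'' or to how $\chi_{\sigma\cup\tau}$ is normalised. This is not a valid repair: formula \eqref{eq: cochain product} is already stated at the level of simplicial cochains on full subcomplexes (with the index conventions $C^{p-1}(\mathcal{K}_I)\otimes C^{q-1}(\mathcal{K}_J)\to C^{p+q-1}(\mathcal{K}_{I\cup J})$, i.e.\ simplices with $p$, $q$, and $p+q$ vertices respectively), so no additional degree-shift or normalisation sign enters when you apply it to a $p$-simplex and a $q$-simplex. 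As written, your argument establishes the coefficient $(-1)^{|I|q}$ and then asserts the remaining $(-1)^{|I|}$ without justification; the fix is simply to count the $q+1$ vertices of $\tau$ correctly, after which everything you wrote before that point goes through.
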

	
	\begin{proof}
		The product $ab$ is given by
		\begin{align*}
			ab &=\left( \sum_{\sigma\in S_a} a_{\sigma} \chi_{\sigma} \right) \left( \sum_{\tau\in S_b} b_{\tau} \chi_{\tau} \right) \\
			&= \sum_{\sigma\in S_a} \sum_{\tau\in S_b} a_{\sigma}\; b_{\tau} \;\varepsilon(\sigma, I) \;\varepsilon(\tau, J)\; \zeta \;\varepsilon(\sigma \cup \tau, I \cup J)\; \chi_{\sigma \cup \tau}
		\end{align*}
		where $\zeta=1$ since all vertices of $I$ are ordered before vertices of $J$ in $\mathcal{K}$.
		
		By the definition of $\varepsilon$, and since all elements $I$ are ordered before $J$, $\varepsilon(\sigma \cup \tau, I\cup J)=\varepsilon(\sigma, I)\varepsilon(\tau, I\cup J) $. Furthermore, for each $q$-simplex $\tau = \{i_1, \ldots, i_{q+1}\} \subset J$,
		\begin{align*}
			\varepsilon(\tau, I \cup J) &= \prod_{j\in \{1, \ldots, q+1\}} \varepsilon(i_j, I \cup J)= \prod_{j\in \{1, \ldots, q+1\}} (-1)^{|I|} \varepsilon(i_j, J) \\
			%&= \prod_{j\in \{1, \ldots, q+1\}} \varepsilon(J_1) \varepsilon(b_j, J_2) \\
			%	&= (-1)^{|I|(q+1)} \prod_{j\in \{1, \ldots, q+1\}} \varepsilon(i_j, J) \\
			&= (-1)^{|I|(q+1)}  \varepsilon(\tau, J).
		\end{align*}
		Therefore, since $\varepsilon(I,J)^2=1$ for any sets $I, J$, the statement follows.
	\end{proof}

	\subsection{Massey products}\label{sect: combinatorial Massey products}
	Massey products are higher cohomology operations that were introduced in a short note by Massey \cite{Massey} and were thereafter first used by Massey and Uehara in \cite{MasseyJacobi} to prove that Whitehead products satisfy the Jacobi identity. 
	They have many applications for example as topological invariants, obstructions to formality and for calculating differentials in spectral sequences.
	
	%Since the introduction of triple Massey products in \cite{MasseyJacobi}, there have been a few alternative definitions of Massey products that differ at most by sign. The definition given here follows \cite{MasseyHigher} and \cite{McCleary} which, in particular, differs by a sign to the definition of the triple Massey product given in \cite{ToricTopology}.
	
	\begin{defn}\label{def: defining system}
		Let $(A,d)$ be a differential graded algebra with classes $\alpha_i$ in $H^{p_i}(A,d)$ for $1\leqslant i \leqslant n$. Let $a_{i,i}\in A^{p_i}$ be a representative for $\alpha_i$. A \textit{defining system} associated to $\langle \alpha_1, \ldots, \alpha_n \rangle$ is a set of elements $(a_{i,k})$ for $1\leqslant i \leqslant k \leqslant n$ and $(i,k) \neq(1,n)$  such that $a_{i,k}\in A^{p_i + \cdots + p_k-k+i}$ and
		\[
		d(a_{i,k})=\sum_{r=i}^{k-1} \overline{a_{i,r}}a_{r+1,k}
		\]
		where $\overline{a_{i,r}}=(-1)^{1+\deg a_{i,r}}a_{i,r}$.
		To each defining system of $\langle \alpha_1, \ldots, \alpha_n \rangle$, the \textit{associated cocycle} is defined as
		\[
		\sum_{r=1}^{n-1} \overline{a_{1,r}}a_{r+1,n} \in A^{p_1 + \cdots + p_n-n+2}.
		\]
		The \textit{$n$-Massey product }$\langle \alpha_1, \ldots, \alpha_n \rangle$ is the set of cohomology classes of associated cocycles for all possible defining systems. 
		The \textit{indeterminacy} of a Massey product is the set of differences between elements in $\langle \alpha_1, \ldots, \alpha_n \rangle$.
		The Massey product is called \textit{trivial} if $0\in \langle \alpha_1, \ldots, \alpha_n \rangle$.
	\end{defn}

	We use Theorem~\ref{thm: full Hochster's} to give a correspondence between defining systems in $C^*(\mathcal{Z}_\mathcal{K})$ and in $\bigoplus \limits_{J \subset [m]} C^*(\mathcal{K}_J)$.
	For any $a\in C^{p+|J|+1}(\mathcal{Z}_\mathcal{K})$ with $p \geqslant 0$ and $J\subset[m]$, there is a corresponding $a\in C^p(\mathcal{K}_J)$.
	
	\begin{defn} \label{def: degree of elements in C(KJ)}
		For $a\in C^p(\mathcal{K}_J)$, let $\mydeg(a)=p+|J|+1$ and let $\doubleoverline{a}=(-1)^{1+\mydeg a}a=(-1)^{p+|J|}a$. 
		%	$
		%	\alpha\in \widetilde{H}^p(\mathcal{K}_J)
		%	\subset \bigoplus \limits_{J \subset [m]} \widetilde{H}^*(\mathcal{K}_J),
		%	$
		%	let $\mydeg(\alpha)=p+|J|+1$. Also for $a\in C^p(\mathcal{K}_J)$, let $\doubleoverline{a}=(-1)^{1+\mydeg a}a=(-1)^{p+|J|}a$. 
	\end{defn}
	
	Let $\langle \alpha_1, \ldots, \alpha_n \rangle \subset H^*(\mathcal{Z}_\mathcal{K})$, where each class $\alpha_i\in H^{p_i+|J_i|+1}(\mathcal{Z}_\mathcal{K})$ corresponds to $\alpha_i\in H^{p_i}(\mathcal{K}_{J_i})$. 
	%Let $a_i\in C^{p_i}(\mathcal{K}_{J_i})$ be a cocycle representative for $\alpha_i$. 
	Let $(a_{i,k})\subset C^{*}(\mathcal{Z}_\mathcal{K})$ be a defining system for $\langle \alpha_1, \ldots, \alpha_n \rangle$, where $a_{i,i}=a_i$ is a cocycle representative for $\alpha_i$.
	Then $a_{i,k}\in C^{p_i+\cdots + p_k + |J_i \cup \cdots \cup J_k|+1}(\mathcal{Z}_\mathcal{K})$ and $d(a_{i,k})=\sum_{r=i}^{k-1} \overline{a_{i,r}}a_{r+1,k}$.
	By Theorem~\ref{thm: full Hochster's}, there are corresponding cochains $a_{i,k}\in C^{p_i+\cdots+p_k}(\mathcal{K}_{J_i\cup \cdots \cup J_k})$ and 
	% where the product $\doubleoverline{a_{i,r}}a_{r+1,k}$ is given by \eqref{eq: cochain product}.
	%Then using the product given in \eqref{eq: cochain product}, the cochains $a_{i,k}$ such that $d(a_{i,k})=\sum_{r=i}^{k-1} \doubleoverline{a_{i,r}}a_{r+1,k}$ belong to $C^{p_i+\cdots+p_k}(\mathcal{K}_{J_i\cup \cdots \cup J_k})$.
	\begin{align*}
		\mydeg(a_{i,k})&=p_i+\cdots + p_k + |J_i \cup \cdots \cup J_k|+1 \\
		&= (p_i+|J_i|+1)+\cdots + (p_k+|J_k|+1) -k+i \\
		&= \mydeg(a_{i})+\cdots + \mydeg(a_{k})-k+i.
	\end{align*}
	By the product in \eqref{eq: cochain product}, $d(a_{i,k})=\sum_{r=i}^{k-1} \doubleoverline{a_{i,r}}a_{r+1,k}$.
	Hence $(a_{i,k})\subset \bigoplus \limits_{J \subset [m]} C^*(\mathcal{K}_J)$ is a defining system that corresponds to the defining system $(a_{i,k})\subset C^*(\mathcal{Z}_\mathcal{K})$ and the associated cocycle $\omega\in C^{p_1+\cdots+p_n+|J_1\cup \cdots \cup J_n|+2}(\mathcal{Z}_\mathcal{K})$ corresponds to the associated cocycle $\omega \in C^{p_1+\cdots +p_n+1}(\mathcal{K}_{J_1\cup \cdots \cup J_n})$.
	
	Let $\langle \alpha_1, \alpha_2, \alpha_3 \rangle$  be a triple Massey product on $\alpha_i\in H^{p_i+|J_i|+1}(\mathcal{Z}_\mathcal{K})$ for $i=1,2,3$. 
	The indeterminacy of a triple Massey product is \[\alpha_1 \cdot H^{p_2+p_3+|J_2\cup J_3|+1}(\mathcal{Z}_\mathcal{K})+\alpha_3 \cdot H^{p_1+p_2+|J_1\cup J_2|+1}(\mathcal{Z}_\mathcal{K}).\] 
	By Theorem~\ref{thm: full Hochster's},  $\alpha_i$ corresponds to $\alpha_i \in \widetilde{H}^{p_i}(\mathcal{K}_{J_i})$ and the indeterminacy of $\langle \alpha_1, \alpha_2, \alpha_3 \rangle$ is
	\begin{equation} \label{eq: indeterminacy}
		\alpha_1 \cdot \widetilde{H}^{p_2+p_3}(\mathcal{K}_{J_2\cup J_3})+
		\alpha_3 \cdot \widetilde{H}^{p_1+p_2}(\mathcal{K}_{J_1\cup J_2}).
	\end{equation}
	In general, the indeterminacy of an $n$-Massey product can be expressed in terms of matric Massey products \cite[Proposition 2.3]{MatricMasseyProducts}, but this is not a helpful expression for calculations.
	%	For $4$-Massey products, the indeterminacy is given by an expression in terms of matric Massey products \cite{MatricMasseyProducts}, but in general there is no succinct expression for the indeterminacy of higher Massey products.

	\begin{exmp}
		\label{ex: Non-trivial indeterminacy triple Massey example} 
		Let $\mathcal{K}$ be the simplicial complex in Figure~\ref{fig:Massey example}.
		Let $\alpha_1, \alpha_2, \alpha_3 \in H^3(\mathcal{Z}_\mathcal{K})$ correspond to $\alpha_1=[\chi_1]\in \widetilde{H}^0(\mathcal{K}_{12})$, $\alpha_2=[\chi_3]\in \widetilde{H}^0(\mathcal{K}_{34})$, $\alpha_3=[\chi_5] \in \widetilde{H}^0(\mathcal{K}_{56})$. 
		%Then $\alpha_1\alpha_2\in\widetilde{H}^1(\mathcal{K}_{1234})=0$ and $\alpha_2\alpha_3\in$ $\widetilde{H}^1(\mathcal{K}_{3456})=0$.
		Since $\widetilde{H}^1(\mathcal{K}_{1234})$ and $\widetilde{H}^1(\mathcal{K}_{3456})=0$, the products $\alpha_1\alpha_2\in\widetilde{H}^1(\mathcal{K}_{1234})$ and $\alpha_2\alpha_3\in$ $\widetilde{H}^1(\mathcal{K}_{3456})$ are zero. 
		
		A cochain $a_{12}\in C^0(\mathcal{K}_{1234})$ such that $d(a_{12})=\doubleoverline{\chi_1} \chi_3=0$ is of the form $a_{12}=c_1 \chi_3+c_2(\chi_1+\chi_4+\chi_2)$, for any $c_1, c_2\in \mathbf{k}$. 
		A cochain $a_{23}\in C^0(\mathcal{K}_{3456})$ such that $d(a_{23})=\doubleoverline{\chi_3}\cdot \chi_5=\chi_{35}$ is of the form $a_{23}=c_3(\chi_4+\chi_6+\chi_3+\chi_5) +\chi_5$ for any $c_3\in \mathbf{k}$.
		%	We will find all cochains $a_{12}\in C^0(\mathcal{K}_{1234})$ such that $d(a_{12})=\doubleoverline{\chi_1} \chi_3=0$. 
		%	For a general cochain $x=c_1 \chi_3 + c_2 \chi_1 +c_3\chi_4+c_4\chi_2\in C^0(\mathcal{K}_{1234})$ with coefficients $c_1, \ldots, c_4$, $d(x)=0-c_2\chi_{14}+c_3 \chi_{14}+c_3 \chi_{24} -c_4\chi_{24}$. 
		%	So $d(x)=0$ if $c_2=c_3=c_4$. 
		%	Hence $a_{12}=c_1 \chi_3+c_2(\chi_1+\chi_4+\chi_2)\in C^0(\mathcal{K}_{1234})$ for any coefficients $c_1, c_2$. 
		%	We also find all cochains $a_{23}\in C^0(\mathcal{K}_{3456})$ such that $d(a_{23})=\doubleoverline{\chi_3}\cdot \chi_5=\chi_{35}$. 
		%	For a general cochain in $y=c_1 \chi_4 +c_2\chi_6 + c_3 \chi_3 + c_4 \chi_5\in C^0(\mathcal{K}_{3456})$ with coefficients $c_1, \ldots, c_4$, 
		%	$d(y)=-c_1 \chi_{46}+c_2\chi_{46}+c_2\chi_{36}-c_3\chi_{36} -c_3\chi_{35}+c_4\chi_{35}$. 
		%	If $\{4,6\}\notin \mathcal{K}$, then $\chi_{46}=0$ so $d(y)=\chi_{35}$ when $c_2=c_3=c_4-1$. 
		%	If $\{4,6\}\in \mathcal{K}$, then $d(y)=\chi_{35}$ when $c_1=c_2=c_3=c_4-1$. 
		%	Therefore $a_{23}=c_3\chi_4 +c_4(\chi_6+\chi_3+\chi_5) +\chi_5\in C^0(\mathcal{K}_{3456})$ for coefficients $c_3, c_4$, where $c_3=c_4$ if $\{4,6\}\in \mathcal{K}$.
		Then the associated cocycle $\omega\in C^1(\mathcal{K})$ is 
		$\doubleoverline{a}_1a_{23}+\doubleoverline{a}_{12}a_3= 
		%	\chi_1(c_3\chi_4 +c_4(\chi_6+\chi_3+\chi_5) +\chi_5)+(c_1 \chi_3+c_2(\chi_1+\chi_4+\chi_2))\chi_5= 
		c_3(\chi_{14} +\chi_{16}+\chi_{15})+\chi_{15}+c_1 \chi_{35} + c_2(\chi_{15}+\chi_{25}) 
		$. 
		For $\chi_1,\chi_5\in C^0(\mathcal{K})$, $\omega=c_3d(\chi_1)+\chi_{15}+(c_1-c_2)\chi_{35}+c_2d(\chi_5)$. 
		%Since $d(\chi_5)=\chi_{15}+\chi_{35}+\chi_{25}$ and $d(\chi_1)=-\chi_{16}-\chi_{14}-\chi_{15}$ for $\chi_1, \chi_5\in C^1(\mathcal{K})$, $\omega=(c_3-c_4)\chi_{14}-c_4d(\chi_1)+\chi_{15}+(c_1-c_2)\chi_{35}+c_2d(\chi_5)$. 
		%Thus $\langle \alpha_1, \alpha_2, \alpha_3 \rangle=\{[(c_3-c_4)\chi_{14}+\chi_{15}+(c_1-c_2)\chi_{35}]\neq 0 \ | \ c_1, c_2, c_3, c_4\in \mathbf{k}, c_3=c_4 \text{ if } \{4,6\}\in \mathcal{K}\}$. 
		Also, $[\omega]=[\chi_{15}+(c_1-c_2)\chi_{35}]\neq 0$ for any $c_1, c_2, c_3 \in \mathbf{k}$. 
		%	So every element in $\langle \alpha_1, \alpha_2, \alpha_3 \rangle$ is non-zero and therefore $\langle \alpha_1, \alpha_2, \alpha_3 \rangle\subset H^8(\mathcal{Z}_\mathcal{K})$ is non-trivial. 
		Therefore $\langle \alpha_1, \alpha_2, \alpha_3 \rangle\subset H^8(\mathcal{Z}_\mathcal{K})$ is non-trivial with non-trivial indeterminacy,
		$ 
		\alpha_1 \cdot \widetilde{H}^0(\mathcal{K}_{3456}) + \alpha_3 \cdot \widetilde{H}^0(\mathcal{K}_{1234})=\alpha_3 \cdot \widetilde{H}^0(\mathcal{K}_{1234}). 
		$

		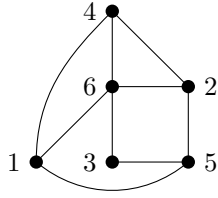
\begin{figure}[htbp]
			\centering
			\begin{tikzpicture}[inner sep=2mm]
			\coordinate (a) at (0,0);
			\coordinate (b) at (1,0);
			\coordinate (c) at (0,1);
			\coordinate (d) at (1,1);
			\coordinate (e) at (0,2);
			\coordinate (f) at (-1,0);
			
			\draw (c)--(a) node[left]{3}-- (b) node[right]{5} -- (d) node[right]{2} --(e) node[left]{4};
			\draw (d) -- (c) node[left]{6} -- (f) node[left] {1};
			\draw (f)  to [out=90,in=-135] (e);
			\draw (f) to [out=-40,in=-140] (b);
			\draw (c)--(e);
			
			\fill (a) circle (2.5pt);\fill (b) circle (2.5pt);\fill (c) circle (2.5pt);
			\fill (d) circle (2.5pt);\fill (e) circle (2.5pt);\fill (f) circle (2.5pt);
			\end{tikzpicture} 
			\caption{A simplicial complex $\mathcal{K}$ for which $\mathcal{Z}_\mathcal{K}$ has a non-trivial $3$-Massey product with non-trivial indeterminacy.}
			\label{fig:Massey example}
		\end{figure}
	\end{exmp}

	\section{Massey products via join and star deletion} \label{section:combinatorial operations}
	The categorical product of simplicial complexes $\mathcal{K}_1$ and $\mathcal{K}_2$ is the join $\mathcal{K}_1 *\mathcal{K}_2$.
	This induces a product in moment-angle complexes, $\mathcal{Z}_{\mathcal{K}_1 * \mathcal{K}_2}=\mathcal{Z}_{\mathcal{K}_1}\times \mathcal{Z}_{\mathcal{K}_2}$.
	In this way cup products in $H^*(\mathcal{Z}_{\mathcal{K}_1 * \mathcal{K}_2})$ can be seen combinatorially. 
	Since Massey products are higher operations, we require lower Massey products to be trivial.
	The idea is to start with the join of simplicial complexes and remove certain simplices in order to trivialise lower Massey products.	
	To remove simplices, we use \textit{star deletion}.
	%For example, let $\mathcal{K}$ be the boundary of a square, which is the join of two disjoint pairs of points. The moment-angle complex for two disjoint points is $S^3$ and so $\mathcal{Z}_{\mathcal{K}}=S^3\times S^3$. The cup product in $H^*(\mathcal{Z}_\mathcal{K})$ is represented by the combinatorial cycle $\mathcal{K}\simeq S^1$. We trivialise this product by removing one of the edges of the square.
	
	For a simplicial complex $\mathcal{K}$, the \textit{star} and \textit{link} of a simplex $I\in \mathcal{K}$ are \[\st_{\mathcal{K}}I=\{J\in \mathcal{K} \suchthat I \cup J \in \mathcal{K} \} \text{  and  } \link_{\mathcal{K}}I=\{J\in \mathcal{K} \suchthat I \cup J \in \mathcal{K}, I\cap J=\myempty \}.\] 
	The \textit{boundary of the star} of $I\in \mathcal{K}$ is
	$
	\partial \st_{\mathcal{K}}I=\{J\in \mathcal{K} \suchthat I \cup J \in \mathcal{K}, I\not \subset J \}.
	$
	Let $\mathring{\st}_{\mathcal{K}}I=\st_{\mathcal{K}}I \setminus \partial \st_{\mathcal{K}}I$.
	
	\begin{defn}
		The \textit{star deletion} $\sd_I\mathcal{K}$ of $\mathcal{K}$ at $I$ is $\sd_I\mathcal{K}=\mathcal{K} \setminus \mathring{\st}_{\mathcal{K}}I$.
	\end{defn}
	
	Alternatively, $\sd_I\mathcal{K}=\{ J\in \mathcal{K} \suchthat I\not\subset J  \}$.
	Star deletions $\sd_{I_1}$ and $\sd_{I_2}$ can be applied iteratively providing that $I_1\not\subset I_2$ and $I_2\not\subset I_1$.
	We show that the order of star deletions on a simplicial complex does not affect the result.
	
	\begin{lemma} \label{lem: order of deletes doesn't matter}
		Let $\mathcal{K}$ be a simplicial complex. Let $I_1, I_2\in \mathcal{K}$ be simplices such that $I_1 \cap I_2 \neq I_1, I_2$. Then $\sd_{I_2}\sd_{I_1} \mathcal{K}=\sd_{I_I}\sd_{I_2} \mathcal{K}$. 
	\end{lemma}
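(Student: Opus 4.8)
The plan is to work entirely with the alternative description of star deletion recorded just before the statement, namely $\sd_I\mathcal{K}=\{J\in\mathcal{K}\suchthat I\not\subset J\}$, which turns the claim into an elementary set-theoretic identity. The only genuinely non-formal point is making sure both iterated deletions are defined in the first place, so I would dispose of that first.

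First I would verify well-definedness. Applying $\sd_{I_2}$ after $\sd_{I_1}$ requires $I_2\in\sd_{I_1}\mathcal{K}$, i.e. $I_1\not\subset I_2$; this is part of the hypothesis $I_1\cap I_2\neq I_1,I_2$ (which says precisely that neither of $I_1,I_2$ is contained in the other). Symmetrically $I_1\in\sd_{I_2}\mathcal{K}$ because $I_2\not\subset I_1$. Hence $\sd_{I_2}\sd_{I_1}\mathcal{K}$ and $\sd_{I_1}\sd_{I_2}\mathcal{K}$ both make sense.

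Next I would compute both sides. Using the alternative description twice,
\[
\sd_{I_2}\sd_{I_1}\mathcal{K}=\{J\in\sd_{I_1}\mathcal{K}\suchthat I_2\not\subset J\}=\{J\in\mathcal{K}\suchthat I_1\not\subset J \text{ and } I_2\not\subset J\},
\]
and in the same way $\sd_{I_1}\sd_{I_2}\mathcal{K}=\{J\in\mathcal{K}\suchthat I_2\not\subset J \text{ and } I_1\not\subset J\}$. The two right-hand sides are the same subset of $\mathcal{K}$, so the two simplicial complexes coincide, which is the assertion. (One should also note that the intermediate complex $\sd_{I_1}\mathcal{K}$ is genuinely a simplicial complex — it is closed under taking subsets, since if $I_1\not\subset J$ then $I_1\not\subset J'$ for any $J'\subset J$ — so that "$\sd_{I_2}\sd_{I_1}\mathcal{K}$" is a legitimate star deletion of a simplicial complex and not merely of a set system.)

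There is essentially no obstacle here: the content is entirely in translating the geometric definition $\sd_I\mathcal{K}=\mathcal{K}\setminus\mathring{\st}_\mathcal{K}I$ into the combinatorial form $\{J\in\mathcal{K}\suchthat I\not\subset J\}$, which the excerpt already records, and the only care needed is the well-definedness observation above. If one preferred not to invoke the alternative description as given, I would instead unwind $\mathring{\st}_\mathcal{K}I=\st_\mathcal{K}I\setminus\partial\st_\mathcal{K}I=\{J\in\mathcal{K}\suchthat I\cup J\in\mathcal{K},\ I\subset J\}$ directly to reprove $\sd_I\mathcal{K}=\{J\in\mathcal{K}\suchthat I\not\subset J\}$, and then proceed as above.
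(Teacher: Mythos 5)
Your proof is correct and follows essentially the same route as the paper's: after checking that neither simplex is deleted by the other star deletion, you identify both iterated deletions with the same subset of $\mathcal{K}$ — you write it as $\{J\in\mathcal{K}\suchthat I_1\not\subset J \text{ and } I_2\not\subset J\}$, while the paper writes the equivalent $\mathcal{K}\setminus(\mathring{\st}_{\mathcal{K}}I_1\cup\mathring{\st}_{\mathcal{K}}I_2)$. Your version is slightly more explicit about why the second deletion, performed inside the intermediate complex, removes exactly the expected simplices, but the underlying argument is the same.
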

	\begin{proof}
		Since $I_1 \cap I_2 \neq I_1, I_2$, neither $I_1\subset I_2$ nor $I_2\subset I_1$. Thus $I_1\in \sd_{I_2} \mathcal{K}$ and $I_2\in \sd_{I_1} \mathcal{K}$.
		So $\sd_{I_2}\sd_{I_1} \mathcal{K}=\mathcal{K} \setminus (\mathring{\st}_{\mathcal{K}}I_1 \cup \mathring{\st}_{\mathcal{K}}I_2)=\sd_{I_1}\sd_{I_2} \mathcal{K}$.
	\end{proof}
	
	\begin{figure}[ht]
		\centering
		
		\begin{minipage}{0.3\textwidth}
			\centering
			\begin{tikzpicture} [scale=1.5, inner sep=2mm]
			\coordinate (4) at (0,0);
			\coordinate (1) at (1,0);
			\coordinate (2) at (0.6,0.5);
			\coordinate (3) at (1.55,0.5);
			\coordinate (6) at (0.8,1.5);
			\coordinate (5) at (0.8,-1.2);
			
			% 2-simplices
			\fill[lightgray, fill opacity=0.4] (6)--(4)--(5)--(3)--(6);
			\fill[lightgray, fill opacity=0.6] (4)--(1)--(3)--(5);
			
			\draw (3) node[right] {3} -- (1) node[below right] {1} --  (4)--(6) node[above] {6}; 
			\draw (4)  node[left] {4} -- (5) node[below] {5} -- (3); \draw (1) -- (5);
			\draw[] (6) -- (2) node[above right] {2};
			\draw[dashed] (2) -- (5); \draw[] (4) -- (2); 
			\draw[] (2) -- (3);
			\draw[] (3)--(6);
			\draw (2)--(0.65,0);

			\foreach \i in {1,...,6} {\fill (\i) circle (1.8pt);}; 
			\end{tikzpicture} 
			\subcaption{$\sd_{\{ 1,6\}} \mathcal{K}$} 
			\label{fig: star deletion 16}
		\end{minipage}
		\begin{minipage}{0.3\textwidth}
			\centering
			\begin{tikzpicture} [scale=1.5, inner sep=2mm]
			\coordinate (4) at (0,0);
			\coordinate (1) at (1,0);
			\coordinate (2) at (0.6,0.5);
			\coordinate (3) at (1.55,0.5);
			\coordinate (6) at (0.8,1.5);
			\coordinate (5) at (0.8,-1.2);
			
			% 2-simplices
			\fill[lightgray, fill opacity=0.4] (6)--(4)--(5)--(3)--(2)--(6);
			\fill[lightgray, fill opacity=0.6] (4)--(6)--(1)--(3)--(5)--(4);
			
			\draw (3) node[right] {3} -- (1) node[below right] {1} --  (4)--(6) node[above] {6}; 
			\draw (4)  node[left] {4} -- (5) node[below] {5} -- (3); \draw (1) -- (5);
			\draw[dashed] (6) -- (2) node[above right] {2};
			\draw[dashed] (2) -- (5); \draw[dashed] (4) -- (2); 
			\draw[name path=2-3, dashed] (2) -- (3);
			\draw[name path=1-5] (1)--(6);
			\path [name intersections={of=1-5 and 2-3,by=int}];
			\draw (3)--(int);
			
			\foreach \i in {1,...,6} {\fill (\i) circle (1.8pt);}; 
			\end{tikzpicture} 
			\subcaption{$\sd_{\{3,6\}} \mathcal{K}$} \label{fig: octahedron star delete 36}
		\end{minipage}
		\begin{minipage}{0.3\textwidth}
			\centering
			\begin{tikzpicture} [scale=1.5, inner sep=2mm]
			\coordinate (4) at (0,0);
			\coordinate (1) at (1,0);
			\coordinate (2) at (0.6,0.5);
			\coordinate (3) at (1.55,0.5);
			\coordinate (6) at (0.8,1.5);
			\coordinate (5) at (0.8,-1.2);
			
			% 2-simplices
			\fill[lightgray, fill opacity=0.4] (6)--(4)--(5)--(3)--(2)--(6);
			\draw (2)--(0.65,0);
			\fill[lightgray, fill opacity=0.6] (4)--(1)--(3)--(5);
			
			\draw (3) node[right] {3} -- (1) node[below right] {1} --  (4)--(6) node[above] {6}; 
			\draw (4)  node[left] {4} -- (5) node[below] {5} -- (3); \draw (1) -- (5);
			\draw[] (6) -- (2) node[above right] {2};
			\draw[dashed] (2) -- (5); \draw[] (4) -- (2) -- (3);
			
			\foreach \i in {1,...,6} {\fill (\i) circle (1.8pt);}; 
			\end{tikzpicture} 
			\subcaption{$\sd_{\{3,6\}}\sd_{\{ 1,6\}} \mathcal{K}\\=\sd_{\{ 1,6\}}\sd_{\{3,6\}} \mathcal{K}$} \label{fig: star deletion example octahedron}
		\end{minipage}
		\caption{The star deleted complex is not affected by the order of star deletions.}
		\label{fig:deletion order}
	\end{figure}
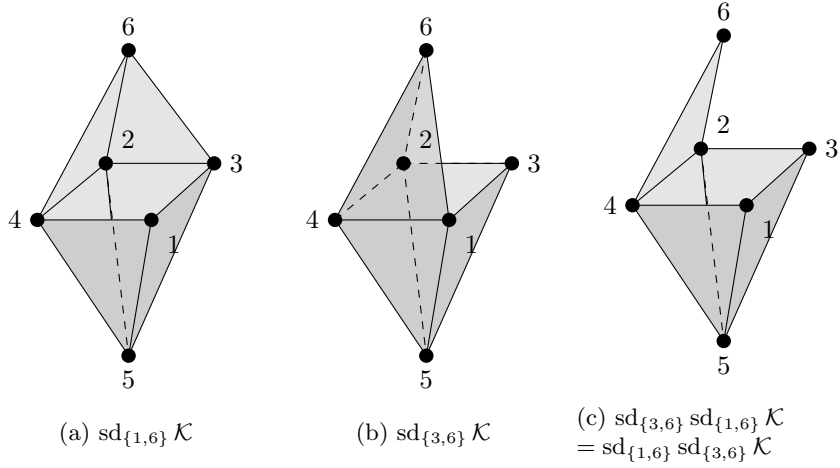
	
	\begin{exmp}
		Let $\mathcal{K}$ be the boundary of an octahedron with opposing vertices labelled $i, i+1$ for $i=1, 3, 5$. Let $I_1=\{1,6\}$ and $I_2=\{3,6\}$. 
		The star $\st_{\mathcal{K}}I_1$ contains maximal simplices $\{1,4,6\}$ and $\{1,3,6\}$, and $\st_{\mathcal{K}}I_2$ contains $\{1,3,6\}$ and $\{2,3,6\}$. 
		If the star of $I_1$ is deleted from $\mathcal{K}$ first, then $\st_{\sd_{I_1}\mathcal{K}}I_2$ contains the maximal simplex $\{2,3,6\}$.
		Hence $\sd_{I_2}\sd_{I_1} \mathcal{K}$ removes the simplices $\{1,4,6\}$, $\{1,3,6\}$ and $\{2,3,6\}$ from $\mathcal{K}$.
		The same simplices are removed from $\mathcal{K}$ in  $\sd_{I_1}\sd_{I_2} \mathcal{K}$, as shown in Figure~\ref{fig:deletion order}.	
	\end{exmp}
	
	\begin{remark}
		Star deletion is equivalent to doing a stellar subdivision $\stellar_I \mathcal{K} = (\mathcal{K} \setminus \mathring{\st}_{\mathcal{K}}I)\cup_{\partial \st_{\mathcal{K}}I} \text{cone}(\partial \st_{\mathcal{K}}I)$ then restricting to the original vertices $V(\mathcal{K})$.
		For example, see Figure~\ref{fig: stellar subdivision example} compared to Figure~\ref{fig: star deletion 16}.
		If $\mathcal{K}$ is a triangulation of an $n$-sphere on $m$ vertices, then $\mathcal{Z}_\mathcal{K}$ is an $(m+n+1)$-dimensional manifold. 
		As $\stellar_I \mathcal{K} \simeq \mathcal{K}$, $\stellar_I \mathcal{K}$ is a triangulation of an $n$-sphere on $m+1$ vertices.
		Hence $\mathcal{Z}_{\stellar_I\mathcal{K}}$ is an $(m+n+2)$-dimensional manifold.
		Since Massey products are obstructions to formality, %and K\"ahler structure in complex geometry \cite{RealHtpyTheoryofKahler}, 
		a non-trivial Massey product in $H^*(\mathcal{Z}_{\stellar_I\mathcal{K}})$ implies that  $\mathcal{Z}_{\stellar_I\mathcal{K}}$ is a non-formal. 
		%and also non-K\"{a}hler manifold.
	\end{remark}
	
	\begin{figure}[ht]
		\centering
		\begin{minipage}{0.3\textwidth}
			\centering
			\begin{tikzpicture} [scale=1.5, inner sep=2mm]
			\coordinate (4) at (0,0);
			\coordinate (1) at (1,0);
			\coordinate (2) at (0.6,0.5);
			\coordinate (3) at (1.55,0.5);
			\coordinate (6) at (0.8,1.5);
			\coordinate (5) at (0.8,-1.2);
			\coordinate (7) at (1.1, 0.7);
			
			% 2-simplices
			\fill[lightgray, fill opacity=0.8] (6)--(4)--(5)--(3)--(6);
			%\fill[lightgray, fill opacity=0.6] (4)--(1)--(3)--(5);
			
			\draw (3) node[right] {3} -- (1) node[below right] {1} --  (4)--(6) node[above] {6}; 
			\draw (4)  node[left] {4} -- (5) node[below] {5} -- (3); \draw (1) -- (5);
			\draw[dashed] (6) -- (2) node[above left, inner sep=1mm] {2};
			\draw[dashed] (2) -- (5); \draw[dashed] (4) -- (2); 
			\draw[dashed] (2) -- (3);
			\draw[] (3)--(6);
			%\draw (2)--(0.65,0);
			\draw (1)--(7)--(6);
			\draw (4)--(7)--(3);

			\foreach \i in {1,...,6,7} {\fill (\i) circle (1.8pt);}; 
			\end{tikzpicture} 
			%\subcaption{$\sd_{I_1} \mathcal{K}$} 
		\end{minipage}
		\caption{A stellar subdivision at $\{1,6\}$ in the octahedron}
		\label{fig: stellar subdivision example}
	\end{figure}
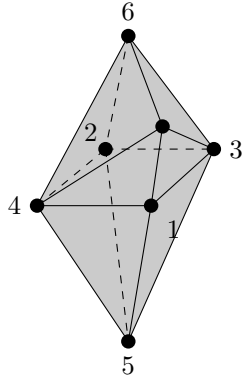
	
	\subsection{A construction of non-trivial Massey products}
	
	We aim to construct a simplicial complex $\mathcal{K}$ such that there is a non-trivial $n$-Massey product in $H^*(\mathcal{Z}_\mathcal{K})$.
	We start with the join of $n$-simplicial complexes $\mathcal{K}^1 * \cdots * \mathcal{K}^n$ and classes $\alpha_i \in \widetilde{H}^{p_i}(\mathcal{K}^i_{J_i})$ for each $i\in \{1, \ldots, n\}$.
	In $\mathcal{K}^1 * \cdots * \mathcal{K}^n$, all cup products between $\alpha_i$s are non-trivial, so in order to define a higher Massey product we first remove simplices to make certain cup products trivial. 
	To define which simplices to remove, 
	we define two sets of simplices, $S_{a_i}\subset \mathcal{K}^i$ and $P_{a_i}\subset \mathcal{K}^i$ for each $\mathcal{K}^i$.
	In order to create $\mathcal{K}$, we star delete $\mathcal{K}^1 * \cdots * \mathcal{K}^n$ at every simplex $\sigma_i\cup \sigma_k$ for $\sigma_i \in S_{a_i}$ and $\sigma_k \in P_{a_k}$, $1\leq i<k\leq n$, $(i,k)\neq (1,n)$.
	The star deletions at $\sigma_1\cup \sigma_2$ and $\sigma_2\cup \sigma_3$ trivialise the cup products $\alpha_1 \alpha_2$ and $\alpha_2\alpha_3$ respectively, which is required to define a triple Massey product $\langle \alpha_1, \alpha_2, \alpha_3 \rangle$. 
	By star deleting at $\sigma_1\cup \sigma_3$, we trivialise $\langle \alpha_1, \alpha_2, \alpha_3 \rangle$.
	If we also star delete at simplices $\sigma_3\cup \sigma_4$ and $\sigma_2\cup \sigma_4$, then $\langle \alpha_2, \alpha_3, \alpha_4 \rangle$ is defined and trivial, so the $4$-Massey product $\langle \alpha_1, \alpha_2, \alpha_3, \alpha_4 \rangle$ is defined.
	We define the Massey product $\langle \alpha_1, \ldots, \alpha_n \rangle\subset H^*(\mathcal{Z}_\mathcal{K})$ by iterating this process.

	\begin{cons}\label{cons: simplicial complex for n-Massey}
		For $i\in \{1, \ldots, n\}$, let $\mathcal{K}^i$ be a simplicial complex on $[m_i]$ vertices that is not an $(m_i-1)$-simplex. 
		Since $\mathcal{K}^i$ is not a simplex, there is a non-zero cohomology class $\alpha_i \in \widetilde{H}^{p_i}(\mathcal{K}^i_{J_i})$ for $p_i \in \mathbb{N}$, $J_i\subseteq [m_i]$.
		Let $a_i\in C^{p_i}(\mathcal{K}^i_{J_i})$ be a cocycle representative for $\alpha_i$ that is supported on $p_i$-simplices $S_{a_i}\subset \mathcal{K}$ so that 
		$
		a_i=\sum_{\sigma_i \in S_{a_i}} c_{\sigma_i} \chi_{\sigma_i} \in C^{p_i}(\mathcal{K}^i_{J_i})
		$
		for a non-zero coefficient $c_{\sigma_i} \in \mathbf{k}$.
		For every simplex $\sigma_i \in S_{a_i}$, let $v_{\sigma_i}$ denote one vertex in $\sigma_i$.
		Let $P_{\sigma_i}$ be the set  
		\begin{equation*}
			P_{\sigma_i}=\{\sigma_i' \in \mathcal{K}^i \suchthat \sigma_i' \text{ is a } p_i \text{-simplex}, \sigma_i\cap \sigma_i'=\sigma_i\setminus v_{\sigma_i}\}.
		\end{equation*}
		An example is shown in Figure~\ref{fig: P_sigma example}.
		
		\begin{figure}[ht]
			\centering
			
			\begin{tikzpicture} [scale=1.6, inner sep=2mm]
			\coordinate (1) at (0,0);
			\coordinate (2) at (1,0);
			\coordinate (3) at (-0.5, 1);
			\coordinate (4) at (0.5, 1);
			\coordinate (5) at (1.5, 1);
			\coordinate (l1) at (0.5,0.5);
			\coordinate (l2) at (1, 0.5);
			\coordinate (l3) at (0, 0.5);
			
			\fill[lightgray, fill opacity=0.4] (1)--(3)--(4)--(5)--(2)--(1);
			\draw (1)node[left] {$v_\sigma$} -- (3) -- (4) -- (1) --(2)--(4)--(5)--(2);
			
			\draw (l1) node[below, inner sep=0mm] {$\sigma$};
			\draw (l2) node[above, inner sep=0mm] {$\sigma'$};
			\draw (l3) node[above, inner sep=0mm] {$\sigma''$};
			\foreach \i in {1,...,5} {\fill (\i) circle (2pt);}; 
			\end{tikzpicture} 
			
			\caption{For this choice of vertex $v_{\sigma}\in \sigma$, $\sigma'\in P_{\sigma}$ but $\sigma''\notin P_{\sigma}$}
			\label{fig: P_sigma example}
		\end{figure}
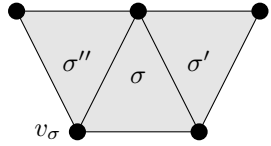
		
		We start by constructing the set $P_{a_i}$ for each $i$, in order to define star deletions of $\mathcal{K}^1 * \cdots * \mathcal{K}^n$. 
		We fix an order on the simplices in $S_{a_i}$ and define an ordered subsequence of simplices $\sigma_i^{(1)}, \ldots, \sigma_i^{(l)}\subset S_{a_i}$.
		Let $\sigma_i^{(1)}$ be the first element of $S_{a_i}$. 
		Then let $S_{a_i}^{(1)}=S_{a_i}\setminus P_{\sigma_i^{(1)}}$.
		Let $\sigma_i^{(2)}$ be the next element after $\sigma_i^{(1)}$ in $S_{a_i}^{(1)}$. 
		Then let $S_{a_i}^{(2)}=S_{a_i}^{(1)}\setminus P_{\sigma_i^{(2)}}$.
		We continue repeatedly until $\sigma_i^{(l)}$ is the last element of $S_{a_i}^{(l-1)}$, and let 
		\begin{equation}\label{eq: where to delete}
			P_{a_i}=P_{\sigma_i^{(1)}} \cup \cdots \cup P_{\sigma_i^{(l)}}.
		\end{equation}
		Since $\sigma_i^{(l)}\notin P_{\sigma_i^{(l)}}$,  the set $S_{a_i}\setminus P_{a_i}=S_{a_i}^{(l-1)}\setminus P_{\sigma_i^{(l)}}$ contains at least the last element $\sigma_i^{(l)}$. So $P_{a_i}\neq S_{a_i}$.
		%\begin{equation}\label{eq: where to not delete}
		%	\widetilde{S}_{a_i}=S_{a_i}^{(l)}=S_{a_i}^{(l-1)}\setminus P_{\sigma_i^{(l)}}.
		%\end{equation}
		%At each stage, $\sigma_i\notin P_{\sigma_i}$ so $\widetilde{S}_{a_i}$ contains at least the last element $\sigma_i^{(l)}$. Let
		
		Let $\bar{\mathcal{K}}=\mathcal{K}^1 * \cdots * \mathcal{K}^n$, so $\bar{\mathcal{K}}_{[m_i]}=\mathcal{K}^i$ for every $i\in \{1, \ldots, n\}$. 
		The vertices in each vertex set $V(\mathcal{K}^i)=[m_i]$ have an order. Suppose that the vertex set $V(\bar{\mathcal{K}})=\bigsqcup_{i\in \{1,\ldots, n\}} V(\mathcal{K}^i)$ is ordered so that $u < v$ for all $u\in V(\mathcal{K}^i)$ and $v\in V(\mathcal{K}^{j})$ for $i<j$. 
		We construct a simplicial complex $\mathcal{K}$ by star deleting $\bar{\mathcal{K}}$ at each simplex $\sigma_i\cup \sigma_k$ one at a time, %if $\sigma_i\cup \sigma_k\in \bar{\mathcal{K}}$ 
		where $\sigma_i\in S_{a_i}$ and $\sigma_k \in P_{a_k}$, $1\leq i<k\leq n$, $(i,k)\neq (1,n)$.
		%as follows.
		%For every $1\leq i<k\leq n$, $(i,k)\neq (1,n)$, let $\sigma_i\in S_{a_i}$ and $\sigma_k \in P_{a_k}$. We star delete $\bar{\mathcal{K}}$ at the simplex $\sigma_i\cup \sigma_k$. 
		% Any $\sigma_k \in P_{\sigma_k'}$ could also be in $P_{\sigma_k''}$, that's why checking if $\sigma_i\cup \sigma_k\in \bar{\mathcal{K}}$ makes some sense, but it also might not matter (if the simplex isn't there, can't delete it anyway??).
		%For ease of notation, let $\bar{\mathcal{K}}$ also denote the resulting simplicial complex $\sd_{\sigma_i\cup \sigma_k}\bar{\mathcal{K}}$.
		%We iteratively star delete $\bar{\mathcal{K}}$ at every $\sigma_i\cup \sigma_k$ if $\sigma_i\cup \sigma_k\in \bar{\mathcal{K}}$ for $i<k$, $(i,k)\neq (1,n)$, and $\sigma_i\in S_{a_i}$, $\sigma_k \in P_{a_k}$.
		Let $\mathcal{K}$ denote the resulting simplicial complex.
	\end{cons}
	
	\begin{lemma}
		For any $i\in \{1, \ldots, n\}$, the set $P_{a_i}$ is non-empty.
	\end{lemma}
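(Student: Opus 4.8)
The plan is to argue by contradiction: supposing $P_{a_i}=\varnothing$, I will exhibit a cochain $b$ with $db=a_i$ in the augmented cochain complex $\widetilde{C}^{\,*}(\mathcal{K}^i_{J_i})$, contradicting $0\neq\alpha_i=[a_i]\in\widetilde{H}^{p_i}(\mathcal{K}^i_{J_i})$. The starting point is that $P_{\sigma}=\varnothing$ is a strong local condition on $\mathcal{K}^i$. Write $\tau_\sigma=\sigma\setminus v_\sigma$ for the $(p_i-1)$-face of $\sigma$ opposite the chosen vertex $v_\sigma$. Every $p_i$-simplex of $\mathcal{K}^i$ containing $\tau_\sigma$ has the form $\tau_\sigma\cup\{w\}$: if $w=v_\sigma$ we recover $\sigma$, while if $w\notin\sigma$ then $(\tau_\sigma\cup\{w\})\cap\sigma=\tau_\sigma$, so $\tau_\sigma\cup\{w\}\in P_\sigma$. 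Hence $P_\sigma=\varnothing$ if and only if $\sigma$ is the \emph{only} $p_i$-simplex of $\mathcal{K}^i$ containing $\tau_\sigma$.

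Next I would check that $P_{a_i}=\varnothing$ in fact forces $P_\sigma=\varnothing$ for \emph{every} $\sigma\in S_{a_i}$. Since $P_{a_i}$ in \eqref{eq: where to delete} is the union of the sets $P_{\sigma_i^{(j)}}$, each of these is empty; but if $P_{\sigma_i^{(1)}}=\varnothing$ then $S_{a_i}^{(1)}=S_{a_i}$, so $\sigma_i^{(2)}$ is simply the second element of $S_{a_i}$, and, iterating, the subsequence $\sigma_i^{(1)},\dots,\sigma_i^{(l)}$ runs through all of $S_{a_i}$.

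I would then convert this into exactness. In $\widetilde{C}^{\,*}(\mathcal{K}^i_{J_i})$ the coboundary $d\chi_{\tau_\sigma}$ is a signed sum of the $\chi_\rho$ over $p_i$-simplices $\rho$ of $\mathcal{K}^i_{J_i}\subseteq\mathcal{K}^i$ containing $\tau_\sigma$; by the previous step the only such $\rho$ is $\sigma$, so $d\chi_{\tau_\sigma}=s_\sigma\chi_\sigma$ with $s_\sigma\in\{\pm1\}$. The faces $\tau_\sigma$, $\sigma\in S_{a_i}$, are pairwise distinct, since two $p_i$-simplices sharing such a face would violate the uniqueness just established. Therefore $b:=\sum_{\sigma\in S_{a_i}}s_\sigma c_\sigma\chi_{\tau_\sigma}$ is a well-defined element of $\widetilde{C}^{\,p_i-1}(\mathcal{K}^i_{J_i})$ with $db=\sum_{\sigma\in S_{a_i}}c_\sigma\chi_\sigma=a_i$, whence $\alpha_i=0$, a contradiction. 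The degenerate case $p_i=0$ fits the same scheme: there $\tau_\sigma=\varnothing$, the condition forces $\mathcal{K}^i$ to have a single vertex, and $a_i$ becomes a scalar multiple of the augmentation cocycle $d\chi_\varnothing$, so again $\alpha_i=0$.

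I expect the only real work to lie in the first two steps: pinning down that $P_\sigma=\varnothing$ says exactly that $\tau_\sigma$ lies in a unique $p_i$-simplex, and verifying that the recursive definition of $P_{a_i}$ in Construction~\ref{cons: simplicial complex for n-Massey} propagates the vanishing hypothesis to all of $S_{a_i}$. Once this is done the contradiction is immediate, since each $d\chi_{\tau_\sigma}$ is supported on the single simplex $\sigma$, so there is no interaction between the elementary collapses at different simplices and $a_i$ is visibly a coboundary.
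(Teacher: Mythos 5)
Your proof is correct, but it takes a genuinely different route from the paper's. The paper argues directly and homologically: since $\alpha_i\neq 0$ it selects a cycle $x$ with $a_i(x)\neq 0$, picks $\sigma\in S_{a_i}\cap S_x$, and uses $\partial x=0$ to force a second simplex $\tau\in S_x$ sharing the face $\sigma\setminus v_\sigma$, whence $\tau\in P_\sigma$. You argue contrapositively and cohomologically: if $P_{a_i}=\varnothing$, then every $P_\sigma$ vanishes, each $\tau_\sigma=\sigma\setminus v_\sigma$ is contained in the unique $p_i$-simplex $\sigma$, and summing the elementary coboundaries $d\chi_{\tau_\sigma}=\pm\chi_\sigma$ exhibits $a_i$ as a coboundary, contradicting $\alpha_i\neq 0$. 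Your version buys two things. First, you make explicit the reduction from $P_{a_i}=\varnothing$ to $P_\sigma=\varnothing$ for all $\sigma\in S_{a_i}$ via the recursion in \eqref{eq: where to delete}; the paper's proof produces some $\sigma$ with $P_\sigma\neq\varnothing$ but does not remark that this $\sigma$ need not be one of the chosen $\sigma_i^{(j)}$, so your contrapositive framing is the cleaner way to close that loop. Second, your argument never needs a cycle that pairs non-trivially with $a_i$; over $\mathbf{k}=\mathbb{Z}$ such a cycle can fail to exist when $\alpha_i$ is a torsion class (the generator of $\widetilde{H}^2$ of the triangulated $\mathbb{R}P^2$ in Example~\ref{ex: joins torsion example} evaluates to zero on every $2$-cycle, since that complex has no non-zero $2$-cycles at all), so your proof covers precisely the torsion situation the paper advertises, where the opening step of the paper's own proof is delicate. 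The only extra work you incur is verifying that the faces $\tau_\sigma$, $\sigma\in S_{a_i}$, are pairwise distinct, and you supply that verification.
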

	\begin{proof}
		If $p_i=0$ and $\widetilde{H}^{0}(\mathcal{K})\neq 0$, then $\mathcal{K}$ is a disjoint union of at least two vertices. For any $v, w \in \mathcal{K}$, $v\cap w=\myempty=v\setminus v$. Hence $w\in P_v$.
		Alternatively let $p_i>0$.
		Since $\alpha_i\in \widetilde{H}^{p_i}(\mathcal{K}_{J_i})$ is non-zero, there is a non-zero cycle $x\in C_{p_i}(\mathcal{K}_{J_i})$ such that $a_i(x)\neq 0$. 
		Let $x=\sum_{\tau\in S_x} c_{\tau} \Delta_{\tau}$ 	with non-zero coefficients $c_{\tau}$ and $p_i$-simplices $\tau$.
		Let $\sigma\in S_{a_i}\cap S_x$.
		Let $\partial\colon C_{p_i}(\mathcal{K}_{J_i})\to C_{p_i-1}(\mathcal{K}_{J_i})$ be the boundary homomorphism.
		Since $x$ is a cycle and $\partial \Delta_\sigma\neq 0$, for any vertex $v\in \sigma$ there exists a different simplex $\tau\in S_x$ with $\sigma\setminus v= \sigma\cap \tau=\tau\setminus u$ for some vertex $u\in \tau$.
		Hence for any $\sigma\in S_{a_i}\cap S_x$, $\tau\in P_{\sigma}$ and so $P_{a_i}$ is non-empty.
	\end{proof}

	\begin{exmp}[a]\label{ex: joins construction example}
		Let $\mathcal{K}^1$ be the disjoint union of two vertices $\{1\}, \{2\}$ and $\mathcal{K}^2$ the simplicial complex in Figure~\ref{fig: star delete K}. 
		The join $\mathcal{K}^1*\mathcal{K}^2$ is homotopy equivalent to $S^2 \vee S^1$. Let $\alpha_1\in \widetilde{H}^{0}(\mathcal{K}^1)$, $\alpha_2 \in \widetilde{H}^{0}(\mathcal{K}^2)$ be represented by the cochains $a_1=\chi_1 $ and $a_2=\chi_3+\chi_4+\chi_5$, respectively. 
		Then $S_{a_1}=\{\{1\}\}$, and $S_{a_2}=\{\{3\}, \{4\}, \{5\}\}$.
		Following the construction above, for $\sigma_2=\{3\}$ there is only one choice of a vertex $v=3$. 
		Then
		$
		P_{\{3\}}=\{\{4\}, \{5\}, \{6\} \}
		$
		so $S_{a_2}^{(1)}=\{\{3\}\}$ and 
		$P_{a_2}=P_{\{3\}}$.
		The simplicial complex 
		\[
		\mathcal{K}=\sd_{\{1,6\}}\sd_{\{1,5\}}\sd_{\{1,4\}} \mathcal{K}^1*\mathcal{K}^2
		\]
		is shown in Figure~\ref{fig: star delete example 1}. 
		Since $\mathcal{K}$ is contractible, the cup product $\alpha_1 \alpha_2$ is trivial.
		
		(b). 
		In addition to $\mathcal{K}^1$ and $\mathcal{K}^2$ in Part (a), 
		let $\mathcal{K}^3$ be the disjoint union of two vertices $\{7\}, \{8\}$. Let $\alpha_3\in \widetilde{H}^{0}(\mathcal{K}^3)$ be represented by $a_3=\chi_7$. 
		Then $S_{a_3}=\{\{7\}\}$ and
		$P_{a_3}=P_{\{7\}}=\{\{8\}\}$. 
		By Construction~\ref{cons: simplicial complex for n-Massey}, we star delete $\mathcal{K}^1*\mathcal{K}^2*\mathcal{K}^3$
		at $\sigma_i\cup \sigma_k$ for every $\sigma_i\in S_{a_i}$ and $\sigma_k\in P_{a_k}$ for $i=1,2$ and $k=i+1$. 
		Since $S_{a_2}=\{\{3\}, \{4\}, \{5\}\}$, we obtain the simplicial complex 
		%	stellar subdivide at $\{3,8\}, \{4,8\}, \{5,8\}$, and this creates the simplicial complex
		\[
		\mathcal{K}'=\sd_{\{5,8\}}\sd_{\{4,8\}}\sd_{\{3,8\}}\sd_{\{1,6\}}\sd_{\{1,5\}}\sd_{\{1,4\}} \mathcal{K}^1*\mathcal{K}^2*\mathcal{K}^3.
		\]
		The full subcomplex $\mathcal{K}'_{3,4,5,6,7,8}$ is shown in Figure~\ref{fig: star delete example 2}.
		Theorem~\ref{thm: joins} will show that there is a non-trivial triple Massey product in $H^*(\mathcal{Z}_{\mathcal{K}'})$. 
		%	has a $1$-cycle on the edges $\{1,3\}$, $\{2,3\}$, $\{2,8\}$, $\{1,8\}$, which is not a boundary (proved by Lemma~\ref{lem: omega non-zero joins}). 
		%	Therefore $\widetilde{H}^1(\mathcal{K})$ is non-trivial. %(additionally $\widetilde{H}^p(\mathcal{K})=0$ for $p=0, 2$ or higher, by direct computation). 
	\end{exmp}

	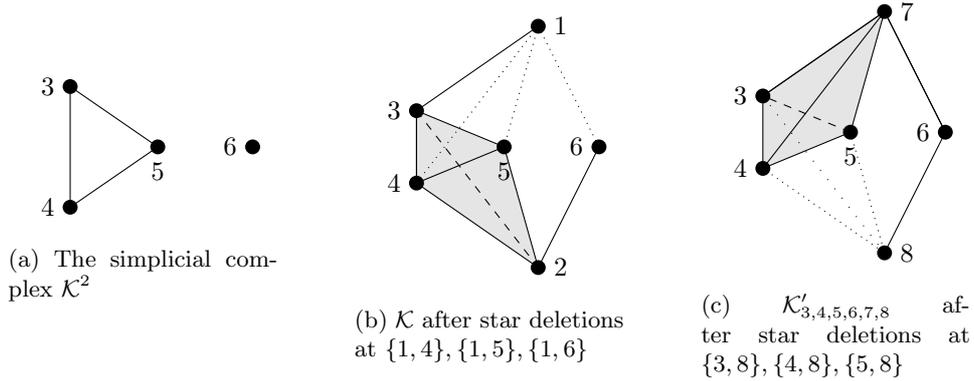
\begin{figure}[ht]
		\centering
		\begin{minipage}{0.28\textwidth}
			\centering
			\begin{tikzpicture} [scale=0.8, inner sep=2mm]
			\coordinate (3) at (0, 1);
			\coordinate (4) at (0, -1);
			\coordinate (5) at (1.44, 0);
			\coordinate (6) at (3, 0) ;
			
			\draw (3) node[left] {3} -- (4) node[left] {4}-- (5) node[below] {5} -- (3);
			\draw (6) node[left] {6};
			\foreach \i in {3,...,6} {\fill (\i) circle (3.5pt);}; 
			\end{tikzpicture} 
			\subcaption{The simplicial complex $\mathcal{K}^2$} \label{fig: star delete K}
		\end{minipage} \hfill
		\begin{minipage}{0.28\textwidth}
			\centering
			\begin{tikzpicture} [scale=0.8, inner sep=2mm]
			\coordinate (3) at (0, 0.6);
			\coordinate (4) at (0, -0.6);
			\coordinate (5) at (1.44, 0);
			\coordinate (6) at (3, 0) ;
			\coordinate (1) at (2, 2);
			\coordinate (2) at (2, -2);
			
			\fill[lightgray, fill opacity=0.4] (3)--(4)--(2)--(5);
			
			\draw (5)--(3) node[left] {3} -- (4) node[left] {4}-- (5) node[below] {5}--(2);
			\draw (6) node[left] {6} -- (2) node[right] {2}-- (4) --(3) --(1);
			\draw[dashed] (3)--(2);
			\draw[dotted] (4)--(1)--(5); \draw[dotted] (1)node[right] {1}--(6);
			\foreach \i in {1,...,6} {\fill (\i) circle (3.5pt);}; 
			\end{tikzpicture} 
			\subcaption{$\mathcal{K}$ after star deletions at $\{1,4\}, \{1,5\}, \{1,6\}$} \label{fig: star delete example 1}
		\end{minipage} \hfill
		\begin{minipage}{0.28\textwidth}
			\centering
			\begin{tikzpicture} [scale=0.8, inner sep=2mm]
			\coordinate (3) at (0, 0.6);
			\coordinate (4) at (0, -0.6);
			\coordinate (5) at (1.44, 0);
			\coordinate (6) at (3, 0) ;
			\coordinate (1) at (2, 2);
			\coordinate (2) at (2, -2);
			
			\fill[lightgray, fill opacity=0.4] (3)--(4)--(5)--(1);
			
			\draw (3) node[left] {3} -- (4) node[left] {4}-- (5) node[below] {5} -- (1) --(3);
			\draw (2) node[right] {8}; \draw (6) node[left] {6} -- (1)--(3);\draw (4)--(1) node[right] {7}--(6)--(2);
			\draw[dashed] (5)--(3);
			\draw[dotted] (5)--(2)--(4); \draw[loosely dotted] (3)--(2);
			\foreach \i in {1,...,6} {\fill (\i) circle (3.5pt);}; 
			\end{tikzpicture} 
			\subcaption{$\mathcal{K}'_{3,4,5,6,7,8}$ after star deletions at $\{3,8\}, \{4,8\}, \{5,8\}$} \label{fig: star delete example 2}
		\end{minipage} 
		\caption{Example of the star deletions in  Construction~\ref{cons: simplicial complex for n-Massey}}
		\label{fig: star delete example}
	\end{figure}

	\begin{lemma}
		The simplicial complex $\mathcal{K}$ is independent of the order of simplices in $P_{a_k}$.
	\end{lemma}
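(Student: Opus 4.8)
The plan is to deduce the statement from Lemma~\ref{lem: order of deletes doesn't matter}. Recall that in Construction~\ref{cons: simplicial complex for n-Massey} the complex $\mathcal{K}$ is obtained from $\bar{\mathcal{K}}=\mathcal{K}^1*\cdots*\mathcal{K}^n$ by performing, one at a time, the star deletions $\sd_{\sigma_i\cup\sigma_k}$ ranging over all $\sigma_i\in S_{a_i}$, $\sigma_k\in P_{a_k}$ with $1\leq i<k\leq n$ and $(i,k)\neq(1,n)$; write $\mathcal{D}$ for the set of simplices $\sigma_i\cup\sigma_k$ so obtained. Lemma~\ref{lem: order of deletes doesn't matter} tells us that $\sd_I$ and $\sd_{I'}$ commute on any simplicial complex containing $I$ and $I'$ whenever neither of $I,I'$ contains the other. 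Since any two orderings of $\mathcal{D}$ differ by a sequence of transpositions of adjacent entries, the proof reduces to two things: (i) no two distinct members of $\mathcal{D}$ are nested, and (ii) each member of $\mathcal{D}$ stays a simplex throughout the construction, so that the two-element commutation lemma is genuinely applicable at each step.

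For (i), I would first record from Construction~\ref{cons: simplicial complex for n-Massey} that the blocks $[m_1],\dots,[m_n]$ are pairwise disjoint in $\bar{\mathcal{K}}$, and that all simplices of $S_{a_j}$ and of $P_{a_j}$ are $p_j$-simplices, hence nonempty subsets of $[m_j]$ of one fixed cardinality $p_j+1$. Take distinct $I=\sigma_i\cup\sigma_k$ and $I'=\sigma_{i'}\cup\sigma_{k'}$ in $\mathcal{D}$. If $\{i,k\}\neq\{i',k'\}$, then, these being two-element subsets of $\{1,\dots,n\}$, there are block indices $j\in\{i,k\}\setminus\{i',k'\}$ and $j'\in\{i',k'\}\setminus\{i,k\}$; the component of $I$ in block $j$ is a nonempty simplex whereas $I'$ meets block $j$ in nothing, so $I\not\subseteq I'$, and symmetrically $I'\not\subseteq I$. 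If $\{i,k\}=\{i',k'\}$, then $i=i'$ and $k=k'$ (both pairs increasing), and $I\neq I'$ forces $\sigma_i\neq\sigma_{i'}$ or $\sigma_k\neq\sigma_{k'}$; in the first case $\sigma_i$ and $\sigma_{i'}$ are distinct $p_i$-simplices, hence incomparable, and since they are exactly the block-$i$ components of $I$ and $I'$ (the block-$k$ components lying in the disjoint block $[m_k]$), neither of $I,I'$ contains the other; the second case is the same with $k$ in place of $i$. This establishes (i).

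For (ii), since star deletion at a simplex $I'$ removes only simplices that contain $I'$, part (i) shows that each $I\in\mathcal{D}$ survives star deletion at every $I'\in\mathcal{D}\setminus\{I\}$; so at every stage of the construction the (at most two) simplices we wish to interchange both lie in the current complex, and, being non-nested, may be interchanged by Lemma~\ref{lem: order of deletes doesn't matter} without affecting the outcome. Performing such interchanges repeatedly converts any ordering of the star deletions indexed by $\mathcal{D}$ into any other, so $\mathcal{K}$ is independent of that ordering, and in particular of the order of the simplices in $P_{a_k}$.

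I expect step (ii) to be the only subtle point: Lemma~\ref{lem: order of deletes doesn't matter} (and the definition of iterated star deletion recalled just before it) presupposes that the simplices being deleted are still present in the complex, and it is precisely the non-nestedness obtained in step (i) that guarantees this.
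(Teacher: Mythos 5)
Your proof is correct and follows essentially the same route as the paper: both reduce the statement to Lemma~\ref{lem: order of deletes doesn't matter} by checking that the simplices being star deleted are pairwise non-nested (distinct simplices of equal cardinality within a block, and disjoint blocks across indices). The paper's proof only verifies this for the pairs $\sigma_i\cup\sigma_k$, $\sigma_i\cup\sigma_k'$ with $\sigma_k,\sigma_k'\in P_{a_k}$, whereas you verify it for the entire deletion set (which also subsumes the next lemma on the order of the pairs $\{i,k\}$) and make explicit that each simplex survives the earlier deletions so the commutation lemma genuinely applies at every step.
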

	\begin{proof}
		For any $\sigma_k, \sigma_k'\in P_{a_k}$, we have $(\sigma_i\cup \sigma_k)\cap (\sigma_i\cup \sigma_k') \neq \sigma_i\cup \sigma_k, \sigma_i\cup \sigma_k'$. So by Lemma~\ref{lem: order of deletes doesn't matter}, the order of $P_{a_k}$ does not affect $\mathcal{K}$.
	\end{proof}
	
	\begin{lemma}
		The simplicial complex $\mathcal{K}$ is independent of the order in which the pairs $\{i,k\}$, $1\leqslant i < k \leqslant n$, are chosen.
	\end{lemma}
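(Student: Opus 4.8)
The plan is to reduce the statement to a single combinatorial fact about the deletion simplices and then invoke Lemma~\ref{lem: order of deletes doesn't matter}. Write $\mathcal{D}$ for the collection of all simplices $\sigma_i\cup\sigma_k$ at which $\bar{\mathcal{K}}=\mathcal{K}^1*\cdots*\mathcal{K}^n$ is star deleted in Construction~\ref{cons: simplicial complex for n-Massey}, where $\sigma_i\in S_{a_i}$, $\sigma_k\in P_{a_k}$, $1\leqslant i<k\leqslant n$ and $(i,k)\neq(1,n)$; thus $\mathcal{K}$ is obtained by applying $\sd_\tau$ for every $\tau\in\mathcal{D}$ in some order. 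Since each $\sigma_i\in S_{a_i}$ is a $p_i$-simplex with $p_i\geqslant 0$ and each $\sigma_k\in P_{a_k}$ is a $p_k$-simplex, all of these simplices are non-empty; moreover $\sigma_i\subseteq V(\mathcal{K}^i)$ and $\sigma_k\subseteq V(\mathcal{K}^k)$, where the vertex sets $V(\mathcal{K}^1),\ldots,V(\mathcal{K}^n)$ are pairwise disjoint in the join, and $\tau=\sigma_i\cup\sigma_k$ is a face of $\bar{\mathcal{K}}$. I claim it suffices to prove that any two distinct $\tau,\tau'\in\mathcal{D}$ are incomparable under inclusion, i.e. $\tau\cap\tau'\neq\tau,\tau'$.

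The core step is this incomparability claim, which I would prove by case analysis on $|\{i,k\}\cap\{i',k'\}|$ for $\tau=\sigma_i\cup\sigma_k$ and $\tau'=\sigma_{i'}\cup\sigma_{k'}$. If the index pairs are disjoint, then $\tau$ and $\tau'$ are supported on disjoint unions of factor vertex sets, so $\tau\cap\tau'=\varnothing$, and as both are non-empty neither contains the other. If the pairs share exactly one index, then in each of the possible configurations ($i=i'$, $k=k'$, $i=k'$ or $k=i'$) at least one of the two non-empty ``legs'' $\sigma_i,\sigma_k$ of $\tau$ is supported on a factor $\mathcal{K}^j$ with $j\notin\{i',k'\}$, hence is disjoint from $\tau'$; this forces $\tau\not\subseteq\tau'$, and by symmetry $\tau'\not\subseteq\tau$. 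If the index pairs coincide, say $\{i,k\}=\{i',k'\}$ with $(\sigma_i,\sigma_k)\neq(\sigma_{i'},\sigma_{k'})$, then using disjointness of $V(\mathcal{K}^i)$ and $V(\mathcal{K}^k)$, the inclusion $\tau\subseteq\tau'$ would force $\sigma_i\subseteq\sigma_{i'}$ and $\sigma_k\subseteq\sigma_{k'}$; since these are pairs of equidimensional simplices, equality would follow, contradicting $\tau\neq\tau'$, and again $\tau'\not\subseteq\tau$ by symmetry. This last case overlaps with the preceding lemma.

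To conclude, observe that because no $\tau\in\mathcal{D}$ is contained in any other $\tau'\in\mathcal{D}$, applying $\sd_{\tau'}$ never removes $\tau$, so $\tau$ remains a face of the complex throughout the whole sequence of star deletions. Hence at each step $\sd_\tau$ is applied to a complex containing $\tau$, and for any two consecutive deletions Lemma~\ref{lem: order of deletes doesn't matter} applies and lets us transpose them. Since any reordering of $\mathcal{D}$ --- in particular any reordering of the pairs $\{i,k\}$ --- differs from the original by a sequence of such transpositions, the resulting simplicial complex $\mathcal{K}$ is independent of the order. The main obstacle is purely the bookkeeping in the one-shared-index case; once the deletion simplices are recognised as non-empty unions of faces supported on disjoint join factors, incomparability and hence the conclusion follow immediately from Lemma~\ref{lem: order of deletes doesn't matter}.
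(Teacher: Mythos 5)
Your proof is correct and follows the same overall route as the paper: reduce the statement to pairwise incomparability of the deletion simplices and then invoke Lemma~\ref{lem: order of deletes doesn't matter} to transpose adjacent star deletions. The difference is in the justification of incomparability. The paper's proof asserts that $(\sigma_{i_1}\cup\sigma_{k_1})\cap(\sigma_{i_2}\cup\sigma_{k_2})=\varnothing$ on the grounds that $(J_{i_1}\cup J_{k_1})\cap(J_{i_2}\cup J_{k_2})=\varnothing$, which is literally true only when the index pairs $\{i_1,k_1\}$ and $\{i_2,k_2\}$ are disjoint; when they share an index (e.g.\ $\{1,2\}$ and $\{1,3\}$ for $n\geqslant 4$, possibly with $\sigma_{i_1}=\sigma_{i_2}$) the two deletion simplices can intersect non-trivially, and only incomparability --- not disjointness --- holds. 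Your case analysis on $|\{i,k\}\cap\{i',k'\}|$ handles exactly these overlapping configurations, using that each non-empty leg lives in a join factor disjoint from the other simplex, so your argument is in fact more complete than the one in the paper. Your additional observation that incomparability guarantees each deletion simplex survives as a face until its own deletion is performed (so that every adjacent transposition is a legitimate application of Lemma~\ref{lem: order of deletes doesn't matter}) is a worthwhile point that the paper leaves implicit.
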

	
	\begin{proof}
		%For any $a_i$ and any simplex  $\sigma\in S_{a_i}$, the vertices of $\sigma$ are a subset of $J_i\subset V(\mathcal{K}^i)$. 
		Let $\{i_1,k_1\}$ and $\{i_2,k_2\}$ be two pairs of indices. 
		For $1\leqslant i_j < k_j \leqslant n$, $j=1,2$, let  $\sigma_{i_j} \in S_{a_{i_j}}$ and  $\sigma_{k_j}\in P_{a_{k_j}}$. 
		The intersection $(\sigma_{i_1} \cup \sigma_{k_1})\cap (\sigma_{i_2} \cup \sigma_{k_2})$ is empty since  the vertices of any $\sigma_j\in S_{a_j}$ are a subset of $J_j$ for every $j\in \{1,\ldots,n\}$ and
		$(J_{i_1} \cup J_{k_1})\cap  (J_{i_2} \cup J_{k_2})=\myempty$.
		Therefore by Lemma~\ref{lem: order of deletes doesn't matter}, we can star delete at simplices $\sigma_{i_1} \cup \sigma_{k_1}$ and simplices $\sigma_{i_2} \cup \sigma_{k_2}$ in either order.
	\end{proof}

	\begin{lemma}
		In Construction~\ref{cons: simplicial complex for n-Massey}, the simplicial complex $\mathcal{K}$ depends on the order of simplices in $S_{a_k}$.
	\end{lemma}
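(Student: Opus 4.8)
The plan is to prove the statement by exhibiting one explicit instance of Construction~\ref{cons: simplicial complex for n-Massey} in which two admissible orders on $S_{a_k}$ produce non-isomorphic simplicial complexes $\mathcal{K}$; this is in contrast with the two preceding lemmas, where reordering the remaining data leaves $\mathcal{K}$ unchanged. Since the star deletions only involve $P_{a_k}$ for $k \geqslant 2$ and the pair $(1,n)$ is omitted, no star deletion occurs when $n = 2$, so the first relevant case is $n = 3$. I would take $\mathcal{K}^1 = \{1\} \sqcup \{2\}$ and $\mathcal{K}^3 = \{7\} \sqcup \{8\}$ with $a_1 = \chi_1$ and $a_3 = \chi_7$, and I would choose $\mathcal{K}^2$ on the vertex set $\{3,4,5,6\}$ to be the disjoint union of the isolated vertex $\{3\}$, the edge $\{4,5\}$, and the isolated vertex $\{6\}$, with $a_2 = \chi_3 - \chi_4 - \chi_5$, a cocycle representing a non-zero class in $\widetilde{H}^0(\mathcal{K}^2)$ and with $S_{a_2} = \{\{3\},\{4\},\{5\}\}$. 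The decisive feature of this choice is that the vertex $3$ (isolated in $\mathcal{K}^2$) and the vertex $4$ (an endpoint of the edge) lie in different orbits of $\mathrm{Aut}(\mathcal{K}^2)$, yet both appear in $S_{a_2}$.

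The key observation I would record is that for a $0$-dimensional class the recipe defining $P_{a_k}$ collapses: if $\sigma = \{v\}$ is a vertex then $P_\sigma = \{\{w\} \in \mathcal{K}^k : w \neq v\}$, so $S_{a_k}^{(1)} = S_{a_k} \setminus P_{\sigma_k^{(1)}} = \{\sigma_k^{(1)}\}$, the selected subsequence has length one, and
\[
P_{a_k} = P_{\sigma_k^{(1)}} = \bigl\{ \{w\} \in \mathcal{K}^k : w \neq v^{(1)} \bigr\},
\]
where $v^{(1)}$ is whichever vertex of $S_{a_k}$ is listed first. Consequently the star deletions $\sd_{\sigma_1 \cup \sigma_2}$ with $\sigma_1 \in S_{a_1}$, $\sigma_2 \in P_{a_2}$ coming from the pair $(1,2)$ change when the first element of $S_{a_2}$ changes: since $S_{a_1} = \{\{1\}\}$, listing $\{3\}$ first gives $P_{a_2} = \{\{4\},\{5\},\{6\}\}$ and star deletions at $\{1,4\}, \{1,5\}, \{1,6\}$, whereas listing $\{4\}$ first gives $P_{a_2} = \{\{3\},\{5\},\{6\}\}$ and star deletions at $\{1,3\}, \{1,5\}, \{1,6\}$. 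The star deletions coming from the pair $(2,3)$, namely $\sd_{\sigma_2 \cup \{8\}}$ for $\sigma_2 \in S_{a_2}$, are the same in both cases because they run over all of $S_{a_2}$, and the pair $(1,3) = (1,n)$ is excluded; so the two resulting complexes $\mathcal{K}$ and $\mathcal{K}'$ differ only in these $(1,2)$-deletions.

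Finally I would certify that $\mathcal{K}$ and $\mathcal{K}'$ are genuinely non-isomorphic, not merely distinct as labelled complexes. The cleanest route is to compare the degree sequences of their $1$-skeletons: starting from $\bar{\mathcal{K}} = \mathcal{K}^1 * \mathcal{K}^2 * \mathcal{K}^3$ and removing the edges listed above, one finds that in $\mathcal{K}$ every vertex other than $2$ and $7$ has degree $3$, whereas in $\mathcal{K}'$ the vertex $3$ has degree $2$ and the vertex $4$ has degree $4$; the resulting multisets of vertex degrees differ, so no simplicial isomorphism $\mathcal{K} \to \mathcal{K}'$ can exist. The one real obstacle is this last step: one must pick the blocks $\mathcal{K}^i$ and the cocycle $a_k$ so that the symmetry permuting the candidate first vertices of $S_{a_k}$ is actually broken once the join is formed and all star deletions are applied, and then exhibit a combinatorial invariant separating the two outcomes. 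If it is enough to see that $\mathcal{K}$ changes as a complex on a fixed vertex set, the first two paragraphs already give this at once.
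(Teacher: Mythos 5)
Your proposal is correct, and it takes a genuinely different route from the paper. The paper's proof is an abstract two-case analysis: it supposes some $\sigma_k' \in S_{a_k}\cap P_{\sigma_k}$ and shows that, depending on whether $\sigma_k'$ precedes or follows $\sigma_k$ in the chosen order (and subject to a side condition on the absence of a later $\sigma_k''$ with $\sigma_k\in P_{\sigma_k''}$), the set $P_{a_k}$ changes, so the edge $\sigma_i\cup\sigma_k$ is present in one outcome and absent in the other; it concludes only that the two labelled complexes differ as subcomplexes of the join. You instead exhibit a single explicit instance with $n=3$, correctly exploiting the collapse $P_{a_k}=P_{\sigma_k^{(1)}}$ for $0$-dimensional classes so that the whole of $P_{a_2}$ is controlled by which vertex of $S_{a_2}$ is listed first, and your bookkeeping checks out: with $\{3\}$ first the $(1,2)$-deletions occur at $\{1,4\},\{1,5\},\{1,6\}$, with $\{4\}$ first at $\{1,3\},\{1,5\},\{1,6\}$, while the $(2,3)$-deletions at $\{3,8\},\{4,8\},\{5,8\}$ are order-independent; the resulting degree multisets of the $1$-skeletons, $\{3^6,6^2\}$ versus $\{2,3^4,4,6^2\}$, indeed differ. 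What each approach buys: the paper's argument identifies in general when the order matters (namely when $S_{a_k}$ meets some $P_{\sigma_k}$), whereas your example establishes the strictly stronger conclusion that the two outcomes can be non-isomorphic as abstract simplicial complexes, not merely unequal on a fixed vertex set — the paper's proof alone would not rule out the two complexes being related by a relabelling. Your first two paragraphs already suffice for the lemma as stated, and the third is a worthwhile strengthening.
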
	
	
	\begin{proof}
		Suppose that $\sigma_k\in S_{a_k}$, $\sigma_k'\in P_{\sigma_k}$ and let $\sigma_i\in S_{a_i}$ for an $i\in \{1, \ldots, k-1\}$.
		If $\sigma_k'\in S_{a_k}\cap P_{\sigma_k}$, then either $\sigma_k' > \sigma_k$ or $\sigma_k'<\sigma_k$ in the order of simplices in $S_{a_k}$. 
		In the first case, $\sigma_k'\in P_{a_k}$ so we perform a star deletion at $\sigma_i\cup \sigma_k'$.
		If there is no simplex $\sigma_k''\in S_{a_k}$ such that $\sigma_k'' > \sigma_k$ and $\sigma_k\in P_{\sigma_k''}$, then $\sigma_k\not \in P_{a_k}$. So $\sigma_i\cup \sigma_k\in \mathcal{K}$ and $\sigma_i\cup \sigma_k'\notin \mathcal{K}$. 
		%	If $\sigma_k\not \in P_{a_k}$, then $\sigma_i\cup \sigma_k\in \mathcal{K}$. 
		% and hence $\sigma_i\cup \sigma_k \in \mathcal{K}$ and $\sigma_i\cup \sigma_k'\notin \mathcal{K}$. 
		In the second case, if the chosen vertex $v_{k'}\in\sigma_k'$ is such that $\sigma_k'\setminus v_{k'}=\sigma_k\setminus v_k$, then $\sigma_k\in P_{\sigma_k'}$. 
		Since $\sigma_k'<\sigma_k$, $\sigma_k\in P_{a_k}$ and therefore  $\sigma_i\cup \sigma_k\notin \mathcal{K}$. 
		Hence $\mathcal{K}$ is different in the two cases.
	\end{proof}
	
	\begin{lemma}
		The choice of vertex $v_{\sigma_k}\in \sigma_k$ affects the number of stars deletions performed in Construction~\ref{cons: simplicial complex for n-Massey}.
	\end{lemma}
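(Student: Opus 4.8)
The plan is to prove this by exhibiting a single instance of Construction~\ref{cons: simplicial complex for n-Massey} in which two admissible choices of the vertices $v_{\sigma_k}$, with every other piece of data held fixed, produce different numbers of star deletions. First I would record the elementary count: by the construction, $\mathcal{K}$ is obtained from $\bar{\mathcal{K}}=\mathcal{K}^1*\cdots*\mathcal{K}^n$ by performing exactly one star deletion at $\sigma_i\cup\sigma_k$ for each admissible index pair $1\le i<k\le n$ with $(i,k)\neq(1,n)$ and each $(\sigma_i,\sigma_k)\in S_{a_i}\times P_{a_k}$, so the number of star deletions performed equals $\sum_{(i,k)}|S_{a_i}|\cdot|P_{a_k}|$. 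Since the cocycles $a_i$, and hence their supports $S_{a_i}$, are fixed, it is enough to find an example in which changing the single vertex $v_{\sigma_k}$ of one simplex $\sigma_k\in S_{a_k}$ changes the cardinality of $P_{a_k}$; because $|S_{a_i}|\ge 1$ for the relevant $i$, this forces the total count to change.

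For the example I would take $n=3$. Let $\mathcal{K}^1$ on $\{1,2\}$ and $\mathcal{K}^3$ on $\{7,8\}$ each be a pair of disjoint vertices, with $a_1=\chi_1$ and $a_3=\chi_7$; these represent non-zero classes in $\widetilde{H}^0$, with $S_{a_1}=\{\{1\}\}$, $S_{a_3}=\{\{7\}\}$ and $P_{a_3}=\{\{8\}\}$, and there is no vertex choice to be made in $\mathcal{K}^1$ or $\mathcal{K}^3$. Let $\mathcal{K}^2$ on $\{3,4,5,6\}$ be the boundary of the triangle on $\{3,4,5\}$ together with the pendant edge $\{3,6\}$, and set $a_2=\chi_{\{3,4\}}$; because $\{3,4\}$ lies on the $1$-cycle given by the boundary of the triangle, the class $\alpha_2=[\chi_{\{3,4\}}]\in\widetilde{H}^1(\mathcal{K}^2)$ is non-zero, $S_{a_2}=\{\{3,4\}\}$, and so $P_{a_2}=P_{\{3,4\}}$. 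Computing $P_{\{3,4\}}$ for the two admissible choices of $v_{\sigma_2}\in\{3,4\}$ yields
\[
P_{\{3,4\}}=\{\{4,5\}\}\ \text{ if } v_{\sigma_2}=3, \qquad P_{\{3,4\}}=\{\{3,5\},\{3,6\}\}\ \text{ if } v_{\sigma_2}=4,
\]
of cardinalities $1$ and $2$ respectively. By the count above, the number of star deletions performed is $|S_{a_1}|\cdot|P_{a_2}|+|S_{a_2}|\cdot|P_{a_3}|=|P_{a_2}|+1$, namely $2$ for the first choice of $v_{\sigma_2}$ and $3$ for the second, which proves the statement.

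I do not expect a genuine obstacle, since this is an existence claim. The only thing requiring attention is verifying that the data above is admissible for Construction~\ref{cons: simplicial complex for n-Massey}: each $\mathcal{K}^i$ is visibly not a full simplex, the $a_i$ represent non-zero reduced cohomology classes (for $\mathcal{K}^1$ and $\mathcal{K}^3$ because they are disconnected, for $\mathcal{K}^2$ because $\{3,4\}$ lies on a $1$-cycle), and the two runs of the construction differ only in the choice of $v_{\{3,4\}}$. If one wishes, this can be illustrated with a small picture analogous to Figure~\ref{fig: P_sigma example}.
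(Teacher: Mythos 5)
Your proof is correct and follows the same basic strategy as the paper, namely establishing the claim by exhibiting an explicit instance of Construction~\ref{cons: simplicial complex for n-Massey} in which two choices of $v_{\sigma_k}$ yield different counts of star deletions. The only difference is the mechanism inside the example: the paper uses $\mathcal{K}^2=\partial\Delta^3$ with a two-element support $S_{a_2}$, so that the vertex choice changes $|P_{a_2}|$ via the iterative step $S_{a_i}^{(j)}=S_{a_i}^{(j-1)}\setminus P_{\sigma_i^{(j)}}$ (one choice absorbs the second support simplex, the other does not), whereas you keep $|S_{a_2}|=1$ and instead use a locally asymmetric complex (triangle boundary with a pendant edge) so that $|P_{\sigma_2}|$ itself depends on the chosen vertex; both examples are valid and your counting $|S_{a_1}||P_{a_2}|+|S_{a_2}||P_{a_3}|$ matches the paper's.
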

	\begin{proof}
		We demonstrate this with an example. Consider the join $\mathcal{K}^1* \mathcal{K}^2 * \mathcal{K}^3$ of three simplicial complexes. 
		Suppose that $\mathcal{K}^2$ is the boundary of a tetrahedron on the vertices $1, 2, 3, 4$. 
		Also suppose that $a_2\in C^1(\mathcal{K}^2)$ is $\chi_{123}+\chi_{234}$. We fix the order on $S_{a_2}=\{ \{1,2,3\}, \{2,3,4 \} \}$.
		First let $v_{\{123\}}=3\in \{1,2,3\}$ and $v_{\{234 \} }=2\in \{2,3,4 \}$. By definition, $P_{\{123\}}=\{\sigma\in \mathcal{K}^2 \suchthat  \sigma \text{ is a } 1 \text{-simplex and } \sigma \cap \{1,2,3\}=\{1,2,3\}\setminus v_{\{123\}} \} =\{1,2,4\}$. 
		Similarly $P_{\{234\}}=\{1,3,4 \}$.
		Therefore $P_{a_2}=\{\{1,2,4\}, \{1,3,4\}\}$.
		%and $\widetilde{S}_{a_2}=S_{a_2}\setminus P_{a_2}=\{ \{1,2,3\}, \{2,3,4 \} \}$.
		To construct $\mathcal{K}$ from $\mathcal{K}^1* \mathcal{K}^2 * \mathcal{K}^3$, we perform $|S_{a_1}||P_{a_2}|+|S_{a_2}||P_{a_3}|=2|S_{a_1}|+|S_{a_2}||P_{a_3}|$ star deletions.
		
		Compare this to the case when $v_{\{123\}}=1$, so $P_{\{123\}}=\{2,3,4\}$.
		Since $\{1,2,3\}$ comes before $\{2,3,4 \}$ in $S_{a_2}$ and $S_{a_2}\setminus P_{\{123\}}= \{1,2,3\}$, 
		$P_{a_2}=P_{\{123\}}=\{2,3,4\}$.
		In this case, to construct $\mathcal{K}$ we perform $|S_{a_1}|+|S_{a_2}||P_{a_3}|$ star deletions.
		Since $S_{a_1}$ and $P_{a_3}$ do not depend on $v_{\{123\}}$ or $v_{\{234\}}$, this is fewer star deletions than when $v_{\{123\}}=3$.
	\end{proof}
	
	We will prove that the Massey product $\langle \alpha_1, \ldots, \alpha_n \rangle \subset H^{*}(\mathcal{Z}_\mathcal{K})$ is non-trivial in several steps, first showing that it is defined.
	
	\begin{prop} \label{prop: joins Massey defined}
		Let $\mathcal{K}$ be a simplicial complex constructed in Construction~\ref{cons: simplicial complex for n-Massey}. Then $\langle \alpha_1, \ldots, \alpha_n \rangle \subset H^{*}(\mathcal{Z}_\mathcal{K})$ is defined. 
	\end{prop}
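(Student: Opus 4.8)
The plan is to argue entirely on the combinatorial side. By Theorem~\ref{thm: full Hochster's} and the correspondence set up in Section~\ref{sect: combinatorial Massey products}, a defining system for $\langle\alpha_1,\ldots,\alpha_n\rangle\subset C^*(\mathcal{Z}_\mathcal{K})$ is precisely a family of cochains $a_{i,k}\in C^{p_i+\cdots+p_k}(\mathcal{K}_{J_i\cup\cdots\cup J_k})$, indexed by $1\leqslant i\leqslant k\leqslant n$ with $(i,k)\neq(1,n)$, such that $a_{i,i}=a_i$ is the chosen cocycle representative of $\alpha_i$ and $d(a_{i,k})=\sum_{r=i}^{k-1}\doubleoverline{a_{i,r}}\,a_{r+1,k}$, the products being computed via \eqref{eq: cochain product}. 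So it suffices to construct such a family, and I would do this by induction on the gap $k-i$. The case $k=i$ is the given representative; for the inductive step, with all $a_{i,r}$ and $a_{r+1,k}$ of strictly smaller gap already in hand, set $\omega_{i,k}=\sum_{r=i}^{k-1}\doubleoverline{a_{i,r}}\,a_{r+1,k}\in C^{p_i+\cdots+p_k+1}(\mathcal{K}_{J_i\cup\cdots\cup J_k})$, and the task is to exhibit $a_{i,k}$ with $d(a_{i,k})=\omega_{i,k}$.

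First I would note, as in the classical theory of Massey products, that $\omega_{i,k}$ is automatically a cocycle: expanding $d\omega_{i,k}$ by the Leibniz rule and substituting the relations $d(a_{i,r})=\sum_{s=i}^{r-1}\doubleoverline{a_{i,s}}\,a_{s+1,r}$ furnished by the inductive hypothesis, the resulting terms cancel in pairs, the sign convention $\doubleoverline{\,\cdot\,}$ of Definition~\ref{def: degree of elements in C(KJ)} being exactly what makes the cancellation go through. The substantive point is therefore that $[\omega_{i,k}]=0$ in $\widetilde{H}^{p_i+\cdots+p_k+1}(\mathcal{K}_{J_i\cup\cdots\cup J_k})$, after which $a_{i,k}$ may be taken to be any primitive.

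To see this I would use the explicit shape of $\mathcal{K}$. Since $(i,k)\neq(1,n)$, every pair $a<b$ with $a,b\in\{i,\ldots,k\}$ also satisfies $(a,b)\neq(1,n)$, so $\mathcal{K}_{J_i\cup\cdots\cup J_k}$ is obtained from the join $\mathcal{K}^i_{J_i}*\cdots*\mathcal{K}^k_{J_k}$ by performing exactly those star deletions of Construction~\ref{cons: simplicial complex for n-Massey} whose simplex lies in $J_i\cup\cdots\cup J_k$; in particular all deletions $\sd_{\sigma_i\cup\sigma_k}$ with $\sigma_i\in S_{a_i}$, $\sigma_k\in P_{a_k}$ are among them. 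By Lemma~\ref{lem: multiplication of general cochains}, $\omega_{i,k}$ is a signed sum of basis cochains $\chi_{\sigma_i\cup\cdots\cup\sigma_k}$ with each $\sigma_j$ ranging over a set determined by the $a_j$, and I would show this cochain is the differential of an explicit $a_{i,k}$ assembled from the basis cochains $\chi_{\sigma_i\cup\cdots\cup\sigma_{k-1}\cup(\sigma_k\setminus v_{\sigma_k})}$: by construction $P_{\sigma_k}$ consists of the $p_k$-simplices meeting $\sigma_k$ in the facet $\sigma_k\setminus v_{\sigma_k}$, and deleting the stars of all $\sigma_i\cup\sigma_k$ over $\sigma_k\in P_{a_k}$ removes precisely the faces $\sigma_i\cup\cdots\cup\sigma_k$ that would otherwise appear in $d(a_{i,k})$ without a partner in $\omega_{i,k}$. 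Keeping a clean normal form of this kind for $a_{i,k}$ is what allows the argument to be iterated at the next gap. When $(i,k)=(1,n)$ is reached no further cochain is required, and the family constructed is a defining system, so $\langle\alpha_1,\ldots,\alpha_n\rangle$ is defined.

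The step I expect to be the main obstacle is the last one: pinning down the right normal form for the $a_{i,k}$, verifying that the star deletions --- above all those at the endpoints $\sigma_i\cup\sigma_k$ --- really do trivialise $[\omega_{i,k}]$, and arranging that this happens compatibly for every gap simultaneously, while controlling the signs coming from $\varepsilon(\,\cdot\,,\,\cdot\,)$, from $\doubleoverline{\,\cdot\,}$, and from the vertex-ordering hypothesis of Lemma~\ref{lem: multiplication of general cochains}.
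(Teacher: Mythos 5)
Your overall strategy is the one the paper uses: pass to the combinatorial side via Theorem~\ref{thm: full Hochster's} and build an explicit defining system whose coboundary identities are forced by the star deletions at $\sigma_i\cup\sigma_k$ for $\sigma_k\in P_{a_k}$. But there is a genuine gap, and one concrete error. The error: your proposed normal form $\chi_{\sigma_i\cup\cdots\cup\sigma_{k-1}\cup(\sigma_k\setminus v_{\sigma_k})}$ lives in the wrong degree once $k-i\geqslant 2$. The cochain $a_{i,k}$ must lie in $C^{p_i+\cdots+p_k}(\mathcal{K}_{J_i\cup\cdots\cup J_k})$, i.e.\ be supported on simplices with $p_i+\cdots+p_k+1$ vertices, whereas $\sigma_i\cup\cdots\cup\sigma_k$ has $p_i+\cdots+p_k+(k-i+1)$ vertices; one must delete a chosen vertex from \emph{each} of $\sigma_{i+1},\ldots,\sigma_k$, giving $\chi_{\sigma_i\cup\cdots\cup\sigma_k\setminus(v_{i+1}\cup\cdots\cup v_k)}$ as in \eqref{eq: a_ik joins}. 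Your formula is only correct for the first inductive step $k-i=1$. Relatedly, you never restrict the inner indices to $\widetilde{S}_{a_j}=S_{a_j}\setminus P_{a_j}$; without that restriction the simplices you write down need not survive the star deletions, and the term-by-term matching with $\doubleoverline{a_{i,r}}\,a_{r+1,k}$ (where only $\sigma_{r+1}\in\widetilde{S}_{a_{r+1}}$ contributes) fails.

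The gap: the heart of the paper's proof is precisely the step you defer as ``the main obstacle,'' namely showing that with the correct support the identity $d(a_{i,k})=\sum_r\doubleoverline{a_{i,r}}\,a_{r+1,k}$ holds \emph{on the nose}, not merely up to cohomology. This requires three separate checks: coboundary terms that add a vertex $j\in J_i$ cancel because $a_i$ is a cocycle in $\mathcal{K}_{J_i}$ and the join structure preserves that cancellation; coboundary terms that add $j\in J_l\setminus\sigma_l$ for $l>i$ vanish because $j\cup\sigma_l\setminus v_l\in P_{\sigma_l}\subset P_{a_l}$, so the star deletion at $\sigma_i\cup(j\cup\sigma_l\setminus v_l)$ has removed the relevant simplex from $\mathcal{K}$; and the surviving terms, where the added vertex is one of the $v_{r+1}$, match the products with the explicit signs $\theta_{i,k}$ via the identity $\varepsilon(v_{r+1},\sigma_i\cup\cdots\cup\sigma_k\setminus(\cdots))=(-1)^{p_i+\cdots+p_r+1}\varepsilon(v_{r+1},\sigma_{r+1})$. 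Note also that an inductive ``take any primitive'' scheme does not work as stated: whether $[\omega_{i,k}]=0$ at a given gap depends on the primitives chosen at smaller gaps, so the argument must carry the explicit normal form through every stage rather than appeal to arbitrary choices. As written, your proposal identifies the right mechanism but does not constitute a proof.
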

	
	\begin{proof}
		Let $a_i=\sum_{\sigma_i \in S_{a_i}} c_{\sigma_i} \chi_{\sigma_i}$ be a representative cocycle for $\alpha_i \in \widetilde{H}^{p_i}(\mathcal{K}_{J_i})$ for each $i\in \{1, \ldots, n-1\}$. 
		We construct a defining system $(a_{i,k})$ for the Massey product $\langle \alpha_1, \ldots, \alpha_n \rangle\subset H^{p_1+\cdots+p_n+|J_1\cup \cdots \cup J_n|+2}(\mathcal{Z}_\mathcal{K}).$ 
		%For convenience, we label the vertex $v_{\sigma_i}\in \sigma_i$ as $v_i$. 
		
		For $1\leqslant i \leqslant k \leqslant n$, $(i,k)\neq (1,n)$, let $a_{i,k}\in C^{p_i + \cdots + p_k}(\mathcal{K}_{J_i \cup \cdots \cup J_k})$ be the cochain given by
		\begin{equation} \label{eq: a_ik joins}
			a_{i,k}=  \sum_{\sigma_i \in S_{a_i}} 
			\sum_{\sigma_{i+1} \in \widetilde{S}_{a_{i+1}}} \cdots \sum_{\sigma_k \in \widetilde{S}_{a_k}}
			c_{\sigma_i}  \ldots c_{\sigma_k} \; \theta_{i,k} \;
			\chi_{\sigma_i \cup \cdots \cup \sigma_k \setminus (v_{i+1} \cup \cdots \cup v_{k})}
		\end{equation}
		where $\widetilde{S}_{a_{i}}=S_{a_i}\setminus P_{a_i}$, vertices $v_i=v_{\sigma_i}\in \sigma_i$ are fixed,  and $\theta_{i,k}=1$ when $i=k$ or otherwise
		\begin{multline} \label{eq: theta_ik}
			\theta_{i,k}=%(-1)^{k-i} (-1)^{|J_i|(p_{i+1}+\cdots +p_k)}(-1)^{|J_{i+1}|(p_{i+2}+\cdots +p_k)} 	\ldots (-1)^{|J_{k-1}|p_k} \cdot \\ 
			(-1)^{k-i +|J_i|(p_{i+1}+\cdots +p_k)+|J_{i+1}|(p_{i+2}+\cdots +p_k)+
				\cdots + |J_{k-1}|p_k} \\
			\cdot \varepsilon(v_{i+1},\sigma_{i+1}) \ldots \varepsilon(v_k, \sigma_k).
		\end{multline}
		For any $\sigma_i \in \widetilde{S}_{a_i}$ and $\sigma_k\in \widetilde{S}_{a_k}$, $\sigma_i\cup \sigma_k\in \mathcal{K}$ and so  $\sigma_i \cup \cdots \cup \sigma_k \setminus (v_{i+1} \cup \cdots \cup v_{k}) \in \mathcal{K}$. 
		%with $i\leq k$, $\sigma_i \in \widetilde{S}_{a_i}$ and $\sigma_k\in \widetilde{S}_{a_k}$, the simplex $\sigma_i \cup \sigma_k \setminus v_{k}$ is contained in $\partial (\sigma_i\cup \sigma_k) \subset \partial \st_{\bar{\mathcal{K}}}(\sigma_i\cup \sigma_k) \subset \mathcal{K}$. Hence $\sigma_i \cup \cdots \cup \sigma_k \setminus (v_{i+1} \cup \cdots \cup v_{k}) \in \mathcal{K}$. 
		Since every coefficient $c_{\sigma_i}$  is non-zero and each $\chi_{\sigma_i \cup \cdots \cup \sigma_k \setminus (v_{i+1} \cup \cdots \cup v_{k})}$ is a different basis element of $C^{p_i + \cdots + p_k}(\mathcal{K}_{J_i \cup \cdots \cup J_k})$, the cochain $a_{i,k}$ is not trivial.

		We will verify that $d(a_{i,k})=\sum_{r=i}^{k-1} \doubleoverline{a_{i,r}} \cdot a_{r+1,k}$.	
		By the definition of the coboundary map, 
		\begin{equation}\label{eq: d(a_ik) for joins}
			\begin{multlined}
				d(a_{i,k}) = \sum_{\sigma_i \in S_{a_i}}  \sum_{\sigma_{i+1} \in \widetilde{S}_{a_{i+1}}} \cdots \sum_{\sigma_k \in \widetilde{S}_{a_k}} c_{\sigma_i} \ldots c_{\sigma_k}\; \theta_{i,k} 
				\cdot \\
				\left(\sum_{j\in B 
					%\substack{j\in J_i\cup \cdots \cup J_{k} \setminus (\sigma_i\cup \cdots \cup \sigma_k \setminus (v_{i+1}\cup \cdots \cup v_{k}))\suchthat \\ j \cup \sigma_i\cup \cdots \cup \sigma_k\setminus (v_{i+1}\cup \cdots \cup v_{k})\in \mathcal{K}}
				} 
				\varepsilon(j, j \cup \sigma_i\cup \cdots \cup \sigma_k\setminus (v_{i+1}\cup \cdots \cup v_{k}) ) \;
				\chi_{j \cup \sigma_i\cup \cdots \cup \sigma_k\setminus (v_{i+1}\cup \cdots \cup v_{k})}\right)
			\end{multlined}
		\end{equation}
		where $B$ is the set $\{j\in J_i\cup \cdots \cup J_{k} \setminus (\sigma_i\cup \cdots \cup \sigma_k \setminus (v_{i+1}\cup \cdots \cup v_{k}))\suchthat  j \cup \sigma_i\cup \cdots \cup \sigma_k\setminus (v_{i+1}\cup \cdots \cup v_{k})\in \mathcal{K} \}$.
		First we show that the only non-zero summands are when $j\in v_{i+1}\cup \cdots \cup v_{k}$. 
		For fixed $\sigma_i, \ldots, \sigma_k$, suppose that there is a vertex $ j\in J_i\cup \cdots \cup J_k\setminus  (\sigma_i\cup \cdots \cup \sigma_k)$ such that $j \cup \sigma_i\cup \cdots \cup \sigma_k\setminus (v_{i+1}\cup \cdots \cup v_k) \in \mathcal{K}$.
		So $j\notin v_{i+1}\cup \cdots \cup v_{k}$.
		Consider two cases, either $j\in J_i$ or $j\in J_l\setminus \sigma_l$ for $l\in \{i+1, \ldots, k\}$.
		
		(\romannumeral 1) In the first case, $j\in J_i$.
		By the definition of the coboundary map and since $a_i$ is a cocycle, 
		\begin{equation*}
			d(a_i)= \sum_{\sigma\in S_{a_i}} c_{\sigma} 
			\sum_{j\in J_i\setminus V(\sigma)} \varepsilon(j, j\cup \sigma)\chi_{j\cup \sigma}=0.
		\end{equation*}
		We extend this sum by taking the union of each $j\cup \sigma$ with $\sigma_{i+1} \cup \cdots \cup \sigma_k\setminus (v_{i+1}\cup \cdots \cup v_{k})$. 
		Since $\sigma_l\notin P_{a_l}$ for every $l\in \{i+1, \ldots, k\}$,
		$j\cup \sigma\cup \sigma_{i+1} \cup \cdots \cup \sigma_k\setminus (v_{i+1}\cup \cdots \cup v_{k})\in \mathcal{K}$ for any $j\cup \sigma\in \mathcal{K}_{J_i}$. 
		Hence
		\begin{multline*}
			\sum_{\sigma\in S_{a_i}} c_{\sigma} 
			\sum_{j\in J_i\setminus V(\sigma)} \varepsilon(j, 
			j\cup \sigma \cup \sigma_{i+1} \cup \cdots \cup \sigma_k\setminus (v_{i+1}\cup \cdots \cup v_{k})) \cdot \\ \cdot
			\chi_{j\cup \sigma\cup \sigma_{i+1} \cup \cdots \cup \sigma_k\setminus (v_{i+1}\cup \cdots \cup v_{k})}=0.
		\end{multline*}
		
		(\romannumeral 2) In the second case, $j\in J_l$ for  $l\in \{i+1, \ldots, k\}$, so $j\cup \sigma_l\setminus v_l\in \mathcal{K}_{J_l}$ and hence $j\cup \sigma_l \setminus v_{l} \in P_{\sigma_l}\subset P_{a_l}$.  
		By Construction~\ref{cons: simplicial complex for n-Massey}, $\sigma_i\cup j\cup \sigma_l \setminus v_{l}\notin \mathcal{K}$.
		Hence $j \cup \sigma_i\cup \cdots \cup \sigma_k\setminus (v_{i+1}\cup \cdots \cup v_k)\notin\mathcal{K}$ for any $ j\in J_i\cup \cdots \cup J_k\setminus  (v_{i+1}\cup \cdots \cup v_{k})$.
		
		Since the only non-zero summands in \eqref{eq: d(a_ik) for joins} are when $j\in v_{i+1}\cup \cdots \cup v_{k}$,
		$d(a_{i,k})$ reduces to
		\begin{multline*}
			d(a_{i,k}) = \sum_{\sigma_i \in S_{a_i}}  \sum_{\sigma_{i+1} \in \widetilde{S}_{a_{i+1}}} \cdots \sum_{\sigma_k \in \widetilde{S}_{a_k}}
			c_{\sigma_i} \ldots c_{\sigma_k} \theta_{i,k}  \cdot\\
			\cdot \!\!\! \sum_{\substack{j\in v_{i+1}\cup \cdots \cup v_{k} | \\ j \cup \sigma_i\cup \cdots \cup \sigma_k\setminus (v_{i+1}\cup \cdots \cup v_{k})\in \mathcal{K}}} \!\!\!\!\!\!\!\!\!\!\!\!\!\!\!
% 			\sum_{%\substack{}
% 			j\in v_{i+1}\cup \cdots \cup v_{k} %\suchthat \\ j \cup \sigma_i\cup \cdots \cup \sigma_k\setminus (v_{i+1}\cup \cdots \cup v_{k})\in \mathcal{K}}
% 			} 
			\varepsilon(j, j \cup \sigma_i\cup \cdots \cup \sigma_k\setminus (v_{i+1}\cup \cdots \cup v_{k})) \; 
			\chi_{j \cup \sigma_i\cup \cdots \cup \sigma_k\setminus (v_{i+1}\cup \cdots \cup v_{k})}.
		\end{multline*}
		Denote $j\in v_{i+1}\cup \cdots \cup v_{k}$ by $v_{r+1}$ for $r\in\{i, \ldots, k-1\}$, and rewrite $d(a_{i,k})$ as
		\begin{equation}\label{eq: derived d(a_ik) for joins}
			\begin{multlined}[0.9\displaywidth]
				d(a_{i,k}) = \sum_{r=i}^{k-1} \theta_{i,k}
				\sum_{\sigma_i \in S_{a_i}}  \sum_{\sigma_{i+1} \in \widetilde{S}_{a_{i+1}}} \cdots \sum_{\sigma_k \in \widetilde{S}_{a_k}}
				c_{\sigma_i} \ldots c_{\sigma_k} \cdot \\
				\cdot \varepsilon (v_{r+1}, \sigma_i \cup \cdots \cup \sigma_k \setminus (v_{i+1} \cup \cdots \cup \hat{v}_{r+1} \cup \cdots\cup v_k))  \cdot\\\cdot
				\chi_{\sigma_i \cup \cdots \cup \sigma_k \setminus (v_{i+1} \cup \cdots \cup \hat{v}_{r+1} \cup \cdots\cup v_k)} 
			\end{multlined}
		\end{equation}
		where $\hat{v}_{r+1}$ denotes that $v_{r+1}$ is deleted from the sequence $v_{i+1}, \ldots, v_k$.
		
		To show that $d(a_{i,k}) =\sum_{r=i}^{k-1} \doubleoverline{a_{i,r}} \cdot a_{(r+1),k}$, we write out $a_{i,r}$ and $a_{(r+1),k}$ so that $\sum_{r=i}^{k-1} \doubleoverline{a_{i,r}} \cdot a_{(r+1),k}$ is
		\begin{align*}
			\sum_{r=i}^{k-1} &
			(-1)^{1+\mydeg(a_{i,r})}
			\left(\sum_{\sigma_i \in S_{a_i}}  \sum_{\sigma_{i+1} \in \widetilde{S}_{a_{i+1}}} \cdots \sum_{\sigma_r \in \widetilde{S}_{a_r}}
			c_{i,r}\,%\sigma_i}  \ldots c_{\sigma_r} \, \theta_{i,r} \,
			\chi_{\sigma_i \cup \cdots \cup \sigma_r \setminus (v_{i+1} \cup \cdots \cup v_{r})}\right) \cdot\\
			&\cdot\left(\sum_{\sigma_{r+1} \in S_{a_{r+1}}}  \sum_{\sigma_{r+2} \in \widetilde{S}_{a_{r+2}}} \cdots \sum_{\sigma_k \in \widetilde{S}_{a_k}}
			c_{r+1, k}\,%\sigma_{r+1}}  \ldots c_{\sigma_k} \, \theta_{{r+1},k} \,
			\chi_{\sigma_{r+1} \cup \cdots \cup \sigma_k \setminus (v_{r+2} \cup \cdots \cup v_{k})}\right)
		\end{align*}
		where $c_{i,r}=c_{\sigma_i}  \ldots c_{\sigma_r} \, \theta_{i,r}$ and $c_{r+1, k}=c_{\sigma_{r+1}}  \ldots c_{\sigma_k} \, \theta_{{r+1},k}$.
		For any $\sigma_{r+1}\in S_{a_{r+1}}\setminus \widetilde{S}_{a_{r+1}}$, by definition  $\sigma_{r+1}\in P_{a_{r+1}}$ and $\sigma_i\cup \sigma_{r+1}\notin \mathcal{K}$. 
		Therefore $\left(\sigma_i \cup \cdots \cup \sigma_r \setminus (v_{i+1} \cup \cdots \cup v_{r})\right) \cup  \left(\sigma_{r+1} \cup \cdots \cup \sigma_k \setminus (v_{r+2} \cup \cdots \cup v_{k})\right) \in~\mathcal{K}$ only if $\sigma_{r+1}\in \widetilde{S}_{a_{r+1}}$. 
		Then by expanding the above expression and using the sign from Lemma~\ref{lem: multiplication of general cochains}, $\sum_{r=i}^{k-1} \doubleoverline{a_{i,r}} \cdot a_{(r+1),k}$ is
		\begin{equation}\label{eq: a_ir a_{r+1}k for joins}
			\begin{multlined}
				%\sum_{r=i}^{k-1} \doubleoverline{a_{i,r}} \cdot a_{(r+1),k}=
				\sum_{r=i}^{k-1} 
				\sum_{\sigma_i \in S_{a_i}}  \sum_{\sigma_{i+1} \in \widetilde{S}_{a_{i+1}}} \cdots \sum_{\sigma_k \in \widetilde{S}_{a_k}}  
				(-1)^{1+\mydeg(a_{i,r})+ |J_i \cup \cdots \cup J_{r}|(p_{r+1} + \cdots + p_k+1)} \cdot \\
				\cdot 
				%(-1)^{|J_i \cup \cdots \cup J_{r}|(p_{r+1} + \cdots + p_k+1)} 
				c_{\sigma_i}  \ldots c_{\sigma_k} \;  
				\theta_{i,r} \; \theta_{{r+1},k} \;
				\chi_{\sigma_i \cup \cdots \cup \sigma_k \setminus (v_{i+1} \cup \cdots \cup \hat{v}_{r+1} \cup \cdots \cup v_{k})}.
			\end{multlined}
		\end{equation}
		Since $\mydeg(a_{i,r})=|J_i \cup \cdots \cup J_r| +p_i + \cdots +p_r+1$, 
		\begin{align*}
			(-1)^{1+\mydeg(a_{i,r})+|J_i \cup \cdots \cup J_{r}|(p_{r+1} + \cdots + p_k+1)} = 
			(-1)^{(p_i + \cdots +p_r) + |J_i \cup \cdots \cup J_{r}|(p_{r+1} + \cdots + p_k)}.
		\end{align*}
		%revision: removed the repeated word "we next prove that that"
		We next prove that \eqref{eq: derived d(a_ik) for joins} is equal to \eqref{eq: a_ir a_{r+1}k for joins} by showing that 
		\begin{multline}\label{eq: d(a_ik) and join}
			\theta_{i,k} \; \varepsilon (v_{r+1}, \sigma_i \cup \cdots \cup \sigma_k \setminus (v_{i+1} \cup \cdots \cup \hat{v}_{r+1} \cup \cdots\cup v_k)) 
			\\
			= 
			(-1)^{(p_i + \cdots +p_r) + |J_i \cup \cdots \cup J_{r}|(p_{r+1} + \cdots + p_k)} \theta_{i,r} \; \theta_{{r+1},k}. 
		\end{multline}
		Since
		\[
		\theta_{i,r}=(-1)^{r-i + |J_i|(p_{i+1}+\cdots +p_r)+ \cdots + |J_{r-1}|p_r} \; \varepsilon(v_{i+1},\sigma_{i+1}) \cdots \varepsilon(v_r, \sigma_r)
		\]
		and 
		\[
		\theta_{r+1,k}=(-1)^{k-r-1 + |J_{r+1}|(p_{r+2}+\cdots +p_k) + \cdots + |J_{k-1}|p_k} \; \varepsilon(v_{r+2},\sigma_{r+2}) \cdots \varepsilon(v_k, \sigma_k)
		\] 
		the right hand side of \eqref{eq: d(a_ik) and join} becomes
		\begin{multline*}
			(-1)^{k-i-1 + (p_i + \cdots +p_r) +
				|J_i|(p_{i+1}+\cdots +p_k) +
				|J_{i+1}|(p_{i+2}+\cdots +p_k) +  \cdots +|J_{k-1}|p_k}  \\
			\cdot \varepsilon(v_{i+1}, \sigma_{i+1}) \ldots \varepsilon(v_{r},\sigma_{r}) \varepsilon(v_{r+2},\sigma_{r+2})   \ldots \varepsilon(v_k, \sigma_k). 
		\end{multline*}
		This is simplified as 
		\begin{equation}\label{eq: reduced sign of a_ir a_{r+1}k in joins}
			(-1)^{p_i + \cdots +p_r-1} \;\varepsilon(v_{r+1},\sigma_{r+1}) \;\theta_{i,k}.
		\end{equation}
		
		Next consider the left hand side of \eqref{eq: d(a_ik) and join}. 
		For any $r\in \{i,\ldots, k-1\}$, suppose that $v_{r+1}\in \sigma_{r+1}$ is the $l$th vertex in the vertex set of $ \sigma_i \cup \cdots \cup \sigma_k \setminus (v_{i+1}\cup \cdots \cup \hat{v}_{r+1}\cup \cdots \cup v_k)$. Then by \eqref{eq: varepsilon},
		\[
		\varepsilon(v_{r+1}, \sigma_i \cup \cdots \cup \sigma_k \setminus (v_{i+1}\cup \cdots \cup \hat{v}_{r+1}\cup \cdots \cup v_k))=(-1)^{l-1}.
		\]	
		Since $v_{r+1}\in \sigma_{r+1}$, $l$ is given by
		\[
		l=|\sigma_i|+(|\sigma_{i+1}|-1)+\cdots+(|\sigma_{r}|-1)+l_{r+1}
		\]
		where $l_{r+1}$ is the position of $v_{r+1}$ in $\sigma_{r+1}$.
		Since $|\sigma_i|=p_i+1$ for every $i$, $l=(p_i+1) +p_{i+1}+\cdots + p_{r}+l_{r+1}$, and hence
		\begin{equation}\label{eq: breaking down varepsilon joins}
			\varepsilon(v_{r+1}, \sigma_i \cup \cdots \cup \sigma_k \setminus (v_{i+1}\cup \cdots \cup \hat{v}_{r+1}\cup \cdots \cup v_k))=(-1)^{p_i+\cdots+p_{r} +1} \; \varepsilon(v_{r+1}, \sigma_{r+1}).
		\end{equation}
		Thus \eqref{eq: d(a_ik) and join} may be rewritten as
		$
		(-1)^{p_i+\cdots+p_{r} +1} \; \theta_{i,k} \; \varepsilon(v_{r+1}, \sigma_{r+1}),
		$
		which is equal to \eqref{eq: reduced sign of a_ir a_{r+1}k in joins}.
		Hence \eqref{eq: derived d(a_ik) for joins} is equal to \eqref{eq: a_ir a_{r+1}k for joins} so $d(a_{i,k})=\sum_{r=i}^{k-1} \doubleoverline{a_{i,r}} \cdot a_{(r+1),k}$, which proves that $(a_{i,k})$ corresponds to a defining system  for $\langle \alpha_1, \ldots, \alpha_n \rangle$.
	\end{proof}
	
	We aim to show that the constructed $n$-Massey product $\langle \alpha_1, \ldots, \alpha_n \rangle$ is non-trivial. 
	We build a cycle $x\in C_{p_1 + \ldots + p_n +1}(\mathcal{K}_{J_1 \cup \cdots \cup J_n})$ and show that for any $[\omega]\in \langle \alpha_1, \ldots, \alpha_n \rangle$ there is a cycle $x'$ homologous to $x$ such that $\omega(x')\neq 0$.
	This will conclude that $[\omega]\neq0$.

	\begin{cons}\label{cons: joins x}
		Fix $\sigma_1\in S_{a_1}$, $\sigma_i\in \widetilde{S}_{a_i}=S_{a_i}\setminus P_{a_i}$ for $2\leqslant i < n$ and $\sigma_n\in P_{a_n}$.
		Since $\alpha_1\in \widetilde{H}^{p_1}(\mathcal{K}_{J_1})$ is non-zero, there is a cycle $x_1\in C_{p_1}(\mathcal{K}_{J_1})$ such that $a_1(x_1)\neq 0$.
		We write the cycle $x_1$ as
		\[
		x_1=\sum_{\tilde{\sigma}_1\in S_{x_1}} c_{\tilde{\sigma}_1} \Delta_{\tilde{\sigma}_1}
		\]
		for a collection of $p_1$-simplices $S_{x_1}\subset \mathcal{K}_{J_1}$ and non-zero coefficients $c_{\tilde{\sigma}_1}$, where $\Delta_{\tilde{\sigma}_1}$ is a basis element of $C_{p_1}(\mathcal{K}_{J_1})$.
		
		After the star deletion of $\sigma_2\cup \sigma_n$, the boundary complex $\partial(\sigma_2\cup \sigma_n)$  is contained in $\mathcal{K}$.
		Let $x_2\in C_{p_2+p_n}(\partial(\sigma_2\cup \sigma_n))$ be the cycle
		\[
		x_2=\sum_{w_2\in \sigma_2\cup \sigma_n} c_{w_2} \Delta_{\sigma_2\cup \sigma_n \setminus w_2}
		\]
		for vertices $w_2\in \sigma_2\cup \sigma_n$ and non-zero coefficients $c_{w_2}$.
		Similarly for $3\leqslant i \leqslant n-1$, 
		%let $[x_i] \in  \widetilde{H}_{p_i-1}(\partial(\sigma_i))$ be the spherical class and 
		let $x_i \in  C_{p_i-1}(\partial(\sigma_i))$ be the cycle given by
		\[
		x_i=\sum_{w_i\in \sigma_i} c_{w_i} \Delta_{\sigma_i \setminus w_i}
		\]
		for vertices $w_i\in \sigma_i$ and non-zero coefficients $c_{w_i}$.
		
		Let $x\in C_{p_1 + \ldots + p_n +1}(\mathcal{K}_{J_1 \cup \cdots \cup J_n})$ be the chain
		\begin{multline*}\label{eq: form of x joins}
			x=\sum_{\tilde{\sigma}_1\in S_{x_1}}  \sum_{w_2\in \sigma_2\cup \sigma_n} \sum_{w_3\in \sigma_3} \cdots \sum_{w_{n-1}\in \sigma_{n-1}}
			c_{\tilde{\sigma}_1} c_{w_2} \cdots c_{w_{n-1}} \cdot\\\cdot
			\Delta_{\tilde{\sigma}_1 \cup \sigma_2 \cup \cdots \cup \sigma_{n-1} \cup \sigma_n\setminus (w_2 \cup \cdots \cup w_{n-1})}.
		\end{multline*}
		Let $S_x$ be the support of $x$, consisting of simplices 
		\begin{equation}\label{eq: simplices of x joins}
			\sigma=\tilde{\sigma}_1 \cup \sigma_2 \cup \cdots \cup \sigma_{n-1} \cup \sigma_n\setminus (w_2 \cup \cdots \cup w_{n-1})
		\end{equation}
		for a $p_1$-simplex $\tilde{\sigma}_1\in S_{x_1}$, and a choice of vertices $w_2 \in \sigma_2 \cup \sigma_n$,  $w_i \in \sigma_i$ for $3\leqslant i \leqslant n-1$.
	\end{cons}
	
	\begin{lemma}\label{lem: joins cycle x}
		The cochain $x\in C_{p_1 + \ldots + p_n +1}(\mathcal{K}_{J_1 \cup \cdots \cup J_n})$ is a cycle.
	\end{lemma}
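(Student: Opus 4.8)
The plan is to recognise $x$ as the simplicial join of the cycles $x_1,x_2,\dots,x_{n-1}$ and to deduce $\partial x=0$ from the Leibniz rule for the boundary of a join, after first checking that $x$ is a legitimate chain in $C_*(\mathcal{K}_{J_1\cup\cdots\cup J_n})$. For the latter, note that a simplex $\sigma=\tilde\sigma_1\cup\sigma_2\cup\cdots\cup\sigma_{n-1}\cup\sigma_n\setminus(w_2\cup\cdots\cup w_{n-1})$ in the support $S_x$ of $x$ is a face of $\tilde\sigma_1\cup\sigma_2\cup\cdots\cup\sigma_n\in\bar{\mathcal{K}}$, hence lies in $\bar{\mathcal{K}}$, so it suffices to show that $\sigma$ contains none of the simplices $\sigma_i'\cup\sigma_k'$ (with $\sigma_i'\in S_{a_i}$, $\sigma_k'\in P_{a_k}$, $1\le i<k\le n$, $(i,k)\ne(1,n)$) at whose stars $\bar{\mathcal{K}}$ was deleted. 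Since $\sigma_i'$ is a $p_i$-simplex and $\sigma_k'$ a $p_k$-simplex, such a containment would force the $J_i$- and $J_k$-blocks of $\sigma$ to be full-dimensional; but the blocks indexed by $3,\dots,n-1$ each lose the vertex $w_j$, and exactly one of the $J_2$- and $J_n$-blocks loses $w_2$, so the only candidate is $\{i,k\}\subseteq\{1,2,n\}$. The pair $(1,n)$ is excluded, for $(2,n)$ one of the two blocks is deficient, and for $(1,2)$ with $w_2\in\sigma_n$ one uses that $\sigma_2\in\widetilde{S}_{a_2}=S_{a_2}\setminus P_{a_2}$ while $\sigma_k'\in P_{a_2}$, so the $p_2$-simplex $\sigma_k'\ne\sigma_2$ cannot lie in the $p_2$-simplex $\sigma_2$. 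Hence $\sigma\in\mathcal{K}$.

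Next I would identify $x$ with the iterated simplicial join $x_1*x_2*\cdots*x_{n-1}$ of the chains of Construction~\ref{cons: joins x}, up to the ordering convention on $V(\bar{\mathcal{K}})$. Here $x_1$ is a cycle by hypothesis, as it represents the nonzero class $\alpha_1$; the chain $x_2$ is the fundamental cycle of the boundary sphere $\partial(\sigma_2\cup\sigma_n)$, so it is a scalar multiple of $\partial\Delta_{\sigma_2\cup\sigma_n}$; and for $3\le j\le n-1$ the chain $x_j$ is the fundamental cycle of $\partial\sigma_j$, namely a scalar multiple of $\partial\Delta_{\sigma_j}$. In particular each of $x_2,\dots,x_{n-1}$ is a boundary, hence a cycle. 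Applying the standard Leibniz rule $\partial(a*b)=(\partial a)*b+(-1)^{\dim a+1}\,a*(\partial b)$ repeatedly expresses $\partial x$ as a signed sum of terms $x_1*\cdots*(\partial x_j)*\cdots*x_{n-1}$; since $\partial x_1=0$ and, for $j\ge2$, $\partial x_j=0$ because $x_j$ is a boundary, every term vanishes and $\partial x=0$.

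The main obstacle is the sign bookkeeping in the identification step. In the formula defining $x$ the $J_n$-block is written last, although in the order on $V(\bar{\mathcal{K}})$ it is separated from the $J_2$-block by the blocks $\sigma_3,\dots,\sigma_{n-1}$; thus for $n\ge 4$ the chain $x$ is presented as a shuffled join rather than a join in a single fixed block order, and one has to check that the Koszul signs furnished by the Leibniz rule really do match the positional signs in $\partial\Delta_\sigma$ (equivalently, that the coefficients $c_{w_2},\dots,c_{w_{n-1}}$ carry exactly the signs that turn the shuffled expression into an honest join of cycles). Alternatively one can run the computation directly: expand $\partial x=\sum(\text{coeffs})\,\partial\Delta_\sigma$, group the codimension-one faces according to which block contains the deleted vertex, and verify that summing over the corresponding summation index reproduces a uniform multiple of $\partial x_1$, $\partial x_2$, or some $\partial x_j$ joined with the remaining factors, each of which is zero. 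Once these signs are pinned down the cancellation is forced, and I do not anticipate any conceptual difficulty beyond it.
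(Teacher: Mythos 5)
Your overall strategy --- first check that the supports of $x$ actually survive all the star deletions, then view $x$ as a join of the cycles $x_1,\dots,x_{n-1}$ and kill $\partial x$ factor by factor via the Leibniz rule --- is essentially the paper's proof reorganised: the ``direct computation'' you offer as a fallback is exactly what the paper does, splitting $\partial x$ into sums according to which block contains the deleted vertex and recognising each sum as a prefactor times $\partial(x_1)$, $\partial(x_2)$ or $\partial(x_i)$ joined with the remaining factors. Your preliminary membership check (ruling out containment of a deleted simplex $\sigma_i'\cup\sigma_k'$ by noting that the blocks $3,\dots,n-1$ are always deficient, that exactly one of the $J_2$- and $J_n$-blocks is deficient, that $(1,n)$ is excluded by construction, and that $(1,2)$ fails because $\sigma_2\in\widetilde S_{a_2}=S_{a_2}\setminus P_{a_2}$) is correct and is a worthwhile addition; the paper leaves this implicit in Construction~\ref{cons: joins x}.

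However, the sign bookkeeping you defer is not a formality: it is the only place the argument can fail, and with the coefficients as literally specified (each $x_i$ individually the fundamental cycle of its factor) it does fail in general. The obstruction is precisely the shuffle you flagged. For $v$ in the $x_2$-factor one computes
\[
\frac{\varepsilon(v,\sigma)}{\varepsilon\bigl(v,(\sigma_2\cup\sigma_n)\setminus w_2\bigr)}=
\begin{cases}
(-1)^{p_1+1} & \text{if } v\in\sigma_2,\\
(-1)^{p_1+1+p_3+\cdots+p_{n-1}} & \text{if } v\in\sigma_n,
\end{cases}
\]
since a vertex of $\sigma_n$ must be moved past the $p_3+\cdots+p_{n-1}$ vertices of the intervening blocks. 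When $p_3+\cdots+p_{n-1}$ is odd these prefactors disagree, so the codimension-one faces obtained by deleting a $J_2$-vertex and those obtained by deleting a $J_n$-vertex do not assemble into a single multiple of $\partial(x_2)$. Concretely, for $n=4$, $p_1=p_2=p_4=0$, $p_3=1$ and $c_{w_2}=\varepsilon(w_2,\sigma_2\cup\sigma_4)$, the coefficient of $\Delta_{\tilde\sigma_1\cup(\sigma_3\setminus w_3)}$ in $\partial x$ is $\pm 2$, not $0$. The repair is to twist the coefficients, e.g.\ take $c_{w_2}=\varepsilon(w_2,\sigma_2\cup\sigma_n)$ for $w_2\in\sigma_2$ but $c_{w_2}=(-1)^{p_3+\cdots+p_{n-1}}\,\varepsilon(w_2,\sigma_2\cup\sigma_n)$ for $w_2\in\sigma_n$, so that $x$ itself --- rather than $x_2$ in its naive form --- is the fundamental cycle of the join written in the global vertex order. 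Be aware that the paper's proof asserts a single uniform prefactor $(-1)^{p_1+p_3+\cdots+p_{n-1}+1}$ in front of the entire $\partial(x_2)$ term and therefore glosses over the same point; a complete argument, by your route or the paper's, must make this correction explicit.
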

	\begin{proof}
		We show that $x$ is a cycle by explicitly calculating $\partial(x)$.
		By the definition of the boundary map, 
		\begin{align*}
			\partial(x)&=
			\sum_{\tilde{\sigma}_1\in S_{x_1}}  \sum_{w_2\in \sigma_2\cup \sigma_n} \sum_{w_3\in \sigma_3} \cdots \sum_{w_{n-1}\in \sigma_{n-1}}
			\sum_{v \in \sigma} 
			\varepsilon(v, \sigma) \;c_{\tilde{\sigma}_1} c_{w_2} \cdots c_{w_{n-1}} 
			\Delta_{\sigma \setminus v} %\\
			%		&= \left(\sum_{\tilde{\sigma}_1\in S_{x_1}}  \sum_{w_2\in \sigma_2} \sum_{w_3\in \sigma_3} \cdots \sum_{w_{n-1}\in \sigma_{n-1}}
			%		\sum_{v \in t} 
			%		\varepsilon(v, t) \;c_{\tilde{\sigma}_1} c_{w_2} \cdots c_{w_{n-1}} 
			%		\Delta_{t\setminus v}\right) + \\
			%			& \qquad\qquad\qquad + 
			%				\left(\sum_{\tilde{\sigma}_1\in S_{x_1}}  \sum_{w_2\in \sigma_n} \sum_{w_3\in \sigma_3} \cdots \sum_{w_{n-1}\in \sigma_{n-1}}
			%				\sum_{v \in t} 
			%				\varepsilon(v, t) \;c_{\tilde{\sigma}_1} c_{w_2} \cdots c_{w_{n-1}} 
			%				\Delta_{t\setminus v}\right)
		\end{align*}
		%	where we split into sums for $w_2\in \sigma_2$ and $w_2\in \sigma_n$.
		where $\sigma\in S_x$ as in \eqref{eq: simplices of x joins}.
		Since $\tilde{\sigma}_1\subset J_1$, $\sigma_i\subset J_i$ for $2\leqslant i \leqslant n$, and $J_i\cap J_j=\myempty$ for $i\neq j$, any choice of  vertex $v\in \sigma$ is contained in a simplex $\tilde{\sigma}_1$ or $\sigma_i$ for $2\leqslant i \leqslant n$.
		If $v\in\tilde{\sigma}_1$, then $\varepsilon(v,\sigma)=\varepsilon(v,\tilde{\sigma}_1)$. 
		Also %like in \eqref{eq: breaking down varepsilon joins}, 
		if $v\in \sigma_i$ for $i>1$, then
		\begin{equation*}
			\varepsilon(v,\sigma)=
			\begin{cases}
				(-1)^{p_1+1}\; \varepsilon(v, \sigma_{2}) & \text{if } w_2\in \sigma_n \text{ and } i=2,  \\
				(-1)^{p_1+\cdots + p_{i-1}+2}\; \varepsilon(v, \sigma_{i}\setminus \tilde{w}_i) & \text{if } w_2\in \sigma_n \text{ and } i>2,  \\
				(-1)^{p_1+\cdots + p_{n-1}+1}\; \varepsilon(v, \sigma_{n}) & \text{if } w_2\in \sigma_2 \text{ and } i=n,  \\
				(-1)^{p_1+\cdots + p_{i-1}+1}\; \varepsilon(v, \sigma_{i}\setminus w_i) & \text{if } w_2\in \sigma_2 \text{ and } i<n
			\end{cases}
		\end{equation*}
		%	where $\sigma_i\setminus w_i$ denotes $\sigma_n\setminus w_2$ when $i=n$ and $w_2\in \sigma_n$.
		where $\tilde{w}_i=w_i$ for $1<i<n$, and $\tilde{w}_n=w_2$.
		We rewrite $\partial(x)$ as
		\small{\begin{align*}
			\partial(x)&=
			\sum_{\tilde{\sigma}_1\in S_{x_1}}  \sum_{w_2\in \sigma_2\cup \sigma_n} \sum_{w_3\in \sigma_3} \cdots \sum_{w_{n-1}\in \sigma_{n-1}}
			\: \sum_{i=1}^n \: \sum_{v \in \tilde{\sigma}_i\setminus \tilde{w}_i} 
			\varepsilon(v, \sigma) \;c_{\tilde{\sigma}_1} c_{w_2} \cdots c_{w_{n-1}} 
			\Delta_{\sigma \setminus v} 
		\end{align*}}
		where $\tilde{\sigma}_1\setminus \tilde{w}_1=\tilde{\sigma}_1$ and $\tilde{\sigma}_i=\sigma_i$ for $i>1$.
		Let $\Delta_{\sigma \setminus v|{J}}$ denote the restriction of $\Delta_{\sigma \setminus v}$ to its vertices in $J\subset V(\mathcal{K})$, where $V(\mathcal{K})$ is the vertex set of $\mathcal{K}$.
		Then	
		\begin{multline*}
			\partial(x)=
			\sum_{\tilde{\sigma}_1\in S_{x_1}}  \sum_{w_2\in \sigma_2\cup \sigma_n} \sum_{w_3\in \sigma_3} \cdots \sum_{w_{n-1}\in \sigma_{n-1}} \\
			\left( \sum_{i=1}^n \: \sum_{v \in \tilde{\sigma}_i\setminus \tilde{w}_i} 
			\varepsilon(v, \sigma) \;c_{\tilde{\sigma}_1} c_{w_2} \cdots c_{w_{n-1}} 
			(\Delta_{\sigma \setminus v|{J_i}}) (\Delta_{\sigma \setminus v|{V(\mathcal{K})\setminus J_i}})
			\right).
		\end{multline*}
		We rearrange $\partial(x)$ into four sums, one in which $v\in \tilde{\sigma}_1$, another for $v \in \sigma_2\cup \sigma_n \setminus w_2$, and two more when $v\in \sigma_i\setminus w_i$ for $3\leqslant i\leqslant n-1$ where either $w_2\in \sigma_2$ or $w_2\in \sigma_2$. 
		Then expanding $\varepsilon(v,\sigma)$, $\partial(x)$ is 
		\begin{align*}
			&\begin{multlined}[t][0.9\textwidth]
				\sum_{w_2\in \sigma_2\cup \sigma_n} \sum_{w_3\in \sigma_3} \cdots \sum_{w_{n-1}\in \sigma_{n-1}} c_{w_2} \cdots c_{w_{n-1}} \cdot\\
				\cdot (\Delta_{\sigma\setminus v|{V(\mathcal{K})\setminus J_1}}) 
				\left( \sum_{\tilde{\sigma}_1\in S_{x_1}}  \sum_{v \in \tilde{\sigma}_1} 
				\varepsilon(v, \tilde{\sigma}_1) \,c_{\tilde{\sigma}_1} 
				(\Delta_{\sigma \setminus v|{J_1}}) 
				\right)+
			\end{multlined}\\
			&\begin{multlined}[t][0.9\textwidth]
				+ \sum_{\tilde{\sigma}_1\in S_{x_1}}\sum_{w_3\in \sigma_3} \cdots \!\!\! \sum_{w_{n-1}\in \sigma_{n-1}} \!\!\! c_{\tilde{\sigma}_1} c_{w_3} \cdots c_{w_{n-1}} (-1)^{p_1+p_3+\cdots + p_{n-1}+1} (\Delta_{\sigma \setminus v|{V(\mathcal{K})\setminus J_2 \cup J_n}}) \cdot \\ \cdot
			     \left( \sum_{w_2\in \sigma_2\cup \sigma_n}  \sum_{v \in \sigma_2\cup \sigma_n \setminus w_2} 
				\varepsilon(v, \sigma_2\cup \sigma_n \setminus w_2) \,c_{w_2} 
				(\Delta_{\sigma \setminus v|{J_2\cup J_n}}) 
				\right)+
			\end{multlined}\\
			&\begin{multlined}[0.9\textwidth]
				+ \sum_{\tilde{\sigma}_1\in S_{x_1}}\sum_{w_2\in \sigma_2} \sum_{w_3\in \sigma_3} \cdots \sum_{w_{n-1}\in \sigma_{n-1}} c_{\tilde{\sigma}_1} c_{w_2} \cdots c_{w_{n-1}} \cdot
				\\ \cdot \left( 
				\sum_{i=3}^{n-1} (-1)^{p_1+\cdots + p_{i-1}+1} 
				(\Delta_{\sigma \setminus v|{V(\mathcal{K})\setminus J_i}}) \left(   \sum_{v \in \sigma_i \setminus w_i} 
				\varepsilon(v, \sigma_i \setminus w_i) 	(\Delta_{\sigma \setminus v|{J_i}}) 
				\right) \right)+
			\end{multlined}\\
			&\begin{multlined}[0.9\textwidth]
				+
				\sum_{\tilde{\sigma}_1\in S_{x_1}}\sum_{w_2\in \sigma_n} \sum_{w_3\in \sigma_3} \cdots \sum_{w_{n-1}\in \sigma_{n-1}}
				c_{\tilde{\sigma}_1} c_{w_2} \cdots c_{w_{n-1}} \cdot \\ \cdot\left( 
				\sum_{i=3}^{n-1} (-1)^{p_1+\cdots + p_{i-1}+2} 
				(\Delta_{\sigma\setminus v|{V(\mathcal{K})\setminus J_i}}) \left(   \sum_{v \in \sigma_i \setminus w_i} 
				\varepsilon(v, \sigma_i \setminus w_i) 	(\Delta_{\sigma\setminus v|{J_i}}) 
				\right) \right)_.
			\end{multlined}
		\end{align*}
		Each sum can be written in terms of $\partial(x_i)$, that is,
		\begin{align*}
			\partial(x)=&
			\begin{multlined}[t]
				\sum_{w_2\in \sigma_2\cup \sigma_n} \sum_{w_3\in \sigma_3} \cdots \sum_{w_{n-1}\in \sigma_{n-1}} 
				c_{w_2} \cdots c_{w_{n-1}} (\Delta_{\sigma\setminus v|{V(\mathcal{K})\setminus J_1}}) 
				\, \partial(x_1)+
			\end{multlined}\\
			&\begin{multlined}[0.9\textwidth]
				+ \sum_{\tilde{\sigma}_1\in S_{x_1}}\sum_{w_3\in \sigma_3} \cdots \sum_{w_{n-1}\in \sigma_{n-1}} c_{\tilde{\sigma}_1} c_{w_3} \cdots c_{w_{n-1}}\cdot \\ \cdot(-1)^{p_1+p_3\cdots + p_{n-1}+1} (\Delta_{\sigma\setminus v|{V(\mathcal{K})\setminus J_2 \cup J_n}}) 
				\, \partial(x_2)
				+
			\end{multlined}\\
			&\begin{multlined}[0.9\textwidth]
				+ \sum_{\tilde{\sigma}_1\in S_{x_1}}\sum_{w_2\in \sigma_2} \sum_{i=3}^{n-1}
				\sum_{w_3\in \sigma_3} \cdots \widehat{\sum_{w_i\in \sigma_i}}\cdots \sum_{w_{n-1}\in \sigma_{n-1}} c_{\tilde{\sigma}_1} c_{w_2} \cdots \widehat{c_{w_i}} \cdots c_{w_{n-1}} \cdot
				\\ \cdot \left( 
				(-1)^{p_1+\cdots + p_{i-1}+1} 
				(\Delta_{\sigma\setminus v|{V(\mathcal{K})\setminus J_i}})
				\, \partial(x_i) \right)+
			\end{multlined}\\
			&\begin{multlined}[0.9\textwidth]
				+
				\sum_{\tilde{\sigma}_1\in S_{x_1}}\sum_{w_2\in \sigma_n} \sum_{i=3}^{n-1}
				\sum_{w_3\in \sigma_3} \cdots \widehat{\sum_{w_i\in \sigma_i}}\cdots \sum_{w_{n-1}\in \sigma_{n-1}} c_{\tilde{\sigma}_1} c_{w_2} \cdots \widehat{c_{w_i}} \cdots c_{w_{n-1}} 
				\cdot \\ \cdot \left( 
				(-1)^{p_1+\cdots + p_{i-1}+2} 
				(\Delta_{\sigma \setminus v|{V(\mathcal{K})\setminus J_i}})
				\, \partial(x_i) \right)
			\end{multlined}
		\end{align*}
		where $\ \widehat{\ }\ $ denotes omission. 
		Since $\partial(x_i)=0$ for every $i$, $x$ is a cycle as well.
	\end{proof}
	
	\begin{exmp} \label{ex: star deletion x cycle example}
		Let $\mathcal{K}$ be the simplicial complex in Figure~\ref{fig: example of s}, where the simplices $\sigma_1 \cup \sigma_2'$, $\sigma_2 \cup \sigma_3'$ were star deleted and $S_{a_1}=\{\sigma_1\}$, $S_{a_2}=\{\sigma_2\}$, $S_{a_3}=\{\sigma_3\}$, $P_{a_2}=\{\sigma_2'\}$, $P_{a_3}=\{\sigma_3'\}$. The cycle $x$ is supported on simplices of the form
		\begin{equation*}
			\sigma=\tilde{\sigma}_1 \cup \sigma_2  \cup \sigma_3'\setminus (w_2)
		\end{equation*}
		where $\tilde{\sigma}_1$ is either $\sigma_1$ or $\sigma_1'$ and $w_2 \in \sigma_2 \cup \sigma_3'$. 	
		Therefore $S_x$ contains $\sigma_1\cup \sigma_2$, $\sigma_1'\cup\sigma_2$, $\sigma_1'\cup \sigma_3'$ and $\sigma_1 \cup \sigma_3'$, as shown in Figure~\ref{fig: cycle x}. 
		
		If $a_1=\chi_{\sigma_1}\in C^0(\mathcal{K}_{\sigma_1,\sigma_1'})$, $a_2=\chi_{\sigma_2}\in C^0(\mathcal{K}_{\sigma_2,\sigma_2'})$ and $a_3=\chi_{\sigma_3}\in C^0(\mathcal{K}_{\sigma_3,\sigma_3'})$, then the rest of the defining system constructed in Proposition~\ref{prop: joins Massey defined} is
		$a_{12}=-\chi_{\sigma_1}$ and $a_{23}=-\chi_{\sigma_2}$. 
		The associated cocycle to this defining system is
		\[
		\omega=-\chi_{\sigma_1\cup\sigma_3} - \chi_{\sigma_1\cup\sigma_2}.
		\]
		%As in \eqref{eq: form of omega joins}, summands of $\omega$ are of the form $\chi_{\sigma_1\cup\sigma_2\cup\sigma_3\setminus (v_2)}=\chi_{\sigma_1\cup\sigma_3}$ and $\chi_{\sigma_1\cup\sigma_2\cup\sigma_3\setminus (v_3)}=\chi_{\sigma_1\cup\sigma_2}$. 
		There is exactly one simplex $\sigma_1\cup\sigma_2=S_x\cap S_\omega$. 
		So by evaluating $\omega$ on $x$, $\omega(x)\neq 0$.
		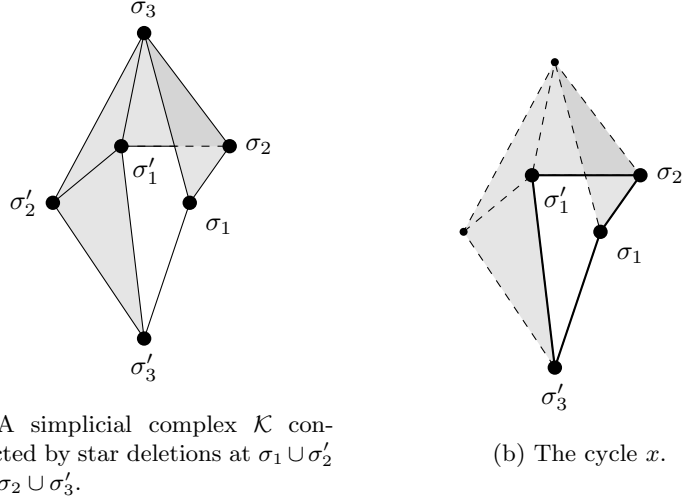
\begin{figure}[ht]
			\centering
			\begin{minipage}{0.4\textwidth}
				\centering
				\begin{tikzpicture} [scale=1.5, inner sep=2mm]
				\coordinate (4) at (0,0);
				\coordinate (1) at (1.2,0);
				\coordinate (2) at (0.6,0.5);
				\coordinate (3) at (1.55,0.5);
				\coordinate (5) at (0.8,1.5);
				\coordinate (6) at (0.8,-1.2);
				
				\fill[lightgray, fill opacity=0.4] (3)--(5)--(4)--(6)--(2)--(3);
				\draw (1.1, 0.5) -- (2) -- (5);\draw (4) -- (2)  node[below right, inner sep=1.4mm] {$\sigma_1'$}-- (6);
				\fill[lightgray, fill opacity=0.4] (1)--(5)--(3);
				
				\draw (1) node[below right] {$\sigma_1$} -- (3) node[right] {$\sigma_2$} -- (5) node[above] {$\sigma_3$} -- (4)  node[left] {$\sigma_2'$} -- (6) node[below] {$\sigma_3'$} -- (1)--(5); 
				\draw[dashed] (2) -- (3);
				
				\foreach \i in {1,...,6} {\fill (\i) circle (1.8pt);}; 
				\end{tikzpicture} 
				\subcaption{A simplicial complex $\mathcal{K}$ constructed by star deletions at  $\sigma_1 \cup \sigma_2'$ and $\sigma_2 \cup \sigma_3'$.}
				\label{fig: example of s}
			\end{minipage}\qquad
			\begin{minipage}{0.4\textwidth}
				\centering
				\begin{tikzpicture} [scale=1.5, inner sep=2mm]
				\coordinate (1) at (1.2,0);
				\coordinate (2) at (0.6,0.5);
				\coordinate (3) at (1.55,0.5);
				\coordinate (4) at (0,0);
				\coordinate (5) at (0.8,1.5);
				\coordinate (6) at (0.8,-1.2);
				\coordinate (bottom) at (0.5, -1.5);
				\coordinate (top) at(0.8, 1.9);
				
				\draw[white] (top)--(bottom);	
				\fill[lightgray, fill opacity=0.4] (3)--(5)--(4)--(6)--(2)--(3);
				\draw[dashed] (3) -- (2) -- (5);\draw[dashed] (4) -- (2) -- (6);
				\fill[lightgray, fill opacity=0.4] (1)--(5)--(3);
				
				\draw[dashed] (1)  -- (3) -- (5)  -- (4)   -- (6)  -- (1)--(5); 
				\draw[dashed] (2) -- (3);
				
				\foreach \i in {1,...,6} {\fill (\i) circle (1pt);}; 
				
				\draw[thick] (6) node[below] {$\sigma_3'$}--(1) node[below right] {$\sigma_1$} -- (3) node[right] {$\sigma_2$} -- (2)  node[below right, inner sep=1.4mm] {$\sigma_1'$} -- (6); 
				
				%	\node at (1.15,-0.6){$s$};
				
				\foreach \i in {1,...,3, 6} {\fill (\i) circle (1.8pt);}; 
				\end{tikzpicture} 
				\subcaption{The cycle $x$.} \label{fig: cycle x} % is supported by simplices that include $s$, whose link is empty
			\end{minipage}
			\caption{Example of the cycle $x$ defined in Construction~\ref{cons: joins x}}
		\end{figure}
	\end{exmp}

	\begin{prop} \label{prop: joins Massey non-trivial}
		The $n$-Massey product $\langle \alpha_1, \ldots, \alpha_n \rangle \subset H^*(\mathcal{Z}_\mathcal{K})$ is non-trivial.
	\end{prop}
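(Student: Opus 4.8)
The statement to prove means, by Definition~\ref{def: defining system}, that $0\notin\langle\alpha_1,\ldots,\alpha_n\rangle$, i.e.\ every associated cocycle is non-cohomologous to zero. So the plan is: fix an arbitrary defining system for $\langle\alpha_1,\ldots,\alpha_n\rangle$ in $\bigoplus_{J\subset[m]}C^*(\mathcal{K}_J)$, with associated cocycle $\omega'=\sum_{r=1}^{n-1}\doubleoverline{a'_{1,r}}\,a'_{r+1,n}$; since the Massey product does not depend on the choice of cocycle representatives of the $\alpha_i$, I may take $a'_{i,i}=a_i$ as in Construction~\ref{cons: simplicial complex for n-Massey}. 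It then suffices to show $\omega'(x)\neq 0$ for the cycle $x$ of Construction~\ref{cons: joins x}: a cocycle vanishes on boundaries, so $\omega'(x)\neq 0$ forces $[\omega']\neq 0$, and since this holds for every defining system it gives $0\notin\langle\alpha_1,\ldots,\alpha_n\rangle$. (The phrase ``a cycle $x'$ homologous to $x$'' merely reflects that one is free to first replace $x$ by a homologous representative with smaller support, adapted to $\omega'$, since the pairing depends only on $[x]$.)

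The key structural input is the join decomposition of $x$: up to sign,
\[
x=x_1*\partial\Delta_{\sigma_2\cup\sigma_n}*\partial\Delta_{\sigma_3}*\cdots*\partial\Delta_{\sigma_{n-1}},
\]
where $x_1$ is a cycle of $\mathcal{K}_{J_1}$ with $a_1(x_1)\neq 0$, and each of the remaining $n-2$ factors is the boundary sphere of a single simplex, namely of $\sigma_2\cup\sigma_n$ and of $\sigma_3,\ldots,\sigma_{n-1}$, with $\sigma_i\in\widetilde{S}_{a_i}$ for $2\leqslant i\leqslant n-1$ and $\sigma_n\in P_{a_n}$ fixed as in Construction~\ref{cons: joins x}. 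Thus every simplex of $S_x$ is a disjoint union of a $p_1$-simplex in $S_{x_1}$ with a facet of $\sigma_2\cup\sigma_n$ and facets of $\sigma_3,\ldots,\sigma_{n-1}$. By Lemma~\ref{lem: multiplication of general cochains} the cochain product is compatible with the partition of $J_1\cup\cdots\cup J_n$ into the blocks $J_1$, $J_2\cup J_n$, $J_3,\ldots,J_{n-1}$, so $\omega'(x)$ breaks up as a finite sum of products of pairings localised in these blocks. I would also record the elementary fact that $\sigma_i\in\widetilde{S}_{a_i}=S_{a_i}\setminus P_{a_i}$ forces $\sigma_i$ to be one of the subsequence simplices selected in Construction~\ref{cons: simplicial complex for n-Massey}, hence $P_{\sigma_i}\subseteq P_{a_i}$; this prevents ``neighbouring'' facets from competing for the same block of $x$.

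I would then compute $\omega'(x)$ by comparison with the canonical associated cocycle $\omega$ built in Proposition~\ref{prop: joins Massey defined}. Writing $a'_{i,k}=a_{i,k}+e_{i,k}$ gives $e_{i,i}=0$, $e_{i,i+1}$ a cocycle, and
\[
\omega'-\omega=\sum_{r=1}^{n-1}\bigl(\doubleoverline{a_{1,r}}\,e_{r+1,n}+\doubleoverline{e_{1,r}}\,a_{r+1,n}+\doubleoverline{e_{1,r}}\,e_{r+1,n}\bigr),
\]
while $d(e_{i,k})$ has the analogous shape. Evaluating any summand on $x$, the block carrying an $e$-factor meets $x$ only through a boundary sphere $\partial\Delta_{\sigma_j}$ or $\partial\Delta_{\sigma_2\cup\sigma_n}$ (possibly joined with $x_1$); such a sphere is the boundary of the corresponding solid simplex, and the join with the cycle $x_1$ remains a boundary, so the pairing of $e_{i,k}$ against it equals $d(e_{i,k})$ evaluated on that solid simplex. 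Substituting the expression for $d(e_{i,k})$ and iterating yields an induction on the width $k-i$: the base cases $e_{i,i}=0$ and $e_{i,i+1}$ being a cocycle kill the smallest pieces, and at each stage the surviving products $\doubleoverline{a_{i,r}}\,a_{r+1,k}$, $\doubleoverline{a_{i,r}}\,e_{r+1,k}$, etc., vanish on (a cone over) $S_x$ because $\mathcal{K}$ was obtained by star-deleting every $\sigma_i\cup\sigma_k$ with $\sigma_i\in S_{a_i}$, $\sigma_k\in P_{a_k}$, $(i,k)\neq(1,n)$ — exactly the deletions making $\sigma_i\cup\sigma_n\notin\mathcal{K}$ for $\sigma_i\in S_{a_i}$, $\sigma_n\in P_{a_n}$, $i\geqslant 2$. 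This gives $\omega'(x)=\omega(x)$, and the explicit evaluation of $\omega(x)$, extending Example~\ref{ex: star deletion x cycle example} to arbitrary $n$ and arbitrary classes, comes out to $\pm\bigl(\prod_i c_{\sigma_i}\bigr)a_1(x_1)\neq 0$.

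The main obstacle is precisely this last induction. In the triple case of Example~\ref{ex: star deletion x cycle example} every error cochain that appears is a cocycle, so ``a cocycle kills a boundary'' closes the argument immediately; for $n\geqslant 4$ the width-$\geqslant 2$ error cochains $e_{i,k}$ are genuinely non-cocyclic, and one must unwind the defining-system relations $d(e_{i,k})=\sum(\cdots)$ against both the precise simplicial shape of $x$ and the precise list of star-deleted simplices, while tracking the signs coming from Lemma~\ref{lem: multiplication of general cochains} and from $\mydeg$. Everything else — the join decomposition of $x$, block-compatibility of the product, the subsequence observation for $\widetilde{S}_{a_i}$, and the final single-term evaluation of $\omega(x)$ — is bookkeeping on ingredients already in place.
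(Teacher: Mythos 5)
Your strategy is genuinely different from the paper's. The paper fixes an arbitrary associated cocycle $\omega'$ and performs a support surgery: it replaces $\omega'$ by a cohomologous cocycle and $x$ by a homologous cycle until the two supports intersect in exactly one simplex, which forces a non-zero pairing without ever computing its value. You instead try to prove the stronger identity $\omega'(x)=\omega(x)$ by an induction on the error cochains $e_{i,k}=a'_{i,k}-a_{i,k}$. That identity would in particular assert that the entire indeterminacy subset pairs trivially with $[x]$, which is considerably more than non-triviality requires.

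The gap is in the key step of your induction. You assert that each join factor of $x$ met by an $e$-factor is ``the boundary of the corresponding solid simplex'', so that pairing $e_{i,k}$ against it can be rewritten as $d(e_{i,k})$ evaluated on the solid cone. This is correct for the factors $\partial\Delta_{\sigma_j}$, $3\leqslant j\leqslant n-1$, since each $\sigma_j$ lies in $\mathcal{K}$. It fails for the factor $\partial\Delta_{\sigma_2\cup\sigma_n}$: the simplex $\sigma_2\cup\sigma_n$ (with $\sigma_2\in S_{a_2}$, $\sigma_n\in P_{a_n}$) is precisely one of the simplices removed by star deletion in Construction~\ref{cons: simplicial complex for n-Massey}, so the solid cone is absent from $\mathcal{K}$ and $\partial\Delta_{\sigma_2\cup\sigma_n}$ is in general a non-bounding cycle of $\mathcal{K}_{J_2\cup J_n}$ --- indeed it is the cycle the construction is designed to create. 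Hence for the term $\doubleoverline{a_{1}}\,e_{2,n}$, whose evaluation on $x$ factors as $\pm\,a_1(x_1)\cdot e_{2,n}(x_2*\cdots*x_{n-1})$, you are pairing a non-closed cochain (for $n\geqslant 4$, $d(e_{2,n})\neq 0$ in general) against a non-bounding cycle, and the identity $\langle e,\partial c\rangle=\langle de,c\rangle$ is unavailable; the induction does not close. A second, related defect: for $1<r<n-1$ the cut $J_1\cup\cdots\cup J_r\mid J_{r+1}\cup\cdots\cup J_n$ in the term $\doubleoverline{a_{1,r}}\,e_{r+1,n}$ separates $J_2$ from $J_n$, while the join factor $\partial\Delta_{\sigma_2\cup\sigma_n}$ of $x$ straddles it, so the block-by-block factorisation of the pairing you invoke requires a Leibniz decomposition $\partial\Delta_{\sigma_2\cup\sigma_n}=\partial\Delta_{\sigma_2}*\Delta_{\sigma_n}\pm\Delta_{\sigma_2}*\partial\Delta_{\sigma_n}$ that you do not carry out. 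The paper's surgery argument sidesteps both problems, and the replacement of $x$ by $x'$ there is not the cosmetic normalisation your parenthetical suggests but the mechanism that removes the cancelling simplices you would otherwise have to control.
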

	\begin{proof}

		For any $[\omega]\in \langle \alpha_1, \ldots, \alpha_n \rangle$, we consider a corresponding cocycle $\omega \in C^{p_1 + \cdots + p_n +1}(\mathcal{K}_{J_1 \cup \cdots \cup J_n})$  with the cycle $x$ from Construction~\ref{cons: joins x} and aim to show that $S_{\omega}\cap S_{x}$ contains only one simplex.
		This implies that $\omega(x)$ is non-zero, and therefore $[\omega]\neq0$.
		%in Lemma~\ref{lem: joins cycle x} 
		
		First we define a subcollection of simplices in $S_\omega$.
		Let $(a_{i,k})$ be any defining system of $\langle \alpha_1, \ldots, \alpha_n \rangle$.
		%Recall that $a_{i,i}=a_i$ and for $a_{i,k}\in C^{p_i+\cdots + p_k}(\mathcal{K}_{J_i\cup \cdots \cup J_k})$, $d(a_{i,k})=\sum_{r=i}^{k-1} \doubleoverline{a_{i,r}}a_{r+1,k}$.
		Let $S_{a_{i,k}}$ be the support of $a_{i,k}$ so that
		%of $(p_i+\cdots + p_k)$-simplices such that 
		\begin{equation*}
			a_{i,k}=\sum_{\sigma \in S_{a_{i,k}}} c_{\sigma} \chi_{\sigma}
		\end{equation*}	
		for non-zero coefficients $c_{\sigma}\in \mathbf{k}$. 
		The image of the coboundary map is a cochain obtained by adding a vertex to the simplices in $S_{a_{i,k}}$. 
		Since $d(a_{i,k})=\sum_{r=i}^{k-1} \doubleoverline{a_{i,r}}a_{r+1,k}$, 
		for any $\sigma_{i,r}\in S_{a_{i,r}}$ and $\sigma_{r+1,k} \in S_{a_{r+1,k}}$ there is a simplex $\sigma\in S_{a_{i,k}}$ and vertex $u_i\in \sigma_{i,r}\cup \sigma_{r+1,k}$ such that
		$\sigma=\sigma_{i,r}\cup \sigma_{r+1,k} \setminus u_i$.
		%there is a simplex $\sigma\in S_{a_{i,k}}$ such that $\sigma=\sigma_{i,r}\cup \sigma_{r+1,k} \setminus u_i$ for  $\sigma_{i,r}\in S_{a_{i,r}}$, $\sigma_{r+1,k} \in S_{a_{r+1,k}}$ and a vertex $u_i\in \sigma_{i,r}\cup \sigma_{r+1,k}$.
		We extend this principle to say that there is a simplex $\sigma\in S_{a_{2,n}}$ such that 
		$\sigma=\sigma_2 \cup \cdots \cup \sigma_n \setminus (u_2 \cup \cdots \cup u_{n-1})$  
		for $\sigma_i\in \widetilde{S}_{a_i}=S_{a_i}\setminus P_{a_i}$  and vertices $u_i\in \sigma_2 \cup \cdots \cup \sigma_n$ for $2\leqslant i \leqslant n$, $u_i\neq u_j$.
		Let $\omega$ be the associated cocycle for this defining system, 
		\[
		\omega=\sum_{\tau \in S_{\omega} } c_\tau \chi_\tau
		\]
		for non-zero coefficients $c_{\tau}\in \mathbf{k}$.
		The support of the first summand $\doubleoverline{a}_1a_{2,n}$ of $\omega$ contains a simplex of the form
		\begin{equation}\label{eq: simplex supporting omega}
			\tau=\sigma_1 \cup \sigma_2 \cup \cdots \cup \sigma_{n-1} \cup \sigma_n \setminus (u_2 \cup \cdots \cup u_{n-1})
		\end{equation}
		for $\sigma_1\in S_{a_1}$, $\sigma_i\in \widetilde{S}_{a_i}$ and vertices $u_i\in \sigma_2 \cup \cdots \cup \sigma_n$ for $2\leqslant i \leqslant n$, $u_i\neq u_j$.
		Hence $\tau\in S_\omega$.
		
		We compare the simplices $\tau\in S_\omega$ in \eqref{eq: simplex supporting omega} and $\sigma\in S_x$ in \eqref{eq: simplices of x joins} where
		\[\sigma=\tilde{\sigma}_1 \cup \sigma_2 \cup \cdots \cup \sigma_{n-1} \cup \sigma_n'\setminus (w_2 \cup \cdots \cup w_{n-1})
		\]
		for $\tilde{\sigma}_1\in S_{x_1}$, $\sigma_i \in \widetilde{S}_{a_i}$ for $i\in \{2, \ldots, n-1\}$, $\sigma_n'\in P_{a_n}$ and a choice of vertices $w_2 \in \sigma_2 \cup \sigma_n$,  $w_i \in \sigma_i$ for $3\leqslant i \leqslant n-1$. %revision: removed "=" from $i=\in \{2, \ldots, n-1\}$.
		For $\sigma_1\in S_{a_1}$ and $\sigma_i\in \widetilde{S}_{a_i}$ for $2\leqslant i \leqslant n$, the simplex $\sigma_1\cup \cdots \cup \sigma_n\in \mathcal{K}$ was not removed by star deletion in Construction~\ref{cons: simplicial complex for n-Massey}.
		Both $\tau$ and $\sigma$ are $(p_1+\cdots +p_n+1)$-dimensional faces of $\sigma_1\cup \cdots \cup \sigma_n$.
		If there is no $\tau\in S_\omega$ and $\sigma\in S_x$ such that $\tau=\sigma$, then
		there is a cochain $b \in C^{p_1+\cdots +p_n}(\mathcal{K})$ whose support consists of $(p_1+\cdots+p_n)$-simplices contained in $\sigma_1\cup \cdots \cup \sigma_n$ and the support of $d(b)$ contains both $\tau$ and $\sigma$. 
		Let $\omega'=\omega+c_{\tau}c_{d(b),\sigma}d(b)$ where $c_{\tau}$ is the coefficient of $\tau\in S_{\omega}$ and $c_{d(b),\sigma}$ is the coefficient of $\sigma\in S_{d(b)}$. 
		Then $S_{\omega'}$ contains  $\sigma$ and does not contain $\tau$.
		Therefore $\sigma\in S_{\omega'}\cap S_x$. 
		However there could be other simplices in $S_{\omega'}\cap S_x$ that cancel, so we cannot conclude that $\omega'(x)$ is non-zero.
		To resolve this, we change the representatives of $[\omega]$ and $[x]$ so that there is only one term in their evaluation. 
		
		Suppose that there is $\tau'\in S_{\omega'}\cap S_x$, $\tau\neq\tau'$. 
		%If $\tau'\neq \sigma_1 \cup \cdots \cup \sigma_n \setminus (v_2 \cup \cdots \cup v_{n-1})$ for $\sigma_1\in S_{a_1}$, $\sigma_i\in \widetilde{S}_{a_i}$ for $2\leqslant i < n$, $\sigma_n\in P_{a_n}$ and $v_i\in \sigma_i$ as set in Construction~\ref{cons: simplicial complex for n-Massey}, then $\link_{\mathcal{K}}(\tau')\neq \myempty$. ... rethink.
		%The link of tau is non-empty for everything except maybe the special one....    
		If $\link_{\mathcal{K}}(\tau')\neq \myempty$, then there is a $(p_1+\ldots+p_k+2)$-dimensional
		simplex $A \in \mathcal{K}_{J_1 \cup \cdots \cup J_n}$ containing $\tau'$ in its boundary.
		Suppose that $S_{\omega'}$ does not contain an additional face of $A$. Then replace $x$ by $x'$, where the simplex $\tau'\in S_x$ is replaced by the $(p_1+\ldots+p_k+1)$-simplices in $\partial(A)\setminus \tau'$ to form $S_{x'}$  as illustrated in Figure~\ref{fig: x and x' in joins}. Therefore $x'$ is the cycle $x-c_{\tau'} \, \epsilon(v, A)\, \partial(\Delta_A)$, where $c_{\tau'}$ is the coefficient of the summand $\Delta_{\tau'}$ in $x$, $v$ is the vertex such that $v\cup \tau'=A$, and $\epsilon(v, A)$ is the coefficient of $\Delta_\tau$ in $\partial(\Delta_A)$. Thus $[x]=[x']$ and $\tau'\notin S_{\omega'} \cap S_{x'}$.
		
		%We construct a cocycle $\omega''$ and a cycle $x'$ such that $[\omega'']=[\omega']$, $[x']=[x]$, and $\omega''(x')\neq 0$. 
		%Set an order on the simplices $T_x$ such that the simplex $s$ in \eqref{eq: special simplex} is the last in this order. 
		%We work inductively to remove simplices in $S_{\omega'}\cap T_x$ so that $S_{\omega''}\cap T_{x'}$ contains only the simplex $s$. Let us start at the first simplex $\tau\in T_x$. 
		
		%Suppose that both $T_x$ and $S_{\omega'}$ contain $\tau\neq s$. Then there is a non-zero term in the evaluation of $\omega'$ on $x$.  
		%The link of  $\tau$ is non-empty, since $\tau\neq s$ and because $\mathcal{K}$ was constructed from the join of simplicial complexes. 
		%So there is a $(p_1+\ldots+p_k+2)$-dimensional simplex $A \in \mathcal{K}_{J_1 \cup \cdots \cup J_n}$ containing $\tau$ in its boundary. 

		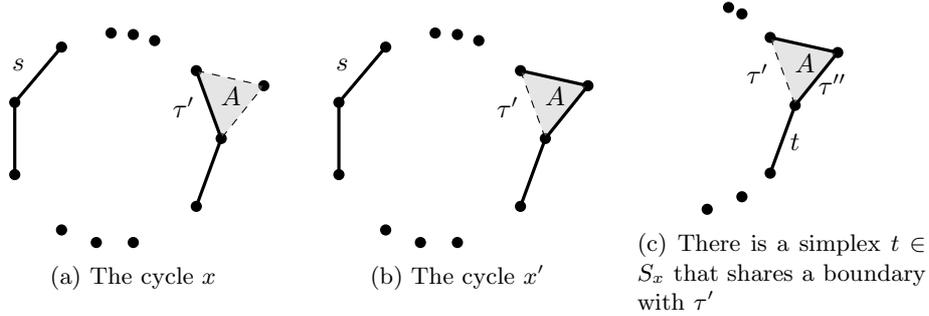
\begin{figure}[ht]
			\centering
			\begin{minipage}{0.3\textwidth}
				\centering
				\begin{tikzpicture}[scale=1.4, inner sep=2mm]
				\foreach \i in {1, 2, ..., 5, 1.7, 2.3, 6, 6.5, 7, 8, 9}
				\fill (\i*360/9:1) coordinate (\i) circle(1.5 pt);
				
				\fill (1.4,0.5) coordinate (a) circle(1.5 pt);
				
				\fill[lightgray, fill opacity=0.4](1) --(a)--(9) --(1);
				\draw[very thick] (3) -- (4) --(5); 
				\draw[very thick] (8)--(9) --(1)--(1.7);
				\draw[dashed] (1) --(a)--(9);
				
				%Labels
				\coordinate (s) at (-0.7,0.7);
				\draw (s) node[left ] {$s$};
				
				\coordinate (A) at (0.85, 0.4);
				\draw (A) node[right ] {$A$};
				
				\coordinate (t) at (0.9, 0.3);
				\draw (t) node[left ] {$\tau'$};
				
				\end{tikzpicture}
				\subcaption{The cycle $x$}
			\end{minipage}
			\quad
			\begin{minipage}{0.3\textwidth}
				\centering
				\begin{tikzpicture}[scale=1.4, inner sep=2mm]
				\foreach \i in {1, 2, ..., 5, 1.7, 2.3, 6, 6.5, 7, 8, 9}
				\fill (\i*360/9:1) coordinate (\i) circle(1.5 pt);
				
				\fill (1.4,0.5) coordinate (a) circle(1.5 pt);
				
				\fill[lightgray, fill opacity=0.4](1) --(a)--(9) --(1);
				\draw[very thick] (3) -- (4) --(5); 
				\draw[very thick] (8)--(9);
				\draw[very thick] (1) --(a)--(9);
				\draw[dashed] (1)--(9);
				
				%Labels
				\coordinate (s) at (-0.7,0.7);
				\draw (s) node[left ] {$s$};
				
				\coordinate (A) at (0.85, 0.4);
				\draw (A) node[right ] {$A$};
				
				\coordinate (t) at (0.9, 0.3);
				\draw (t) node[left ] {$\tau'$};
				
				\end{tikzpicture}
				\subcaption{The cycle $x'$}
			\end{minipage}
			\quad
			\begin{minipage}{0.3\textwidth}
				\centering
				\begin{tikzpicture}[scale=1.4, inner sep=2mm]
				\foreach \i in {1, 1.5, 1.7, 7, 7.5, 8, 9}
				\fill (\i*360/9:1) coordinate (\i) circle(1.5 pt);
				
				\fill (1.4,0.5) coordinate (a) circle(1.5 pt);
				
				\fill[lightgray, fill opacity=0.4](1) --(a)--(9) --(1);
				\draw[very thick] (8)--(9);
				\draw[very thick] (1) --(a)--(9);
				\draw[dashed] (1)--(9);
				
				%Labels
				\node at (1.35,0.2){$\tau''$};
				\node at (1.0, -0.35){$t$};
				
				\coordinate (A) at (0.85, 0.4);
				\draw (A) node[right ] {$A$};
				
				\coordinate (t) at (0.9, 0.3);
				\draw (t) node[left ] {$\tau'$};
				
				\end{tikzpicture}
				\subcaption{There is a simplex $t\in S_x$ that shares a boundary with $\tau'$} \label{fig: tau cap tau' contained in t}
			\end{minipage}
			\caption{If the link of $\tau'$ is non-empty, then the cycle $x$ can be changed to $x'$}
			\label{fig: x and x' in joins}
		\end{figure}
		
		Alternatively, suppose that $\link_{\mathcal{K}}(\tau')= \myempty$, or 
		$\link_{\mathcal{K}}(\tau')\neq \myempty$ and $S_{\omega'}$ contains an additional face $\tau''$ of $A$.
		Since $x$ is a cycle, there is another simplex $t\neq \tau'\in S_x$ such that $\tau'\cap t \neq \myempty$ %\cap\tau'' \subset t$ 
		(as shown in Figure~\ref{fig: tau cap tau' contained in t}).
		Let $\omega''=\omega'-c_{\tau'}\; \varepsilon(\tau' \setminus \tau'\cap t, \tau')\; d(\chi_{\tau'\cap t})$  where $c_{\tau'}$ is the coefficient of the summand $\chi_{\tau'}$ in $\omega'$ and $\varepsilon(\tau' \setminus \tau'\cap t, \tau')$ is its coefficient in $d(\chi_{\tau'\cap t})$. 
		So $[\omega'']=[\omega']$ and $S_{\omega''}$ contains $t$ but $S_{\omega''}\cap S_x$ does not contain ${\tau'}$.
		
		%Alternatively, suppose $S_{\omega'}$ contains an additional face $\tau''$ of $A$. Since $x$ is a cycle, there is another simplex $t\in S_x$ such that $\tau\cap\tau' \subset t$ (as shown in Figure~\ref{fig: tau cap tau' contained in t}). If $t<\tau$ in the order on $S_x$, then again we create the cycle $x'$ from $x$ by replacing $\tau$ in $S_x$ by  the $(p_1+\ldots+p_k+1)$-simplices in $\partial(A)\setminus \tau$. Then $\tau$ is not  contained in $x'$. By induction, $\tau'\notin S_x$, otherwise we would have considered it before $\tau$. Similarly since $t<\tau$, the induction process means that $t\notin S_{\omega'}$.
		
		%On the other hand, suppose $S_{\omega'}$ contains another face $\tau'$ of $A$ and $t>\tau$ in the order  on  $S_x$. Then $t\in \st_{\mathcal{K}_{J_1\cup \cdots \cup J_n}}(\tau\cap \tau')$. 
		%Let $\omega''=\omega'-c_\tau\; \varepsilon(\tau \setminus \tau\cap \tau', \tau)\; d(\chi_{\tau\cap \tau'})$ with a coefficient $-c_\tau$ where $c_\tau$ is the coefficient of the summand $\chi_\tau$ in $\omega'$ and $\varepsilon(\tau \setminus \tau\cap \tau', \tau)$ is its coefficient in $d(\chi_{\tau\cap \tau'})$. 
		%So $\omega''$ does not contain a summand $\chi_\tau$, but does have a summand $\chi_t$, and $[\omega'']=[\omega']$. We continue the induction process on the next simplex in $S_x$.
		
		By this process of replacing simplices in the intersection of the supports one-by-one, we obtain a cocycle $\omega'\in C^{p_1 + \ldots + p_n +1}(\mathcal{K}_{J_1 \cup \cdots \cup J_n})$ and a cycle $x'\in C_{p_1 + \ldots + p_n +1}(\mathcal{K}_{J_1 \cup \cdots \cup J_n})$ such that $[\omega']=[\omega]$, $[x']=[x]$ and $S_{\omega'}\cap S_{x'}$ contains only one simplex.
		Thus $\omega'(x')\neq 0$, and so $[\omega']=[\omega]$ is non-zero.
		%	Finally we come to a cocycle $\omega''$ and a cycle $x'$ such that $S_{\omega''} \cap S_{x'}=s$. 
		%Since the cycle $x$ only depended on particular choices of $\sigma_i$ for all $i\in \{1, \ldots, n\}$, and since the link of the simplex $s$ in $\mathcal{K}_{J_1 \cup \cdots \cup J_n}$ is empty, the induction process terminates. 
		%The evaluation of $\omega''$ on $x'$ only has one non-zero term, which is supported by the simplex $s$. Thus $\omega''(x')\neq 0$, and so $[\omega'']=[\omega']$ is non-zero.
	\end{proof}
	
	%first verify that there is at least one non-zero cohomology class in $\langle \alpha_1, \ldots, \alpha_n \rangle$. 
	%This non-zero element will  for any other element 
	%Then we show that any other element of $\langle \alpha_1, \ldots, \alpha_n \rangle$ is also non-zero.

	%As in Section~\ref{sect: combinatorial Massey products}, the above lemma shows that the cohomology class 
	%\[
	%[\omega]\in H^{p_1+\cdots+p_n+|J_n\cup \cdots \cup J_n|+2}(\mathcal{Z}_\mathcal{K})
	%\]
	%is non-zero. It remains to show that every other element of the Massey product $\langle \alpha_1, \ldots, \alpha_n \rangle$ is also non-zero.

	%In summary, Proposition~\ref{prop: joins Massey defined}, Lemma~\ref{lem: omega non-zero joins} and Proposition~\ref{prop: joins Massey non-trivial}  prove the following theorem.
	
	By combining Propositions~\ref{prop: joins Massey defined} and \ref{prop: joins Massey non-trivial}, we have proved the main theorem.
	
	\begin{theorem}\label{thm: joins}
		For $i\in \{1, \ldots, n\}$, let $\mathcal{K}^i$ be a simplicial complex on $[m_i]$ that is not an $(m_i-1)$-simplex.
		Then there exists a simplicial complex $\mathcal{K}$, obtained by performing star deletions on $\mathcal{K}^1 * \cdots * \mathcal{K}^n$, with a non-trivial $n$-Massey product in $H^*(\mathcal{Z}_\mathcal{K})$. \qed
	\end{theorem}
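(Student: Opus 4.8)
The plan is to apply Construction~\ref{cons: simplicial complex for n-Massey} to the given complexes $\mathcal{K}^1, \ldots, \mathcal{K}^n$ to produce $\mathcal{K}$, and then to establish separately the two defining properties of a non-trivial $n$-Massey product: that $\langle \alpha_1, \ldots, \alpha_n \rangle \subset H^*(\mathcal{Z}_\mathcal{K})$ is defined, and that $0 \notin \langle \alpha_1, \ldots, \alpha_n \rangle$. Since no $\mathcal{K}^i$ is a full simplex, Theorem~\ref{thm: full Hochster's} supplies a nonzero class $\alpha_i \in \widetilde{H}^{p_i}(\mathcal{K}^i_{J_i})$, hence a nonzero class in $H^*(\mathcal{Z}_\mathcal{K})$; fixing a cocycle $a_i$ supported on $p_i$-simplices $S_{a_i}$ and forming the auxiliary sets $P_{a_i}$, one lets $\mathcal{K}$ be the star deletion of $\mathcal{K}^1 * \cdots * \mathcal{K}^n$ at all $\sigma_i \cup \sigma_k$ with $\sigma_i \in S_{a_i}$, $\sigma_k \in P_{a_k}$, $1 \le i < k \le n$, $(i,k) \ne (1,n)$; the lemmas of Section~\ref{section:combinatorial operations} guarantee $\mathcal{K}$ is well defined.

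For the claim that the product is \emph{defined} I would exhibit an explicit defining system, working in $\bigoplus_J C^*(\mathcal{K}_J)$ rather than in $C^*(\mathcal{Z}_\mathcal{K})$; this is legitimate since Theorem~\ref{thm: full Hochster's} identifies defining systems on the two sides after the sign adjustment of Definition~\ref{def: degree of elements in C(KJ)}. The cochain $a_{i,k}$ of \eqref{eq: a_ik joins} is built by gluing the supports $S_{a_i}, \widetilde{S}_{a_{i+1}}, \ldots, \widetilde{S}_{a_k}$ and contracting one chosen vertex $v_\ell$ from each later block, with the sign $\theta_{i,k}$ of \eqref{eq: theta_ik}. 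Two things must be checked: that the resulting simplices lie in $\mathcal{K}$ --- which is exactly what the star deletions were designed to achieve, since the contracted faces survive while any $\sigma_i \cup (j \cup \sigma_\ell \setminus v_\ell)$ with $j \cup \sigma_\ell \setminus v_\ell$ a $p_\ell$-simplex in $P_{a_\ell}$ has been deleted --- and that $d(a_{i,k}) = \sum_{r=i}^{k-1} \doubleoverline{a_{i,r}} a_{r+1,k}$. The coboundary computation sorts the vertices that may be added to a face of $a_{i,k}$ into those in the first block $J_i$ (which cancel since $a_i$ is a cocycle), those in a later block $J_\ell \setminus \sigma_\ell$ (killed because the enlarged simplex was star deleted), and those equal to a contracted vertex $v_{r+1}$ (which recombine into the product terms); matching $\theta_{i,k}$ against $\theta_{i,r}\theta_{r+1,k}$ with the attendant $\varepsilon$-factors is the main bookkeeping. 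This is Proposition~\ref{prop: joins Massey defined}.

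For non-triviality I would build, as in Construction~\ref{cons: joins x}, a cycle $x$ in $C_{p_1 + \cdots + p_n + 1}(\mathcal{K}_{J_1 \cup \cdots \cup J_n})$ assembled from a cycle $x_1$ detecting $\alpha_1$, the boundary sphere $\partial(\sigma_2 \cup \sigma_n)$ (available after the star deletion of $\sigma_2 \cup \sigma_n$), and the boundary spheres $\partial\sigma_i$ for $3 \le i \le n-1$; a direct boundary computation, arranged so that each partial sum is a multiple of some $\partial(x_i) = 0$, shows $x$ is a cycle. Given any $[\omega] \in \langle \alpha_1, \ldots, \alpha_n \rangle$, an associated cocycle $\omega$ has a summand $\tau$ of the form \eqref{eq: simplex supporting omega}, a codimension-one face of a surviving simplex $\sigma_1 \cup \cdots \cup \sigma_n$, matching the shape of a generator of $S_x$. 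One then iteratively \emph{cleans} the intersection $S_\omega \cap S_x$, modifying $\omega$ by coboundaries $d(\chi_{\tau' \cap t})$ and $x$ by boundaries of top simplices, one unwanted simplex at a time, using that $\mathcal{K}$ restricted to each $\sigma_1 \cup \cdots \cup \sigma_n$ is a full simplex and that $x$ is a cycle, until the supports meet in exactly one simplex; then $\omega'(x') \ne 0$, so $[\omega] \ne 0$. This is Proposition~\ref{prop: joins Massey non-trivial}, and combining the two propositions gives the theorem.

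The main obstacle is the sign bookkeeping in verifying that $(a_{i,k})$ is a defining system, together with arguing carefully that the clean-up procedure in the non-triviality step terminates and leaves precisely one simplex common to the two supports.
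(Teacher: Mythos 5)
Your proposal is correct and follows essentially the same route as the paper: it constructs $\mathcal{K}$ via Construction~\ref{cons: simplicial complex for n-Massey}, exhibits the explicit defining system \eqref{eq: a_ik joins} and verifies $d(a_{i,k})=\sum_{r}\doubleoverline{a_{i,r}}a_{r+1,k}$ exactly as in Proposition~\ref{prop: joins Massey defined}, and then detects non-triviality with the cycle of Construction~\ref{cons: joins x} together with the support clean-up argument of Proposition~\ref{prop: joins Massey non-trivial}. No genuinely different ideas and no gaps beyond the sign bookkeeping you already flag.
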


	\begin{exmp}
		For $i=1,2,3$, let $\mathcal{K}^i$ be the simplicial complexes as in Example~\ref{ex: joins construction example} and let 
		\[
		\mathcal{K}=\sd_{\{5,8\}}\sd_{\{4,8\}}\sd_{\{3,8\}}\sd_{\{1,6\}}\sd_{\{1,5\}}\sd_{\{1,4\}} \mathcal{K}^1*\mathcal{K}^2*\mathcal{K}^3.
		\]
		Suppose that $a_1=\chi_1\in C^0(\mathcal{K}^1)$, $a_2=\chi_3+\chi_4+\chi_5\in C^0(\mathcal{K}^2)$, $a_3=\chi_7\in C^0(\mathcal{K}^3)$. 
		Then $S_{a_1}=\{1\}$, $S_{a_2}=\{\{3\}, \{4\}, \{5\}\}$, $S_{a_3}=\{\{7\}\}$ and $P_{a_2}=\{4,5,6\}$, $P_{a_3}=\{8\}$.
		The rest of the defining system constructed in \eqref{eq: a_ik joins} is
		\begin{align*}
			&a_{1,2}=\theta_{1,2} \chi_{1}=-\chi_1 \\
			&a_{2,3}=\theta_{2,3} (\chi_3 +\chi_4 +\chi_5)=-(\chi_3 +\chi_4 +\chi_5).
		\end{align*}
		The associated cocycle $\omega$ for this defining system is
		\[
		\omega=-\chi_1 (\chi_3 +\chi_4 +\chi_5) - \chi_{1} \chi_7.
		\]
		Therefore $\omega\in C^1(\mathcal{K})$ evaluates non-trivially on the $1$-cycle $x=\Delta_{\{1,3\}}-\Delta_{\{2,3\}}+\Delta_{\{2,8\}}-\Delta_{\{1,8\}}$.
		Another defining system could have $a_{2,3}'=\chi_8+\chi_6+\chi_7$.
		Then the associated cocycle $\omega'$ for this defining system is given by
		\[
		\omega'= \chi_1(\chi_6+\chi_7+\chi_8)  + - \chi_{1} \chi_7=\chi_{17}+\chi_{18}-\chi_{17}=\chi_{18}.
		\]
		Thus $\omega'$ also evaluates non-trivially on $x$. By Proposition~\ref{prop: joins Massey non-trivial}, the associated cocycle of any defining system evaluates non-trivially on some cycle.
		Hence $\langle [a_1], [a_2], [a_3] \rangle\subset H^{10}(\mathcal{Z}_\mathcal{K})$ is a non-trivial Massey product.
	\end{exmp}

	Two particular examples of Theorem~\ref{thm: joins} are the families of Baskakov and Limonchenko.
	
	\begin{exmp}[Baskakov's family \cite{Baskakov}] \label{ex: Recovering Baskakov's triple Massey}
		For $i=1,2,3$, let $\mathcal{K}^i$ be a triangulation of a $(n_i-1)$-sphere on $[m_i]$.
		Let $\sigma_1\in \mathcal{K}^1$, $\sigma_2, \sigma_2'\in \mathcal{K}^2$, $\sigma_3\in \mathcal{K}^3$ be maximal simplices such that $\sigma_2$ and $\sigma_2'$ are adjacent, that is, there is a vertex $v_{2'}\in \mathcal{K}^2$ such that $(\sigma_2\cap \sigma_2') \cup v_{2'} =\sigma_2'$. 
		Similarly, let $\sigma_3'\in \mathcal{K}^3$ be a maximal simplex adjacent to $\sigma_3$ so that there exists a vertex $v_{3'}\in \mathcal{K}^3$ such that $(\sigma_3\cap \sigma_3') \cup v_{3'} = \sigma_3'$.
		Let $a_1=\chi_{\sigma_1}$, $a_2=\chi_{\sigma_{2'}}$, and $a_3=\chi_{\sigma_{3'}}$ be cocycle representatives of $\alpha_i\in \widetilde{H}^{n_i-1}(\mathcal{K}^i)$ for $i=1,2,3$.
		Baskakov \cite{Baskakov} constructed $\mathcal{K}'=\stellar_{\{\sigma_1,\sigma_{2'}\}} \stellar_{\{\sigma_2, \sigma_3 \}} \mathcal{K}^1*\mathcal{K}^2*\mathcal{K}^3$ and showed that $\langle \alpha_1, \alpha_2, \alpha_3 \rangle$ is a non-trivial Massey product in $H^*(\mathcal{Z}_\mathcal{K})$ where $\mathcal{K}$ is the restriction of $\mathcal{K}'$ to the vertex set $[m_1]\cup [m_2]\cup [m_3]$.
		Since $\mathcal{K}=\sd_{\{\sigma_1,\sigma_{2'}\}} \sd_{\{\sigma_2, \sigma_3 \}} \mathcal{K}^1*\mathcal{K}^2*\mathcal{K}^3$,
		Theorem~\ref{thm: joins} recovers Baskakov's family of examples of non-trivial triple Massey products in $H^*(\mathcal{Z}_\mathcal{K})$. 
		%Construction~\ref{cons: simplicial complex for n-Massey} produces the same family of simplicial complexes as given by Baskakov.
		The simplest example when $\mathcal{K}^1, \mathcal{K}^2, \mathcal{K}^3$ are $S^0$ is shown in Figure~\ref{fig: baskakov's first example} and its restriction to the original $6$ vertices is in Figure~\ref{fig: example of s} after swapping the labels $\sigma_3$, $\sigma_{3'}$
		%By Theorem~\ref{thm: joins}, we recover Baskakov's family of examples of non-trivial triple Massey products in $H^*(\mathcal{Z}_\mathcal{K})$. 
	\end{exmp}

	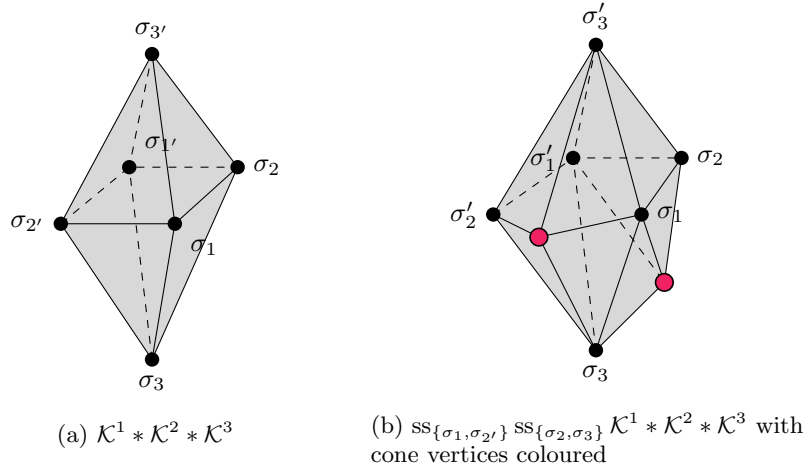
\begin{figure}[ht]
		\centering
		\begin{minipage}{0.45\textwidth}
			\centering
			\begin{tikzpicture} [scale=1.5, inner sep=2mm]
			\coordinate (4) at (0,0);
			\coordinate (1) at (1,0);
			\coordinate (2) at (0.6,0.5);
			\coordinate (3) at (1.55,0.5);
			\coordinate (6) at (0.8,1.5);
			\coordinate (5) at (0.8,-1.2);
			
			% 2-simplices
			\fill[lightgray, fill opacity=0.6] (6)--(4)--(5)--(3)--(6);
			
			\draw (3) node[right] {$\sigma_{2}$} -- (1) node[below right] {$\sigma_1$} --  (4)--(6) node[above] {$\sigma_{3'}$}; 
			\draw (4)  node[left] {$\sigma_{2'}$} -- (5) node[below] {$\sigma_{3}$} -- (3); \draw (1) -- (5);
			\draw[] (1)--(6);
			\draw[] (3)--(6);
			\draw[dashed] (4)-- (2) node[above right] {$\sigma_{1'}$}-- (3);
			\draw[dashed] (5)--(2)--(6);
			
			\foreach \i in {1,...,6} {\fill (\i) circle (1.8pt);}; 
			\end{tikzpicture} 
			\subcaption{$\mathcal{K}^1*\mathcal{K}^2*\mathcal{K}^3$} 
			\label{fig: baskakov's octahedron}
		\end{minipage}
		\begin{minipage}{0.45\textwidth}
			\centering
			\begin{tikzpicture} [scale=1.5, inner sep=2mm]
			\coordinate (4) at (-0.1,0);
			\coordinate (1) at (1.2,0);
			\coordinate (2) at (0.6,0.5);
			\coordinate (3) at (1.55,0.5);
			\coordinate (5) at (0.8,1.5);
			\coordinate (6) at (0.8,-1.2);
			\coordinate (7) at (0.3,-0.2);
			\coordinate (8) at (1.4, -0.6);
			
			\fill[lightgray, fill opacity=0.6] (6)--(4)--(5)--(3)--(8)--(6);

			\draw (1)-- (3) -- (5) -- (4)  -- (6)  -- (1)--(5)--(7)--(6)--(8)--(3); 
			\draw (4)--(7)--(1)--(8);
			\draw[dashed] (4)-- (2) -- (3);
			\draw[dashed] (5)--(2)--(6);
			\draw[dashed] (2)--(8);
			
			\node[right] at (1) {$\sigma_1$};
			\node[left] at (2) {$\sigma_1'$};
			\node[right] at (3) {$\sigma_2$};
			\node[left] at (4) {$\sigma_2'$};
			\node[above] at (5) {$\sigma_3'$};
			\node[below] at (6) {$\sigma_3$};
			
			\foreach \i in {1,...,8} {\fill (\i) circle (1.8pt);}; 
			\foreach \i in {7,8} {\fill (\i) circle (2.4pt);}; 
			\foreach \i in {7,8} {\fill[colour3] (\i) circle (2pt);}; 
			\end{tikzpicture} 
			\subcaption{$\stellar_{\{\sigma_1,\sigma_{2'}\}} \stellar_{\{\sigma_2, \sigma_3 \}} \mathcal{K}^1*\mathcal{K}^2*\mathcal{K}^3$ with cone vertices coloured} \label{fig: baskakov's first example1}
		\end{minipage}
		\caption{The simplest example of both Baskakov and Limonchenko's families of non-trivial Massey products in moment-angle complexes}
		\label{fig: baskakov's first example}
	\end{figure}

	\begin{exmp}[Limonchenko's family \cite{Limonchenko}] \label{ex: Recovering Limonchenko's n-cubes}
		Let $F$ be a face of a polytope $P$ and suppose that there is a hyperplane $H$ that does not include any vertices of $P$ but separates the vertices of $F$ from the other vertices in $P$.
		If $H_1$, $H_2$ are the half spaces defined by $H$ and $F\subset H_2$, then the polytope $P\cap H_1$ is called a \textit{truncation} of $P$ at $F$.
		%Let $I^n=I\times \cdots \times I$ be the unit $n$-cube with opposite facets labelled $F_l$, $F_{l'}$ for $l=1, \ldots, n$. 
		A family of non-trivial $n$-Massey products is constructed by truncating  the unit $n$-cube $I^n=I\times \cdots \times I$ as follows.
		Suppose that opposite facets of $I^n$ are labelled $F_l$, $F_{l'}$ for $l=1, \ldots, n$.  
		The boundary of the dual $\mathcal{K}=\mathcal{K}_{I^n}=\partial (I^n)^*$ is the join of $n$ copies of $S^0$, for example $\mathcal{K}_{I^3}$ is shown in Figure~\ref{fig: baskakov's octahedron}.
		To create a non-trivial $n$-Massey product, Limonchenko~\cite[Construction~1]{Limonchenko} truncated $I^n$ at the intersection of facets $F_i$ and $F_{k'}$ for $1\leqslant i< k\leqslant n$, $(i,k)\neq(1,n)$.
		For example see Figure~\ref{fig: truncated cube}.
		These truncations correspond to stellar subdividing $\mathcal{K}_{I^n}$ at the edges $\sigma_i\cup \sigma_{k'}$, where $\sigma_l, \sigma_{l'}\in\mathcal{K}_{I^n}$ are the vertices that are dual to the facets $F_l$, $F_{l'}$ in $I^n$. 
		Let $\mathcal{K}$ be the restriction of the stellar subdivided complex to the $2n$ vertices $\sigma_l, \sigma_{l'}$ for $l=1, \ldots, n$, and let $\alpha_l$ be the generator of $\widetilde{H}^0(\mathcal{K}_{\sigma_l, \sigma_{l'}})$.
		Limonchenko showed that the $n$-Massey product $\langle \alpha_1, \ldots, \alpha_n \rangle\subset H^*(\mathcal{Z}_\mathcal{K})$ is non-trivial.
		Since this construction is recovered by star deleting $K_{I^n}$ as described in Construction~\ref{cons: simplicial complex for n-Massey}, Theorem~\ref{thm: joins} gives an alternative proof that $\langle \alpha_1, \ldots, \alpha_n \rangle$ is non-trivial.
	\end{exmp}
	
	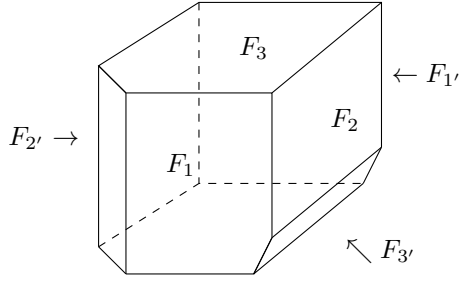
\begin{figure}
		\centering
		\begin{tikzpicture}[scale=1.2]
		\coordinate (a) at (0,0);
		\coordinate (b) at (2,0);
		\coordinate (c) at (0,2);
		\coordinate (d) at (2,2);
		\coordinate (e) at (1.2,1);
		\coordinate (f) at (3.2,1);
		\coordinate (g) at (1.2,3);
		\coordinate (h) at (3.2,3);
		
		\coordinate (a1) at (0.4,0);
		\coordinate (a2) at (0.1,0.3);
		\coordinate (c1) at (0.4,2);
		\coordinate (c2) at (0.1,2.3);
		
		\coordinate (b1) at (1.8,0);
		\coordinate (b2) at (2,0.4);
		\coordinate (f1) at (3,1);
		\coordinate (f2) at (3.2,1.4);
		%	\coordinate (top) at (0, 3.55);
		%	\coordinate (bottom) at (0, -0.5);
		
		\draw[dashed] (a2)--(e)--(f1);
		\draw[dashed] (g)--(e);
		\draw (f2)--(h)--(g)--(c2)--(c1)--(d)--(b2)--(b1)--(a1);
		\draw (h)--(d);
		\draw (a1)--(c1); \draw (a2)--(c2);
		\draw (b1)--(f1); \draw (b2)--(f2);
		\foreach \i in {a,b, c, f} {\draw (\i1)--(\i2);};

		\node at (1.0,1.2){$F_{1}$}; 
		\node at (2.8,1.7){$F_{{2}}$}; 
		\node at (1.8,2.5){$F_{3}$};
		\node at (1.95,1.2){}; 
		\node at (0.6,2.5){};
		\node at (-0.5,1.5){$F_{2'}\rightarrow$}; 
		\node at (3.7,2.2){$\leftarrow F_{{1'}}$};
		\node at (3.2,0.25){$\nwarrow F_{{{3'}}}$};
		\end{tikzpicture} 
		\caption{A $3$-cube truncated at the faces $F_{1}\cap F_{{2'}}$ and $F_{2}\cap F_{{3'}}$, which is dual to Figure~\ref{fig: baskakov's first example1} with the labels $\sigma_3, \sigma_{3'}$ swapped.}
		\label{fig: truncated cube}
	\end{figure}
	
	Theorem~\ref{thm: joins} does not just give alternative proofs of existing results about non-trivial Massey products in the cohomology of moment-angle complexes,
	it creates non-trivial $n$-Massey products from any non-zero cohomology classes supported on a full subcomplex of any simplicial complex $\mathcal{K}^i$. 
	Therefore there is no limit on $n$ or the dimension of the classes $\alpha_i$.
	Using this construction it is also possible to construct Massey products on torsion elements.

	\begin{exmp} \label{ex: joins torsion example}
		Let $\mathcal{K}^1$ be a triangulation of $\mathbb{R}P^2$ on $6$ vertices as in Figure~\ref{fig: RP2 triangulation}. 
		Let $\mathcal{K}^2$, $\mathcal{K}^3$ be copies of two disjoint vertices labelled $6, 7$ and $8, 9$, respectively.
		Let $\alpha_1\in \widetilde{H}^2(\mathcal{K}^1)$ be represented by $\chi_{012}$. For $i=2,3$, let $\alpha_i \in \widetilde{H}^0(\mathcal{K}^i)$ be represented by $a_2=\chi_6$ and $a_3=\chi_8$, respectively.
		By Construction~\ref{cons: simplicial complex for n-Massey}, $P_{a_2}=\{ \{7\} \}$ and $P_{a_3}=\{ \{9 \}\}$.
		Then let 
		\[	
		\mathcal{K} = \sd_{\{0127\}} \sd_{\{69\}} \mathcal{K}^1 * \mathcal{K}^2 * \mathcal{K}^3.
		\]
		By Theorem~\ref{thm: joins}, there is a non-trivial triple Massey product $\langle \alpha_1, \alpha_2, \alpha_3 \rangle \subset H^{14}(\mathcal{Z}_\mathcal{K})$. 
		This is the smallest example of a non-trivial triple Massey product on a torsion class since $\mathcal{K}^1$ is the triangulation of $\mathbb{R}P^2$ on the least number of vertices.
		
		Since $\alpha_1$ is the generator of $\widetilde{H}^2(\mathcal{K}^1)\cong \widetilde{H}^2(\mathbb{R}P^2)$, $\alpha_1$ is a torsion element. 
		The associated cocycle for the defining system constructed in \eqref{eq: a_ik joins}
		%constructed in \eqref{eq: form of omega joins} 
		is $\omega=-\chi_{0126}-\chi_{0128}\in C^3(\mathcal{K})$.
		The corresponding class $[\omega]\in \langle \alpha_1, \alpha_2, \alpha_3 \rangle$ is not a torsion element in $H^{14}(\mathcal{Z}_\mathcal{K})$. 
		
		Also, there is a cochain $a_{1,2}'=\chi_{126}+\chi_{124}-\chi_{147}-\chi_{347}+\chi_{037}+\chi_{027}$ such that $d(a_{1,2}')=\chi_{0126}\in C^{3}(\mathcal{K}_{01234567})$, which is different to $a_{1,2}$ constructed in \eqref{eq: a_ik joins}.
		The associated cocycle to this defining system is $\omega'=-\chi_{0126}+\chi_{1268}+\chi_{1248}-\chi_{1478}-\chi_{3478}+\chi_{0378}+\chi_{0278}$ with $[\omega']\neq 0$ and $[\omega]\neq [\omega']$.
		Therefore $\langle \alpha_1, \alpha_2, \alpha_3 \rangle$ has non-trivial indeterminacy.
		In particular, the indeterminacy is given by $\alpha_1 \cdot \widetilde{H}^0(\mathcal{K}_{6789})+\alpha_3 \cdot \widetilde{H}^2(\mathcal{K}_{01234567})=\alpha_3 \cdot \widetilde{H}^2(\mathcal{K}_{01234567})$, where $\widetilde{H}^2(\mathcal{K}_{01234567})\cong~\mathbb{Z}$.
	\end{exmp}

	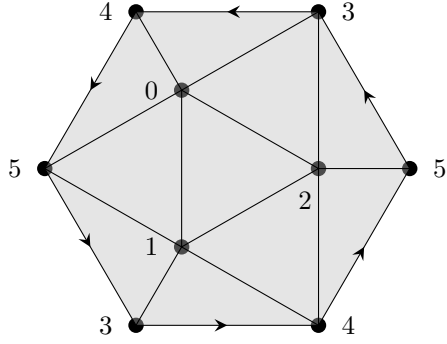
\begin{figure}[ht]
		\centering
		\begin{tikzpicture}	[scale=1.2, inner sep=3mm, decoration={
			markings,
			mark=at position 0.5 with {\arrow[scale=1.5]{stealth}}}
		]
		
		\foreach \i in {1, ..., 6}
		\fill (\i*360/6:2) coordinate (O\i) circle(2.4 pt);
		
		%\foreach \i in {2, 4, 6, 8, 10, 12}
		%\fill (\i*360/12:2) coordinate (M\i) circle(2.4 pt);
		
		\foreach \i in {1,2,3}
		\fill (\i*360/3:1) coordinate (I\i) circle(2.4 pt);

		\fill[lightgray, fill opacity=0.4](O1) --(O2)--(O3) --(O4)--(O5)--(O6)--(O1);
		
		\draw (I1)--(I2)--(I3)--(I1);
		\draw[postaction={decorate}] (O1)--(O2);
		\draw[postaction={decorate}] (O2)--(O3);
		\draw[postaction={decorate}] (O3)--(O4);
		\draw[postaction={decorate}] (O4)--(O5);
		\draw[postaction={decorate}] (O5)--(O6);
		\draw[postaction={decorate}] (O6)--(O1);
		\draw (O3)--(I2)--(O5)--(I3)--(O1)--(I1)--(O3);
		\draw (O6)--(I3); \draw (I1)--(O2); \draw (I2)--(O4);
		
		% Label the vertices
		\node[right] at (O1){$3$};
		\node[left] at (O2){$4$};
		\node[left] at (O3){$5$};
		\node[left] at (O4){$3$};
		\node[right] at (O5){$4$};
		\node[right] at (O6){$5$};
		\node[left] at (I1){$0$};
		\node[left] at (I2){$1$};
		\node[below] at (I3){$2\quad$};
		
		\end{tikzpicture}
		\caption{A $6$-vertex triangulation of $\mathbb{R}P^2$.}
		\label{fig: RP2 triangulation}
	\end{figure}
	
	We also extend Construction~\ref{cons: simplicial complex for n-Massey} by allowing more general star deletions in order to construct more non-trivial Massey products. These often only require a difference in the technical details of the proof of Theorem~\ref{thm: joins} and do not change the nature of the construction.
% 	Technical requirements can be easily relaxed without changing the nature of the construction, resulting in small technical changes in the proof. (relax the assumptions. perform star deletions "in general". Only affect is technical adjustments
	For example, if $\mathcal{K}^i$ is the disjoint union of two vertices $\{i\}$ and $\{i'\}$, then let $\mathcal{K}$ be the simplicial complex that is obtained from $\mathcal{K}^1*\mathcal{K}^2*\mathcal{K}^3*\mathcal{K}^4$ by the sequence of star deletions
	\scriptsize
	\[
	     {\sd_{\{1,4\}}\sd_{\{1,4'\}}\sd_{\{1',4'\}}\sd_{\{2,4\}}\sd_{\{2',4'\}}\sd_{\{2,4'\}}\sd_{\{1,3\}}\sd_{\{1',3'\}}\sd_{\{1,3'\}}\sd_{\{3,4'\}}\sd_{\{2,3'\}}\sd_{\{1,2'\}}}.
	\] \normalsize
	This is a full subcomplex of the icosahedron $\mathcal{I}$ as shown in \cite[Theorem~4.6]{baralic-grbic-limonchenko-vucic}. Also, no obstruction graph from the classification \cite{DenhamSuciu, LowestDegreeClassification} is a full subcomplex of $\mathcal{I}$. In \cite{baralic-grbic-limonchenko-vucic}, this example is given in order to demonstrate a non-trivial $4$-Massey product of lowest-degree classes in $H^*(\mathcal{Z}_\mathcal{I})$ where there are no non-trivial $3$-Massey products of lowest-degree classes in $H^*(\mathcal{Z}_\mathcal{I})$. 
	
	Let us consider another example of more general star deletions. Suppose we have a simplicial complex  $\mathcal{K}$ with $n\geqslant 3$ disjoint subsets of its vertices $J_1, \ldots, J_n\subset V(\mathcal{K})$ such that there are $n$ non-trivial classes $\alpha_i\in \widetilde{H}^{p_i}(\mathcal{K}_{J_i})$. 
	Limonchenko \cite[Lemma~3.3]{Limonchenko_multiwedge} 
	showed that if a Massey product $\langle \alpha_1, \ldots, \alpha_n\rangle\subset H^{*}(\mathcal{Z}_\mathcal{K})$ is defined and $\widetilde{H}^{p_i+\cdots+p_{k}}(\mathcal{K}_{J_i \cup \cdots \cup J_{k}})=0$ for every $1\leqslant i < k \leqslant n$, $(i,k)\neq (1,n)$, then the Massey product has trivial indeterminacy. 
	The following example shows that this is not a necessary condition for trivial indeterminacy.
	\begin{exmp}
	    For $i=1,2,3,4$, let $\mathcal{K}^i$ be the disjoint union of two vertices $J_i=\{i, i'\}$. %$\{i\}$ and $\{i'\}$. 
	    Let $\mathcal{K}$ be a simplicial complex obtained by Construction~\ref{cons: simplicial complex for n-Massey} with an additional star deletion at the edge $\{1',2'\}$, that is,
		\[
		\mathcal{K}=\sd_{\{2,4'\}}\sd_{\{1,3'\}}\sd_{\{3,4'\}}\sd_{\{2,3'\}}\sd_{\{1',2'\}}\sd_{\{1,2'\}} \mathcal{K}^1*\mathcal{K}^2*\mathcal{K}^3*\mathcal{K}^4.
		\]
		For each $1\leqslant i\leqslant 4$, let $a_i=\chi_i\in C^0(\mathcal{K}_{J_i})$  and set $\alpha_i=[a_i]\in \widetilde{H}^0(\mathcal{K}_{J_i})$.
		The star deletions at $\{1,2'\}$ and $\{1',2'\}$ imply that any cochain $a_{1,2}\in C^0(\mathcal{K}_{J_1\cup J_2})$ such that $d(a_{1,2})=\doubleoverline{a_1}a_2$ is of the form \[a_{1,2}=-\chi_1+c_1(\chi_1+\chi_{1'}+\chi_2)+c_1'\chi_{2'}\] for any $c_1, c_1'\in \mathbf{k}$. However, a cochain $a_{1,3}$ such that $d(a_{1,3})=\doubleoverline{a_1} a_{2,3}+\doubleoverline{a_{1,2}}a_3$ is only defined when $c_1'=c_1$. 
		Thus, any defining system for $\langle \alpha_1, \alpha_2, \alpha_3, \alpha_4 \rangle\subset H^*(\mathcal{Z}_\mathcal{K})$ is of the form
		\begin{align*}
			&a_{1,2}=-\chi_1+c_1(\chi_1+\chi_{1'}+\chi_2+\chi_{2'}) \\
			&a_{2,3}=-\chi_2+c_2(\chi_2+\chi_{2'}+\chi_3+\chi_{3'}) \\
			&a_{3,4}=-\chi_3+c_3(\chi_3+\chi_{3'}+\chi_4+\chi_{4'}) \\
			&a_{1,3}=-(c_2-1)\chi_1+c_1\chi_3 + c_4(\chi_1+\chi_{1'}+\chi_2+\chi_{2'}+\chi_3+\chi_{3'}) \\
			&a_{2,4}=-(c_3-1)\chi_2+c_2\chi_4 + c_5(\chi_2+\chi_{2'}+\chi_3+\chi_{3'}+\chi_4+\chi_{4'})
		\end{align*}
		for coefficients $c_1, \ldots, c_5\in \mathbf{k}$.
		These are the same defining systems we would get if we had not star deleted $\mathcal{K}^1*\mathcal{K}^2*\mathcal{K}^3*\mathcal{K}^4$ at the edge $\{1',2'\}$.
		The associated cocycle $\omega$ to any of these defining systems is 
		\begin{multline*}
		    \omega=-\chi_{14'}-d(\chi_1)+c_3d(\chi_1)-c_1d(\chi_3) +c_4d(\chi_4)-c_5 d(\chi_1)+ \\ +c_1c_3(-d(\chi_1)-d(\chi_{1'})-d(\chi_2)-d(\chi_{2'})).
		\end{multline*}
		Thus, $\langle \alpha_1, \alpha_2, \alpha_3, \alpha_4 \rangle=[\omega]=[-\chi_{14'}]$ and hence this Massey product is non-trivial and has no indeterminacy. However, the star deletions at $\{1,2'\}$ and $\{1',2'\}$ imply that $\widetilde{H}^{0}(\mathcal{K}_{J_1\cup J_2})=\mathbb{Z}\neq 0$. Therefore this is an example of a non-trivial Massey product with trivial indeterminacy that does not satisfy the conditions of \cite[Lemma~3.3]{Limonchenko_multiwedge}.
	\end{exmp}
	
	\subsection{Infinite families of Massey products with non-trivial indeterminacy}\label{sec: infinite families non-trivial indeterminacy}
	
	In the last example, we saw that doing Construction~\ref{cons: simplicial complex for n-Massey} followed by an extra star deletion at $\{1',2'\}$ produced more choices of cochains $a_{1,2}$ such that $d(a_{1,2})=\doubleoverline{a_1}a_2$.
	We extend this technique to create the first infinite families of moment-angle complexes with non-trivial Massey products that have non-trivial indeterminacy.
    These are the first known examples of non-trivial indeterminacy in $n$-Massey products in $H^*(\mathcal{Z}_\mathcal{K})$ for $n\geqslant 4$.
	
	The idea in Construction~\ref{cons: simplicial complex for n-Massey} was to create a non-trivial Massey product $\langle \alpha_1, \ldots, \alpha_n\rangle$ by defining two sets of simplices $S_{a_i}, P_{a_i}$ for each $1\leqslant i \leqslant n$ and star deleting the join of $n$ simplicial complexes at the simplices $\sigma_i\cup \sigma_k'$ for $\sigma_i\in S_{a_i}$, $\sigma_k'\in P_{a_k}$, $1\leqslant i <k \leqslant n$, $(i,k)\neq (1,n)$.
	A star deletion at $\sigma_i\cup \sigma_k'$ made the Massey product $\langle \alpha_i, \ldots, \alpha_k \rangle$ trivial by allowing us to define a cochain $a_{i,k}$ such that  $d(a_{i,k})$ represents a (trivial) class in the lower Massey product $\langle \alpha_i, \ldots, \alpha_k \rangle$. Supposing that $\langle \alpha_1, \ldots, \alpha_k \rangle$ has non-trivial indeterminacy, we construct indeterminacy in the higher Massey product $\langle \alpha_1, \ldots, \alpha_n \rangle$ by making more than one class in the lower product $\langle \alpha_1, \ldots, \alpha_k \rangle$ trivial. 
	%by doing more star deletions, thus resulting in choices for $a_{i,k}$ in the defining system for $\langle \alpha_1, \ldots, \alpha_n\rangle$, and hence indeterminacy in the higher product.
	In this version of the construction, we star delete at $\sigma_1\cup \sigma_k'$ for $\sigma_k'\in P_{a_k}$, $k\neq n$, and any $p_1$-simplex $\sigma_1\in \mathcal{K}^1$, rather than $\sigma_1\in S_{a_1}\subset \mathcal{K}^1$.
	These extra star deletions create choices for $a_{1,k}$ in the defining system for $\langle \alpha_1, \ldots, \alpha_n\rangle$, and do not affect the proof of Theorem~\ref{thm: joins}.
	We will show that these choices result in non-trivial indeterminacy in $\langle \alpha_1, \ldots, \alpha_n\rangle$ when $n>2$. 
	
	\begin{theorem}\label{thm: infinite Massey products with indeterminacy}
		Let $\mathcal{K}^i$ be a simplicial complex on the vertex set $[m_i]$ that is not an $(m_i-1)$-simplex, for $i\in\{1, \ldots, n\}$, $n>2$.
		Then there exists a simplicial complex $\mathcal{K}$ obtained by star deletions on $\mathcal{K}^1* \cdots * \mathcal{K}^n$ such that $H^*(\mathcal{Z}_\mathcal{K})$ has a non-trivial $n$-Massey product with non-trivial indeterminacy.
	\end{theorem}
	
	\begin{proof}
		Since $\mathcal{K}^i$ is not an $(m_i-1)$-simplex, there is a non-trivial class $\alpha_i\in \widetilde{H}^{p_i}(\mathcal{K}_{J_i}^i)$ for $J_i\subset [m_i]$.
		We will construct two different defining systems for a Massey product $\langle \alpha_1, \ldots, \alpha_n \rangle$ and show that the two associated cocycles are non-zero and not cohomologous. Therefore this concludes there is non-trivial indeterminacy in $\langle \alpha_1, \ldots, \alpha_n \rangle$.
		
		Let $a_i$ be a cocycle representative for $\alpha_i$.
		Recall that in Construction~\ref{cons: simplicial complex for n-Massey}, we had a set of $p_i$-simplices $S_{a_i}\subset \mathcal{K}^i$ for each $i$ such that $a_i=\sum_{\sigma_i\in S_{a_i}} c_{\sigma_i} \chi_{\sigma_i}$.
		For any $\sigma_i\in S_{a_i}$, the set
		$P_{\sigma_i}\subset \mathcal{K}^i$ contains all $p_i$-simplices $\sigma_i'\in \mathcal{K}^i$ such that there is a vertex $v_{\sigma_i'}$ and $\sigma_i\setminus v_{\sigma_i}= \sigma_i'\setminus v_{\sigma_i'}$, where $v_{\sigma_i}$ is a fixed choice of vertex in $\sigma_i$. 
		We will use these fixed choices of $v_{\sigma_i}\in\sigma_i \in S_{a_i}$ and $v_{\sigma_i'}\in \sigma_i'\in P_{\sigma_i}$ throughout this proof.
		Also recall the set
		\[P_{a_i}=P_{\sigma_i^{(1)}}\cup \cdots \cup P_{\sigma_i^{(l)}}\] for  $\sigma_i^{(1)}, \ldots, \sigma_i^{(l)} \subset S_{a_i}$.
		%We star deleted $\mathcal{K}^1* \cdots * \mathcal{K}^n$ at each $\sigma_i \cup \sigma_k$ one at time for $\sigma_i\in S_{a_i}$ and $\sigma_k\in P_{a_k}$, $1\leq i < k \leq n$, $(i,k)\neq (1,n)$.
		To define a simplicial complex $\mathcal{K}$ so that $\langle \alpha_1, \ldots,\alpha_n \rangle\subset H^*(\mathcal{Z}_\mathcal{K})$ has non-trivial indeterminacy, we star delete $\mathcal{K}^1* \cdots * \mathcal{K}^n$ at  $\sigma_1\cup \sigma_k'$ for every $p_1$-simplex $\sigma_1\in \mathcal{K}^1$ and  $\sigma_k'\in P_{a_k}$, $1<k<n$, as well as at each $\sigma_i \cup \sigma_k'$ for $\sigma_i\in S_{a_i}$ and $\sigma_k'\in P_{a_k}$, $1< i < k \leq n$.
		This is more star deletions than in Construction~\ref{cons: simplicial complex for n-Massey}, where we used $\sigma_1\in S_{a_1}$ instead of $\sigma_1\in \mathcal{K}^1$.
		Let $\widetilde{S}_{a_k}=S_{a_k}\setminus P_{a_k}$.
		If there are simplices $\sigma_k\in S_{a_k}\setminus \widetilde{S}_{a_k}$ for any $k$, then we also star delete at $\sigma_i'\cup \sigma_k$ for every $\sigma_i'\in P_{a_i}$, $i<k$. This is for technical purposes, to ensure that $\sigma_k\in S_{a_k}$ and $\sigma_i'\cup \sigma_k\in \mathcal{K}$ implies that $\sigma_k\in \widetilde{S}_{a_k}$.
		
		We construct two different defining systems for $\langle \alpha_1, \ldots,\alpha_n \rangle$.
		Recall from \eqref{eq: a_ik joins} in Proposition~\ref{prop: joins Massey defined} that $a_{i,k}\in C^{p_i + \cdots + p_k}(\mathcal{K}_{J_i \cup \cdots \cup J_k})$ for $1\leqslant i \leqslant k\leqslant n$, $(i,k)\neq (1,n)$ is the cochain 
		\[
		a_{i,k}=  \sum_{\sigma_i \in S_{a_i}} 
		\sum_{\sigma_{i+1} \in \widetilde{S}_{a_{i+1}}} \cdots \sum_{\sigma_k \in \widetilde{S}_{a_k}}
		c_{\sigma_i}  \ldots c_{\sigma_k} \; \theta_{i,k} \;
		\chi_{\sigma_i \cup \cdots \cup \sigma_k \setminus (v_{i+1} \cup \cdots \cup v_{k})}
		\]
		where $\widetilde{S}_{a_{i}}=S_{a_i}\setminus P_{a_i}$ and $\theta_{i,k}=1$ when $i=k$ or otherwise
		\begin{multline} 
			\theta_{i,k}=%(-1)^{k-i} (-1)^{|J_i|(p_{i+1}+\cdots +p_k)}(-1)^{|J_{i+1}|(p_{i+2}+\cdots +p_k)} 	\ldots (-1)^{|J_{k-1}|p_k} \cdot \\ 
			(-1)^{k-i +|J_i|(p_{i+1}+\cdots +p_k)+|J_{i+1}|(p_{i+2}+\cdots +p_k)+
				\cdots + |J_{k-1}|p_k} \\
			\cdot \varepsilon(v_{\sigma_{i+1}},\sigma_{i+1}) \ldots \varepsilon(v_{\sigma_k}, \sigma_k).
		\end{multline}
		The defining system $(a_{i,k})$ is a defining system for $\langle \alpha_1, \ldots, \alpha_n\rangle$ by the same proof as for Proposition~\ref{prop: joins Massey defined}, since neither the simplices 
		$\sigma_1\cup \sigma_k$ for $\sigma_1\notin S_{a_1}$ and  $\sigma_k\in P_{a_k}$, $1<k<n$, nor $\sigma_i'\cup \sigma_k$ for $\sigma_i'\in P_{a_i}$ and $\sigma_k\in (S_{a_k}\setminus \widetilde{S}_{a_k})$, $i<k$,
		%$\sigma_1\notin S_{a_1}$ does not 
		play an active role in the proof.
		To construct a different defining system, for any $1<k\leqslant n$, let 
		$b_{1,k}\in C^{p_1 + \cdots + p_k}(\mathcal{K}_{J_1 \cup \cdots \cup J_k})$ be the cochain
		\begin{equation*}
			b_{1,k}=
			\sum_{\sigma_1 \in S_{a_1}} 
			\sum_{\sigma_{2} \in \widetilde{S}_{a_{2}}} \cdots \sum_{\sigma_k \in \widetilde{S}_{a_k}}
			\sum_{\sigma_i'\in P_{\sigma_2}\cup \cdots \cup P_{\sigma_k}}
			 \varrho_{1,k} \;
			\chi_{v_{\sigma_i'} \cup \sigma_1 \cup \cdots \cup \sigma_k \setminus (v_{\sigma_1} \cup \cdots \cup v_{\sigma_k})}
		\end{equation*}
		where $\varrho_{1,k}=c_{\sigma_1}  \ldots c_{\sigma_k}\varepsilon(v_{\sigma_i'},v_{\sigma_i'}\cup \sigma_1\cup \cdots \cup \sigma_k \setminus (v_{\sigma_1} \cup \cdots \cup v_{\sigma_k})) \theta_{1,k}$.
		Also let $b_{i,k}=0$ for $i\neq 1$ or $i=k=1$, so $a_{i,k}'=a_{i,k}+b_{i,k}$ for all $1 \leqslant i\leqslant  k\leqslant n$, $(i,k)\neq (1,n)$.
		We will show that $(a_{i,k}')$ is a defining system for $\langle \alpha_1, \ldots, \alpha_n\rangle$.
		%$1< k < n$, $a_{i,k}'=a_{i,k}$ for all $1< i \leqslant k \leqslant n$ or $i=k=1$.
		
		First we check that $d(b_{1,k})=\sum_{r=1}^{k-1} \doubleoverline{b_{1,r}}a_{r+1,k}$, where
		\begin{multline*}
		d(b_{1,k})=\sum_{\sigma_1 \in S_{a_1}} 
		\sum_{\sigma_{2} \in \widetilde{S}_{a_{2}}} \cdots \sum_{\sigma_k \in \widetilde{S}_{a_k}} 
		\sum_{\sigma_i'\in P_{\sigma_2}\cup \cdots \cup P_{\sigma_k}}\;
		\sum_{j\in \mathcal{K}_{J_1\cup \cdots \cup J_n}}
		 \varrho_{1,k} \cdot\\
		\cdot \varepsilon(j, j\cup v_{\sigma_i'}  \cup \sigma_1 \cup \cdots \cup \sigma_k \setminus (v_{\sigma_1} \cup \cdots \cup v_{\sigma_k})) \;
		\chi_{j\cup v_{\sigma_i'}  \cup \sigma_1 \cup \cdots \cup \sigma_k \setminus (v_{\sigma_1} \cup \cdots \cup v_{\sigma_k})}.
		\end{multline*}
		Fix a simplex $\tau=v_{\sigma_i'} \cup \sigma_1 \cup \cdots \cup \sigma_k \setminus (v_{\sigma_1} \cup \cdots \cup v_{\sigma_k})\in S_{b_{1,k}}$. 
		For any $1\leqslant r \leqslant k$, recall from the definition of $P_{\sigma_r}$ that since $\sigma_r\in S_{a_r}$, if there is a vertex $v\in \mathcal{K}^r$ such that $v\cup (\sigma_r\setminus v_{\sigma_r})\in \mathcal{K}^r$, then $v\cup (\sigma_r\setminus v_{\sigma_r})\in P_{\sigma_r}$.
		Thus $\sigma_i'=v_{\sigma_i'} \cup (\sigma_i\setminus v_{\sigma_i})\in P_{\sigma_i}$.
		Consider the link of $\tau$ in $\mathcal{K}_{J_1 \cup \cdots \cup J_k}$.
		There is no vertex $v\in \mathcal{K}^1$ in this link since if $v\cup \sigma_1\setminus v_{\sigma_1}\in \mathcal{K}^1$, then $(v\cup \sigma_1\setminus v_{\sigma_1}) \cup \sigma_i'\notin \mathcal{K}$ because there was a star deletion at that simplex.
		Similarly, for any $r<i$, there is no vertex $v_{\sigma_r}$ in the link of $\tau$ because $\sigma_r\cup \sigma_i'\notin \mathcal{K}$.
		Therefore the only vertices in the link of $\tau$ are $v_{\sigma_r'}$ for $\sigma_{r'}\in P_{\sigma_r}$ and any $r$, and $v_{\sigma_r}$ for $\sigma_r\in S_{a_r}$ and $r>i$.
		
		Consider the summands of $d(b_{1,k})$ when $j=v_{\sigma_r'}$ for $\sigma_{r'}\in P_{\sigma_r}$ and any $r$. 
		If $v_{\sigma_i'} \cup v_{\sigma_{r}'} \cup \sigma_1 \cup \cdots \cup \sigma_k \setminus (v_{\sigma_1} \cup \cdots \cup v_{\sigma_k})\in \mathcal{K}$, then the coefficient of $\chi_{v_{\sigma_i'} \cup v_{\sigma_{r}'} \cup \sigma_1 \cup \cdots \cup \sigma_k \setminus (v_{\sigma_1} \cup \cdots \cup v_{\sigma_k})}$ is the product of $c_{\sigma_1}  \ldots c_{\sigma_k} \; \theta_{1,k}$ and 
		\small{\begin{multline}\label{eq: sign for joins indeterminacy}
			\varepsilon(v_{\sigma_i'},v_{\sigma_i'}\cup \sigma_1\cup \cdots \cup \sigma_k \setminus (v_{\sigma_1} \cup \cdots \cup v_{\sigma_k}))
			\varepsilon(v_{\sigma_r'}, v_{\sigma_i'}  \cup v_{\sigma_r'}\cup \sigma_1 \cup \cdots \cup \sigma_k \setminus (v_{\sigma_1} \cup \cdots \cup v_{\sigma_k})) 
			+\\
			\varepsilon(v_{\sigma_r'},v_{\sigma_r'}\cup \sigma_1\cup \cdots \cup \sigma_k \setminus (v_{\sigma_1} \cup \cdots \cup v_{\sigma_k}))
			\varepsilon(v_{\sigma_i'}, v_{\sigma_i'}  \cup v_{\sigma_r'}\cup \sigma_1 \cup \cdots \cup \sigma_k \setminus (v_{\sigma_1} \cup \cdots \cup v_{\sigma_k})).
		\end{multline}}
		First suppose that $\sigma_i', \sigma_r'\in \mathcal{K}^i$, so $\sigma_i'\cup v_{\sigma_r'}=\sigma_r'\cup v_{\sigma_i'}\in \mathcal{K}^i$.
		Also suppose, without loss of generality, that $v_{\sigma_i'}<v_{\sigma_r'}$ in the order of the vertex set of $\mathcal{K}$ and that $v_{\sigma_r'}$ is  the $l$th vertex in   $\sigma_r'$.
		Since $v_{\sigma_i'}<v_{\sigma_r'}$ and $\varepsilon(v_{\sigma_i'},v_{\sigma_i'}\cup \sigma_1\cup \cdots \cup \sigma_k \setminus (v_{\sigma_1} \cup \cdots \cup v_{\sigma_k}))=(-1)^{p_1+\cdots+p_{i-1}}\varepsilon(v_{\sigma_i'},\sigma_i')$ by the definition of $\varepsilon$ in \eqref{eq: varepsilon}, we rewrite \eqref{eq: sign for joins indeterminacy}  as
		\begin{multline*}
			%(-1)^{p_1+\cdots+p_{i-1}}
			\varepsilon(v_{\sigma_i'},\sigma_i') \varepsilon(v_{\sigma_r'},\sigma_i'\cup v_{\sigma_r'})
			+
			\varepsilon(v_{\sigma_r'},\sigma_r')
			\varepsilon(v_{\sigma_i'},\sigma_i'\cup v_{\sigma_r'})
			=\\ (-1)^l\varepsilon(v_{\sigma_i'},\sigma_i')-(-1)^{l-1}\varepsilon(v_{\sigma_i'},\sigma_i'\cup v_{\sigma_r'}).
		\end{multline*}
		Also  $\varepsilon(v_{\sigma_i'},\sigma_i')=\varepsilon(v_{\sigma_i'},\sigma_i'\cup v_{\sigma_r'})$ because $v_{\sigma_i'}<v_{\sigma_r'}$,  so \eqref{eq: sign for joins indeterminacy} is zero.
		In particular, when $k=i=2$, then $d(b_{1,2})=0$. So $d(a_{1,2}')=d(a_{1,2})=\doubleoverline{a_1}a_2$.
		
		Alternatively, suppose that $\sigma_i'\in \mathcal{K}^i$, $\sigma_r'\in \mathcal{K}^r$ and, without loss of generality, $i<r$. By using the definition of $\varepsilon$ in \eqref{eq: varepsilon}, then \eqref{eq: sign for joins indeterminacy} becomes
		\begin{multline*}
			(-1)^{p_1+\cdots+p_{i-1}} \varepsilon(v_{\sigma_i'},\sigma_i')(-1)^{p_1+\cdots+p_{r-1}+1} \varepsilon(v_{\sigma_r'},\sigma_r')
			+\\
			(-1)^{p_1+\cdots+p_{r-1}} \varepsilon(v_{\sigma_r'},\sigma_r')(-1)^{p_1+\cdots+p_{i-1}} \varepsilon(v_{\sigma_i'},\sigma_i')=0.
		\end{multline*}
		Since all of these summands cancel out, we conclude that $d(b_{1,k})$ only has non-zero summands when $j=v_{\sigma_r}$ for $\sigma_r\in S_{a_r}$ and $r>i$.  By rewriting $r$ as $r+1$ for $r\in \{1, \ldots, n\}$ and $i\in \{2, \ldots, r\}$,  $d(b_{1,k})$ is equal to
		\begin{multline}\label{eq: joins indeterminacy d(b1k)}
			\sum_{\sigma_1 \in S_{a_1}} 
			\sum_{\sigma_{2} \in \widetilde{S}_{a_{2}}} \cdots \sum_{\sigma_k \in \widetilde{S}_{a_k}} \sum_{r=1}^{k-1}
			\sum_{\sigma_i'\in P_{\sigma_2}\cup \cdots \cup P_{\sigma_r}}
			 \varrho_{i,k} \;\cdot  \\
			\cdot \varepsilon(v_{\sigma_{r+1}}, v_{\sigma_i'}  \cup v_{\sigma_{r+1}}\cup \sigma_1 \cup \cdots \cup \sigma_k \setminus (v_{\sigma_1} \cup \cdots \cup v_{\sigma_k})) \;
			\chi_{v_{\sigma_i'} \cup v_{\sigma_{r+1}} \cup \sigma_1 \cup \cdots \cup \sigma_k \setminus (v_{\sigma_1} \cup \cdots \cup v_{\sigma_k})}.
		\end{multline}
		Since the simplices $\sigma_i'\cup \sigma_r$ were star deleted for $\sigma_i'\in P_{a_i}$ and $\sigma_r\in S_{a_r}\setminus \widetilde{S}_{a_r}$, this sum is the same whether we use $\sum_{\sigma_r \in \widetilde{S}_{a_r}}$ or $\sum_{\sigma_r \in S_{a_r}}$. 	
		Therefore we split this sum into products 
		so that $d(b_{1,k})=\sum_{r=1}^{k-1} \doubleoverline{b_{1,r}}a_{r+1,k}$, by using the fact that  
		\begin{multline*}
		\varrho_{1,k}\varepsilon(v_{\sigma_{r+1}}, v_{\sigma_i'}  \cup v_{\sigma_{r+1}}\cup \sigma_1 \cup \cdots \cup \sigma_k \setminus (v_{\sigma_1} \cup \cdots \cup v_{\sigma_k})) 
		=\\
		(-1)^{p_1+\cdots+p_r +|J_1\cup \cdots \cup J_r|(p_{r+1}+\cdots+p_k)} \varrho_{1,r} \,
		c_{\sigma_{r+1}}\ldots c_{\sigma_{k}} \theta_{{r+1},k}.
		\end{multline*}
		Then  
		\begin{align*}
        d(a_{i,k}')&=d(a_{i,k})+d(b_{i,k})
        \\ &=
		\sum_{r=1}^{k-1} \doubleoverline{a_{1,r}}a_{r+1,k}+
		\sum_{r=1}^{k-1} \doubleoverline{b_{1,r}}a_{r+1,k}
        = \sum_{r=1}^{k-1} \doubleoverline{a_{1,r}'}a_{r+1,k}.
		\end{align*}
		Hence $(a_{i,k}')$ is a defining system for $\langle \alpha_1, \ldots, \alpha_n \rangle$.

		The associated cocycle for $(a_{i,k}')$ is \begin{equation*}
		    \omega'=\omega+
			\sum_{r=1}^{n-1} \doubleoverline{b_{1,r}}a_{r+1,n}
		\end{equation*}
		where $\omega$ is the associated cocycle for $(a_{i,k})$.
		We show that the difference $\omega'-\omega$ is not a coboundary by constructing a cycle $x'\in C_{p_1 + \ldots + p_n +1}(\mathcal{K}_{J_1 \cup \cdots \cup J_n})$ such that $(\omega'-\omega)(x)\neq 0$.
		We use a similar method to Construction~\ref{cons: joins x}. 
		Fix $\sigma_i\in \widetilde{S}_{a_i}$ for $i=1 ,3, \ldots, n-1$ and fix $\sigma_2'\in P_{a_2}$. 
		Since it was star deleted,  $\sigma_1\cup \sigma_2'\notin \mathcal{K}$ but the boundary complex $\partial(\sigma_1\cup \sigma_2')$ is contained in $\mathcal{K}$. 
		Also, since $\alpha_n\in \widetilde{H}^{p_n}(\mathcal{K}_{J_n})$ is non-zero, there is a cycle $x_n\in C_{p_n}(\mathcal{K}_{J_n})$ such that $a_n(x_n)\neq 0$. 
		Define $x'\in C_{p_1 + \ldots + p_n +1}(\mathcal{K}_{J_1 \cup \cdots \cup J_n})$ to be the chain 
		\begin{multline*}
		      x'=\sum_{w_2\in \sigma_1\cup \sigma_2'} 
		      \sum_{w_3\in \sigma_3} \cdots \sum_{w_{n-1}\in \sigma_{n-1}} \\
		      \sum_{\tilde{\sigma}_n\in S_{x_n}}
			 c_{w_2} \cdots c_{w_{n-1}}  c_{\tilde{\sigma}_n} 
			\Delta_{\sigma_1 \cup \sigma_2' \cup \sigma_3 \cdots \cup \sigma_{n-1} \cup \tilde{\sigma}_n\setminus (w_2 \cup \cdots \cup w_{n-1})}
		\end{multline*}
		where $c_{\tilde{\sigma}_n}$ are the non-zero coefficients from $x_n$, and  $c_{w_2}, \ldots, c_{w_{n-1}}$ are the coefficients of cycles in $C_{p_1+p_2}(\partial(\sigma_1\cup \sigma_2'))$, $C_{p_i}(\partial(\sigma_i))$ for $3\leqslant i \leqslant n-1$.
		Every simplex $\sigma$ in the support $S_x$ of $x$ is a simplex in $\mathcal{K}$ since none of them were star deleted.
		By an analogous proof to Lemma~\ref{lem: joins cycle x}, the chain $x'$ is a cycle.
		
		We want to compare the supports $S_{\omega'-\omega}$ of $\omega'-\omega$ and $S_{x'}$ of $x'$.
		The cochain $\omega'-\omega=\sum_{r=1}^{n-1} \doubleoverline{b_{1,r}}a_{r+1,n}$ is given in \eqref{eq: joins indeterminacy d(b1k)} when $k=n$.
		A simplex $\sigma$ is in $S_{\omega'-\omega}\cap S_{x'}$ precisely when $w_2=v_{\sigma_1}$, $w_j=v_{\sigma_j}$ for $3\leqslant j \leqslant n-1$,
		$r=n-1$ so that $\sigma_{r+1}=\sigma_n\in S_{x_n}$, and $i=2\in \{2, \ldots, r\}$ so that  $\sigma_i'=\sigma_2'\in P_{a_2}$.
		Hence $S_{\omega'-\omega}\cap S_{x'}$ contains only one simplex, $\sigma$.  
		Thus $(\omega'-\omega)(x)=\pm \chi_\sigma (\Delta_\sigma)\neq 0$.
		Therefore $\omega'-\omega$ is not a coboundary and so $[\omega']\neq[\omega]$.

		The proof that $\langle \alpha_1, \ldots, \alpha_n \rangle$ is non-trivial is the same as the proof of Proposition~\ref{prop: joins Massey non-trivial} since neither the simplices  $\sigma_1\cup \sigma_k'$ for $\sigma_1\notin S_{a_1}$ and $\sigma_k'\in P_{a_k}$, nor $\sigma_i'\cup \sigma_k$ for $\sigma_i\in P_{a_i}$ and $\sigma_k\in (S_{a_k}\setminus \widetilde{S}_{a_k})$, $i<k$, 
		play a role so the extra star deletions do not change the proof.
		Hence  $\langle \alpha_1, \ldots, \alpha_n \rangle$ is non-trivial with non-trivial indeterminacy.
	\end{proof}
	
	\begin{exmp}
	    For $i=1,2,3$, suppose $\mathcal{K}^i$ is a pair of disjoint vertices labelled $\sigma_i, \sigma_i'$.
	    Let $\alpha_i\in \widetilde{H}^0(\mathcal{K}^i)$ be represented by the cocycle $a_i=\chi_{\sigma_i}$.
	    Then $S_{a_i}=\{\sigma_i\}$ and $P_{a_i}=\{\sigma_i'\}$.
	    Following the construction in the proof of Theorem~\ref{thm: infinite Massey products with indeterminacy}, we define 
	    \[ \mathcal{K}= \sd_{\sigma_1 \cup \sigma_2'} \sd_{\sigma_1' \cup \sigma_2'} \sd_{\sigma_2 \cup \sigma_3'} \mathcal{K}^1 * \mathcal{K}^2 * \mathcal{K}^3. \]
	    This simplicial complex is shown in Figure~\ref{fig: smallest indeterminacy example}. The Massey product $\langle \alpha_1, \alpha_2, \alpha_3\rangle$ is one of the simplest examples of a Massey product in a moment-angle complex with non-trivial indeterminacy. It is one of the obstruction graphs in the classification of lowest degree non-trivial triple Massey products in \cite{LowestDegreeClassification}. Since it is a triple Massey product, its indeterminacy is given by $\alpha_1 \cdot \widetilde{H}^{0}(\mathcal{K}_{\sigma_2, \sigma_2', \sigma_3, \sigma_3'})+ \alpha_3 \cdot \widetilde{H}^{0}(\mathcal{K}_{\sigma_1, \sigma_1', \sigma_2, \sigma_2'})=\alpha_3 \cdot\widetilde{H}^{0}(\mathcal{K}_{\sigma_1, \sigma_1', \sigma_2, \sigma_2'})$.
	\end{exmp}
	
	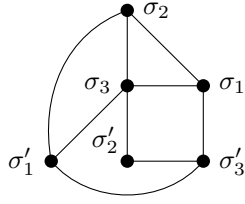
\begin{figure}[ht]
	    \centering
	    \begin{tikzpicture}[scale=1, inner sep=2mm]
		\coordinate (3) at (0,0);
		\coordinate (5) at (1,0);
		\coordinate (6) at (0,1);
		\coordinate (2) at (1,1);
		\coordinate (4) at (0,2);
		\coordinate (1) at (-1,0);
%		\coordinate (a) at (-1.5, 0.5);
		\coordinate (a) at (0, -1.0);
		
		% Draw the edges
		\draw (2)  -- (6)--(3) -- (5) -- (2) --(4) ;
		\draw (1)  to [out=100,in=-160] (4);
		\draw (1) to [out=-50,in=-130] (5);
		\draw (1) -- (6);
		\draw[] (4)--(6);
		
		% Draw all of the vertices
		\foreach \i in {1, ..., 6} {\fill (\i) circle (2.5pt);}
		
		% Label the vertices
		\draw (1) node[left] {$\sigma_1'$};
		\draw (2) node[right] {$\sigma_1$};
		\draw (3) node[above left, inner sep=1mm] {$\sigma_2'$};
		\draw (4) node[right] {$\sigma_2$};
		\draw (5) node[right] {$\sigma_3'$};
		\draw (6) node[left] {$\sigma_3$};
		%\draw (a) node[below] {(c)};
		\end{tikzpicture} 
	    \caption{A simplicial complex $\mathcal{K}$ such that a triple Massey product in $H^*(\mathcal{Z}_\mathcal{K})$ has indeterminacy.}
	    \label{fig: smallest indeterminacy example}
	\end{figure}
	
	%%%%%%%%%%%%%%%%%%%%%%%%%%%%%%%%%%%%%%%%%%%%%%%%%%%%%%%%%%%%%%%%%%%%%%%%%%%%%%%%%%%%%%%%%%
	%%%%%%%%%%%%%%%%%%%%%%%%%%%%%%%%%%%%%%%%%%%%%%%%%%%%%%%%%%%%%%%%%%%%%%%%%%%%%%%%%%%%%%%%%%
	%%%%%%%%%%%%%%%%%%%%%%%%%%%%%%%%%%%%%%%%%%%%%%%%%%%%%%%%%%%%%%%%%%%%%%%%%%%%%%%%%%%%%%%%%%

	%\pagebreak
	
	\section{Massey products constructed by edge contractions}
	
	A simplicial homotopy map $\varphi \colon \mathcal{K}\to \mathcal{\hat{K}}$ induces a map on the cohomology of moment-angle complexes $\varphi^*\colon H^*(\mathcal{Z}_{\mathcal{\hat{K}}}) \to  H^*(\mathcal{Z}_\mathcal{K})$.
	%We also have a pullback $\alpha=\varphi^* (\hat{\alpha})\in \widetilde{H}^{p}(\mathcal{K})$ for any $\hat{\alpha}\in \widetilde{H}^{p}(\mathcal{\hat{K}})$.
	However, a property of Massey products \cite[Section 2]{Kraines} is that $\varphi^*\langle \hat{\alpha}_1,\ldots, \hat{\alpha}_n\rangle \subset \langle \varphi^* (\hat{\alpha}_1),\ldots, \varphi^*(\hat{\alpha}_n)\rangle$.
	Hence if $\langle \varphi^* (\hat{\alpha}_1),\ldots, \varphi^*(\hat{\alpha}_n)\rangle$ has non-trivial indeterminacy, it may be trivial even if $\langle \hat{\alpha}_1,\ldots, \hat{\alpha}_n\rangle$ is non-trivial.

	In this section we  use edge contractions $\varphi \colon \mathcal{K}\to \mathcal{\hat{K}}$ as a simplicial homotopy operation to construct non-trivial Massey products. 
	Given a non-trivial Massey product $\langle \hat{\alpha}_1,\ldots, \hat{\alpha}_n\rangle \subset H^*(\mathcal{Z}_{\mathcal{\hat{K}}})$ with $\hat{\alpha}_i\in \widetilde{H}^{p_i}(\mathcal{\hat{K}}_{\hat{J}_i})$, $\hat{J}_i\neq \hat{J}_j$ for $i\neq j$, 
	we explicitly construct a defining system to show that $\langle \alpha_1, \ldots, \alpha_n\rangle \subset H^*(\mathcal{Z}_\mathcal{K})$ 
	%$=\varphi^* (\hat{\alpha}_i)\in \widetilde{H}^{p_i}(\mathcal{K}_{J_i})$ 
	is defined
	where $\alpha_i$ is the pullback of $\hat{\alpha}_i$ along $\varphi$.
	Then we also show that it is non-trivial to conclude the main result of this section, Theorem~\ref{thm: edge contractions Massey}.

	%The next section demonstrates a systematic method to create non-trivial higher Massey products given existing non-trivial higher Massey products. 
	%The idea is to contract edges of a simplicial complex $\mathcal{K}$ in a way that preserves the homotopy type of $\mathcal{K}$. 
	%Then a class $\alpha\in \widetilde{H}^p(\mathcal{K}_J)$ for $J\subset V(\mathcal{K})$ is also preserved.
	%We first define such an edge contraction.

	\begin{defn}
		Let $\mathcal{K}$, $\mathcal{\hat{K}}$ be simplicial complexes with an edge $\{u,w\}\in \mathcal{K}$, and a vertex $z \in V(\mathcal{\hat{K}})$ such that $V(\mathcal{\hat{K}})\setminus \{z\}=V(\mathcal{K})\setminus \{\{u\},\{w\}\}$. 
		The simplicial complex $\mathcal{\hat{K}}$ is obtained from $\mathcal{K}$ by \textit{an edge contraction} of $\{u,w\}$ if there is a map $\varphi_V\colon V(\mathcal{K}) \to V(\mathcal{\hat{K}})$
		\begin{equation*}
			\varphi_V(v)= \begin{cases}
				%\left\{ \begin{array}{lr}
				z & \text{for } v\in \{u,w\}  \\
				v & \text{for } v\notin \{u,w\}
				%\end{array} \right.
			\end{cases}
		\end{equation*}
		that extends to a surjective map $\varphi\colon \mathcal{K} \to \mathcal{\hat{K}}$, where  $\varphi(I)=\{\varphi_V(v_1), \ldots, \varphi_V(v_n)\}$ for $I=\{v_1, \ldots, v_n \}\in \mathcal{K}$. 
		The map $\varphi\colon \mathcal{K} \to \mathcal{\hat{K}}$ is called the \textit{edge contraction of} $\{u,w\}\in \mathcal{K}$.
	\end{defn}
	
	Edge contractions are simplicial maps, but they do not preserve the topology of $\mathcal{K}$ in general. Attali, Lieutier and Salinas \cite[Theorem 2]{ALS} showed that the homotopy type of a simplicial complex is preserved under edge contractions that satisfy the \textit{link condition}.
	
	\begin{theorem}[\cite{ALS}] \label{thm:ALS}
		For any simplicial complex $\mathcal{K}$, if an edge $\{u,w\}\in \mathcal{K}$ satisfies the \textit{link condition}, 
		\begin{equation}\label{eq:linkcondition}
			\link_\mathcal{K}(\{u\}) \cap \link_\mathcal{K}(\{w\})=\link_\mathcal{K} (\{u,w\}),
		\end{equation}
		then the edge contraction of $\{u,w\}$ preserves the homotopy type of $\mathcal{K}$.
	\end{theorem}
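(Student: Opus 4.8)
The plan is to show that the simplicial map $\varphi\colon\mathcal{K}\to\hat{\mathcal{K}}$ induces a homotopy equivalence $|\mathcal{K}|\to|\hat{\mathcal{K}}|$ of geometric realisations, which is exactly what the statement asserts. The tool will be the functorial Nerve Lemma applied to the open-star covers of the two complexes. Recall that for any simplicial complex $\mathcal{L}$ the open stars $\st^{\circ}_{\mathcal{L}}(v)$, $v\in V(\mathcal{L})$, form an open cover of $|\mathcal{L}|$ for which $\bigcap_{v\in S}\st^{\circ}_{\mathcal{L}}(v)$ is empty unless $S\in\mathcal{L}$, in which case it equals the contractible open star $\st^{\circ}_{\mathcal{L}}(S)$ of the simplex $S$; so this is a good cover whose nerve is $\mathcal{L}$ itself. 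The functorial Nerve Lemma says: if $f\colon X\to Y$ carries a good open cover $\{U_{\alpha}\}$ of $Y$ to a good open cover $\{f^{-1}U_{\alpha}\}$ of $X$ with $\bigcap_{\alpha\in S}f^{-1}U_{\alpha}\neq\varnothing$ if and only if $\bigcap_{\alpha\in S}U_{\alpha}\neq\varnothing$ for every $S$, then the two nerves coincide and the canonical equivalences $X\simeq|N|$ and $Y\simeq|N|$ are compatible with $f$ and $\operatorname{id}_{|N|}$, so $f$ is a homotopy equivalence.

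First I would compute the pullback along $\varphi$ of the open-star cover of $\hat{\mathcal{K}}$. Writing $x\in|\mathcal{K}|$ in barycentric coordinates on its carrier simplex, the point $\varphi(x)$ has the same coordinate as $x$ at each vertex $v\notin\{u,w\}$, and at $z$ has coordinate equal to the sum of the $u$- and $w$-coordinates of $x$. Hence $\varphi^{-1}(\st^{\circ}_{\hat{\mathcal{K}}}(v))=\st^{\circ}_{\mathcal{K}}(v)$ for every $v\in V(\mathcal{K})\setminus\{u,w\}$, while $\varphi^{-1}(\st^{\circ}_{\hat{\mathcal{K}}}(z))=\st^{\circ}_{\mathcal{K}}(u)\cup\st^{\circ}_{\mathcal{K}}(w)$. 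So the pulled-back cover of $|\mathcal{K}|$ consists of the open stars of the vertices $v\in V(\mathcal{K})\setminus\{u,w\}$ together with the single set $\st^{\circ}_{\mathcal{K}}(u)\cup\st^{\circ}_{\mathcal{K}}(w)$.

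The decisive step, and the only point at which the link condition enters, is to verify that this pulled-back cover is good and has nerve $\hat{\mathcal{K}}$. Distributing intersection over union, every intersection of pulled-back sets that involves the special set has the form $\st^{\circ}_{\mathcal{K}}(\{u\}\cup V)\cup\st^{\circ}_{\mathcal{K}}(\{w\}\cup V)$ for some $V\subseteq V(\mathcal{K})\setminus\{u,w\}$, and its two pieces meet exactly in $\st^{\circ}_{\mathcal{K}}(\{u,w\}\cup V)$. If both pieces are non-empty, then $\{u\}\cup V,\{w\}\cup V\in\mathcal{K}$, so $V\in\link_{\mathcal{K}}(u)\cap\link_{\mathcal{K}}(w)$, and the link condition~\eqref{eq:linkcondition} forces $\{u,w\}\cup V\in\mathcal{K}$; then $\st^{\circ}_{\mathcal{K}}(\{u,w\}\cup V)$ is non-empty and contractible, and a union of two contractible open sets meeting in a non-empty contractible open set is contractible by Mayer--Vietoris, van Kampen and Whitehead's theorem, since open subsets of simplicial complexes have CW homotopy type. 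If exactly one piece is non-empty the union equals that contractible piece, and if neither is, the union is empty; in all cases the union is empty if and only if $\{z\}\cup V\notin\hat{\mathcal{K}}$. The remaining intersections are the $\st^{\circ}_{\mathcal{K}}(S)$ with $S\subseteq V(\mathcal{K})\setminus\{u,w\}$, and these match the corresponding data for $\hat{\mathcal{K}}$ verbatim. Hence the pulled-back cover is good with nerve $\hat{\mathcal{K}}$, and the functorial Nerve Lemma yields that $\varphi$ is a homotopy equivalence.

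I expect the main obstacle to be the bookkeeping of this last step: checking that all the pulled-back sets --- in particular the unions $\st^{\circ}_{\mathcal{K}}(\{u\}\cup V)\cup\st^{\circ}_{\mathcal{K}}(\{w\}\cup V)$ --- are contractible or empty in precisely the pattern dictated by $\hat{\mathcal{K}}$, and invoking the Nerve Lemma in a sufficiently functorial form to conclude that $\varphi$ itself, rather than some abstract map, realises the equivalence. The link condition is used exactly once, to exclude the configuration $\{u\}\cup V,\{w\}\cup V\in\mathcal{K}$ with $\{u,w\}\cup V\notin\mathcal{K}$; in that configuration the set $\st^{\circ}_{\mathcal{K}}(\{u\}\cup V)\cup\st^{\circ}_{\mathcal{K}}(\{w\}\cup V)$ would be two cones glued along a sphere, hence non-contractible, and the nerve would fail to match $\hat{\mathcal{K}}$.
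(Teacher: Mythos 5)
This theorem is not proved in the paper at all: it is imported from \cite{ALS} (Theorem~2 there), so there is no internal argument to measure your proof against, and what you have written is an independent proof of the cited result. Judged on its own terms, your argument is correct. The pullback computation is right: $|\varphi|$ is the affine map you describe in barycentric coordinates, $\varphi^{-1}(\st^{\circ}_{\mathcal{\hat{K}}}(v))=\st^{\circ}_{\mathcal{K}}(v)$ for $v\neq z$, and $\varphi^{-1}(\st^{\circ}_{\mathcal{\hat{K}}}(z))=\st^{\circ}_{\mathcal{K}}(u)\cup\st^{\circ}_{\mathcal{K}}(w)$. The identification $\st^{\circ}_{\mathcal{K}}(\{u\}\cup V)\cap\st^{\circ}_{\mathcal{K}}(\{w\}\cup V)=\st^{\circ}_{\mathcal{K}}(\{u,w\}\cup V)$, the matching of the emptiness pattern with $\mathcal{\hat{K}}$ (using that $\{z\}\cup V\in\mathcal{\hat{K}}$ iff $\{u\}\cup V\in\mathcal{K}$ or $\{w\}\cup V\in\mathcal{K}$), and the contractibility of a union of two contractible open subsets of a polyhedron along a non-empty contractible intersection are all sound, and the link condition is indeed used exactly once and exactly where you say. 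Two points deserve to be made explicit in a full write-up: (i) the precise form of the functorial nerve lemma --- the cleanest justification is that $|\mathcal{K}|$ and $|\mathcal{\hat{K}}|$ are each weakly equivalent to the homotopy colimit of their diagrams of cover intersections indexed by the common nerve, $|\varphi|$ induces an objectwise map between these diagrams of contractible spaces, hence a weak equivalence, hence a genuine homotopy equivalence since both realisations have CW type; and (ii) a small slip in your closing remark: when $\{u\}\cup V,\{w\}\cup V\in\mathcal{K}$ but $\{u,w\}\cup V\notin\mathcal{K}$, the two open stars are \emph{disjoint} (their intersection is $\st^{\circ}_{\mathcal{K}}(\{u,w\}\cup V)=\varnothing$), so the failure mode is disconnectedness rather than ``two cones glued along a sphere''; this does not affect the proof, only the heuristic.
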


	\begin{exmp}
		The following is a series of edge contractions that satisfy the link condition.
		
		\begin{center}
			\begin{tikzpicture}[scale=\textwidth/10.5cm, inner sep=2mm]
			\coordinate (1) at (0,1);
			\coordinate (2) at (1, 1);
			\coordinate (3) at (1,0);
			\coordinate (4) at (0,0);
			\coordinate (5) at (-0.6,-0.5);
			\coordinate (6) at (1.7, 0.5);
			
			% Fill to make it a 2-complex
			\fill[lightgray, fill opacity=0.4] (2)--(6)--(3)--(2); 
			
			% Draw the 1-simplices
			\draw[] (5)--(4)--(1)--(2)--(3)--(4); 
			\draw[] (2)--(6)--(3); 
			
			% Draw all of the vertices
			\foreach \i in {1, ..., 6} {\fill (\i) circle (1.5pt);}
			
			%Contracting simplex
			\draw[line width=3] (5)--(4);
			\foreach \i in {4, 5} {\fill (\i) circle (2pt);}
			
			%Arrow to next
			\draw[->, line width=2] (2.15, 0.5) -- (2.65, 0.5);
			
			% Label the vertices
			%		\draw (7)node[left] {$7$}; \draw (8)node[ right] {$8$};\draw (9)node[ right] {$9$};\draw (0)node[left] {$0$};
			\begin{scope}[xshift=3cm]
			\coordinate (1) at (0,1);
			\coordinate (2) at (1, 1);
			\coordinate (3) at (1,0);
			\coordinate (4) at (0,0);
			\coordinate (6) at (1.7, 0.5);
			
			% Fill to make it a 2-complex
			\fill[lightgray, fill opacity=0.4] (2)--(6)--(3)--(2); 
			
			% Draw the 1-simplices
			\draw[] (4)--(1)--(2)--(3)--(4); 
			\draw[] (2)--(6)--(3); 
			
			% Draw all of the vertices
			\foreach \i in {1, ..., 4, 6} {\fill (\i) circle (1.5pt);}
			
			%Contracting simplex
			\draw[line width=3] (3)--(6);
			\foreach \i in {3, 6} {\fill (\i) circle (2pt);}
			
			%Arrow to next
			\draw[->, line width=2] (2.15, 0.5) -- (2.65, 0.5);
			
			\end{scope}
			
			\begin{scope}[xshift=6cm]
			\coordinate (1) at (0,1);
			\coordinate (2) at (1, 1);
			\coordinate (3) at (1,0);
			\coordinate (4) at (0,0);
			
			% Draw the 1-simplices
			\draw[] (4)--(1)--(2)--(3)--(4); 
			
			% Draw all of the vertices
			\foreach \i in {1, ..., 4} {\fill (\i) circle (1.5pt);}
			
			%Contracting simplex
			\draw[line width=3] (1)--(2);
			\foreach \i in {1, 2} {\fill (\i) circle (2pt);}
			
			%Arrow to next
			\draw[->, line width=2] (1.65, 0.5) -- (2.15, 0.5);
			
			\end{scope}
			
			\begin{scope}[xshift=8.5cm]
			\coordinate (1) at (0.5,1);
			\coordinate (3) at (1,0);
			\coordinate (4) at (0,0);
			
			% Draw the 1-simplices
			\draw[] (4)--(1)--(3)--(4); 
			
			% Draw all of the vertices
			\foreach \i in {1, 3, 4} {\fill (\i) circle (1.5pt);}
			
			\end{scope}
			
			\end{tikzpicture} 			
		\end{center}
		
		%Here, despite the fact that the dimension of the simplicial complex has reduced, the homotopy type has remained the same as the link condition holds.
	\end{exmp}

	\begin{exmp}
		Without the link condition, the homotopy type of a simplicial complex under edge contractions can change, such as in the following example.
		
		\begin{center}
			\begin{tikzpicture}[scale=\textwidth/10.5cm, inner sep=2mm]
			\begin{scope}
			\coordinate (1) at (0.5,1);
			\coordinate (2) at (1,0);
			\coordinate (3) at (0,0);
			
			% Draw the 1-simplices
			\draw[] (3) node[left] {1}--(1)node[left] {2}--(2) node[right] {3}--(3); 
			
			% Draw all of the vertices
			\foreach \i in {1, 2, 3} {\fill (\i) circle (1.5pt);}
			
			%Contracting simplex
			\draw[line width=3] (1)--(2);
			\foreach \i in {1, 2} {\fill (\i) circle (2pt);}
			
			%Arrow to next
			\draw[->, line width=2] (1.5, 0.5) -- (2.0, 0.5);
			
			\end{scope}			
			\begin{scope}[xshift=2.5cm]
			\coordinate (1) at (0,0.5);
			\coordinate (2) at (1,0.5);
			
			% Draw the 1-simplices/Contracting simplex
			\draw[] (1)--(2) node[above]{$z$}; 
			
			% Draw all of the vertices
			\foreach \i in {1, 2} {\fill (\i) circle (1.5pt);}

			\end{scope}
			\end{tikzpicture}
		\end{center}
		
		The links of the vertices $\{2\}$ and $\{3\}$ both contain the vertex $\{1\}$, but $\link_{\mathcal{K}} (\{2,3\})$ is empty, so the link condition is not satisfied.
		
	\end{exmp}

	\begin{exmp}
		An edge contraction that does not satisfy the link condition may create a non-trivial cycle. For example, suppose that $\mathcal{\hat{K}}$ is a triangulation of $S^2$ on four vertices, and let $\mathcal{K}$ be a $2$-dimensional simplicial complex on $5$ vertices with facets $\{1,2,3\}$, $\{1,2,4\}$, $\{1,3,4\}$, $\{3,4,5\}$, $\{2,5\}$ as shown in Figure~\ref{fig: contraction creates cycle}. 
		There is no non-trivial $2$-cycle in $\mathcal{K}$ so $H^2(\mathcal{K})=0$, but the  contraction $\{2,5\}\mapsto z$ results in a $2$-cycle and $H^2(\mathcal{\hat{K}})\neq 0$.
		In this case the link condition is not satisfied  because $\link_{\mathcal{K}}\{2,5\}= \myempty$ but $\link_{\mathcal{K}} \{2\}\cap \link_{\mathcal{K}}\{5\}=\{\{3\},\{4\}\}$. 
		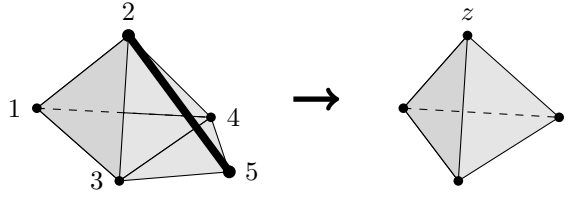
\begin{figure}
			\centering
			\begin{tikzpicture}[scale=\textwidth/10.5cm, inner sep=2mm]
			\begin{scope}
			\coordinate (1) at (0,0.2);
			\coordinate (2) at (1,1);
			\coordinate (3) at (0.9,-0.6);
			\coordinate (4) at (1.9, 0.1);
			\coordinate (5) at (2.1, -0.5);
			\coordinate (a) at (0.95, 0.15);
			
			% Draw the back faces
			\fill[lightgray, fill opacity=0.4] (1)--(2)--(4)--(1);
			\draw (1) node[left]{1}--(2)node[above]{2}--(4) node[right]{4};
			
			\fill[lightgray, fill opacity=0.4] (1)--(3)--(4)--(1);
			\draw (1)--(3)--(4)--(0.95, 0.15);
			
			\fill[lightgray, fill opacity=0.4] (5)--(3)--(4)--(5);
			\draw (5)node[right]{5}--(3)node[left]{3}--(4)--(5);
			
			% Draw the front simplex
			\fill[lightgray, fill opacity=0.4] (1)--(2)--(3)--(1);
			\draw (1)--(2)--(3)--(1);
			
			\draw[dashed] (1)--(4);
			
			% Draw all of the vertices
			\foreach \i in {1, 2, 3, 4, 5} {\fill (\i) circle (1.5pt);}
			
			%Contracting simplex
			\draw[line width=3] (2) --(5);
			\foreach \i in {5, 2} {\fill (\i) circle (2pt);}
			
			%Arrow to next
			\draw[->, line width=2] (2.8, 0.3) -- (3.3, 0.3);
			
			\end{scope}			
			\begin{scope}[xshift=3.8cm]
			\coordinate (1) at (0.2,0.2);
			\coordinate (2) at (0.9,1);
			\coordinate (3) at (0.8,-0.6);
			\coordinate (4) at (1.9, 0.1);
			
			% Draw the back faces
			\fill[lightgray, fill opacity=0.4] (1)--(2)--(4)--(1);
			\draw (1)--(2)--(4);
			
			\fill[lightgray, fill opacity=0.4] (1)--(3)--(4)--(1);
			\draw (1)--(3)--(4);
			
			% Draw the front simplex
			\fill[lightgray, fill opacity=0.4] (1)--(2)--(3)--(1);
			\draw (1)--(2) node[above] {$z$}--(3)--(1); 
			
			\draw[dashed] (1)--(4);
			
			% Draw all of the vertices
			\foreach \i in {1, 2, 3, 4} {\fill (\i) circle (1.5pt);}
			
			\end{scope}
			\end{tikzpicture}
			\caption{An edge contraction without the link condition can create a non-trivial cycle}
			\label{fig: contraction creates cycle}
		\end{figure}
	\end{exmp}

	%Let $\mathcal{\hat{K}}$ be a simplicial complex and let $\hat{x}\in C_q(\mathcal{\hat{K}})$. 
	%Then $\hat{x}$ is supported on a collection $T_{\hat{x}}$ of simplices so that $\hat{x}$ can be written as 
	%$
	%\sum_{\hat{\sigma}\in T_{\hat{x}}} c_{\hat{\sigma}} \Delta_{\hat{\sigma}}
	%$
	%where $c_{\hat{\sigma}} \in \mathbf{k}$ and $\Delta_{\hat{\sigma}}$ is a generator of~$C_q(\mathcal{\hat{K}})$.
	
	%The aim of this section is to create a non-trivial $n$-Massey product  in the cohomology of a moment-angle complex $\mathcal{Z}_\mathcal{K}$, given an existing non-trivial $n$-Massey product in the cohomology of another moment-angle complex $\mathcal{Z}_\mathcal{\hat{K}}$, where $\mathcal{K}$ is mapped onto $\mathcal{\hat{K}}$ by a series of edge contractions. 
	
	We construct cohomological classes in $H^*(\mathcal{Z}_\mathcal{K})$ on which a new pulled-back Massey product will be defined.
	
	\begin{cons} \label{cons: edge contraction}
		Let $\mathcal{\hat{K}}$ be a simplicial complex with a non-trivial $n$-Massey product $\langle \hat{\alpha}_1, \ldots, \hat{\alpha}_n\rangle \subset H^*(\mathcal{Z}_\mathcal{\hat{K}})$.	
		By Hochster's theorem, every class $\hat{\alpha}_i \in H^*(\mathcal{Z}_\mathcal{\hat{K}})$ has a corresponding class
		\[
		\hat{\alpha}_i \in \widetilde{H}^{p_i}(\mathcal{\hat{K}}_{\hat{J}_i})
		\]
		for a set of vertices $\hat{J}_i \subset V(\mathcal{\hat{K}})$.
		When $\langle \hat{\alpha}_1, \ldots, \hat{\alpha}_n \rangle$ is non-trivial, the sets of vertices $\hat{J}_i$, $\hat{J}_j$ are disjoint for any $i\neq j$. %revision: changed to "setS" of vertices, and moved "for any $i\neq j$" to the end.
		
		Suppose that there is a simplicial complex $\mathcal{K}$ and a series of edge contractions $\varphi\colon \mathcal{K} \to \mathcal{\hat{K}}$  satisfying the link condition. 
		Let the vertices in $V(\mathcal{\hat{K}})$ be ordered and suppose that all of the vertices in $\hat{J}_i$ come before those of $\hat{J}_{i+1}$.
		For a set of $p$-simplices $P \subset \mathcal{\hat{K}}$, let 
		\[
		\varphi^{-1}_p(P)=\{\sigma\in \mathcal{K}  \suchthat |\sigma|=p+1 \text{ and } \varphi(\sigma)=\hat{\sigma}\text{ for } \hat{\sigma}\in P \}.
		\]
		Suppose that the vertices $V(\mathcal{K})$ are ordered in such a way that for any vertex $\hat{v}$ that comes before $\hat{w}$ in $\mathcal{\hat{K}}$, each vertex $v\in \varphi^{-1}_0(\hat{v})$ comes before every $w \in \varphi^{-1}_0(\hat{w})$. 
		Let $J_i=\varphi^{-1}_0(\hat{J}_i)\subset V(\mathcal{K})$.
		Then by the order on $V(\mathcal{K})$, all vertices in $J_i$ come before those in $J_{i+1}$. 
		Also $J_i \cap J_j=\myempty$ for any $i\neq j$ since  $\hat{J}_i \cap \hat{J}_j = \myempty$ and $\varphi^{-1}_0(\hat{v}) \cap \varphi^{-1}_0(\hat{w})= \myempty$ for any vertices $\hat{v}, \hat{w} \in \mathcal{\hat{K}}$, $\hat{v}\neq \hat{w}$. 
		%	In particular, for every $i\in \{1, \ldots, n\}$, we have $V_i\subset J_i$ since $\varphi(V_i)=\hat{J}_i$, and so $V_i\cap V_j=\myempty$ for $i\neq j$.
	\end{cons}

	Let $\hat{a}_i$ be a cocycle representing $\hat{\alpha}_i \in \widetilde{H}^{p_i}(\mathcal{\hat{K}}_{\hat{J}_i})$. Let $S_{\hat{a}_i}$ be the support of $\hat{a}_i$, that is, the set of $p_i$-simplices $\hat{\sigma}_i \in \mathcal{\hat{K}}_{\hat{J}_i}$ such that 
	\[
	\hat{a}_i=\sum_{\hat{\sigma}\in S_{\hat{a}_i}} c_{\hat{\sigma}} \chi_{\hat{\sigma}} \in C^{p_i} (\mathcal{\hat{K}}_{\hat{J}_i})
	\]
	for non-zero coefficients $c_{\hat{\sigma}_i} \in \mathbf{k}$.
	Define $a_i\in C^{p_i}(\mathcal{K}_{J_i})$ to be the cochain
	\begin{equation}\label{eq:cochain a}
		a_i=\sum_{\hat{\sigma} \in S_{\hat{a}_i}} c_{\hat{\sigma}} \sum_{\sigma \in \varphi^{-1}_{p_i}(\hat{\sigma})} \chi_{\sigma}.
	\end{equation}
	Since $a_i$ is a pullback of $\hat{a}_i$ along $\varphi$, $a_i$ is a cocycle and $\alpha_i=[a_i] \in \widetilde{H}^{p_i}(\mathcal{K}_{J_i})$ is non-zero.
	
	\begin{exmp}
		Let $\mathcal{K}_{J_i}$, $\mathcal{\hat{K}}_{\hat{J}_i}$ be the simplicial complexes as shown below, where $\mathcal{\hat{K}}_{\hat{J}_i}$  is obtained from $\mathcal{K}_{J_i}$ by contracting the edges $e_2=\{2,3\}\mapsto \{\hat{2}\}$ and $e_5=\{4, 5\}\mapsto \{\hat{3}\}$. The cohomology class $\hat{\alpha}_i\in \widetilde{H}^1(\mathcal{\hat{K}}_{\hat{J}_i})$ is represented by the cocycle $\chi_{\hat{e}}$, so let $S_{\hat{a}_i}=\{\hat{e}\}$.
		\begin{center}
			\begin{tikzpicture}[scale=1.7, inner sep=2mm]
			\begin{scope}
			\coordinate (5) at (0,1);
			\coordinate (1) at (1, 1);
			\coordinate (2) at (1.7, 0.5);
			\coordinate (3) at (1,0);
			\coordinate (4) at (0,0);
			\coordinate (e1) at (0.5,1);
			\coordinate (e2) at (1,0.5);
			\coordinate (e3) at (0.5,0.5);
			\coordinate (e4) at (0.5,0);
			\coordinate (e5) at (0,0.5);
			
			\fill[lightgray] (2)--(1)--(3)--(2);
			
			% Draw the 1-simplices
			\draw[] (4) node[left]{$4$} --(5) node[left]{$5$}--(1) node[above]{$1$}--(2)--(3)--(4); 
			\draw[] (1)--(3); 
			
			\node[above]  at (1.5,0.6) {$e_1$};
			\node[below]  at (1.5, 0.3){$e_2$};
			\node[left] at (1.1, 0.5){$e_3$};
			\node[below] at (0.5,0.1) {$e_4$};
			\node[left] at (0, 0.5) {$e_5$};
			\node[above] at (0.5, 0.9) {$e_6$};
			
			% Draw all of the vertices
			\foreach \i in {1, ..., 5} {\fill (\i) circle (1.5pt);}
			
			%Contracting simplex
			\draw[line width=3] (3)node[below] {$3$}--(2)node[right] {$2$};
			\draw[line width=3] (4)--(5);
			\foreach \i in {2, 3,4,5} {\fill (\i) circle (2pt);}
			
			%Arrow to next
			\draw[->, line width=2] (2.6, 0.5) -- (3.1, 0.5);
			
			\end{scope}			
			\begin{scope}[xshift=4cm]
			\coordinate (1) at (0.5,1);
			\coordinate (3) at (1,0);
			\coordinate (4) at (0,0);
			\coordinate (e) at (0.7,0.6);
			
			% Draw the 1-simplices
			\draw[] (4)--(1) node[above] {$\hat{1}$}--(3) node[right] {$\hat{2}$}--(4) node[left] {$\hat{3}$}; 
			\draw (e) node[right] {$\hat{e}$};
			
			% Draw all of the vertices
			\foreach \i in {1, 3, 4} {\fill (\i) circle (1.5pt);}
			\end{scope}
			\end{tikzpicture}
		\end{center}
		The contraction of $e_2$  satisfies the link condition, since $\link_{\mathcal{K}}(e_2)= \link_{\mathcal{K}}\{2\}\cap \link_{\mathcal{K}}\{3\}=\{1\}$. 
		Under the map $\varphi\colon \mathcal{K} \to \mathcal{\hat{K}}$, $\varphi_{1}^{-1}(\hat{e})=\{e_1, e_3\}$. So by \eqref{eq:cochain a}, $a_i$ is the cochain
		\[
		a_i=\chi_{e_1}+\chi_{e_3} \in C^1(\mathcal{K}_{J_i}).
		\]
		This is a cocycle since $d(a_i)=\chi_{\{1,2,3\}} - \chi_{\{1,2,3\}}=0$.
	\end{exmp}

	For the Massey product $\langle \hat{\alpha}_1, \ldots, \hat{\alpha}_n \rangle\subset H^{(p_1+\cdots +p_n) +|\hat{J}_1\cup \cdots \cup \hat{J}_n|+2}(\mathcal{Z}_\mathcal{\hat{K}})$, there is a defining system $(\hat{a}_{i,k})$ for cochains $\hat{a}_{i,k}\in C^{p_i+\cdots +p_k}(\mathcal{\hat{K}}_{\hat{J}_i \cup \ldots \cup \hat{J}_k})$, $1\leqslant i \leqslant k \leqslant n$ and $(i,k)\neq (1,n)$. 
	Suppose that
	\begin{equation} \label{eq: hat{a_ik}}
		\hat{a}_{i,k}= %\hat{\theta}_{i,k} 
		\sum_{\hat{\tau}\in S_{\hat{a}_{i,k}}} c_{\hat{\tau}} \chi_{\hat{\tau}}
	\end{equation}
	for simplices $\hat{\tau}\in S_{\hat{a}_{i,k}}\subset \mathcal{\hat{K}}_{\hat{J}_i \cup \ldots \cup \hat{J}_k}$, non-zero coefficients $c_{\hat{\tau}}\in \mathbf{k}$.
	%	 and a sign $\hat{\theta}_{i,k}$
	%	given by 
	%	\begin{equation}\label{eq: hat theta_ik stretching}
	%		\hat{\theta}_{i,k}=(-1)^{|\hat{J}_i|(p_{i+1}+\cdots+p_k)}(-1)^{|\hat{J}_{i+1}|(p_{i+2}+\cdots+p_k)} \cdots (-1)^{|\hat{J}_{k-1}|p_k}.
	%	\end{equation}
	Then 
	\begin{align*}
		d(\hat{a}_{i,k})=
		\sum_{\hat{\tau}\in S_{\hat{a}_{i,k}}} c_{\hat{\tau}} \left(
		\sum_{\hat{j}\in \hat{J}_i \cup \cdots \cup \hat{J}_k\setminus V(\hat{\tau})} \varepsilon(\hat{j}, \hat{j}\cup \hat{\tau}) \chi_{\hat{j}\cup \hat{\tau}} \right)
	\end{align*}
	is equal to 
	\begin{equation}\label{eq: hat(a_i,r) hat(a_r+1,k) for edge contraction}
		\begin{aligned}
			\sum_{r=i}^{k-1} (-1)^{1+\mydeg(\hat{a}_{i,r})}\hat{a}_{i,r}\hat{a}_{r,k} 
			=
			\sum_{r=i}^{k-1}
			(-1)^{1+\mydeg(\hat{a}_{i,r})}
			 c
			\left(\sum_{\hat{\nu}\in S_{\hat{a}_{i,r}}} \ \sum_{\hat{\eta}\in S_{\hat{a}_{r+1,k}}} 
			c_{\hat{\nu}} c_{\hat{\eta}}  %\hat{\theta}_{i,r}\; \hat{\theta}_{r+1,k}\;
			\chi_{\hat{\nu}\cup \hat{\eta}}\right)
		\end{aligned}
	\end{equation}
	where $c=(-1)^{|\hat{J}_i \cup \cdots \cup\hat{J}_r| (p_{r+1}+\cdots +p_k+1)}$ comes from the product of $\hat{a}_{i,r}$ and $\hat{a}_{r,k}$, as in Lemma~\ref{lem: multiplication of general cochains}, and $(-1)^{1+\mydeg(\hat{a}_{i,r})}=(-1)^{(p_i+\ldots +p_r)+|\hat{J}_i \cup \cdots \cup\hat{J}_r|}$.
	We use this defining system to construct a defining system for $\langle \alpha_1, \ldots, \alpha_n \rangle$.

	%	\begin{lemma}
	%		Let $\hat{\sigma}$ be a $p$-simplex on vertices in $\hat{J}_i\cup \cdots \cup \hat{J}_r$, and let $\hat{\tau}$ be a $q$-simplex on vertices in $\hat{J}_{r+1}\cup \cdots \cup\hat{J}_k$ such that $\hat{\sigma}\cup \hat{\tau}\in \mathcal{\hat{K}}$. Then there is a $(p+q+1)$-simplex $\sigma\cup \tau\in \varphi^{-1}_{p+q+1}(\hat{\sigma}\cup \hat{\tau})$ for $\sigma\in \varphi^{-1}_{p}(\hat{\sigma})$ and $\tau\in \varphi^{-1}_{q}(\hat{\tau})$.
	%	\end{lemma}
	%	\begin{proof}
	%		Let $\nu\in \varphi^{-1}_{p+q+1}(\hat{\sigma}\cup \hat{\tau})$. If $\nu$ cannot be expressed as $\sigma\cup\tau$
	%		
	%		If there is no simplex $\sigma\cup \tau$ in $\varphi^{-1}_{p+q+1}(\hat{\sigma}\cup \hat{\tau})$ such that $\sigma\in \varphi^{-1}_{p}(\hat{\sigma})$ and $\tau\in \varphi^{-1}_{q}(\hat{\tau})$, then for any this means that there was an edge inside $\sigma\cup \tau$ that was contracted by $\varphi$. In particular, the contracted edge must be between a vertex in $\sigma\subset J_1 \cup \cdots \cup J_r$ and a vertex in $\tau\subset J_{r+1} \cup \cdots \cup J_n$ (otherwise either $\varphi(\sigma)\neq \hat{\sigma}$ or $\varphi(\tau)\neq \hat{\tau}$). However, due to the set up of Construction~\ref{cons: simplicial complex for n-Massey},  edge contractions only occur between two vertices that lie in the same component $J_i$ for some $i$. Therefore, for every simplex $\hat{\sigma}\cup \hat{\tau} \in \mathcal{\hat{K}}$, there is at least some simplex $\sigma\cup \tau \in \mathcal{K}$.
	%	\end{proof}

	\begin{prop} \label{prop: Massey defined edge contractions}\label{lem: d(a_ik)=a_i,r a_r,k contracting}
		Let $\mathcal{K}$ be a simplicial complex that maps to $\mathcal{\hat{K}}$ by edge contractions satisfying the link condition. Then there is an $n$-Massey product $\langle \alpha_1, \ldots, \alpha_n \rangle$ defined on $H^{*}(\mathcal{Z}_\mathcal{K})$.
	\end{prop}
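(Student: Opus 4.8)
The plan is to promote the pullback cochains $a_i$ of \eqref{eq:cochain a} to a full defining system by pulling back the given defining system $(\hat a_{i,k})$ along the cochain map induced by $\varphi$, with an explicit sign correction compensating for the fact that $\varphi$ is not injective on vertices. Write $\varphi^{\#}$ for the simplicial cochain pullback, assembled over the Hochster decomposition so that it carries $\widetilde C^{*}(\hat{\mathcal K}_{\hat J})$ into $\widetilde C^{*}(\mathcal K_{\varphi_0^{-1}(\hat J)})$; on each multidegree it is the cochain map of the simplicial map $\varphi\colon\mathcal K_{\varphi_0^{-1}(\hat J)}\to\hat{\mathcal K}_{\hat J}$, so in particular $d\varphi^{\#}=\varphi^{\#}d$. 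For $1\leqslant i\leqslant k\leqslant n$, $(i,k)\neq(1,n)$, I would set
\[
a_{i,k}=(-1)^{\sum_{i\leqslant l<m\leqslant k}(|J_l|-|\hat J_l|)\,p_m}\ \sum_{\hat\tau\in S_{\hat a_{i,k}}}c_{\hat\tau}\sum_{\sigma\in\varphi^{-1}_{p_i+\cdots+p_k}(\hat\tau)}\chi_\sigma\ \in\ C^{p_i+\cdots+p_k}(\mathcal K_{J_i\cup\cdots\cup J_k}),
\]
which for $i=k$ is the cocycle $a_i$ of \eqref{eq:cochain a}, and which satisfies $\mydeg(a_{i,k})=\mydeg(a_i)+\cdots+\mydeg(a_k)-k+i$ since the $J_l$ are disjoint.

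The verification of $d(a_{i,k})=\sum_{r=i}^{k-1}\doubleoverline{a_{i,r}}\,a_{r+1,k}$ rests on one combinatorial property of $\varphi^{\#}$: it is multiplicative up to a sign tracked by Lemma~\ref{lem: multiplication of general cochains}. Precisely, if $\hat a$ is supported on $\hat{\mathcal K}_{\hat I}$ and $\hat b$ on $\hat{\mathcal K}_{\hat J}$ with $\hat I,\hat J$ disjoint and in order, then $\varphi^{\#}(\hat a\hat b)=(-1)^{(|\varphi_0^{-1}(\hat I)|+|\hat I|)(q+1)}\varphi^{\#}(\hat a)\,\varphi^{\#}(\hat b)$, where $q$ is the cochain degree of $\hat b$. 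This holds because every non-degenerate preimage of a simplex $\hat\sigma\cup\hat\tau$ of $\hat{\mathcal K}$ splits uniquely as $\sigma\cup\tau$ with $\varphi(\sigma)=\hat\sigma$, $\varphi(\tau)=\hat\tau$ (using $\hat I\cap\hat J=\myempty$ and $J_l\cap J_m=\myempty$), while pairs $(\sigma,\tau)$ with $\sigma\cup\tau\notin\mathcal K$ contribute nothing to either side; so both products run over the same index set and differ only by the reordering sign of Lemma~\ref{lem: multiplication of general cochains}.

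Granting this, I would apply $\varphi^{\#}$ to the identity $d(\hat a_{i,k})=\sum_{r}\doubleoverline{\hat a_{i,r}}\,\hat a_{r+1,k}$ as expanded in \eqref{eq: hat(a_i,r) hat(a_r+1,k) for edge contraction}: the left side becomes $d(\varphi^{\#}\hat a_{i,k})$ and the right side a signed sum of $\varphi^{\#}(\hat a_{i,r})\,\varphi^{\#}(\hat a_{r+1,k})$. Comparing with the signs Definition~\ref{def: defining system} demands in $C^{*}(\mathcal K)$, the only discrepancy is that $\mydeg$ is computed with $|J_i\cup\cdots\cup J_r|$ in $\mathcal K$ but with $|\hat J_i\cup\cdots\cup\hat J_r|$ in $\hat{\mathcal K}$, compounded with the multiplicativity defect above. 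A short computation shows the discrepancy is cancelled by any signs $\epsilon_{i,k}$ with $\epsilon_{i,k}\equiv\epsilon_{i,r}+\epsilon_{r+1,k}+\bigl(\sum_{l=i}^{r}(|J_l|-|\hat J_l|)\bigr)(p_{r+1}+\cdots+p_k)\pmod2$ for every cut $r$, and that $\epsilon_{i,k}=\sum_{i\leqslant l<m\leqslant k}(|J_l|-|\hat J_l|)p_m$ — the prefactor above — solves this recursion consistently and vanishes when $i=k$. Hence $(a_{i,k})$ is a defining system and $\langle\alpha_1,\ldots,\alpha_n\rangle\subset H^{p_1+\cdots+p_n+|J_1\cup\cdots\cup J_n|+2}(\mathcal Z_{\mathcal K})$ is defined; here $a_{i,i}=a_i$ represents $\alpha_i$, which is non-zero because the link condition is inherited by full subcomplexes, so each $\varphi|_{\mathcal K_{J_i}}$ is again a composition of edge contractions satisfying the link condition, hence a homotopy equivalence by Theorem~\ref{thm:ALS}.

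The main obstacle is the sign bookkeeping of the previous paragraph: three independent sign sources interact — the signs $\doubleoverline{(\cdot)}$, which differ between $\mathcal K$ and $\hat{\mathcal K}$ because the internal degree $|J|$ grows under each edge contraction; the product sign of Lemma~\ref{lem: multiplication of general cochains}; and the $\varepsilon$-reordering signs implicit in identifying the coboundary of a pullback with the pullback of a coboundary — and the delicate step is to check that the resulting $\epsilon_{i,k}$ are well defined, i.e. split multiplicatively over all choices of the cut index $r$ at once, so that a single $a_{i,k}$ works for every $r$ simultaneously. By contrast, the combinatorial input (splitting of preimages along disjoint blocks, vanishing of non-simplex unions on both sides) is routine.
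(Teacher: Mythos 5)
Your proposal is correct and follows essentially the same route as the paper: your defining system is literally the paper's \eqref{eq: a_ik}, since your prefactor $(-1)^{\sum_{i\leqslant l<m\leqslant k}(|J_l|-|\hat J_l|)p_m}$ equals $\theta_{i,k}\hat\theta_{i,k}$ from \eqref{eq: theta_ik stretching}, and your two inputs (commutation of the pullback with $d$ via pairwise cancellation of degenerate preimages, and multiplicativity of the pullback up to the Lemma~\ref{lem: multiplication of general cochains} sign via the unique splitting of nondegenerate preimages over disjoint blocks) are exactly the cancellations the paper carries out inline in \eqref{eq: d(a_ij) second part edge contractions}--\eqref{eq: a_ir a_r+1,k stretching}. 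Your sign recursion for $\epsilon_{i,k}$ checks out and packages the paper's sign verification cleanly.
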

	
	\begin{proof}
		For every $i\in \{1, \ldots, n\}$, let $\alpha_i=[a_i]$ for $a_i$ as in \eqref{eq:cochain a}. We start by constructing a defining system $(a_{i,k})$ for $\langle \alpha_1, \ldots, \alpha_n \rangle\subset H^*(\mathcal{Z}_\mathcal{K})$, where $a_{i,k} \in C^{p_i+\cdots +p_k}(\mathcal{K}_{J_i \cup \cdots \cup J_k})$.
		Define
		\begin{equation}\label{eq: a_ik}
			a_{i,k}= \theta_{i,k} \;\hat{\theta}_{i,k} \sum_{\hat{\tau}\in S_{\hat{a}_{i,k}}} c_{\hat{\tau}} \left(\sum_{\tau \in \varphi^{-1}_{p_i + \cdots +p_k}(\hat{\tau})} \chi_\tau\right)
		\end{equation}
		for $S_{\hat{a}_{i,k}}$ and $c_{\hat{\tau}}\in \mathbf{k}$ from \eqref{eq: hat{a_ik}}, $\theta_{i,i}=1=\hat{\theta}_{i,i}$, and 
		\begin{equation}\label{eq: theta_ik stretching}
			\begin{aligned}
				&\theta_{i,k}=(-1)^{|J_i|(p_{i+1}+\cdots+p_k)}(-1)^{|J_{i+1}|(p_{i+2}+\cdots+p_k)} \cdots (-1)^{|J_{k-1}|p_k}\\
				&\hat{\theta}_{i,k}=(-1)^{|\hat{J}_i|(p_{i+1}+\cdots+p_k)}(-1)^{|\hat{J}_{i+1}|(p_{i+2}+\cdots+p_k)} \cdots (-1)^{|\hat{J}_{k-1}|p_k}.
			\end{aligned}
		\end{equation}
		Since $\theta_{i,i}=1=\hat{\theta}_{i,i}$, $a_{i,i}=a_i$ as in \eqref{eq:cochain a}.
		We show that $d(a_{i,k})=\sum_{r=i}^{k-1} \doubleoverline{a_{i,r}}a_{r,k}$, where $\doubleoverline{a_{i,r}}=(-1)^{1+\mydeg a_{i,r}}a_{i,r}$ as in Definition~\ref{def: degree of elements in C(KJ)}.

		Applying the coboundary map to $a_{i,k}$, $d(a_{i,k})$ is 
		\begin{align}
			\theta_{i,k} &\,\hat{\theta}_{i,k} \sum_{\hat{\tau}\in S_{\hat{a}_{i,k}}} c_{\hat{\tau}} \left(\sum_{\tau \in \varphi^{-1}_{p_i + \cdots +p_k}(\hat{\tau})} \ \sum_{j\in J_i \cup \cdots \cup J_k\setminus V(\tau)} \varepsilon(j, j\cup \tau) \chi_{j\cup\tau}\right) \nonumber\\
			&=\theta_{i,k} \, \hat{\theta}_{i,k} \sum_{\hat{\tau}\in S_{\hat{a}_{i,k}}} c_{\hat{\tau}} 
			\left(\sum_{\tau \in \varphi^{-1}_{p_i + \cdots +p_k}(\hat{\tau})} \ 
			\sum_{j\in J_i \cup \cdots \cup J_k \setminus \varphi^{-1}_0(V(\hat{\tau}))} \varepsilon(j, j\cup \tau) \chi_{j\cup\tau}\right)+\label{eq: d(a_ij) first part edge contractions}\\
			& \qquad
			+\theta_{i,k} \,\hat{\theta}_{i,k} \sum_{\hat{\tau}\in S_{\hat{a}_{i,k}}} c_{\hat{\tau}} 
			\left(\sum_{\tau \in \varphi^{-1}_{p_i + \cdots +p_k}(\hat{\tau})} \ 
			\sum_{j\in \varphi^{-1}_0(V(\hat{\tau})) \setminus V(\tau)} \varepsilon(j, j\cup \tau) \chi_{j\cup\tau}\right). \label{eq: d(a_ij) second part edge contractions}
		\end{align}
		For any $(p_i+\cdots +p_k)$-simplex $\hat{\tau} \in S_{\hat{a}_{i,k}}$ and any $\tau\in \varphi^{-1}_{p_i + \cdots +p_k}(\hat{\tau})$, first suppose that there is a vertex $j\in \varphi^{-1}_0(V(\hat{\tau})) \setminus V(\tau)$ such that $j\cup \tau \in \mathcal{K}$. 
		Then $j\cup \tau=\bar{\tau}\in \varphi^{-1}_{p_i + \cdots +p_k+1}(\hat{\tau})$ and there is a vertex $i\in V(\tau)$ such that $\varphi(i)=\varphi(j)$. 
		Thus $j \cup \tau\setminus i \in \varphi^{-1}_{p_i + \cdots +p_k}(\hat{\tau})$. 
		Moreover, $i, j$ are consecutive vertices in $V(\bar{\tau})$ by the order of vertices  in $\mathcal{K}$ defined in Construction~\ref{cons: edge contraction}, so $\varepsilon(j, \bar{\tau})=-\varepsilon(i, \bar{\tau})$. 
		Therefore \eqref{eq: d(a_ij) second part edge contractions} is zero since all summands cancel out in pairs, that is, for any $\hat{\tau}\in S_{\hat{a}_{i,k}}$,
		\begin{multline*}
		\sum_{\tau \in \varphi^{-1}_{p_i + \cdots +p_k}(\hat{\tau})} \ 
		\sum_{j\in \varphi^{-1}_0(V(\hat{\tau})) \setminus V(\tau)} \varepsilon(j, j\cup \tau) \chi_{j\cup\tau}= \\
		= \sum_{\substack{\bar{\tau}\in \varphi^{-1}_{p_i + \cdots +p_k+1}(\hat{\tau}), \\
				i,j\in \bar{\tau} \suchthat \varphi(i)=\varphi(j) }} 
		\varepsilon(j, \bar{\tau}) \chi_{\bar{\tau}} + \varepsilon(i, \bar{\tau}) \chi_{\bar{\tau}} 
		=0.
		\end{multline*}
		
		Consider summands in \eqref{eq: d(a_ij) first part edge contractions}. For any $j\in J_i \cup \cdots \cup J_k \setminus \varphi^{-1}_0(V(\hat{\tau}))$, $\varphi(j)\notin V(\hat{j})$. So for any simplex $j\cup \tau\in \mathcal{K}$ with $j\in J_i \cup \cdots \cup J_k \setminus \varphi^{-1}_0(V(\hat{\tau}))$, there is a simplex $\varphi(j)\cup \hat{\tau}\in \mathcal{\hat{K}}$. Therefore any summand in \eqref{eq: d(a_ij) first part edge contractions} has a corresponding summand in the expression for $d(\hat{a}_{i,k})$. Hence we rewrite \eqref{eq: d(a_ij) first part edge contractions} as
		\begin{equation}\label{eq: d(a_ik) related to d(hat(a)_ik) edge contraction}
			d(a_{i,k})= \theta_{i,k} \, 
			\hat{\theta}_{i,k} \sum_{\hat{\tau}\in S_{\hat{a}_{i,k}}} c_{\hat{\tau}} 
			\left(\sum_{\hat{j}\in \hat{J}_i \cup \cdots \cup \hat{J}_k\setminus V(\hat{\tau})} \;
			\sum_{j\cup\tau \in \varphi^{-1}_{p_i + \cdots +p_k+1}(\hat{j}\cup\hat{\tau})}
			\varepsilon(j, j\cup\tau) \chi_{j\cup\tau}\right)
		\end{equation}
		where, by  the order of  vertices in $\mathcal{K}$,  $\varepsilon(j, j\cup\tau)=\varepsilon (\hat{j}, \hat{j}\cup \hat{\tau})$.
		Since $d(\hat{a}_{i,k})=\sum_{r=i}^{k-1} \doubleoverline{\hat{a}_{i,r}}\hat{a}_{r,k}$, the expression in \eqref{eq: d(a_ik) related to d(hat(a)_ik) edge contraction} can be written in terms of the expression in \eqref{eq: hat(a_i,r) hat(a_r+1,k) for edge contraction}. Thus $d(a_{i,k})$ is equal to
		\begin{equation}\label{eq: simplified d(a_ik) for edge contraction}
			\begin{aligned}
				\theta_{i,k}\, \hat{\theta}_{i,k} 
				\sum_{r=i}^{k-1}\ 
				(-1)^{1+\mydeg(\hat{a}_{i,r})}
				c
				\left(\sum_{\hat{\nu}\in S_{\hat{a}_{i,r}}} \ \sum_{\hat{\eta}\in S_{\hat{a}_{r+1,k}}} 
				c_{\hat{\nu}} c_{\hat{\eta}} 
				%\hat{\theta}_{i,r}\; \hat{\theta}_{r+1,k}\;
				\left(\sum_{\zeta\in \varphi^{-1}_{p_i + \cdots + p_k+1}(\hat{\nu}\cup\hat{\eta})} \ \chi_\zeta\right)\right)
			\end{aligned}
		\end{equation}
		where $c=(-1)^{|\hat{J}_i \cup \cdots \cup\hat{J}_r| (p_{r+1}+\cdots +p_k+1)}$ comes from the product of $\hat{a}_{i,r}$ and $\hat{a}_{r,k}$, as in Lemma~\ref{lem: multiplication of general cochains}, and $(-1)^{1+\mydeg(\hat{a}_{i,r})}=(-1)^{(p_i+\cdots +p_r)+|\hat{J}_i \cup \cdots \cup\hat{J}_r|}$.
		
		Any simplex $\zeta\in \varphi^{-1}_{p_i + \cdots + p_k+1}(\hat{\nu}\cup\hat{\eta})$ is on $p_i+\cdots+p_k+2$ vertices and so can be written as $\nu \cup \eta$ for $\nu$ the restriction of $\zeta$ to its first $p_i + \cdots + p_r +1$ vertices, and $\eta$ the restriction of $\zeta$ to its last $p_{r+1} + \cdots + p_k+1$ vertices. 
		Then $\nu \in \varphi^{-1}_{p_i+\cdots+p_r}(\hat{\nu})$ and $\eta \in \varphi^{-1}_{p_{r+1}+\cdots +p_k}(\hat{\eta})$.
		Furthermore, 
		$
		\hat{\theta}_{i,k} \;(-1)^{1+\mydeg(\hat{a}_{i,r})} \; c=
		(-1)^{(p_i+\cdots +p_r)} \; \hat{\theta}_{i,r}\; \hat{\theta}_{r+1,k}.
		$
		So \eqref{eq: simplified d(a_ik) for edge contraction} may be rewritten as 
		\begin{multline}\label{eq: d(a_ik) stretching}
			%		 \begin{aligned}
			d(a_{i,k})=
			\sum_{r=i}^{k-1}\ 
			(-1)^{(p_i+\cdots +p_r)}
			\; \theta_{i,k}\; \hat{\theta}_{i,r}\; \hat{\theta}_{r+1,k}\cdot \\
			\cdot \left( \sum_{\hat{\nu}\in S_{\hat{a}_{i,r}}} \ \sum_{\hat{\eta}\in S_{\hat{a}_{r+1,k}}} 
			c_{\hat{\nu}} \;c_{\hat{\eta}} \;
			\left(\sum_{\nu\in \varphi^{-1}_{p_i + \cdots + p_r}(\hat{\nu})} 
			\ \sum_{\eta\in \varphi^{-1}_{p_{r+1} + \cdots + p_k}(\hat{\eta})} \ \chi_{\nu\cup\eta}\right)\right)_.
			%		 \end{aligned}
		\end{multline}
		
		Comparatively, the product $\sum_{r=i}^{k-1} (-1)^{1+\mydeg(a_{i,r})}a_{i,r}a_{r,k}$ is
		\begin{multline}\label{eq: a_ir a_r+1,k stretching}
			%		\begin{aligned}
			\sum_{r=i}^{k-1}\  
			(-1)^{1+\mydeg(a_{i,r})}
			(-1)^{|J_i \cup \cdots \cup J_r| (p_{r+1}+\cdots +p_k+1)}
			\theta_{i,r}\;\theta_{r+1,k} \;\hat{\theta}_{i,r} \;\hat{\theta}_{r+1,k} \cdot \\
			\cdot \left(\sum_{\hat{\nu}\in S_{\hat{a}_{i,r}}} \ \sum_{\hat{\eta}\in S_{\hat{a}_{r+1,k}}} 
			c_{\hat{\nu}} \ c_{\hat{\eta}}
			\left(\sum_{\nu\in \varphi^{-1}_{p_i + \cdots + p_r}(\hat{\nu})} \ \sum_{\eta\in \varphi^{-1}_{p_{r+1} + \cdots + p_k}(\hat{\eta})}\ \chi_{\nu\cup\eta}\right)\right)
			%		\end{aligned}
		\end{multline}
		where the sign $(-1)^{|J_i \cup \cdots \cup J_r| (p_{r+1}+\cdots +p_k+1)}$ comes from the product of $a_{i,r}$ and $a_{r+1,k}$ as in Lemma~\ref{lem: multiplication of general cochains}, and $(-1)^{1+\mydeg(a_{i,r})}=(-1)^{(p_i+\cdots+p_r)+|J_i\cup \cdots \cup J_r|}$.
		Using the expression for $\theta_{i,k}$  in \eqref{eq: theta_ik stretching}, 
		\[
		(-1)^{1+\mydeg(a_{i,r})} 
		(-1)^{|J_i \cup \cdots \cup J_r| (p_{r+1}+\cdots +p_k+1)} \; \theta_{i,r}\; \theta_{r+1,k} \;
		= (-1)^{(p_i+\cdots +p_r)}\; \theta_{i,k}.
		%		c\; \hat{\theta}_{i,r}\; \hat{\theta}_{r+1,k}\;= (-1)^{(p_i+\cdots +p_r)} \hat{\theta}_{i,k}.
		\]
		Therefore the expressions in \eqref{eq: d(a_ik) stretching} and \eqref{eq: a_ir a_r+1,k stretching} are equal.
		
		Hence $d(a_{i,k})=\sum_{r=i}^{k-1} \doubleoverline{a_{i,r}}a_{r,k}$, and so $(a_{i,k})$ is a defining system for the Massey product $\langle \alpha_1, \ldots, \alpha_n \rangle$. 
		%	in $H^{(p_1+\cdots +p_n)+|J_1 \cup \cdots \cup J_n|+2}(\mathcal{Z}_\mathcal{K})$.
	\end{proof}
	
	\begin{exmp}
		Let $J_1=\{1, 2, 3\}$, $\hat{J}_1=\{\hat{1}, \hat{2} \}$, $J_2=\{4,5\}$ and $\hat{J}_2=\{\hat{4}, \hat{5}\}$. Suppose that $\mathcal{K}_{J_1\cup J_2}$ and $\mathcal{\hat{K}}_{\hat{J}_1\cup \hat{J}_2}$ are the simplicial complexes shown below, where $\mathcal{K}_{J_1\cup J_2}$ maps onto $\mathcal{\hat{K}}_{\hat{J}_1\cup \hat{J}_2}$ by the edge contraction $\{2,3\}\mapsto \{\hat{2}\}$. 
		
		\begin{center}
			\begin{tikzpicture}[scale=1.7, inner sep=2mm]	
			\begin{scope}
			\foreach \i in {1, ..., 5}
			\fill (\i*360/5-55:0.7) coordinate (\i) circle(1.5 pt);
			
			% Draw the 1-simplices
			\draw (1)--(2)--(3)--(4)--(5);
			
			% Label the vertices
			\node[right] at (1){$3$};
			\node[above] at (2){$2$};
			\node[left] at (3){$4$};
			\node[left] at (4){$1$};
			\node[right] at (5){$5$};
			
			%Contracting simplex
			\draw[line width=3] (1)--(2);
			\foreach \i in {1,2} {\fill (\i) circle (2pt);}
			
			%Arrow to next
			\draw[->, line width=2] (1.5, 0) -- (2, 0);
			
			\end{scope}			
			\begin{scope}[xshift=3.5cm]
			\foreach \i in {1, ..., 4}
			\fill (\i*360/4-45:0.7) coordinate (\i) circle(1.5 pt);
			
			% Draw the 1-simplices
			\draw (1)--(2)--(3)--(4);
			
			% Label the vertices
			\node[right] at (1){$\hat{2}$};
			\node[left] at (2){$\hat{4}$};
			\node[left] at (3){$\hat{1}$};
			\node[right] at (4){$\hat{5}$};
			
			\end{scope}
			\end{tikzpicture}
		\end{center}
		Suppose that $\hat{a}_{1}=\chi_{\hat{2}}\in C^0(\mathcal{\hat{K}}_{\hat{J}_1})$, $\hat{a}_2=\chi_{\hat{4}} \in C^0(\mathcal{\hat{K}}_{\hat{J}_2})$, and $\hat{a}_{1,2}=-\chi_{\hat{2}}\in C^0(\mathcal{\hat{K}}_{\hat{J}_1\cup \hat{J}_2})$. Then $d(\hat{a}_{1,2})=\chi_{\hat{2},\hat{4}}=(-1)^{1+\mydeg \hat{a}_1}\hat{a}_1 \hat{a}_2$. 
		By \eqref{eq:cochain a}, $a_1=\chi_2+\chi_3\in C^0(\mathcal{K}_{J_1})$ and $a_2=\chi_4 \in C^0(\mathcal{K}_{J_2})$. 
		By \eqref{eq: a_ik}, $a_{1,2}=-\chi_2-\chi_3\in C^0(\mathcal{K}_{J_1\cup J_2})$, since $\theta_{1,2}=1$.
		We check that $d(a_{1,2})=(\chi_{2,4} +\chi_{2,3})-\chi_{2,3}=\chi_{2,4} = (-1)^{1+\mydeg a_1} a_1 a_2$. Hence $d(a_{1,2})=\doubleoverline{a_1} a_2$.
	\end{exmp}

	\begin{exmp}[a]\label{ex: Massey defined example - edge contractions}
		Let $\mathcal{\hat{K}}_1$ be a triangulation of $S^1$ on three vertices, $\{\hat{1}, \hat{2}, \hat{3}\}$. Let $\mathcal{\hat{K}}_2=\{\{\hat{5}\}, \{\hat{6}\}\}$, and let  $\mathcal{\hat{K}}_3=\{\{\hat{7}\}, \{\hat{8}\}\}$. Let $\hat{\alpha}_1=[\chi_{\hat{1}\hat{3}}]\in \widetilde{H}^1(\mathcal{\hat{K}}_1)$, $\hat{\alpha}_2=[\chi_{\hat{5}}]\in \widetilde{H}^0(\mathcal{\hat{K}}_2)$ and $\hat{\alpha}_3=[\chi_{\hat{7}}]\in \widetilde{H}^0(\mathcal{\hat{K}}_3)$. 
		Let $\mathcal{\hat{K}}=\sd_{\{\hat{5},\hat{8}\}} \sd_{\{\hat{1}, \hat{3},\hat{6}\}} \mathcal{\hat{K}}_1 * \mathcal{\hat{K}}_2 *\mathcal{\hat{K}}_3$ be a simplicial complex on the vertices $\{\hat{1}, \hat{2}, \hat{3}, \hat{5}, \hat{6}, \hat{7}, \hat{8}\}$.  The simplicial complex $\mathcal{\hat{K}}_{\hat{1}, \hat{2}, \hat{3}, \hat{5}, \hat{6}}$ is shown in Figure~\ref{fig: options for omega', edge-contracted K}.
		By Theorem~\ref{thm: joins}, there is a non-trivial triple Massey product $\langle \hat{\alpha}_1, \hat{\alpha}_2, \hat{\alpha}_3 \rangle \subset H^*(\mathcal{Z}_\mathcal{\hat{K}})$.
		\begin{figure}[ht]
			\centering
			\begin{minipage}{0.4\textwidth}
				\centering
				\begin{tikzpicture} [scale=1.5, inner sep=2mm]
				\coordinate (4) at (-0.15,0);
				\coordinate (1) at (1,0);
				\coordinate (2) at (0.6,0.5);
				\coordinate (3) at (1.7,0.5);
				\coordinate (5) at (0.8,1.5);
				\coordinate (6) at (0.8,-1.2);
				
				\fill[lightgray, fill opacity=0.8] (1)--(3)--(5)--(4)--(6)--(1);
				\fill[lightgray, fill opacity=0.4] (3)--(5)--(4)--(6)--(3);
				
				\draw (4) -- (1) node[inner sep=1mm, below right] {1} -- (3) node[right] {3} -- (5) node[above] {5} -- (4)  node[left] {4} -- (6) node[below] {6} -- (3); \draw (5) -- (1) -- (6);
				\draw[dashed] (5) -- (2) node[below] {2} -- (6); \draw[dashed] (4) -- (2) -- (3);
				%Contracting simplex
				\draw[line width=2.7] (1)--(4);
				
				\foreach \i in {1,...,6} {\fill (\i) circle (1.8pt);}; 
				\end{tikzpicture} 
				\subcaption{The simplicial complex $\mathcal{K}_{1,2,3,4,5,6}$, which is missing the simplex $\{1,3,6\}$}
				\label{fig: options for omega', non-edge-contracted K}
			\end{minipage}\qquad
			\begin{minipage}{0.4\textwidth}
				\centering
				\begin{tikzpicture} [scale=1.5, inner sep=2mm]
				\coordinate (2) at (0,0.4);
				\coordinate (1) at (0.9,0);
				\coordinate (3) at (1.7,0.6);
				\coordinate (5) at (0.8,1.5);
				\coordinate (6) at (0.8,-1.2);
				
				\fill[lightgray, fill opacity=0.8] (1)--(3)--(5)--(2)--(6)--(1);
				\fill[lightgray, fill opacity=0.4] (3)--(5)--(2)--(6)--(3);
				
				\draw (1) node[inner sep=1mm, below right] {$\hat{1}$} -- (3) node[right] {$\hat{3}$} -- (5) node[above] {$\hat{5}$} --(2) node[left] {$\hat{2}$}-- (6) node[below] {$\hat{6}$} -- (3); \draw (5) -- (1) -- (6); \draw (1)--(2);
				\draw[dashed] (2) -- (3);
				
				\foreach \i in {1, 2,3,5,6} {\fill (\i) circle (1.8pt);}; 
				\end{tikzpicture} 
				\subcaption{The simplicial complex $\mathcal{\hat{K}}_{\hat{1}, \hat{2}, \hat{3}, \hat{5}, \hat{6}}$, which is missing the simplex $\{\hat{1}, \hat{3}, \hat{6}\}$.}
				\label{fig: options for omega', edge-contracted K}
			\end{minipage}
			\caption{The simplicial complex $\mathcal{K}_{1,2,3,4,5,6}$ maps to $\mathcal{\hat{K}}_{\hat{1}, \hat{2}, \hat{3}, \hat{5}, \hat{6}}$	by contracting the edge $\{1,4\}\mapsto\{\hat{1}\}$.}		
		\end{figure}
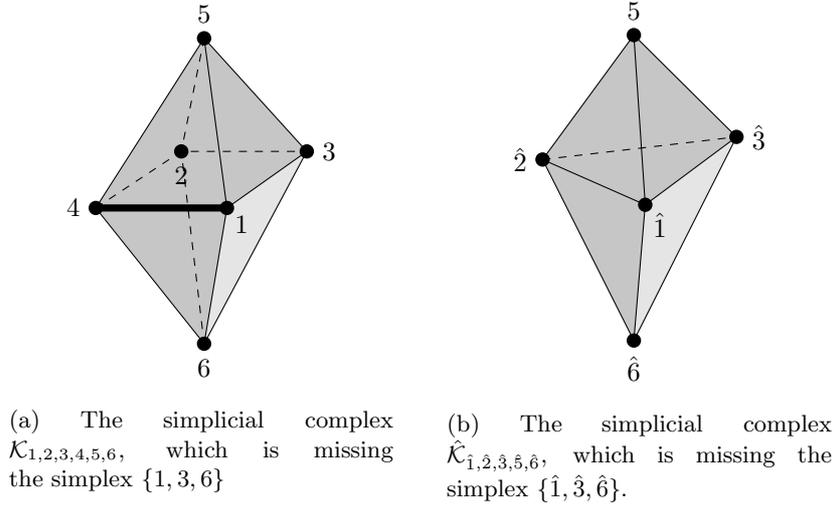
		
		Let $\mathcal{K}$ be the simplicial complex on vertices $\{1, \ldots, 8\}$ that edge contracts to $\mathcal{\hat{K}}$ by contracting the edge $\{1,4\}\mapsto\{\hat{1}\}$, which satisfies the link condition.
		The contraction of the full subcomplex $\mathcal{K}_{J_1\cup J_2}$ is shown in Figure~\ref{fig: options for omega', non-edge-contracted K}. 
		By Construction~\ref{cons: edge contraction}, there are cocycles $a_1=\chi_{13}\in C^1(\mathcal{K}_{J_1})$, $a_2=\chi_5\in C^0(\mathcal{K}_{J_2})$, $a_3=\chi_7\in C^0(\mathcal{K}_{J_3})$.
		The product $a_1a_2$ is $\chi_{13}\chi_5=(-1)^{4}\chi_{135}=\chi_{135}$. 
		If $\hat{a}_{1,2}=\chi_{\hat{1}\hat{3}}$, then using \eqref{eq: a_ik} we  construct $a_{1,2}=\theta_{1,2} \hat{\theta}_{1,2}\chi_{13}=-\chi_{13}$.
		Alternatively, if $\hat{a}_{1,2}=-\chi_{\hat{1}\hat{6}}-\chi_{\hat{1}\hat{2}}-\chi_{\hat{1}\hat{5}}$, then $S_{\hat{a}_{1,2}}=\{\{\hat{1},\hat{6}\}, \{ \hat{1},\hat{2}\}, \{\hat{1},\hat{5}\} \}$. 
		So 
		%\begin{equation*}
		$	\varphi_{1}^{-1}(\{\hat{1},\hat{6}\})=\{ \{1,6\}, \{4,6\} \}$, 
		$ \varphi_{1}^{-1}(\{\hat{1},\hat{2}\})=\{ \{2,4\} \},
		$
		and 
		$\varphi_{1}^{-1}(\{\hat{1},\hat{5}\})=\{ \{1,5\}, \{4,5\}\}$.
		%\end{equation*}
		By \eqref{eq: a_ik}, $a_{1,2}=-\theta_{1,2} \hat{\theta}_{1,2}(\chi_{16}+\chi_{46}+\chi_{24}+\chi_{45}+\chi_{15})=\chi_{16}+\chi_{46}+\chi_{24}+\chi_{45}+\chi_{15}$.
		%By Proposition~\ref{prop: Massey defined edge contractions}, the Massey product $\langle [a_1], [a_2], [a_3] \rangle \subset H^{10}(\mathcal{Z}_\mathcal{K})$ is defined. 
		
		(b).
		In the proof of Proposition~\ref{prop: Massey defined edge contractions}, we showed that the pullback of a defining system $\langle \hat{\alpha}_1,\ldots, \hat{\alpha}_n\rangle$ is a defining system for $\langle \alpha_1, \ldots, \alpha_n \rangle$.
		%given any defining system for $\langle \hat{\alpha}_1,\ldots, \hat{\alpha}_n\rangle$.
		%If $\hat{\omega}_1-\hat{\omega}_2$ is not a coboundary for any $[\hat{\omega}_1], [\hat{\omega}_2]\in \langle \hat{\alpha}_1,\ldots, \hat{\alpha}_n\rangle$ then $\omega_1-\omega_2$ is also not a coboundary for the pullbacks $[\omega_1], [\omega_2]\in \langle \alpha_1, \ldots, \alpha_n \rangle$ 
		%because  edge contractions that satisfy the link condition preserve the homotopy type of the simplicial complex, by Theorem~\ref{thm:ALS}.
		%Thus the proof of Proposition~\ref{prop: Massey defined edge contractions} shows that $|\langle \alpha_1, \ldots, \alpha_n \rangle|\geq |\langle \hat{\alpha}_1,\ldots, \hat{\alpha}_n\rangle|$.
		%\todo{Is this the same as just knowing it from $\varphi^*\langle \hat{\alpha}_1,\ldots, \hat{\alpha}_n\rangle \subset \langle \varphi^* (\hat{\alpha}_1),\ldots, \varphi^*(\hat{\alpha}_n)\rangle$?}
		However there are defining systems for $\langle \alpha_1, \ldots, \alpha_n \rangle$
		that are not pullbacks of defining systems for $\langle \hat{\alpha}_1,\ldots, \hat{\alpha}_n\rangle$.
		For example, let $a_1, a_2, a_3, \hat{a}_1, \hat{a}_2, \hat{a}_3$ be as in Part (a).
		Let $a_{1,2}=-\chi_{16}-\chi_{14}-\chi_{15}$. For the edge $\{1,4\}\in \mathcal{K}$, $\{1,4\}\notin \varphi_{1}^{-1}(\hat{e})$ for any edge $\hat{e}\in \mathcal{\hat{K}}$, so $a_{1,2}$ is not a pullback of any $\hat{a}_{1,2}$.
		%as in \eqref{eq: a_ik}.
		However for $\chi_1\in C^*(\mathcal{K}_{1,2,3,4,5,6})$, 
		\begin{align*}
			a_{1,2}-d(\chi_1)&=-\chi_{16}-\chi_{14}-\chi_{15}-(\chi_{16}+\chi_{14}+\chi_{15}+\chi_{13}) \\
			&=-\chi_{13}=\theta_{1,2} \hat{\theta}_{1,2}\sum_{\tau \in \varphi^{-1}_{p_1 + p_2}(\hat{1}\hat{3})} \chi_{\tau}.
		\end{align*}
		Therefore $a_{1,2}$ differs from the pullback of $\hat{a}_{1,2}=\chi_{\hat{1}\hat{3}}$ by a coboundary.
	\end{exmp}
	
	In order to prove that $\langle \alpha_1, \ldots, \alpha_n \rangle$ is non-trivial, we show that for every defining system for $\langle \alpha_1, \ldots, \alpha_n \rangle$, its associated cocycle is homologous to the pullback of an associated cocycle for a defining system for the non-trivial Massey product $\langle \hat{\alpha}_1,\ldots, \hat{\alpha}_n\rangle$.
	
	\begin{prop} \label{prop: Massey nontrivial edge contractions}
		The $n$-Massey product $\langle \alpha_1, \ldots, \alpha_n \rangle$ is non-trivial.
	\end{prop}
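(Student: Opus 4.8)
The plan is to prove that for \emph{every} defining system of $\langle \alpha_1, \ldots, \alpha_n\rangle$ the associated cocycle is cohomologous to the $\varphi$-pullback of an associated cocycle of \emph{some} defining system of the non-trivial Massey product $\langle \hat\alpha_1, \ldots, \hat\alpha_n\rangle$; since $\varphi$ induces an injection on the relevant reduced cohomology group and $0\notin\langle\hat\alpha_1,\ldots,\hat\alpha_n\rangle$, this forces every element of $\langle\alpha_1,\ldots,\alpha_n\rangle$ to be non-zero. Throughout I work in the Hochster cochain model via the correspondence of Section~\ref{sect: combinatorial Massey products}, so a defining system $(a_{i,k})$ consists of cochains $a_{i,k}\in C^{p_i+\cdots+p_k}(\mathcal{K}_{J_i\cup\cdots\cup J_k})$ with $d(a_{i,k})=\sum_{r=i}^{k-1}\doubleoverline{a_{i,r}}a_{r+1,k}$, and the associated cocycle is $\omega=\sum_{r=1}^{n-1}\doubleoverline{a_{1,r}}a_{r+1,n}\in C^{p_1+\cdots+p_n+1}(\mathcal{K}_{J_1\cup\cdots\cup J_n})$.

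First I would record that the pullback is well-behaved on the full subcomplexes that occur. For any union of $\varphi$-fibres $L=\varphi^{-1}_0(\hat L)\subset V(\mathcal{K})$ with $\hat L=\varphi_0(L)$, the restricted map $\varphi|_{\mathcal{K}_L}\colon \mathcal{K}_L\to\mathcal{\hat{K}}_{\hat L}$ is again a composition of edge contractions satisfying the link condition, because each contracted edge is a $\varphi$-fibre (so $L$ contains both or neither of its endpoints) and links commute with passage to full subcomplexes, whence $\link_{\mathcal{K}_L}\{u\}\cap\link_{\mathcal{K}_L}\{w\}=(\link_{\mathcal{K}}\{u\}\cap\link_{\mathcal{K}}\{w\})_L=(\link_{\mathcal{K}}\{u,w\})_L=\link_{\mathcal{K}_L}\{u,w\}$. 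By Theorem~\ref{thm:ALS} each such $\varphi|_{\mathcal{K}_L}$ is a homotopy equivalence, so the pullback cochain map $\varphi^{\#}\colon\widetilde C^*(\mathcal{\hat{K}}_{\hat L})\to\widetilde C^*(\mathcal{K}_L)$, given by $\varphi^{\#}(\chi_{\hat\sigma})=\sum_{\sigma}\chi_\sigma$ over the $\sigma$ with $\varphi(\sigma)=\hat\sigma$, $|\sigma|=|\hat\sigma|$ (cf.\ \eqref{eq:cochain a}), induces an isomorphism on reduced cohomology. I apply this to $L=J_i\cup\cdots\cup J_k$ for all $1\leqslant i\leqslant k\leqslant n$, which are unions of $\varphi$-fibres by Construction~\ref{cons: edge contraction}.

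Next, given an arbitrary defining system $(a_{i,k})$ for $\langle\alpha_1,\ldots,\alpha_n\rangle$, I would build by induction on $k-i$ cochains $\hat a_{i,k}\in C^{p_i+\cdots+p_k}(\mathcal{\hat{K}}_{\hat J_i\cup\cdots\cup\hat J_k})$ forming a defining system for $\langle\hat\alpha_1,\ldots,\hat\alpha_n\rangle$, together with correction cochains $b_{i,k}\in C^{p_i+\cdots+p_k-1}(\mathcal{K}_{J_i\cup\cdots\cup J_k})$, so that $\varphi^{\#}(\hat a_{i,k})=a_{i,k}+d(b_{i,k})$. For $k=i$ take $\hat a_{i,i}=\hat a_i$ and $b_{i,i}=0$, since $\varphi^{\#}(\hat a_i)=a_i$ by definition. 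For the inductive step, $\sum_{r=i}^{k-1}\doubleoverline{\hat a_{i,r}}\hat a_{r+1,k}$ is a cocycle, and a Leibniz computation — tracking the normalising signs $\theta_{i,k},\hat\theta_{i,k}$ from Proposition~\ref{prop: Massey defined edge contractions} and using the $b_{i',k'}$ — shows that its $\varphi^{\#}$-image differs from $\sum_{r}\doubleoverline{a_{i,r}}a_{r+1,k}=d(a_{i,k})$ by a coboundary in $\widetilde C^*(\mathcal{K}_{J_i\cup\cdots\cup J_k})$; hence it is a coboundary, and since $\varphi^{\#}$ is an isomorphism on reduced cohomology of these full subcomplexes, $\sum_r\doubleoverline{\hat a_{i,r}}\hat a_{r+1,k}$ is already a coboundary, so a primitive $\hat a_{i,k}$ exists, and adjusting it by a cocycle (using surjectivity of $\varphi^{\#}$ on cohomology) one arranges $\varphi^{\#}(\hat a_{i,k})=a_{i,k}+d(b_{i,k})$ for a suitable $b_{i,k}$. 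This is the familiar fact that a quasi-isomorphism of differential graded algebras carries Massey products onto Massey products, transcribed into the Hochster model.

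Finally, the associated cocycle $\hat\omega=\sum_{r=1}^{n-1}\doubleoverline{\hat a_{1,r}}\hat a_{r+1,n}$ of $(\hat a_{i,k})$ satisfies $[\varphi^{\#}(\hat\omega)]=[\omega]$ by the same computation. As $\langle\hat\alpha_1,\ldots,\hat\alpha_n\rangle$ is non-trivial, $[\hat\omega]\neq0$ in $\widetilde H^{p_1+\cdots+p_n+1}(\mathcal{\hat{K}}_{\hat J_1\cup\cdots\cup\hat J_n})$, and since $\varphi^{\#}$ is an isomorphism on reduced cohomology between $\mathcal{\hat{K}}_{\hat J_1\cup\cdots\cup\hat J_n}$ and $\mathcal{K}_{J_1\cup\cdots\cup J_n}$, we get $[\omega]=\varphi^*[\hat\omega]\neq0$; as the defining system was arbitrary, $0\notin\langle\alpha_1,\ldots,\alpha_n\rangle$. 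I expect the main obstacle to be the inductive step of the previous paragraph: keeping the correction cochains $b_{i,k}$ coherent across all the defining-system relations at once so that the Leibniz bookkeeping (with the $\theta_{i,k},\hat\theta_{i,k}$ signs) closes up — routine in principle, but the delicate part of the argument. A secondary point worth verifying explicitly is that the link condition genuinely descends to each full subcomplex $\mathcal{K}_{J_i\cup\cdots\cup J_k}$, including after the intermediate contractions.
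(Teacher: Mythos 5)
Your high-level strategy is the same as the paper's (reduce non-triviality in $H^*(\mathcal{Z}_\mathcal{K})$ to the given non-triviality in $H^*(\mathcal{Z}_{\mathcal{\hat{K}}})$ by relating arbitrary defining systems across $\varphi$), but your implementation runs in the opposite direction: you pull back along $\varphi^{\#}\colon C^*(\mathcal{\hat{K}}_{\hat L})\to C^*(\mathcal{K}_L)$ and try to lift a given defining system downstairs to one upstairs, i.e.\ the classical ``quasi-isomorphisms preserve Massey products'' argument. Several of your supporting observations are correct and worth keeping: the link condition does descend to full subcomplexes on unions of $\varphi$-fibres (your computation $\link_{\mathcal{K}_L}\{u\}\cap\link_{\mathcal{K}_L}\{w\}=(\link_{\mathcal{K}}\{u,w\})_L$ is right), so $\varphi^{\#}$ is a quasi-isomorphism on each summand $\widetilde{C}^*(\mathcal{\hat{K}}_{\hat J_i\cup\cdots\cup\hat J_k})\to\widetilde{C}^*(\mathcal{K}_{J_i\cup\cdots\cup J_k})$, and it is multiplicative between these summands up to the signs $\theta_{i,k},\hat\theta_{i,k}$ (this is essentially the computation in Proposition~\ref{prop: Massey defined edge contractions}). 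Note, however, that you cannot simply cite the general quasi-isomorphism invariance of Massey products: $\varphi^{\#}$ is \emph{not} a quasi-isomorphism of the full cochain algebras of the two moment-angle complexes (it is not even defined on summands $J$ that are not unions of fibres), so the argument has to be carried out by hand on the multigraded pieces, exactly as you attempt.

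The genuine gap is the inductive invariant $\varphi^{\#}(\hat a_{i,k})=a_{i,k}+d(b_{i,k})$: it does not propagate. Assuming it for all $k'-i'<k-i$ and substituting into $\sum_r\doubleoverline{\varphi^{\#}(\hat a_{i,r})}\,\varphi^{\#}(\hat a_{r+1,k})=\sum_r\doubleoverline{(a_{i,r}+db_{i,r})}(a_{r+1,k}+db_{r+1,k})$, the cross terms $\doubleoverline{a_{i,r}}\,d(b_{r+1,k})$ and $\doubleoverline{d(b_{i,r})}\,a_{r+1,k}$ are, after applying the Leibniz rule, coboundaries \emph{plus} terms of the form $\pm\,\doubleoverline{a_{i,s}}a_{s+1,r}b_{r+1,k}$ and $\pm\,b_{i,r}\,\doubleoverline{a_{r+1,s}}a_{s+1,k}$ (because $d(a_{i,r})$ and $d(a_{r+1,k})$ are themselves sums of products, not zero). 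These triple products do not cancel and are not coboundaries, so $\varphi^{\#}\bigl(\sum_r\doubleoverline{\hat a_{i,r}}\hat a_{r+1,k}\bigr)$ is not cohomologous to $d(a_{i,k})$, and the lift $\hat a_{i,k}$ with the stated property need not exist while the given $(a_{i,k})$ is held fixed. The only way to close the induction is to allow yourself to \emph{modify the given defining system} $(a_{i,k})$ at each stage — replacing $a_{i,k}$ by $a_{i,k}+u$ and making the compensating replacements $a_{i',k}\mapsto a_{i',k}\pm a_{i',i-1}u$ and $a_{i,k'}\mapsto a_{i,k'}\pm u\,a_{k+1,k'}$, and verifying that the associated cocycle changes only by a coboundary. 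That bookkeeping is precisely the content of \eqref{eq: corresponding aik that doesn't contract} and \eqref{eq: corresponding omega to tilde aik} in the paper's proof (there used to clear the contracted edge out of the supports so that a pushforward $\varphi^*$ becomes available), and it is the technical core of the argument; your proposal labels it ``routine in principle'' but in fact the specific invariant you chose is the wrong one, and repairing it requires importing this machinery wholesale.
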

	\begin{proof}
		Suppose that $\varphi\colon \mathcal{K}\to \mathcal{\hat{K}}$ is the contraction of just one edge $\{u,v\}\in \mathcal{K}$. By Construction~\ref{cons: edge contraction}, $\{u,v\}\subset J_i$ for  $i\in \{1, \ldots, n\}$.
		
		For $a_{i,i}=a_i$ the representative cocycle for $\alpha_i$ as defined in \eqref{eq:cochain a}, let $(a_{i,k})$ be a defining system for $\langle \alpha_1, \ldots, \alpha_n \rangle$,
		\[
		a_{i,k}=\sum_{\sigma\in S_{a_{i,k}}} c_{\sigma} \chi_{\sigma} \in C^{p_i+\cdots + p_k}(\mathcal{K}_{J_i\cup \cdots \cup J_k}).
		\]
		We show that any defining system $(a_{i,k})$ corresponds to a defining system $(\hat{a}_{i,k})$ for $\langle \hat{\alpha}_1, \ldots, \hat{\alpha}_n \rangle$ in $H^*(\mathcal{Z}_\mathcal{\hat{K}})$. 
		There are two main stages to this proof. Firstly, for a defining system $(a_{i,k})$ such that for any $\{i,k\}$, $\{u,v\}\notin \sigma$ for any $\sigma\in S_{a_{i,k}}$, we construct a corresponding defining system $(\varphi^*(a_{i,k}))$ for $\langle \hat{\alpha}_1, \ldots, \hat{\alpha}_n \rangle$. 
		Secondly, for any other defining system $(a_{i,k})$, we change $a_{i,k}$  to create a different defining system $(\widetilde{a}_{i,k})$ for $\langle \alpha_1, \ldots, \alpha_n \rangle$ such that the associated cocycles are homologous and for any $\{i,k\}$, $\{u,v\}\notin \sigma$ for any $\sigma\in S_{\widetilde{a}_{i,k}}$. 
		Applying the first step to $(\widetilde{a}_{i,k})$, we have a defining system $(\varphi^*(\widetilde{a}_{i,k}))$ that corresponds to $(a_{i,k})$.

		For this first step, suppose that  for any $\{i,k\}$, $\{u,v\}\notin \sigma$ for any $\sigma\in S_{a_{i,k}}$.
		We define a tool $\varphi^*$, which will only be well-defined for certain specified cochains such as
		%we apply to cochains 
		$a_{i,k}\in C^{p}(\mathcal{K}_{J_i\cup \cdots \cup J_k})$ or $a_{i,r}a_{r+1,k}\in C^{p}(\mathcal{K}_{J_i\cup \cdots \cup J_k})$
		where $p=p_i+\cdots + p_k$ or $p=p_i+\cdots + p_k+1$ respectively.
		We check three properties of $\varphi^*$ in order to construct a defining system $(\varphi^*(a_{i,k}))$ for $\langle \hat{\alpha}_1, \ldots, \hat{\alpha}_n \rangle$.
		Let $a\in C^{p}(\mathcal{K}_{J_i\cup \cdots \cup J_k})$ be a general cochain such that $\{u,v\}\notin \sigma$ for any $\sigma\in S_{a}$, where either $p=p_i+\cdots + p_k$ or $p=p_i+\cdots + p_k+1$.
		For $J\subset[m]$, let $\hat{J}=\varphi(J)$.
		Define	
		\begin{equation}\label{eq: varphi (edge contraction)} 
			\varphi^*(a) = c_{i,k} \sum_{\hat{\sigma}\in \varphi(S_{a})} c_{\hat{\sigma}} \chi_{\hat{\sigma}} 
			\; \in \; C^{p}(\mathcal{\hat{K}}_{\hat{J}_i\cup \cdots \cup \hat{J}_{k}})
		\end{equation}
		where $c_{\hat{\sigma}}=c_\sigma$ for any $\sigma\in S_a$ such that $\varphi(\sigma)=\hat{\sigma}$,  $c_{i,i}=1$ and 
		\begin{equation*}
			c_{i,k} = (-1)^{(|J_i|-|\hat{J}_i|)p_{i+1} +(|J_i\cup J_{i+1}|-|\hat{J}_i\cup \hat{J}_{i+1}|)p_{i+2} +
				\cdots +(|J_i\cup \cdots \cup J_{k-1}|-|\hat{J}_i\cup \cdots \cup \hat{J}_{k-1}|)p_{k} }.
		\end{equation*}
		
		(\romannumeral 1) 
		First note that for any constant $c'\in \mathbf{k}$ and for 
		%$a=c_{\sigma} \chi_{\sigma}$, $b=c_{\tau} \chi_{\tau}$
		$\chi_{\sigma}, \chi_{\tau}$ in $C^p(\mathcal{K}_{J_i\cup \cdots \cup J_k})$ where $p$ is either $p_i+\cdots + p_k$ or $p_i+\cdots + p_k+1$ and $\{u,v\}\notin \sigma, \tau$,
		\begin{equation}\label{eq: varphi is scalar & additive (edge contractions)}
			\begin{gathered}
				\varphi^*(c' c_{\sigma} \chi_{\sigma})=c_{i,k}\;c' c_\sigma \chi_{\varphi(\sigma)}=c'\varphi^*(c_{\sigma} \chi_{\sigma}) \text{ and }  \\
				\varphi^*(c_{\sigma} \chi_{\sigma}+c_{\tau} \chi_{\tau})=c_{i,k}\; (c_\sigma \chi_{\varphi(\sigma)}+ c_\tau \chi_{\varphi(\tau)})=\varphi^*(c_{\sigma} \chi_{\sigma})+\varphi^*(c_{\tau} \chi_{\tau}).
			\end{gathered}
		\end{equation}
		
		(\romannumeral 2) 
		Next we show that $\varphi^*(d(a_{i,k}))=d(\varphi^*(a_{i,k}))$.
		Suppose that for a simplex $\sigma\in S_{a_{i,k}}$, there is a simplex $j\cup \sigma \in \mathcal{K}_{J_i\cup \cdots \cup J_k}$ for $j\in J_i\cup \cdots \cup J_k\setminus \sigma$ that is contracted. That is, $\{u, v\}\in j\cup \sigma$.
		By the definition of a defining system, $d(a_{i,k})=\sum_{r=i}^{k-1} \doubleoverline{a_{i,r}}a_{r,k}$. Therefore either $c_\sigma \; \varepsilon(j, j\cup \sigma) \; \chi_{j\cup \sigma}$ is cancelled by other terms in $d(a_{i,k})$, or there exists $i\leqslant r < k$ and simplices $\tau \in S_{a_{i,r}}$, $\eta \in S_{a_{r+1, k}}$ such that $\tau\cup \eta = j \cup \sigma$.
		In the latter case, if $\{u,v\} \in j\cup \sigma$, then $\{u,v\} \in \tau\cup \eta$. 
		This implies that either $\{u,v\}\in \tau$ or $\{u,v\}\in \eta$, since  by construction
		$\{u,v\}\subset J_i$ for an $1 \leqslant i\leqslant n$ and $\tau\in S_{a_{i,r}}\subset J_i\cup \cdots \cup J_r$, $\eta \in S_{a_{r+1,k}}\subset J_{r+1}\cup \cdots \cup J_k$.
		This then contradicts the assumption that $\{u,v\}\notin \sigma$ for any $\sigma\in S_{a_{i,k}}$ and any $\{i,k\}$. 
		Hence a summand of the form $c_\sigma \; \varepsilon(j, j\cup \sigma) \; \chi_{j\cup \sigma}$, where $\{u,v\}\in j\cup\sigma$, is cancelled out by other summands.
		
		Let  $a=\sum_{\sigma \in S_a} c_{\sigma} \chi_{\sigma}\in C^{p_i+\cdots+p_k}(\mathcal{K}_{J_i\cup \cdots \cup J_k})$ be a cochain such that for any simplex $j\cup \sigma\in \mathcal{K}_{J_i\cup \cdots \cup J_k}$ for $\sigma\in S_{a}$ and  $j\in J_i\cup \cdots \cup J_k\setminus \sigma$, either $c_\sigma \; \varepsilon(j, j\cup \sigma) \; \chi_{j\cup \sigma}$ is cancelled by other terms in $d(a_{})$ or $j\cup \sigma$ does not contract.
		Applying $\varphi^*$ to
		\[
		d(a)=\sum_{\sigma\in S_{a}}
		\sum\limits_{ \substack{j\in J_i\cup \cdots \cup J_k\setminus \sigma, \\j\cup \sigma \in \mathcal{K}_{J_i\cup \cdots \cup J_k}}}
		c_\sigma \; \varepsilon(j, j\cup \sigma)\; \chi_{j\cup \sigma},
		\]
		we write
		\[
		\varphi^*(d(a))=
		c_{i,k}\sum_{\hat{\sigma}\in \varphi(S_{a})}
		\; c_{\hat{\sigma}}
		%			\left( %
		\mathopen{\raisebox{-1.8ex}{$\biggl($}}\, %% Tried to not make the brackets so tall, but this symbol Bigg is too small
		\;
		\sum\limits_{ \substack{\hat{j}\in \hat{J}_i\cup \cdots \cup \hat{J}_k\setminus \hat{\sigma}, \\\hat{j}\cup \hat{\sigma} \in \mathcal{\hat{K}}_{\hat{J}_i\cup \cdots \cup \hat{J}_k}}}
		\; \varepsilon(\hat{j}, \hat{j}\cup \hat{\sigma})\; \chi_{\hat{j}\cup \hat{\sigma}}  
		%			\right) 
		\mathclose{\raisebox{-1.8ex}{$\biggr)$}} 
		\]
		where $\varepsilon(j, j\cup \sigma)=\varepsilon(\hat{j}, \hat{j}\cup \hat{\sigma})$ due to the order on vertices in $\mathcal{K}$ and since $j\cup \sigma$ does not contract.
		Let $\hat{S}=\{ \varphi(\sigma) \; | \; \sigma\in S_a, |\varphi(\sigma)|=p_i+\cdots + p_k+1 \}$ and 
		let $b=\sum_{\hat{\sigma} \in \hat{S}} c_{\hat{\sigma}} \chi_{\hat{\sigma}} \in C^{p_i+\cdots + p_k}(\mathcal{\hat{K}}_{\hat{J}_i\cup \cdots \cup \hat{J}_k})$.
		Then 
		\begin{equation}\label{eq: varphi respects differential (edge contractions)} 
			\varphi^*(d(a))=d(b).
		\end{equation}
		In particular, $\varphi^*(d(a_{i,k}))=d(\varphi^*(a_{i,k}))$.
		
		(\romannumeral 3) 
		We also show that 
		\begin{equation} \label{eq: varphi of defining system - edge contractions}
			\sum_{r=i}^{k-1}  \;\doubleoverline{\varphi^*(a_{i,r})} \varphi^*(a_{r+1,k}) 
			= \varphi^*\left(\sum_{r=i}^{k-1}\;\doubleoverline{a_{i,r}} a_{r+1,k}\right).
		\end{equation}
		Let $a_{i,r}\in C^{p_i+\cdots + p_r}(\mathcal{K}_{J_i\cup \cdots \cup J_r})$ and $a_{r+1,k}\in C^{p_{r+1}+\cdots + p_k}(\mathcal{K}_{J_{r+1}\cup \cdots \cup J_k})$ be represented by $\sum_{\tau\in S_{a_{i,r}}} c_{\tau} \chi_{\tau}$ and $\sum_{\eta\in S_{a_{r+1,k}}} c_{\eta} \chi_{\eta}$ respectively. 
		The left hand side of \eqref{eq: varphi of defining system - edge contractions} is
		\begin{align*}
			\sum_{r=i}^{k-1} & \;\doubleoverline{\varphi^*(a_{i,r})} \varphi^*(a_{r+1,k}) \\
			&=\sum_{r=i}^{k-1} (-1)^{1+\mydeg \varphi^*(a_{i,r})} \left( c_{i,r} \sum_{\hat{\tau}\in \varphi(S_{a_{i,r}})} c_{\tau} \chi_{\hat{\tau}} \right)\cdot 
			\left( c_{r+1,k} \sum_{\hat{\eta}\in \varphi(S_{a_{r+1,k}})} c_{\eta} \chi_{\hat{\eta}} \right)\\
			&= \sum_{r=i}^{k-1} \;C
			\left(\sum_{\hat{\tau}\in \varphi(S_{a_{i,r}})} \sum_{\hat{\eta}\in \varphi(S_{a_{r+1,k}})} c_{\tau} c_{\eta} \chi_{\hat{\tau}\cup \hat{\eta}}\right)
		\end{align*}
		where 
		\begin{equation*}
			C=(-1)^{1+\mydeg \varphi^*(a_{i,r})+|\hat{J}_i \cup \cdots \cup \hat{J}_r|(p_{r+1}+\cdots +p_k+1)} c_{i,r} c_{r+1,k}.
		\end{equation*}
		Using the expressions for $c_{i,r}$ and $c_{r+1,k}$, and using $\mydeg \varphi^*(a_{i,r})=1+p_i+\cdots +p_r+|\hat{J}_i\cup \cdots \cup \hat{J}_r|$,  
		\begin{align*}
			C= &(-1)^{p_i+\cdots +p_r+|\hat{J}_i\cup \cdots \cup \hat{J}_r|+|\hat{J}_i \cup \cdots \cup \hat{J}_r|(p_{r+1}+\cdots +p_k+1)} \\
			&\quad \cdot (-1)^{(|J_i|-|\hat{J}_i|)p_{i+1} +
				\cdots +(|J_i\cup \cdots \cup J_{r-1}|-|\hat{J}_i\cup \cdots \cup \hat{J}_{r-1}|)p_{r}} \\
			&\quad \cdot (-1)^{(|J_{r+1}|-|\hat{J}_{r+1}|)p_{r+2} +
				\cdots +(|J_{r+1}\cup \cdots \cup J_{k-1}|-|\hat{J}_{r+1}\cup \cdots \cup \hat{J}_{k-1}|)p_{k}}\\
			=& (-1)^{p_i+\cdots +p_r+(|J_i|-|\hat{J}_i|)p_{i+1} +
				\cdots +(|J_i\cup \cdots \cup J_{r-1}|-|\hat{J}_i\cup \cdots \cup \hat{J}_{r-1}|)p_{r}+|\hat{J}_i\cup \cdots \cup \hat{J}_r|p_{r+1}}\\
			&\quad \cdot (-1)^{(|J_{r+1}|-|\hat{J}_i\cup \cdots \cup \hat{J}_{r+1}|)p_{r+2} +
				\cdots +(|J_{r+1}\cup \cdots \cup J_{k-1}|-|\hat{J}_i\cup \cdots \cup \hat{J}_{k-1}|)p_{k}} \\
			%=& (-1)^{p_i+\cdots +p_r} (-1)^{|J_i\cup \cdots \cup J_r|(p_{r+1}+\cdot + p_k)} c_{i,k}\\
			=& (-1)^{1+\mydeg a_{i,r}} (-1)^{|J_i\cup \cdots \cup J_r|(p_{r+1}+\cdot + p_k+1)} c_{i,k}.
		\end{align*}
		By assumption, $\{u,v\}\notin \sigma$ for any $\sigma\in S_{a_{i,k}}$ and any $\{i,k\}$.
		Thus $\{u,v\}\notin \tau$ and $\{u,v\}\notin \eta$ for any $i\leqslant r < k$ and any simplices $\tau \in S_{a_{i,r}}$, $\eta \in S_{a_{r+1, k}}$.
		Also, $\{u,v\}\subset J_i$ for an index $1 \leqslant i\leqslant n$,  so $\{u,v\}\notin \tau\cup \eta$.
		Hence $\varphi(\tau\cup \eta)=\varphi(\tau)\cup \varphi(\eta)$ is a $(p_i+\cdots +p_k+1)$-simplex.
		Therefore using the definition of $\varphi^*$, the property (\romannumeral 1), 
		%properties in \eqref{eq: varphi is scalar & additive (edge contractions)}, 
		and the fact that $\varphi(\tau\cup \eta)=\varphi(\tau)\cup \varphi(\eta)=\hat{\tau}\cup \hat{\eta}$,
		\begin{multline*}
			\sum_{r=i}^{k-1}  \;\doubleoverline{\varphi^*(a_{i,r})} \varphi^*(a_{r+1,k}) =\\
			= \sum_{r=i}^{k-1} %(-1)^{1+\mydeg a_{i,r}} (-1)^{|J_i\cup \cdots \cup J_r|(p_{r+1}+\cdot + p_k+1)} c_{i,k}
			C
			\left(\sum_{\hat{\tau}\in \varphi(S_{a_{i,r}})} \sum_{\hat{\eta}\in \varphi(S_{a_{r+1,k}})} c_{\tau} c_{\eta} \chi_{\hat{\tau}\cup \hat{\eta}}\right)
			= \varphi^*\left(\sum_{r=i}^{k-1}\;\doubleoverline{a_{i,r}} a_{r+1,k}\right).
		\end{multline*}
		
		Using properties (\romannumeral 1), (\romannumeral 2) and (\romannumeral 3), we prove that a defining system $(a_{i,k})$ for $\langle \alpha_1, \ldots, \alpha_n \rangle$ and its associated cocycle $\omega$ are mapped by $\varphi^*$ onto a defining system for $\langle \hat{\alpha}_1, \ldots, \hat{\alpha}_n \rangle$ and its associated cocycle is $\varphi^*(\omega)$.
		By the definition of $a_i=a_{i,i}$ in \eqref{eq:cochain a}, $\varphi^*(a_{i,i})=\hat{a}_{i,i}=\hat{a}_i$.
		By properties  (\romannumeral 2) and (\romannumeral 3), %$\varphi^*(d(a_{i,k}))=d(\varphi^*(a_{i,k}))$ and $\sum_{r=i}^{k-1}  \;\doubleoverline{\varphi^*(a_{i,r})} \varphi^*(a_{r+1,k})        = \varphi^*\left(\sum_{r=i}^{k-1}\;\doubleoverline{a_{i,r}} a_{r+1,k}\right)$, 
		we see that
		\begin{equation*}
			d(\varphi^*(a_{i,k}))=\varphi^*(d(a_{i,k}))=\varphi^*\left(\sum_{r=i}^{k-1}\;\doubleoverline{a_{i,r}} a_{r+1,k}\right)
			=\sum_{r=i}^{k-1} \;\doubleoverline{\varphi^*(a_{i,r})} \varphi^*(a_{r+1,k}).
		\end{equation*}
		Hence $(\varphi^*(a_{i,k}))$ is a defining system for $\langle \hat{\alpha}_1, \ldots, \hat{\alpha}_n \rangle$ if $(a_{i,k})$ is a defining system such that $\{u,v\}\notin \sigma$ for any $\sigma\in S_{a_{i,k}}$ and any pair $\{i,k\}$. 
		Also, for the associated cocycle $\omega$ for $(a_{i,k})$,  
		\[
		\varphi^*(\omega)=\varphi^*\left(\sum_{r=1}^{n-1} \;\doubleoverline{a_{1,r}} a_{r+1,n} \right)=\sum_{r=1}^{n-1} \;\doubleoverline{\varphi^*(a_{1,r})} \varphi^*(a_{r+1,n})
		\]
		so $\varphi^*(\omega)$ is the associated cocycle for $(\varphi^*(a_{i,k}))$.

		Lastly we prove that $[\omega]\neq0$.
		If $[\omega]=0$, then there is a cochain $a\in C^{p_1+\cdots+p_n}(\mathcal{K}_{J_1\cup \cdots \cup J_n})$ such that $\omega=d(a)$. 
		Since $\{u,v\}\in J_j$ for some $j\in \{1,\ldots, n\}$ and $\{u,v\}\notin \sigma$ for any $\sigma\in S_{a_{i,k}}$ and any $\{i,k\}$, no simplices in $S_\omega$ contract.
		Thus no simplices in $S_{d(a)}$.
		So by applying $\varphi^*$ and 
		%property (\romannumeral 1) and  
		%\eqref{eq: varphi is scalar & additive (edge contractions)} and 
		\eqref{eq: varphi respects differential (edge contractions)} from property (\romannumeral 2),  
		$\varphi^*(\omega)=\varphi^*(d(a))=d(b)$ for a cochain $b\in C^{p_i+\cdots + p_k}(\mathcal{\hat{K}}_{\hat{J}_i\cup \cdots \cup \hat{J}_k})$. 
		So $[\varphi^*(\omega)]=0$, which contradicts the non-triviality of $\langle \hat{\alpha}_1, \ldots, \hat{\alpha}_n \rangle$. Therefore $[\omega]\neq0$.
		
		For the second stage of this proof, suppose that $(a_{i,k})$ is a defining system for $\langle \alpha_1, \ldots, \alpha_n \rangle$ such that there is a pair of indices $\{i,k\}$ with $\{u,v\}\in \sigma$ for some $\sigma\in S_{a_{i,k}}$. 
		We will define a new defining system  $(\widetilde{a}_{i,k})$  such that $\{u,v\}\notin \sigma$ for any $\sigma\in S_{\widetilde{a}_{i,k}}$ and such that $[\omega]=[\widetilde{\omega}]$ where $\omega$ and $ \widetilde{\omega}$ are the associated cocycles for $(a_{i,k})$ and $(\widetilde{a}_{i,k})$, respectively.
		
		The cocycle $a_i=a_{i,i}$ as defined in \eqref{eq:cochain a} is such that $\{u,v\}\notin \sigma$ for every $\sigma\in S_{a_{i}}$. Therefore, let $\{i,k\}$ be a pair of indices such that there is a simplex $\sigma\in S_{a_{i,k}}$ with $\{u,v\}\in \sigma$, and for every $i<i''<k''<k$, $\{u,v\}\notin \tau$ for any $\tau\in S_{a_{i'',k''}}$.
		Let $\sigma\in S_{a_{i,k}}$ be a simplex such that $\{u,v\}\in \sigma$, and let $c_\sigma$ be the non-zero coefficient of $\chi_\sigma$ in $a_{i,k}$.
		Then for every pair $\{i',k'\}\subset [n]$, let $c=(-1)^{\mydeg a_{i,k}} c_\sigma \; \varepsilon(u, \sigma)$ and define
		\begin{equation}\label{eq: corresponding aik that doesn't contract}
			\widetilde{a}_{i',k'} = 
			\begin{cases}
				%\left\{ \begin{array}{lr}
				a_{i,k}-c_\sigma \; \varepsilon(u, \sigma)\; d(\chi_{\sigma\setminus u}) &\text{if } i'=i<k=k',  \\
				a_{i',k}+c_\sigma\; \varepsilon(u,\sigma)\;a_{i', i-1}\chi_{\sigma\setminus u} &\text{if } i'<i<k=k',  \\
				a_{i,k'}+c\; \chi_{\sigma\setminus u}a_{k+1,k'} &\text{if }  i'=i<k<k', \\
				a_{i',k'} &\text{if }  i'<i<k<k' \text{ or } i<i'<k'<k
				%\end{array} \right.
			\end{cases}
		\end{equation}
		where $\chi_{\sigma\setminus u}\in C^{p_i+\cdots p_k-1}(\mathcal{K}_{J_i \cup \cdots \cup J_k})$.
		We show that $(\widetilde{a}_{i',k'})$ is a defining system for $\langle \alpha_1, \ldots, \alpha_n \rangle$.
		Firstly since $k-i>1$, $\widetilde{a}_{i',i'}=a_{i',i'}$ for every $i'\in [n]$. We also need to show that $d(\widetilde{a}_{i',k'})=\sum_{r=i'}^{k'-1} \doubleoverline{\widetilde{a}_{i',r}}\widetilde{a}_{r+1, k'}$ for every $\{i',k'\}$. 
		
		(\romannumeral 1)
		For $i<i'<k'<k$, we have $\widetilde{a}_{i',k'}=a_{i',k'}$ so
		\[
		d(\widetilde{a}_{i',k'})=d(a_{i',k'})=\sum_{r=i'}^{k'-1} \doubleoverline{a_{i',r}}a_{r+1, k'}=\sum_{r=i'}^{k'-1} \doubleoverline{\widetilde{a}_{i',r}}\widetilde{a}_{r+1, k'}.
		\]
		(\romannumeral 2) For $i'=i<k=k'$, 
		\[
		d(\widetilde{a}_{i,k})=d(a_{i,k}-c_\sigma \; \varepsilon(u,\sigma) \; d(\chi_{\sigma\setminus u}))=d(a_{i,k}).
		\]
		Also $d(\chi_{\sigma\setminus u})\in C^{p_i+\cdots p_k}(\mathcal{K}_{J_i \cup \cdots \cup J_k})$
		since $\chi_{\sigma\setminus u}\in C^{p_i+\cdots p_k-1}(\mathcal{K}_{J_i \cup \cdots \cup J_k})$. 
		Hence $\widetilde{a}_{i,k}\in C^{p_i+\cdots p_k}(\mathcal{K}_{J_i \cup \cdots \cup J_k})$ and $\mydeg \widetilde{a}_{i,k}= \mydeg a_{i,k}$. Additionally, 
		\[
		d(\chi_{\sigma\setminus u})=\sum\limits_{\substack{j\in J_i\cup \cdots \cup J_k \setminus (\sigma\setminus u),\\ j\cup \sigma\setminus u \,\in \mathcal{K}_J}} \varepsilon(j, j\cup \sigma\setminus u) \chi_{j\cup \sigma\setminus u}.
		\]
		So $\chi_\sigma$ is the only summand of $d(\chi_{\sigma\setminus u})$ such that $\{u,v\}\in \sigma$. Thus $a_{i,k}-c_\sigma \; \varepsilon(u,\sigma) \; d(\chi_{\sigma\setminus u})$ no longer contains the summand $\chi_\sigma$ and also 
		\[
		|\{\tau \in S_{\widetilde{a}_{i,k}}\suchthat \{u,v\}\in \tau \}|< |\{\tau \in S_{a_{i,k}}\suchthat \{u,v\}\in \tau \}|.
		\]
		
		(\romannumeral 3)
		Next, for $i'<i<k=k'$, we have $a_{i',i-1}\in C^{p_{i'}+\cdots +p_{i-1}}(\mathcal{K}_{J_{i'}\cup \cdots \cup J_{i-1}})$. So $a_{i', i-1}\chi_{\sigma\setminus u}\in C^{p_{i'}+\cdots +p_{k}}(\mathcal{K}_{J_{i'}\cup \cdots \cup J_{k}})$. Hence $\widetilde{a}_{i',k}\in C^{p_{i'}+\cdots +p_{k}}(\mathcal{K}_{J_{i'}\cup \cdots \cup J_{k}})$. Also,
		\begin{align*}
			d(\widetilde{a}_{i',k})&=d(a_{i',k}+c_\sigma\; \varepsilon(u,\sigma)\;a_{i', i-1}\chi_{\sigma\setminus u})\\
			%&= d(a_{i',k})+c_\sigma\; \varepsilon(u,\sigma) \left(d(a_{i', i-1})\chi_{\sigma\setminus u} +(-1)^{\mydeg a_{i',i-1}}a_{i', i-1}d(\chi_{\sigma\setminus u})\right) \\
			&\begin{multlined}[c][.9\displaywidth]
			=\sum_{r=i'}^{k-1} \doubleoverline{a_{i',r}}a_{r+1, k} + 
			\\ + c_\sigma\, \varepsilon(u,\sigma) 
			\left(\sum_{r=i'}^{i-2} \doubleoverline{a_{i',r}}a_{r+1, i-1}\right) \chi_{\sigma\setminus u} 
			-c_\sigma\, \varepsilon(u,\sigma)\, \doubleoverline{a_{i', i-1}}d(\chi_{\sigma\setminus u})
			\end{multlined}\\
			&
			=\sum_{r=i'}^{i-2} \doubleoverline{a_{i',r}}(a_{r+1, k}+ c_\sigma\; \varepsilon(u,\sigma)\;a_{r+1, i-1} \chi_{\sigma\setminus u} ) +
			 \\ &\qquad \quad \ 
			 +\doubleoverline{a_{i',i-1}}(a_{i, k}- c_\sigma\; \varepsilon(u,\sigma)\;d(\chi_{\sigma\setminus u} )) +\sum_{r=i}^{k-1} \doubleoverline{a_{i',r}}a_{r+1, k} \\
			&= \sum_{r=i'}^{k-1} \doubleoverline{\widetilde{a}_{i',r}}\widetilde{a}_{r+1, k}.
		\end{align*}
		
		(\romannumeral 4)
		For $i'=i<k<k'$,  we have $\widetilde{a}_{i,k'}\in C^{p_{i}+\cdots +p_{k'}}(\mathcal{K}_{J_{i}\cup \cdots \cup J_{k'}})$ since $\chi_{\sigma\setminus u}a_{k+1,k'}\in C^{p_{i}+\cdots +p_{k'}}(\mathcal{K}_{J_{i}\cup \cdots \cup J_{k'}})$. Furthermore, $d(\widetilde{a}_{i,k'})$ is
		\begin{align*}
			&d(a_{i,k'}+(-1)^{\mydeg a_{i,k}} c_\sigma \; \varepsilon(u, \sigma)\; \chi_{\sigma\setminus u}a_{k+1,k'})\\
			&\ \begin{multlined}[t]
			    = \sum_{r=i}^{k'-1} \doubleoverline{a_{i,r}}a_{r+1, k'} 
			+(-1)^{\mydeg a_{i,k}} c_\sigma \; \varepsilon(u, \sigma)\; d(\chi_{\sigma\setminus u})a_{k+1,k'} \cdot \\ \ 
			 \cdot (-1)^{\mydeg a_{i,k}} c_\sigma \; \varepsilon(u, \sigma)\;  (-1)^{\mydeg \chi_{\sigma\setminus u}} \chi_{\sigma\setminus u} 
			\left( \sum_{r=k+1}^{k'-1} \doubleoverline{a_{k+1,r}}a_{r+1, k'} \right)
			\end{multlined}\\
			&\ \begin{multlined}[t]
			  = \sum_{r=i}^{k-1} \doubleoverline{a_{i,r}}a_{r+1, k} 
			- (-1)^{\mydeg a_{i,k}}(a_{i,k}-c_\sigma\;\varepsilon(u,\sigma)\; d(\chi_{\sigma\setminus u}) ) a_{k+1,k'} +\\
			+\sum_{r=k+1}^{k'-1} \left((-1)^{\mydeg a_{i,k}} c_\sigma \; \varepsilon(u, \sigma)\;  (-1)^{\mydeg \chi_{\sigma\setminus u}}\chi_{\sigma\setminus u} \doubleoverline{a_{k+1,r}}+ \doubleoverline{a_{i,r}}\right) a_{r+1, k'}.
			\end{multlined}
		\end{align*}		
		More specifically, let $c=(-1)^{\mydeg a_{i,k}} c_\sigma \; \varepsilon(u, \sigma)$. Then in the last sum,
		\begin{align*}
			c\;  &(-1)^{\mydeg \chi_{\sigma\setminus u}} \chi_{\sigma\setminus u} \doubleoverline{a_{k+1,r}} \\
			&\qquad= (-1)^{p_i+\cdots +p_k +|J_i\cup \cdots \cup J_k|+p_{k+1}+\cdots +p_r +|J_{k+1}\cup \cdots \cup J_r|} \;c\; \chi_{\sigma\setminus u} a_{k+1,r} \\
			%&= (-1)^{2+p_i+\cdots +p_k +|J_i\cup \cdots \cup J_k|}\;c\; \chi_{\sigma\setminus u} a_{k+1,r} \\
			&\qquad= (-1)^{1+\mydeg a_{i,r}} \;c \;\chi_{\sigma\setminus u} a_{k+1,r}.
		\end{align*}
		Therefore 			
		\begin{align*}
			d(\widetilde{a}_{i,k'})&= \sum_{r=i}^{k-1} \doubleoverline{a_{i,r}}a_{r+1, k} 
			+ (-1)^{1+\mydeg a_{i,k}}(a_{i,k}-c_\sigma\;\varepsilon(u,\sigma)\; d(\chi_{\sigma\setminus u}) ) a_{k+1,k'} \\
			&\qquad \qquad  +\sum_{r=k+1}^{k'-1} (-1)^{1+\mydeg a_{i,r}}( \;c \;\chi_{\sigma\setminus u} a_{k+1,r}+ a_{i,r})a_{r+1, k'}\\
			&= \sum_{r=i'}^{k-1} \doubleoverline{\widetilde{a}_{i',r}}\widetilde{a}_{r+1, k}.
		\end{align*}
		
		(\romannumeral 5)
		Lastly when $i'<i<k<k'$, $\widetilde{a}_{i',k'}=a_{i',k'}$ and we want to show that $d(\widetilde{a}_{i',k'})=\sum_{r=i'}^{k'-1}\doubleoverline{\widetilde{a}_{i',r}}\widetilde{a}_{r+1, k'}$.
		The right hand side is
		\begin{align*}
			\sum_{r=i'}^{k'-1} &\doubleoverline{\widetilde{a}_{i',r}}\widetilde{a}_{r+1, k'} 
			=\doubleoverline{a_{i', i-1}} \widetilde{a}_{i,k'} + \doubleoverline{\widetilde{a}_{i', k}} a_{k+1,k'} 
			+ \sum_{r\in\{i', \ldots, \widehat{i-1}, \ldots, \widehat{k}, \ldots k'-1 \}} \doubleoverline{a_{i',r}}a_{r+1, k'}
		\end{align*}
		where $\ \widehat{\ }\ $ denotes omission. 
		By expanding $\widetilde{a}_{i,k'}$, $\widetilde{a}_{i', k}$ and the signs in this expression, $\sum_{r=i'}^{k'-1} \doubleoverline{\widetilde{a}_{i',r}}\widetilde{a}_{r+1, k'}$ is
		%using $d(a_{i',k'})=\sum_{r=i'}^{k'-1} \doubleoverline{a_{i',r}}a_{r+1, k'}$,
		\begin{align*}
			&\begin{multlined}[t][0.9\textwidth]
			(-1)^{1+\mydeg a_{i',i-1}}a_{i', i-1}  \left(a_{i,k'}+(-1)^{\mydeg a_{i,k}} c_\sigma \, \varepsilon(u, \sigma)\, \chi_{\sigma\setminus u} a_{k+1,k'}\right) + \\
			 + (-1)^{1+\mydeg a_{i',k}} \left( a_{i',k}+c_\sigma\, \varepsilon(u,\sigma)\,a_{i', i-1}\chi_{\sigma\setminus u} \right) a_{k+1,k'} +\\
			+ \sum_{r\in\{i', \ldots, \widehat{i-1}, \ldots, \widehat{k}, \ldots k'-1 \}} \doubleoverline{a_{i',r}}a_{r+1, k'}
			\end{multlined}
			\\
			& = \begin{multlined}[t][0.9\textwidth]
			\sum_{r=i'}^{k'-1} \doubleoverline{a_{i',r}}a_{r+1, k'} 
			+ \left(  (-1)^{1+\mydeg a_{i',i-1}+\mydeg a_{i,k}} +(-1)^{1+\mydeg a_{i',k}} \right) \cdot\\ \cdot c_\sigma \; \varepsilon(u, \sigma)\;  a_{i', i-1} \chi_{\sigma\setminus u} a_{k+1,k'} 
			\end{multlined}\\
			&=  d(a_{i',k'}) 
			+ \left(  (-1)^{\mydeg a_{i',k}} +(-1)^{1+\mydeg a_{i',k}} \right) c_\sigma \; \varepsilon(u, \sigma)\;  a_{i', i-1} \chi_{\sigma\setminus u} a_{k+1,k'} \\
			&= d(a_{i',k'})=d(\widetilde{a}_{i',k'})
		\end{align*}
		since $\mydeg a_{i',k}=|J_{i'}\cup \cdots \cup J_k| + p_{i'}+\cdots p_{k}+1=\mydeg \widetilde{a}_{i',k}$.
		
		Therefore for all $\{i',k'\}$, $\widetilde{a}_{i',k'}\in C^{p_{i'}+\cdots +p_{k'}} (\mathcal{K}_{J_{i'}\cup \cdots \cup J_{k'}})$
		and  $d(\widetilde{a}_{i',k'})=\sum_{r=i'}^{k'-1} \doubleoverline{\widetilde{a}_{i',r}}\widetilde{a}_{r+1, k'}$. 
		So $(\widetilde{a}_{i',k'})$ is a defining system for $\langle \alpha_1, \ldots, \alpha_n \rangle$. Also $\sigma\notin \tau$ for any $\tau\in S_{\widetilde{a}_{i',k'}}$ and any $\{i',k'\}$.
		The associated cocycle $\widetilde{\omega}$ for this defining system is given by 
		$\sum_{r=1}^{n-1} \doubleoverline{\widetilde{a}_{1,r}}\widetilde{a}_{r+1, n}$. By calculating $\sum_{r=1}^{n-1} \doubleoverline{\widetilde{a}_{1,r}}\widetilde{a}_{r+1, n}$ in a similar manner as in the above calculations,  
		\begin{equation}\label{eq: corresponding omega to tilde aik}
			\widetilde{\omega} = 
			\begin{cases}
				\omega &\text{if } i\neq 1, k\neq n,  \\
				\omega + c_\sigma \; \varepsilon(u, \sigma) \; d(a_{i', i-1}\chi_{\sigma\setminus u}) &\text{if } 1=i<k=n,\\
				\omega - (-1)^{1+\mydeg a_{i,k}} c_\sigma \; \varepsilon(u, \sigma) d(\chi_{\sigma\setminus u}a_{k+1,k'})&\text{if }  1=i<k<n
			\end{cases}
		\end{equation}
		where $\omega$ is the associated cocycle for $(a_{i',k'})$. 
		So $[\widetilde{\omega}]=[\omega]$. Therefore $[\widetilde{\omega}]=0$ if and only if $[\omega]=0$.
		
		If there is cochain $\widetilde{a}_{i',k'}$ in the defining system $(\widetilde{a}_{i,k})$ such that there is a a simplex $\sigma \in S_{\widetilde{a}_{i',k'}}$ with $\{u,v\}\in \sigma$, then we repeat the above procedure to construct  $(\widetilde{\widetilde{a}}_{i',k'})$, etc. 
		%In each iteration, $\sigma\notin \tau$ for any $\tau\in S_{\widetilde{a}_{i',k'}}$ and any $\{i',k'\}$. 
		After a finite number of iterations, we obtain a defining system $(\widetilde{a}_{i',k'})$ such that for any $\{i',k'\}$ and any simplex $\sigma\in S_{\widetilde{a}_{i',k'}}$, the edge $\{u,v\}$ is not contained in $\sigma$. 
		Then we can construct a defining system $(\varphi^*(\widetilde{a}_{i',k'}))$ for $\langle \hat{\alpha}_1, \ldots, \hat{\alpha}_n \rangle$. 
		Let $\omega$ and $\widetilde{\omega}$ be the associated cocycles for $(a_{i,k})$ and $(\widetilde{a}_{i,k})$, respectively.
		If $[\omega]=[\widetilde{\omega}]=0$, then $[\varphi^*(\widetilde{\omega})]=0$, which contradicts the assumption that $\langle \hat{\alpha}_1, \ldots, \hat{\alpha}_n \rangle$ is non-trivial.
		Hence if $\langle \hat{\alpha}_1, \ldots, \hat{\alpha}_n \rangle$ is non-trivial, then $\langle \alpha_1, \ldots, \alpha_n \rangle$ is non-trivial.
		
		If $\mathcal{K}\to \mathcal{\hat{K}}$ by a series of more than one edge contractions, we repeat the steps in this proof for each edge contraction in turn.
	\end{proof}
	
	Putting together Proposition~\ref{lem: d(a_ik)=a_i,r a_r,k contracting} 
	%showed that the $n$-Massey product $\langle \alpha_1, \ldots, \alpha_n \rangle\subset H^*(\mathcal{Z}_\mathcal{K})$ is defined 
	and Proposition~\ref{prop: Massey nontrivial edge contractions}, %showed that it is non-trivial, 
	we have proved the following statement.
	
	\begin{theorem}\label{thm: edge contractions Massey}
		Let $\mathcal{\hat{K}}$ be a simplicial complex with a non-trivial $n$-Massey product in $H^*(\mathcal{Z}_\mathcal{\hat{K}})$. 
		Let $\mathcal{K}$ be a simplicial complex that maps onto $\mathcal{\hat{K}}$ by a series of edge contractions $\varphi\colon \mathcal{K} \to \mathcal{\hat{K}}$ that satisfy the link condition.
		Then there is a non-trivial $n$-Massey product in $H^*(\mathcal{Z}_\mathcal{K})$. \qed
	\end{theorem}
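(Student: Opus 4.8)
The plan is to observe that Theorem~\ref{thm: edge contractions Massey} splits into two assertions about the pulled-back classes $\alpha_i$ on $H^*(\mathcal{Z}_\mathcal{K})$: that $\langle \alpha_1, \ldots, \alpha_n\rangle$ is \emph{defined}, and that it is \emph{non-trivial}. Both are carried out in the combinatorial model $\bigoplus_{J}\widetilde{C}^*(\mathcal{K}_J)$ supplied by Theorem~\ref{thm: full Hochster's}, using the explicit product formula of Lemma~\ref{lem: multiplication of general cochains}. So first I would recall the set-up of Construction~\ref{cons: edge contraction}: order the vertices of $\mathcal{K}$ compatibly with $\varphi$, set $J_i=\varphi^{-1}_0(\hat J_i)$ so that the $J_i$ are disjoint blocks in order, and define the representing cocycles $a_i\in C^{p_i}(\mathcal{K}_{J_i})$ as in \eqref{eq:cochain a} by summing $\chi_\sigma$ over all $\sigma\in\varphi^{-1}_{p_i}(\hat\sigma)$; since these are pullbacks of the $\hat a_i$, they are cocycles and the classes $\alpha_i=[a_i]$ are non-zero. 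Then Theorem~\ref{thm: edge contractions Massey} is exactly the conjunction of Proposition~\ref{lem: d(a_ik)=a_i,r a_r,k contracting} (definedness) and Proposition~\ref{prop: Massey nontrivial edge contractions} (non-triviality), and the final step is to note that a series of edge contractions is handled by iterating one-edge contractions.

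For \textbf{definedness} I would build a defining system for $\langle \alpha_1,\ldots,\alpha_n\rangle$ by pulling back a defining system $(\hat a_{i,k})$ for $\langle \hat\alpha_1,\ldots,\hat\alpha_n\rangle$: set $a_{i,k}$ as in \eqref{eq: a_ik}, i.e.\ $\theta_{i,k}\hat\theta_{i,k}$ times the sum of $\chi_\tau$ over $\tau\in\varphi^{-1}_{p_i+\cdots+p_k}(\hat\tau)$ for $\hat\tau\in S_{\hat a_{i,k}}$, with the sign corrections $\theta_{i,k},\hat\theta_{i,k}$ of \eqref{eq: theta_ik stretching} inserted so that the Koszul signs in Lemma~\ref{lem: multiplication of general cochains} match between $\mathcal{K}$ and $\mathcal{\hat K}$. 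The verification that $d(a_{i,k})=\sum_{r}\doubleoverline{a_{i,r}}a_{r+1,k}$ has two parts: the summands of $d(a_{i,k})$ coming from a new vertex $j$ with $\varphi(j)\in V(\hat\tau)$ cancel in pairs, because such $j$ pairs with the vertex of $\tau$ mapping to the same vertex of $\hat\tau$ and, being consecutive in the chosen order, it contributes the opposite sign; the remaining summands are in bijection with those of $d(\hat a_{i,k})$, and translating through $d(\hat a_{i,k})=\sum_r\doubleoverline{\hat a_{i,r}}\hat a_{r+1,k}$ gives the claim after a (routine) sign bookkeeping using the identities $\hat\theta_{i,k}(-1)^{1+\mydeg\hat a_{i,r}}c=(-1)^{p_i+\cdots+p_r}\hat\theta_{i,r}\hat\theta_{r+1,k}$ and the analogous one for $\theta$.

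The \textbf{main obstacle} is \textbf{non-triviality}, since the general pullback property $\varphi^*\langle\hat\alpha_1,\ldots,\hat\alpha_n\rangle\subset\langle\alpha_1,\ldots,\alpha_n\rangle$ only controls the image of the upstairs product, and indeterminacy could in principle make $\langle\alpha_1,\ldots,\alpha_n\rangle$ trivial. To handle this I would define a partial transfer $\varphi^*$ on cochains whose support avoids the contracted edge $\{u,v\}$ (formula \eqref{eq: varphi (edge contraction)}, with the scalar $c_{i,k}$ absorbing the sign discrepancy), and check three properties: it is additive and scalar-linear on such cochains; it commutes with $d$ on the relevant cochains, because any boundary face $j\cup\sigma$ that \emph{does} contract must be killed by other summands in a defining-system relation (here one uses that $\{u,v\}\subset J_i$ sits in a single block, so it cannot be split as $\tau\cup\eta$ with $\tau,\eta$ in different blocks); and it is multiplicative on the products appearing in a defining system. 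Granting these, $\varphi^*$ sends a defining system for $\langle\alpha_\bullet\rangle$ whose cochains all avoid $\{u,v\}$ to a defining system for $\langle\hat\alpha_\bullet\rangle$ whose associated cocycle is $\varphi^*(\omega)$; if $[\omega]=0$ then $[\varphi^*(\omega)]=0$, contradicting non-triviality downstairs. The genuinely delicate point is that an arbitrary defining system for $\langle\alpha_\bullet\rangle$ need \emph{not} avoid $\{u,v\}$, so I would replace it by an equivalent one that does: working from the innermost index pair outward, for each $\sigma\in S_{a_{i,k}}$ with $\{u,v\}\in\sigma$ subtract $c_\sigma\,\varepsilon(u,\sigma)\,d(\chi_{\sigma\setminus u})$ from $a_{i,k}$ and propagate the matching correction terms $a_{i',i-1}\chi_{\sigma\setminus u}$ and $\chi_{\sigma\setminus u}a_{k+1,k'}$ into the longer cochains (formula \eqref{eq: corresponding aik that doesn't contract}), verifying case-by-case that the coboundary relations still hold and that the associated cocycle changes only by a coboundary (formula \eqref{eq: corresponding omega to tilde aik}). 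Iterating this removes all occurrences of $\{u,v\}$ in finitely many steps, reducing to the first case; and iterating the whole argument over the edges of $\varphi$ finishes the proof.

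\begin{proof}
Combine Proposition~\ref{lem: d(a_ik)=a_i,r a_r,k contracting} and Proposition~\ref{prop: Massey nontrivial edge contractions}.
\end{proof}
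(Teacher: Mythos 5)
Your proposal is correct and follows essentially the same route as the paper: the theorem is obtained by combining Proposition~\ref{lem: d(a_ik)=a_i,r a_r,k contracting} (definedness via the pulled-back defining system \eqref{eq: a_ik}) with Proposition~\ref{prop: Massey nontrivial edge contractions} (non-triviality via the partial transfer $\varphi^*$ and the modification \eqref{eq: corresponding aik that doesn't contract} of defining systems whose supports meet the contracted edge), iterated over the edges of $\varphi$. Your summary of the two propositions, including the pairwise cancellation of contracting summands and the need to replace an arbitrary defining system by one avoiding $\{u,v\}$, accurately reflects the paper's argument.
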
 
	
	%% Removed this old statement - really the statement should be about existence of non-trivial Massey products, not necessarily about controlling its degree or anything (i.e. no real need for the ghost vertices in $J_i$). 
	%	\begin{theorem}\label{thm: edge contractions Massey}
	%		Let $\mathcal{\hat{K}}$ be a simplicial complex with non-trivial $n$-Massey product $\langle \hat{\alpha}_1, \ldots,  \hat{\alpha}_n \rangle \subset H^*(\mathcal{Z}_\mathcal{\hat{K}})$, with $\hat{\alpha}_i \in \widetilde{H}^{p_i}(\mathcal{\hat{K}}_{\hat{J}_i})$ for a set of vertices $\hat{J}_i \subset V(\mathcal{\hat{K}})$. 
	%		Let $\mathcal{K}$ be a simplicial complex that maps onto $\mathcal{\hat{K}}$ by a series edge contractions $\varphi\colon \mathcal{K} \to \mathcal{\hat{K}}$. 
	%		Suppose that for every  $i\in \{1, \ldots, n\}$, there is a set of vertices $V_i\subset V(\mathcal{K})$ such that $\varphi(V_i)=\hat{J}_i$ and the edge contractions that map $\mathcal{K}_{V_1 \cup \cdots \cup V_n}$ onto $\mathcal{\hat{K}}_{\hat{J}_1 \cup \cdots \cup \hat{J}_n}$ satisfy the link condition over $\mathcal{K}_{V_1 \cup \cdots \cup V_n}$.
	%		Then there is a non-trivial $n$-Massey product  $\langle \alpha_1, \ldots, \alpha_n\rangle \subset H^*(\mathcal{Z}_\mathcal{K})$.
	%	\end{theorem} 

	By construction,  $\alpha_i \in H^{|J_i|+p_i+1}(\mathcal{Z}_\mathcal{K})$ and $\hat{\alpha}_i \in H^{|\hat{J}_i|+p_i+1}(\mathcal{Z}_\mathcal{\hat{K}})$ with $|J_i|\geqslant |\hat{J}_i|$ for each $i$.
	Hence the degree of $\langle \alpha_1, \ldots, \alpha_n \rangle \subset H^{|J_1 \cup \cdots \cup J_n|+(p_1+\cdots +p_n)+2}(\mathcal{Z}_\mathcal{K})$ is greater than the degree of
	$\langle \hat{\alpha}_1, \ldots,  \hat{\alpha}_n \rangle \subset H^{|\hat{J}_1 \cup \cdots \cup \hat{J}_n|+(p_1+\cdots+p_n+1)+1}(\mathcal{Z}_\mathcal{\hat{K}})$.
	Also, if $\langle \hat{\alpha}_1, \ldots,  \hat{\alpha}_n \rangle$ has non-trivial indeterminacy, then $\langle \alpha_1, \ldots, \alpha_n \rangle$ also has non-trivial indeterminacy.
	%because $|\langle \alpha_1, \ldots, \alpha_n \rangle|\geq |\langle \hat{\alpha}_1, \ldots,  \hat{\alpha}_n \rangle|$
	As noted earlier, the converse does not necessarily hold: the pullback Massey product in $H^*(\mathcal{Z}_\mathcal{K})$ might have non-trivial indeterminacy even if it is a pullback of a uniquely defined Massey product in $H^*(\mathcal{Z}_\mathcal{\hat{K}})$.
	
	\begin{figure}[ht]
		\centering
		\begin{minipage}[c]{0.45\textwidth}
			\centering
			\begin{tikzpicture}	[scale=0.60, inner sep=2mm]
			\coordinate (c) at (0.3,-0.3);
			\coordinate (i1) at (-0.7, 0.7);
			\coordinate (i2) at (1.3, 0.7);
			\coordinate (i3) at (1.3, -1.3);
			\coordinate (i4) at (-0.7, -1.3);
			\coordinate (m1) at (0.3, 1.7);
			\coordinate (m2) at (2, -0.3);
			\coordinate (m3) at (0.3, -2);
			\coordinate (m4) at (-1.7, -0.3);
			\coordinate (o1) at (-2.7, 2.7);
			\coordinate (o2) at (2.7, 2.7);
			\coordinate (o3) at (2.7, -2.7);
			\coordinate (o4) at (-2.7, -2.7);
			
			\fill[lightgray, fill opacity=0.6](o4)--(o1) --(o2)--(i2) --(i1)--(m4)--(o4);
			\fill[lightgray, fill opacity=0.4](o4)--(o1) --(o2)--(o3) --(o4);
			
			\draw[dashed] (i1) --(i2)--(i3) --(i4)--(i1);
			\draw[dashed] (o1) --(o2)--(o3) --(o4)--(o1);
			\draw[dashed] (i1)--(m1) --(i2)--(m2)--(i3)--(m3) --(i4)-- (m4)--(i1);
			\draw[dashed] (o1)--(m1) --(o2)--(m2)--(o3)--(m3) --(o4) --(m4)--(o1);
			\draw[dashed] (o1)--(i1)--(c)--(i3)--(o3);
			\draw[dashed] (o2)--(i2)--(c)--(i4)--(o4);
			
			\draw[line width=1pt] (o4)--(m4)--(i1)--(i2)--(i3)--(o3)--(o2)--(i2)--(m1)--(o2)--(o1)--(m1)--(i1)--(o1)--(m4)--(o4)--(o1);
			\draw[line width=1pt] (o4)--(o3);
			\draw[line width=5.5pt] (o1)--(m1);
			\draw[line width=5.5pt] (i1)--(m4);
			\draw[line width=5.5pt] (i1)--(i2);
			\draw[line width=4pt, colour1] (o1)--(m1);
			\draw[line width=4pt, colour2] (i1)--(m4);
			\draw[line width=4pt, colour2] (i1)--(i2);
			
			\foreach \i in {i1, i2, i3, m1, m4, o1, o2, o3, o4} {\fill (\i) circle (6.5pt);}
			\foreach \i in {i3, m1, o1} {\fill[colour1] (\i) circle (5pt);}
			\foreach \i in {i1, i2, m4, o3} {\fill[colour2] (\i) circle (5pt);}
			\foreach \i in {o2,  o4} {\fill[colour3] (\i) circle (5pt);}
			\foreach \i in {c,  i4, m2, m3} {\fill (\i) circle (3pt);}
			
			%Name the vertices
			\node[left] at (o4) {1};
			\node[right] at (o2) {2};
			\node[below]  at (i3) {3};
			\node[left] at  (o1) {4};
			\node[above] at  (m1) {5};
			\node[right] at  (o3) {6};
			\node[right] at (m4) {7};
			\node[left] at  (i1) {8};
			\node[right] at (i2) {9};
			\end{tikzpicture}
			\subcaption{A full subcomplex $\mathcal{K}\subset \mathcal{K}_P$, when $P$ is a truncated octahedron} 
			\label{fig: (square) truncated octahedron Massey}
		\end{minipage}\quad
		\begin{minipage}[c]{0.45\textwidth}
			\centering
			\begin{tikzpicture} [scale=1.2, inner sep=2mm]
			\coordinate (a) at (0,0); 
			\coordinate (b) at (1,0); 
			\coordinate (c) at (0,1); 
			\coordinate (d) at (1,1); 
			\coordinate (e) at (0,2); 
			\coordinate (f) at (-1,0); 
			
			%			\coordinate (bottom) at (0.5, -0.3);
			%			\coordinate (top) at (1, 1.8);
			%			\fill[white] (top) -- (2)-- (bottom);
			
			\fill[lightgray, fill opacity=0.4](f)to [out=90,in=-135] (e) --(d)--(c)--(f);
			
			\draw[line width=1pt] (a) -- (b) -- (d) --(e) -- (c) -- (a); 
			\draw[line width=1pt] (f) to [out=90,in=-135] (e); 
			\draw[line width=1pt] (f) to [out=-40,in=-140] (b); 
			%	\draw[ultra thick] (f) -- (b); 
			\draw[line width=1pt] (f) -- (c) -- (d); 
			
			\foreach \i in {a, b, c, d, e, f} {\fill (\i) circle (2.5pt);} 
			\foreach \i in {a, e} {\fill[colour1] (\i) circle (1.9pt);} 
			\foreach \i in {b, c} {\fill[colour2] (\i) circle (1.9pt);} 
			\foreach \i in {d, f} {\fill[colour3] (\i) circle (1.9pt);} 
			
			\node[left] at (a) {$\hat{3}$};
			\draw (b) node[right] {$\hat{6}$};
			\draw (c) node[left] {$\hat{5}$};
			\draw (d) node[right] {$\hat{2}$};
			\draw (e) node[left] {$\hat{4}$};
			\draw (f) node[left] {$\hat{1}$};
			\end{tikzpicture} 
			\subcaption{A simplicial complex $\mathcal{\hat{K}}$ such that $\mathcal{Z}_\mathcal{\hat{K}}$ has non-trivial triple Massey product with indeterminacy.} 
			\label{fig: simplest non-trivial indeterminacy example}
		\end{minipage}
		\caption{Edge contraction example}
	\end{figure}
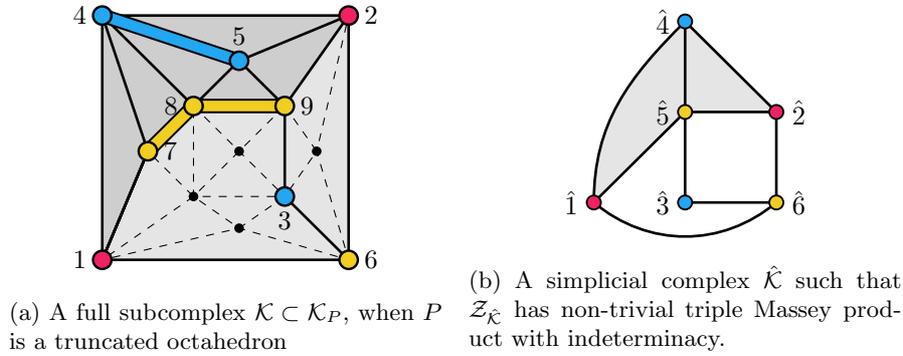
	
	\begin{exmp} \label{ex: truncated octahedron with indeterminacy}
		Let $\mathcal{\hat{K}}$ be the simplicial complex in Figure~\ref{fig: simplest non-trivial indeterminacy example}.
		Since the $1$-skeleton of $\mathcal{\hat{K}}$ is one of the obstruction graphs in the classification of lowest degree non-trivial triple Massey products \cite{LowestDegreeClassification}, there is a non-trivial triple Massey product $\langle \hat{\alpha}_1, \hat{\alpha}_2, \hat{\alpha}_3 \rangle \subset H^8(\mathcal{Z}_{\mathcal{\hat{K}}})$ where $\hat{\alpha}_1\in \widetilde{H}^0(\mathcal{\hat{K}}_{\hat{1} \hat{2}})$, $\hat{\alpha}_2\in \widetilde{H}^0(\mathcal{\hat{K}}_{\hat{3} \hat{4}})$ and $\hat{\alpha}_3\in \widetilde{H}^0(\mathcal{\hat{K}}_{\hat{5} \hat{6}})$.
		This Massey product has non-trivial indeterminacy, since the indeterminacy of this triple Massey product is given by $\hat{\alpha}_1 \cdot \widetilde{H}^0(\mathcal{\hat{K}}_{\hat{3} \hat{4} \hat{5} \hat{6}})+ \hat{\alpha}_3 \cdot \widetilde{H}^0(\mathcal{\hat{K}}_{\hat{1} \hat{2} \hat{3} \hat{4}})=\hat{\alpha}_3 \cdot \widetilde{H}^0(\mathcal{\hat{K}}_{\hat{1} \hat{2} \hat{3} \hat{4}})$.
		
		Let $\mathcal{K}$ be the simplicial complex on $9$ vertices in Figure~\ref{fig: (square) truncated octahedron Massey}. Let $\varphi\colon \mathcal{K}\to \mathcal{\hat{K}}$ be the simplicial map that takes $i\mapsto \hat{i}$ for $i=1,2,3,6$ and contracts the bold coloured edges $\{4,5\}\mapsto \hat{4}$, $\{7,8\}, \{8,9\}\mapsto \hat{5}$.
		By Theorem~\ref{thm: edge contractions Massey} and Construction~\ref{cons: edge contraction}, there is a non-trivial Massey product $\langle \alpha_1, \alpha_2, \alpha_3 \rangle\subset H^{11}(\mathcal{Z}_\mathcal{K})$ where $\alpha_1 \in \widetilde{H}^0(\mathcal{K}_{12})$, $\alpha_2 \in \widetilde{H}^0(\mathcal{K}_{345})$ and $\alpha_3 \in \widetilde{H}^0(\mathcal{K}_{6789})$.
		Also the indeterminacy of this Massey product is non-trivial since it is given by $\alpha_1\cdot \widetilde{H}^0(\mathcal{K}_{3456789})+\alpha_3 \cdot \widetilde{H}^0(\mathcal{K}_{12345})=\alpha_3 \cdot \widetilde{H}^0(\mathcal{K}_{12345})$.
		
		For any simple polytope $P$, define $\mathcal{K}_P=\partial(P^*)$ to be the boundary of the dual polytope.
		This is a simplicial complex and the moment-angle complex $\mathcal{Z}_P=\mathcal{Z}_{\mathcal{K}_P}$ is a moment-angle manifold.
		The simplicial complex $\mathcal{K}$ in Figure~\ref{fig: (square) truncated octahedron Massey} is a full-subcomplex of $\mathcal{K}_P$ when $P$ is a truncated octahedron, otherwise known as the $3$-dimensional permutahedron. 
		A truncated octahedron is a $3$-dimensional simple polytope whose facets are $6$ squares and $8$ hexagons, so there are $6$ vertices of $\mathcal{K}_P$ with valency $4$ and $8$ with valency $6$.
		Since $\mathcal{K}\subset \mathcal{K}_P$, the non-trivial Massey product in $H^*(\mathcal{Z}_\mathcal{K})$ lifts to a non-trivial Massey product in $H^*(\mathcal{Z}_P)$ with non-trivial indeterminacy.
		Hence we found a non-trivial Massey product in $H^*(\mathcal{Z}_P)$ using only Theorem~\ref{thm: edge contractions Massey} and the classification of lowest-degree non-trivial triple Massey products in \cite{DenhamSuciu, LowestDegreeClassification}.
		This technique also recovers the first example of a triple Massey product in $H^*(\mathcal{Z}_P)$ that was given in \cite[Lemma~4.9(2)]{Limonchenko_multiwedge}, where the constructed full subcomplex edge contracts to one of the obstruction graphs that give trivial indeterminacy.
	\end{exmp}

	\begin{exmp}
		A Pogorelov polytope is a $3$-dimensional polytope that can be realised in hyperbolic (Lobachevsky) space as a bounded right-angled polytope. The Pogorelov class is large and includes all fullerenes, whose facets are pentagons and hexagons.
		Zhuravleva \cite[Theorem~3.2]{Liz} showed that for any Pogorelov polytope $P$, $\mathcal{K}_P=\partial(P^*)$ has a full subcomplex $\mathcal{K}$ as shown in Figure~\ref{fig:Liz2}.
		This full subcomplex was used to explicitly construct a non-trivial Massey product $\langle \alpha_1, \alpha_2, \alpha_3 \rangle \subset H^*(\mathcal{Z}_P)$ where $\alpha_1\in \widetilde{H}^0(\mathcal{K}_{567})$, $\alpha_2\in \widetilde{H}^0(\mathcal{K}_{2 b_0\ldots b_n})$ and $\alpha_3\in \widetilde{H}^0(\mathcal{K}_{34})$.
		moment-angle manifolds $\mathcal{Z}_P$ have a non-trivial triple Massey product using the full subcomplex in Figure~\ref{fig:Liz2}.
		
		%Liz's original example
		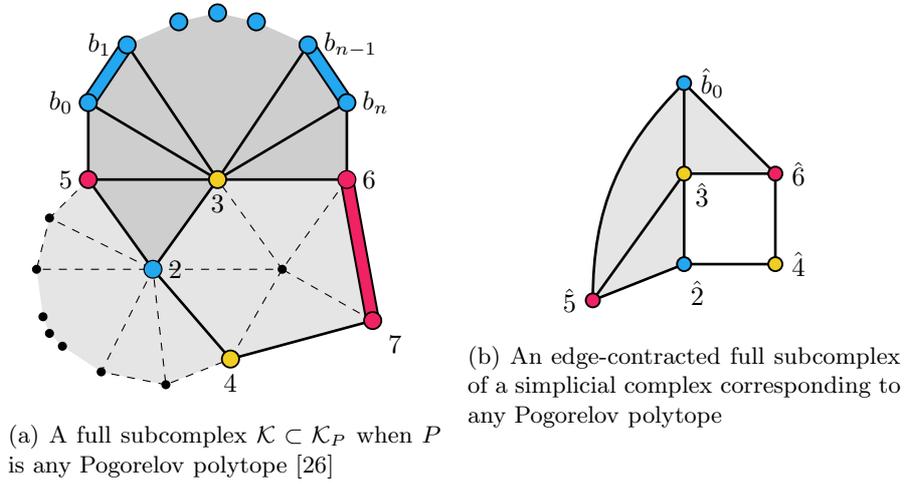
\begin{figure}[ht]
			\centering
			\begin{minipage}[c]{0.45\textwidth}
				\centering
				\begin{tikzpicture}[scale=1.7, inner sep=2mm]
				\coordinate (5) at (0,0);
				\coordinate (3) at (1,0); 
				\coordinate (6) at (2, 0);
				\coordinate (2) at (0.5, -0.7);
				\coordinate (1) at (1.5, -0.7);
				\coordinate (7) at (2.2, -1.1);
				\coordinate (4) at (1.1, -1.4);
				
				\coordinate (b1) at (0, 0.6);
				\coordinate (b2) at (0.3, 1.05);
				\coordinate (b3) at (0.7, 1.23);
				\coordinate (b4) at (1, 1.3);
				\coordinate (b5) at (1.3, 1.23);
				\coordinate (b6) at (1.7, 1.05);
				\coordinate (b7) at (2, 0.6);
				
				\coordinate (01) at (0.6, -1.6);
				\coordinate (02) at (0.1, -1.5);
				\coordinate (03) at (-0.2, -1.3);
				\coordinate	(04) at (-0.3, -1.2);
				\coordinate (05) at (-0.35, -1.07);
				\coordinate (06) at (-0.4, -0.7);
				\coordinate (07) at (-0.3, -0.3);

				% Fill to make it a 2-complex
				\fill[lightgray, fill opacity=0.4] (5)  -- (b1)-- (b2) --(b3)--(b4)--(b5) --(b6)--(b7)--(6)--(7)-- (4)--(01)--(02)--(03)--(04)--(05)--(06)--(07)--(5);
				\fill[lightgray, fill opacity=0.6] (2)--(5)--(b1)-- (b2) --(b3)--(b4)--(b5) --(b6)--(b7)--(6)--(3)--(2);
				
				% Draw the 1-simplices
				\draw[line width=1pt] (5) -- (3) -- (6); 
				\draw[line width=1pt] (5) -- (2) -- (3); 
				\draw[line width=1pt] (2) -- (4) -- (7);
				\draw[line width=1pt] (5) -- (b1) -- (3)--(b2); 
				\draw[line width=1pt] (6) -- (b7)--(3)--(b6);
				
				\draw[dashed] (2) -- (1) -- (3); 
				\draw[dashed] (6) -- (1) -- (7); 
				\draw[dashed] (1) -- (4)--(01)--(2)--(02)--(01); \draw[dashed] (5) -- (07) -- (2) -- (06) -- (07);

				\draw[line width=5.5pt] (b1)--(b2);
				\draw[line width=4pt, colour1] (b1)--(b2);
				\draw[line width=5.5pt] (b6)--(b7);
				\draw[line width=4pt, colour1] (b6)--(b7);
				\draw[line width=5.5pt] (6)--(7);
				\draw[line width=4pt, colour3] (6)--(7);
				
				% Draw all of the vertices
				\foreach \i in {2, ..., 7} {\fill (\i) circle (2.1pt);}
				\foreach \i in {1, ..., 7} {\fill (b\i) circle (2.1pt);}
				\foreach \i in {1, ..., 7} {\fill[colour1] (b\i) circle (1.7pt);} 
				\filldraw[colour1](2) circle (1.7pt);
				\fill[colour2] (3) circle (1.7pt); 
				\fill[colour2] (4) circle (1.7pt);
				\fill[colour3] (5) circle (1.7pt); 
				\filldraw[colour3] (6) circle (1.7pt); 
				\fill[colour3] (7) circle (1.7pt);
				
				\fill (1) circle (1pt);
				\foreach \i in {1, ..., 7} {\fill (0\i) circle (1pt);}
				
				% Label the vertices
				\draw (2)node[right] {$2$};\draw (3)node[below] {$3$};\draw (4)node[below] {$4$};\draw (5)node[left] {$5$};\draw (6)node[right] {$6$};\draw (7)node[below right] {$7$};
				\draw (b1) node[left] {$b_0$};\draw (b2) node[left] {$b_1$};\draw (b6) node[right] {$b_{n-1}$};\draw (b7) node[right] {$b_n$};
				
				\end{tikzpicture} 
				\subcaption{A full subcomplex $\mathcal{K}\subset \mathcal{K}_P$ when $P$ is any Pogorelov polytope \cite{Liz}} \label{fig:Liz2}
			\end{minipage}\quad
			\begin{minipage}[c]{0.45\textwidth}
				\centering
				\begin{tikzpicture}[scale=1.2, inner sep=2mm]
				\coordinate (b1) at (0,0); 
				\coordinate (6) at (1,0); 
				\coordinate (5) at (0,1); 
				\coordinate (4) at (1,1); 
				\coordinate (2) at (0,2); 
				\coordinate (3) at (-1,-0.4); 
				
				%	\coordinate (bottom) at (1.1, -1);
				%	\coordinate (top) at (1, 1.6);
				
				%	\fill[white] (top) -- (2)-- (bottom);
				
				% Fill to make it a 2-complex
				\fill[lightgray, fill opacity=0.4] (3)  to [out=90,in=-135] (2) --(5) --(b1); 
				\fill[lightgray, fill opacity=0.4] (2)--(5)--(4);
				%			\fill[lightgray, fill opacity=0.4] (3)  to [out=0,in=-140] (6);
				
				\draw[line width=1pt] (3)node[left] {$\hat{5}$} -- (b1) node[below] {\ \ \ $\hat{2}$} -- (6)node[right] {$\hat{4}$} -- (4) node[right] {$\hat{6}$} --(2) node[right] {$\hat{b}_0$} -- (5) node[inner sep=1mm, below] {\ \ \ \ $\hat{3}$}-- (b1);
				\draw[line width=1pt] (3)  to [out=90,in=-135] (2);
				%			\draw[thick] (3) to [out=0,in=-140] (6);
				\draw[line width=1pt] (3) -- (5)--(4);
				
				% Draw all of the vertices
				\foreach \i in {2,3, 4, 5, 6} {\fill (\i) circle (2.5pt);}
				\fill (b1) circle (2.5pt);
				\fill[colour1] (b1) circle (1.9pt);
				\fill[colour2] (6) circle (1.9pt);
				\fill[colour2] (5) circle (1.9pt);
				\fill[colour3] (4) circle (1.9pt);
				\fill[colour1] (2) circle (1.9pt);
				\fill[colour3] (3) circle (1.9pt);
				\end{tikzpicture} 
				\subcaption{An edge-contracted full subcomplex of a simplicial complex corresponding to any Pogorelov polytope}
				\label{fig: edge contracted Liz's}
			\end{minipage}
			\caption{Massey products in Pogorelov polytopes}
		\end{figure}
		
		Edge contracting the coloured edges of $\mathcal{K}$, $\{b_i,b_{i+1}\}\mapsto \hat{b}_0$, $\{6,7\}\mapsto \hat{6}$,  we obtain the simplicial complex in Figure~\ref{fig: edge contracted Liz's}. 
		This simplicial complex has a non-trivial triple Massey product, since its $1$-skeleton is one of the obstruction graphs from the classification in \cite{DenhamSuciu, LowestDegreeClassification}.
		Since the edge contractions satisfy the link condition, Theorem~\ref{thm: edge contractions Massey} gives an alternative proof of non-trivial triple Massey products in Zhuravleva's work.
	\end{exmp}
	
	%%%%%%%%%%%%%%%%%
	%\begin{remark}
	%	Theorem~\ref{thm: edge contractions Massey} can be used for detecting non-trivial Massey products in moment-angle complexes. For example, let $\mathcal{K}$ be a simplicial complex with $\alpha_1, \ldots, \alpha_n\in H^*(\mathcal{Z}_\mathcal{K})$ such that $\alpha_i\in \widetilde{H}^{p_i}(\mathcal{K}_{J_i})$ and $J_i\cap J_j=\myempty$ for any $i\neq j$.  Suppose $\mathcal{K}$ edge contracts to a simplicial complex $\mathcal{\hat{K}}$ that has non-trivial Massey product $\langle \hat{\alpha}_1, \ldots, \hat{\alpha}_n \rangle \in H^*(\mathcal{Z}_\mathcal{\hat{K}})$ such that $\langle \hat{\alpha}_1, \ldots, \hat{\alpha}_n \rangle$ can be lifted to $\langle \alpha_1, \ldots, \alpha_n \rangle$ as in the proof of Theorem~\ref{thm: edge contractions Massey}. Then $\langle \alpha_1, \ldots, \alpha_n \rangle \subset H^*(\mathcal{Z}_\mathcal{K})$ is also a non-trivial higher Massey product. 
	%\end{remark}
	
	\subsection{Massey products constructed by edge stretching}

	For an edge contraction $\mathcal{K}\mapsto \mathcal{\hat{K}}$ that satisfies the link condition, we call the inverse $\mathcal{\hat{K}}\mapsto \mathcal{K}$ \textit{edge stretching}.
	%We can also use Theorem~\ref{thm: edge contractions Massey} to reduce known non-trivial Massey products to other non-trivial Massey products of smaller degree.
	\begin{cor}
		Let $\mathcal{\hat{K}}$ be a simplicial complex with a non-trivial $n$-Massey product $\langle \hat{\alpha}_1, \ldots, \hat{\alpha}_n \rangle \subset H^*(\mathcal{Z}_\mathcal{\hat{K}})$.
		Suppose that $\psi\colon \mathcal{\hat{K}}\to \mathcal{K}$ is a series of edge stretchings. 
		Then there is a non-trivial $n$-Massey product in $H^*(\mathcal{Z}_\mathcal{K})$. 
		%	Let $\mathcal{K}$ be a simplicial complex with a non-trivial $n$-Massey product $\langle \alpha_1, \ldots, \alpha_n \rangle \subset H^*(\mathcal{Z}_\mathcal{K})$ for $\alpha_i\in \widetilde{H}^{p_i}(\mathcal{K}_{J_i})$ and $J_i\cap J_j=\myempty$ for any $i\neq j$. 
		%	Suppose $\varphi\colon \mathcal{\hat{K}}\to \mathcal{K}$ is a series of edge stretchings such that there are sets of vertices $\hat{J}_i\subset V(\mathcal{\hat{K}})$, $\hat{J}_i\cap \hat{J}_j=\myempty$ with $\varphi(\hat{J}_i)=J_i$ for $i=1, \ldots, n$.
		%	If there are non-trivial classes $\hat{\alpha}_i\in \widetilde{H}^{p_i}(\mathcal{\hat{K}}_{\hat{J}_i})$, then there is a non-trivial $n$-Massey product $\langle \hat{\alpha}_1, \ldots, \hat{\alpha}_n \rangle$ in $H^*(\mathcal{Z}_\mathcal{\hat{K}})$.
	\end{cor}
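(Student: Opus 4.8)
The plan is to recognise this statement as a direct reformulation of Theorem~\ref{thm: edge contractions Massey} in terms of the inverse operation. By definition, an edge stretching $\mathcal{\hat{K}} \to \mathcal{K}$ is the inverse of an edge contraction $\mathcal{K} \to \mathcal{\hat{K}}$ satisfying the link condition \eqref{eq:linkcondition}. Hence a series of edge stretchings $\psi\colon \mathcal{\hat{K}} \to \mathcal{K}$ determines a series of edge contractions $\varphi = \psi^{-1}\colon \mathcal{K} \to \mathcal{\hat{K}}$, where each contraction appearing in the series satisfies the link condition in the corresponding intermediate complex. So the first step is simply to unpack the definition of ``series of edge stretchings'' and reverse it.

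Next I would fix such a presentation $\varphi\colon \mathcal{K} \to \mathcal{\hat{K}}$ of $\psi^{-1}$ as a composite of link-condition edge contractions, and observe that this is precisely the input required by Theorem~\ref{thm: edge contractions Massey}. Applying that theorem yields a non-trivial $n$-Massey product $\langle \alpha_1, \ldots, \alpha_n \rangle \subset H^*(\mathcal{Z}_\mathcal{K})$, where, following Construction~\ref{cons: edge contraction}, each class $\alpha_i = [a_i] \in \widetilde{H}^{p_i}(\mathcal{K}_{J_i})$ is the pullback of $\hat{\alpha}_i \in \widetilde{H}^{p_i}(\mathcal{\hat{K}}_{\hat{J}_i})$ along $\varphi$, with $J_i = \varphi^{-1}_0(\hat{J}_i)$ and $a_i$ the cochain of \eqref{eq:cochain a}. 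The disjointness of the $\hat{J}_i$ needed in Construction~\ref{cons: edge contraction} is automatic, since $\langle \hat{\alpha}_1, \ldots, \hat{\alpha}_n \rangle$ is non-trivial. This produces the asserted non-trivial Massey product.

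Since the corollary is an immediate consequence of the main theorem of the section, there is no genuine obstacle; the only point one should verify is that reversing a composite of edge stretchings gives a composite of edge contractions each of which still satisfies the link condition, which follows step by step directly from the definition of edge stretching. It is also worth recording, as in the discussion after Theorem~\ref{thm: edge contractions Massey}, that $\langle \alpha_1, \ldots, \alpha_n\rangle$ sits in strictly higher cohomological degree than $\langle \hat{\alpha}_1, \ldots, \hat{\alpha}_n\rangle$ and inherits non-trivial indeterminacy whenever the latter has it.
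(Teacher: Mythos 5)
Your proposal is correct and follows exactly the paper's own argument: by definition a series of edge stretchings $\psi\colon \mathcal{\hat{K}}\to\mathcal{K}$ reverses to a series of link-condition edge contractions $\varphi\colon \mathcal{K}\to\mathcal{\hat{K}}$, and Theorem~\ref{thm: edge contractions Massey} then gives the non-trivial $n$-Massey product in $H^*(\mathcal{Z}_\mathcal{K})$. The additional remarks on degrees and indeterminacy match the paper's discussion following that theorem.
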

	\begin{proof}
		Since $\psi\colon \mathcal{\hat{K}}\to \mathcal{K}$ is a series of edge stretchings, there is a series of edge contractions $\varphi\colon \mathcal{K} \to \mathcal{\hat{K}}$. 
		Given $\langle \hat{\alpha}_1, \ldots, \hat{\alpha}_n \rangle$ in $H^*(\mathcal{Z}_\mathcal{\hat{K}})$, there is a non-trivial $n$-Massey product $\langle \alpha_1, \ldots, \alpha_n \rangle \subset H^*(\mathcal{Z}_\mathcal{K})$  by Theorem~\ref{thm: edge contractions Massey}. 
		%If the Massey product $\langle \hat{\alpha}_1, \ldots, \hat{\alpha}_n \rangle$ was trivial, then by Theorem~\ref{thm: edge contractions Massey}  Massey product $\langle \alpha_1, \ldots, \alpha_n \rangle$ is also trivial, which contradicts the assumption.
	\end{proof}

	We may use edge stretchings to build infinite families of examples of Massey products in moment-angle complexes given any known Massey product in a moment-angle complex. 
	For example we can start with one of the obstruction graphs for lowest-degree triple Massey products \cite{DenhamSuciu, LowestDegreeClassification} and produce infinite families of simplicial complexes that contain non-trivial triple Massey products of classes on different degrees. 
	This illustrates that non-trivial Massey products are very common in moment-angle complexes, contrary to previous belief.
	
	%%%%%%%%%%%%%%%

	\section{Non-trivial Massey products in nestohedra}
	Theorems~\ref{thm: joins} and \ref{thm: edge contractions Massey} can be applied together to construct non-trivial higher Massey products of classes in various degrees in the cohomology of moment-angle complexes.
	Recall that for any simple polytope $P$, there is a simplicial complex $\mathcal{K}_P=\partial(P^*)$ and $\mathcal{Z}_P=\mathcal{Z}_{\mathcal{K}_P}$ is a moment-angle manifold.
	In this section we show that there are families of polytopes $P$ for which $H^*(\mathcal{Z}_P)$ has non-trivial higher Massey products. 
	
	%\subsection{Nestohedra}
	Nestohedra are a large family of simple polytopes built out of Minkowski sums of simplices, introduced by Feichtner and Sturmfels \cite{nestohedra}.
	They include all simplices, permutahedra, Stasheff's associahedra and more generally Carr and Devadoss' graph associahedra \cite{graph_associahedra}.
	Alternatively nestohedra are interpreted as hypergraph polytopes~\cite{hypergraph_polytopes}.
	The first examples of Massey products in moment-angle manifolds associated to nestohedra were in \cite[Proposition~4.1]{LimonchenkoFlagNestohedra} and \cite[Lemma~4.9]{Limonchenko_multiwedge} and were triple Massey products constructed either by explicit calculation or using the classification of lowest degree Massey products~\cite{DenhamSuciu, LowestDegreeClassification}.
	We will use Theorems~\ref{thm: joins} and \ref{thm: edge contractions Massey} to construct families of new non-trivial higher Massey products in moment-angle manifolds associated to certain nestohedra. We use a construction of nestohedra due to Postnikov~\cite[Theorem~7.4]{postnikovnestohedra}.

	\begin{defn}
		A \textit{building set} $B$ is a collection of non-empty subsets of $[n+1]$ such that 
		\begin{enumerate}%[label=(\roman*)]
			\item $\{i\}\in B$ for every $i\in [n+1]$,
			\item $S_1\cup S_2\in B$ for any $S_1, S_2\in B$ with $S_1\cap S_2\neq \myempty$.
		\end{enumerate}
	\end{defn}
	
	A convex polytope is the convex hull of a finite number of points in $\mathbb{R}^n$.
	If $M_1$ and $M_2$ are convex polytopes in $\mathbb{R}^n$, then the Minkowski sum
	%$M_1+M_2$
	\[
	M_1+M_2=\{x\in \mathbb{R}^n \suchthat x=x_1+x_2, x_1\in M_1, x_2\in M_2\}.
	\]
	is also a convex polytope.
	
	%In particular a graphical building set $B(\gamma)$
	
	%\begin{defn}
	%    The \textit{Minkowski sum} of two subsets $M_1, M_2\subset \mathbb{R}^n$ is the set
	%    \[       M_1+M_2=\{x\in \mathbb{R}^n \suchthat x=x_1+x_2, x_1\in M_1, x_2\in M_2\}.\]
	%If $M_1$ and $M_2$ are convex polytopes, then $M_1+M_2$ is also a convex polytope.
	%\end{defn}
	
	\begin{defn}
		For a building set $B\subset [n+1]$, a \textit{nestohedron} $P_B$ is the polytope $\sum_{S\in B} \Delta^S$, where $\Delta^S=\text{conv}\{e_i, i\in S\}$ is the convex hull of the basis elements $e_i\in \mathbb{R}^{n+1}$.
	\end{defn}
	
	For example, the $n$-simplex is a nestohedron with building set $\{ \{1\}, \ldots, \{n+1\}, [n+1] \}$.
	Other key examples of nestohedra are \textit{graph associahedra} $P_{B_\Gamma}$, which are associated to a graph $\Gamma$ on the vertex set $[n+1]$. The \textit{graphical building set} $B_\Gamma$ consists of subsets $S\subset [n+1]$ such that the restriction of $\Gamma$ to the vertices in $S$ is a connected graph.
	
	Since every nestohedron $P_B$ is simple~\cite{nestohedra, postnikovnestohedra}, we will consider the corresponding simplicial complex $\mathcal{K}_{P_B}=\partial (P_B^*)$, which is the boundary of the dual polytope.
	Let $B_{\max}$ be the set of maximal sets in $B$ with respect to inclusion.
	\begin{prop}[\cite{postnikovnestohedra}] \label{prop: nested set complex} %Think this is Theorem 7.4 in Postnikov's paper
		The simplicial complex $\mathcal{K}_{P_B}$ is isomorphic to the \textit{nested set complex} $\mathcal{N}(B)$, which contains a simplex $\{S_1, \ldots, S_k\} \subset B\setminus B_{\max}$ if 
		\begin{enumerate}
			\item for any $S_i, S_j\in \{S_1, \ldots, S_k\}$, either $S_i\subset S_j$, $S_j\subset S_i$ or $S_i\cap S_j=\myempty$,
			\item for any $S_{i_1}, \ldots, S_{i_p}\in \{S_1, \ldots, S_k\}$ with $S_{i_j}\cap S_{i_l}=\myempty$, $S_{i_1}\sqcup \cdots \sqcup S_{i_p} \notin B$.
		\end{enumerate} 
		\qed
	\end{prop}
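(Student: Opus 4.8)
The plan is to realise $\mathcal{K}_{P_B}=\partial(P_B^{*})$ as the combinatorial complex associated with the normal fan of the simple polytope $P_B$. Since $P_B$ is simple, the simplices of $\partial(P_B^{*})$ are precisely the collections of facets of $P_B$ with non-empty common intersection, equivalently the collections of rays of the (inner) normal fan of $P_B$ that together span a simplicial cone of that fan. So it suffices to carry out two steps: (i) exhibit a bijection between the facets of $P_B$ and the sets $S\in B\setminus B_{\max}$, and (ii) show that a collection $\{S_1,\dots,S_k\}$ of such sets indexes facets $F_{S_1},\dots,F_{S_k}$ with $F_{S_1}\cap\cdots\cap F_{S_k}\neq\varnothing$ exactly when $\{S_1,\dots,S_k\}$ satisfies conditions~(1) and~(2) of Proposition~\ref{prop: nested set complex}. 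The assignment $F_S\mapsto S$ is then an isomorphism of posets of faces, hence of simplicial complexes, $\mathcal{K}_{P_B}\cong\mathcal{N}(B)$.

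For step~(i) I would use the Minkowski-sum structure $P_B=\sum_{S\in B}\Delta^{S}$. For a linear functional $\ell$ on $\mathbb{R}^{n+1}$, the face of $P_B$ on which $\ell$ is maximised is the Minkowski sum of the faces of the summands on which $\ell$ is maximised, and the face of $\Delta^{S'}$ maximising $\ell$ is $\mathrm{conv}\{e_i: i\in S',\ \ell(e_i)=\max_{j\in S'}\ell(e_j)\}$. Computing dimensions of these sums one finds that, when $B$ is connected, $P_B$ is $n$-dimensional and has exactly one facet $F_S$ for each $S\in B$ with $S\neq[n+1]$, namely $F_S=\{x\in P_B:\sum_{i\in S}x_i=c_S\}$ where $c_S=\#\{S'\in B:S'\subseteq S\}$ is the minimum of that linear form on $P_B$, with supporting ray spanned by $e_S=\sum_{i\in S}e_i$ (up to the usual sign convention); for a general building set one argues componentwise since $P_B$ is a product over the connected components of $B$, and $B_{\max}$ is exactly the set of components. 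This is Postnikov's facet description, see \cite[Section~7]{postnikovnestohedra} and Feichtner--Sturmfels \cite{nestohedra}.

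For step~(ii), a set of facets $\{F_{S_1},\dots,F_{S_k}\}$ meets in a common face iff the linear forms $\sum_{i\in S_1}x_i,\dots,\sum_{i\in S_k}x_i$ attain their minima on $P_B$ simultaneously, i.e.\ iff there is a direction in the relative interior of the cone spanned by $e_{S_1},\dots,e_{S_k}$ whose corresponding face of $P_B$ contains $F_{S_1}\cap\cdots\cap F_{S_k}$. Unwinding the Minkowski-sum picture, the optimal face of $\Delta^{S'}$ is a single vertex for every $S'\in B$ unless $S'$ meets some $S_i$, and compatibility of all the optimal faces forces the sets $S_1,\dots,S_k$ to form a laminar family, which is condition~(1); the dimension of $F_{S_1}\cap\cdots\cap F_{S_k}$ then drops to the expected codimension $k$, so that the intersection is a genuine face and the cone is simplicial of dimension $k$, exactly when no disjoint subcollection of the $S_i$ has its union again in $B$, which is condition~(2). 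Conversely, given a nested set $\{S_1,\dots,S_k\}$ one builds an explicit direction $v$ (e.g.\ assigning to each $i\in[n+1]$ a weight determined by the chain of $S_j$'s containing it) whose optimal face of $P_B$ is precisely $F_{S_1}\cap\cdots\cap F_{S_k}\neq\varnothing$.

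The main obstacle is the quantitative half of step~(ii): one must show that conditions~(1) and~(2) are exactly what is needed for the system of equalities $\sum_{i\in S_j}x_i=c_{S_j}$ to be simultaneously solvable by a point of $P_B$ with the solution set having the expected dimension. This reduces to the submodularity-type inequality $c_{S\cup T}+c_{S\cap T}\ge c_S+c_T$ for $S,T\in B$ with $S\cap T\neq\varnothing$, together with the observation that this inequality is strict precisely when condition~(2) fails for $\{S,T\}$; iterating this controls the rank of the tight system and pins down when the corresponding cone has the right dimension. Since all of this is established in \cite[Theorem~7.4]{postnikovnestohedra} (and in \cite{nestohedra}), in the write-up it is enough to recall the facet description of $P_B$ and cite that result for the combinatorial identification $\mathcal{K}_{P_B}\cong\mathcal{N}(B)$.
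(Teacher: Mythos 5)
The paper gives no argument for this proposition at all: it is quoted directly from Postnikov (\cite[Theorem~7.4]{postnikovnestohedra}) with a bare \textit{qed}, and your proposal ultimately does the same, deferring to that theorem for the combinatorial identification after a correct outline of the facet description $F_S=\{x\in P_B:\sum_{i\in S}x_i=c_S\}$ and the normal-fan characterisation of which facets meet. So your treatment is correct and essentially the same as the paper's (a citation), with the added sketch being a faithful summary of Postnikov's actual argument.
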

	
	For example if $P_B$ is the polytopal $n$-simplex, then $\mathcal{K}_{P_B}$ is the boundary of an $n$-simplex.
	Another example is shown in Figure~\ref{fig: K_P when P is 3-permutahedron}.
	We denote the moment-angle complex $\mathcal{Z}_{\mathcal{K}_{P_B}}$ by $\mathcal{Z}_{P_B}$.
	
	%There are constructions of families of certain nestohedra $P_B$ with non-trivial Massey products in $H^*(\mathcal{Z}_{P_B})$ by 
	
	%	Let $\Gamma$ be a complete graph on the vertex set $[n+1]$. Let $B_\Gamma$ be the set of all subsets $S\subset [n+1]$. Since $\Gamma$ is complete, the induced subgraph $\Gamma_S$ is a connected graph for every $S\in B_\Gamma$.
	%	Let sets $S_i, S_i'$ in $B_\Gamma$ be enumerated so that if $S_i\subset S_i'$, then $i' \leqslant i$.
	%	Let $\Delta^n$ be a combinatorial $n$-simplex with facets labelled $F_{\{1\}}, \ldots, F_{\{n+1\}}$.
	%	Let $B_0=\{ \{1\}, \ldots, \{n+1\}, [n+1] \}$.
	%	Let $P_\Gamma$ be the polytope obtained from $\Delta^n$ by a sequence of truncations at the faces $F_{S_i} \cap \cdots \cap F_{S_k}$ for every $S\in B_\Gamma \setminus B_0$ (in order) and a decomposition $S=S_i\sqcup \cdots \sqcup S_k$, where each new face is labelled $F_S$.

	\subsection{Permutahedra}
	A permutahedron is an example of a graph associahedron, when the associated graph is a complete graph on $n+1$ vertices.
	Limonchenko \cite[Theorem~3]{Limonchenko} showed that the $3$-dimensional permutahedron $P$ has no non-trivial triple Massey product $\langle \alpha_1, \alpha_2, \alpha_3 \rangle$ for three-dimensional classes $\alpha_i\in H^3(\mathcal{Z}_{P})$, using the classification by \cite[Theorem 6.1.1]{DenhamSuciu} and \cite{LowestDegreeClassification}.
	However, there are other non-trivial triple Massey products in $H^*(\mathcal{Z}_{P})$, as illustrated in Example~\ref{ex: truncated octahedron with indeterminacy}.
	Via an explicit example, it was also shown in \cite[Proposition~4.1]{LimonchenkoFlagNestohedra} and \cite[Lemma~4.9]{Limonchenko_multiwedge} that there are triple Massey products of three-dimensional classes in $H^*(\mathcal{Z}_P)$ for $n$ dimensional permutahedra $P$ with $n>3$. 
	Here we will generalise this and show that $\mathcal{Z}_P$, for the $n$-dimensional permutahedron $P$, has a non-trivial $k$-Massey product for $k\leqslant n$.
	
	\begin{figure}[ht]
	\centering
		\begin{tikzpicture}	[scale=0.60, inner sep=1.5mm]
		\coordinate (c) at (0,-0);
		\coordinate (i1) at (-1, 1);
		\coordinate (i2) at (1, 1);
		\coordinate (i3) at (1, -1);
		\coordinate (i4) at (-1, -1);
		\coordinate (m1) at (0, 2);
		\coordinate (m2) at (2, -0);
		\coordinate (m3) at (0, -2);
		\coordinate (m4) at (-2, -0);
		\coordinate (o1) at (-3, 3);
		\coordinate (o2) at (3, 3);
		\coordinate (o3) at (3, -3);
		\coordinate (o4) at (-3, -3);
		
		%	\fill[lightgray, fill opacity=0.6](o4)--(o1) --(o2)--(i2) --(i1)--(m4)--(o4);
		\fill[lightgray, fill opacity=0.6](o4)--(o1) --(o2)--(o3) --(o4);
		
		\draw[] (i1) --(i2)--(i3) --(i4)--(i1);
		\draw[] (o1) --(o2)--(o3) --(o4)--(o1);
		\draw[] (i1)--(m1) --(i2)--(m2)--(i3)--(m3) --(i4)-- (m4)--(i1);
		\draw[] (o1)--(m1) --(o2)--(m2)--(o3)--(m3) --(o4) --(m4)--(o1);
		\draw[] (o1)--(i1)--(c)--(i3)--(o3);
		\draw[] (o2)--(i2)--(c)--(i4)--(o4);
		
		\draw[] (o4)--(m4)--(i1)--(i2)--(i3)--(o3)--(o2)--(i2)--(m1)--(o2)--(o1)--(m1)--(i1)--(o1)--(m4)--(o4)--(o1);
		\draw[] (o4)--(o3);
		\draw[] (o1)--(m1);
		\draw[] (i1)--(m4);
		\draw[] (i1)--(i2);
		%	\draw[line width=4pt, colour1] (o1)--(m1);
		%	\draw[line width=4pt, colour2] (i1)--(m4);
		%	\draw[line width=4pt, colour2] (i1)--(i2);
		
		%	\foreach \i in {i1, i2, i3, m1, m4, o1, o2, o3, o4} {\fill (\i) circle (6.5pt);}
		%	\foreach \i in {i3, m1, o1} {\fill[colour1] (\i) circle (5pt);}
		%	\foreach \i in {i1, i2, m4, o3} {\fill[colour2] (\i) circle (5pt);}
		%	\foreach \i in {o2,  o4} {\fill[colour3] (\i) circle (5pt);}
		\foreach \i in {c, i1, i2, i3, i4, m1, m2, m3, m4, o1, o2, o3, o4} {\fill (\i) circle (4pt);}
		
		%Name the vertices
		\node[left] at (o4) {$v_{234}$};
		\node[right] at (o2) {$v_{124}$};
		\node[below]  at (i3) {$v_{134}$};
		\node[left] at  (o1) {$v_{2}$};
		\node[above] at  (m1) {$v_{12}$};
		\node[right] at  (o3) {$v_{4}$};
		\node[left] at (m4) {$v_{23}$};
		\node[left] at  (i1) {$v_{123}$};
		\node[right] at (i2) {$v_{1}$};
		\node[right] at (c)  {$v_{13}$};
		\node[left] at (i4) {$v_{3}$};
		\node[right] at (m2) {$v_{14}$};
		\node[below] at (m3) {$v_{34}$};
		\end{tikzpicture}
		\caption{The simplicial complex $\mathcal{K}_P$, without the vertex $v_{24}$, when $P$ is the $3$-dimensional permutahedron}
		\label{fig: K_P when P is 3-permutahedron}
	\end{figure}
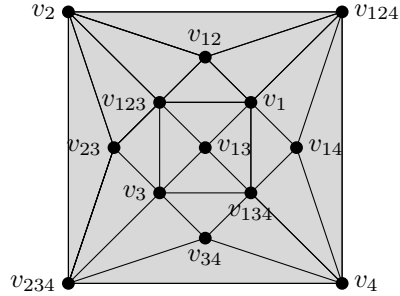

	\begin{prop} \label{prop: permutahedra}
		When $P$ is the $n$-dimensional permutahedron, $H^*(\mathcal{Z}_P)$ has a non-trivial $k$-Massey product for every $k\leqslant n$.
	\end{prop}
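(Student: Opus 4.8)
The strategy is to produce the Massey product first on an auxiliary complex obtained from a join by star deletions (Theorem~\ref{thm: joins}), transplant it onto $\mathcal{K}_P$ by an edge stretching (Theorem~\ref{thm: edge contractions Massey}), and lift it to $\mathcal{Z}_P$ through the full subcomplex decomposition of Hochster's formula (Theorem~\ref{thm: full Hochster's}); the case $n=3$, $k=3$ is exactly Example~\ref{ex: truncated octahedron with indeterminacy}, which I would use as the prototype.

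First I would identify $\mathcal{K}_P$ combinatorially. The $n$-permutahedron is the graph associahedron of the complete graph $K_{n+1}$, so its building set $B_{K_{n+1}}$ consists of \emph{all} nonempty subsets of $[n+1]$; condition~(2) of Proposition~\ref{prop: nested set complex} then forces every simplex of the nested set complex to be a chain, whence $\mathcal{K}_P=\mathcal{N}(B_{K_{n+1}})$ is the barycentric subdivision $\sd(\partial\Delta^n)$, i.e.\ the order complex of the poset of proper nonempty subsets of $[n+1]$ under inclusion. In this model a full subcomplex on a set $W$ of subsets is precisely the order complex of the induced subposet $(W,\subseteq)$. A first reduction: for a fixed $(k+1)$-element subset $T\subseteq[n+1]$, the induced subcomplex of $\mathcal{K}_P$ on the proper nonempty subsets of $T$ is $\sd(\partial\Delta^{k})$, the dual of the $k$-permutahedron $P'$, and by Theorem~\ref{thm: full Hochster's} a non-trivial $k$-Massey product in $H^*(\mathcal{Z}_{P'})$ lifts to one in $H^*(\mathcal{Z}_P)$; this is where $k\leqslant n$ is used, and it reduces the problem to $n=k$.

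Next, applying Theorem~\ref{thm: joins} with $\mathcal{K}^1=\dots=\mathcal{K}^k=S^0$ yields a complex $\hat{\mathcal{K}}$ on the $2k$ vertices $\sigma_1,\sigma_{1'},\dots,\sigma_k,\sigma_{k'}$ — Limonchenko's truncated $k$-cube of Example~\ref{ex: Recovering Limonchenko's n-cubes} — carrying a non-trivial $k$-Massey product $\langle\hat\alpha_1,\dots,\hat\alpha_k\rangle$ with each $\hat\alpha_l$ a generator of $\widetilde{H}^0(\hat{\mathcal{K}}_{\{\sigma_l,\sigma_{l'}\}})$. The heart of the proof is to exhibit a set $W$ of proper nonempty subsets of $[k+1]$ such that the full subcomplex $\mathcal{K}:=\sd(\partial\Delta^{k})_W\subset\mathcal{K}_P$ admits a sequence of edge contractions $\varphi\colon\mathcal{K}\to\hat{\mathcal{K}}$ satisfying the link condition~\eqref{eq:linkcondition}. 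One places the $2k$ vertices of $\hat{\mathcal{K}}$ at suitably chosen incomparable subsets and then ``stretches'' certain vertices of $\hat{\mathcal{K}}$ into short chains of consecutive subsets, enlarging $W$ accordingly — exactly as the three coloured edges are stretched in Figure~\ref{fig: (square) truncated octahedron Massey}. Contracting each covering pair of the stretched chains back down recovers $\hat{\mathcal{K}}$; in the barycentric subdivision the link of an edge between a covering pair $S\subsetneq S'$ is governed by the intervals of $(W,\subseteq)$, so verifying the link condition is a direct check on the chosen poset $W$.

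Finally, Theorem~\ref{thm: edge contractions Massey} applied to $\varphi$ gives a non-trivial $k$-Massey product $\langle\alpha_1,\dots,\alpha_k\rangle\subset H^*(\mathcal{Z}_\mathcal{K})$, and since $\mathcal{K}$ is a full subcomplex of $\mathcal{K}_P$, Theorem~\ref{thm: full Hochster's} realises $H^*(\mathcal{Z}_\mathcal{K})$ as a subalgebra of $H^*(\mathcal{Z}_P)$ on which this Massey product is still non-trivial, which proves the proposition. I expect the main obstacle to be the third step: choosing the subposet $W$ so that the edge contractions satisfy the link condition \emph{and} — the subtle point already visible in Example~\ref{ex: truncated octahedron with indeterminacy} — so that passing to the full subcomplex does not force in chains that would turn the homology cycles underlying the Massey product into boundaries; the stretched intermediate subsets must be kept incomparable to everything outside their own chain. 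Everything else is a direct invocation of Theorems~\ref{thm: joins}, \ref{thm: edge contractions Massey} and~\ref{thm: full Hochster's}.
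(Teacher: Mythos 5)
Your overall strategy coincides with the paper's: realise the $k$-Massey product on a star-deleted join of $k$ copies of $S^0$ via Theorem~\ref{thm: joins}, transfer it to a full subcomplex of $\mathcal{K}_P$ by edge contractions via Theorem~\ref{thm: edge contractions Massey}, and lift through the full-subcomplex decomposition of Theorem~\ref{thm: full Hochster's}. Your identification of $\mathcal{K}_P$ with the order complex of proper nonempty subsets of $[n+1]$ is correct, and your preliminary reduction of $k<n$ to the $k$-permutahedron (by restricting to the proper nonempty subsets of a fixed $(k+1)$-set $T$) is valid and is a genuine, if minor, departure from the paper; the paper instead handles $k<n$ directly by choosing two-element vertex sets $J_i$, which has the advantage that no edge contractions are needed at all in that case.

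There is, however, a genuine gap at exactly the point you flag as ``the main obstacle'': you never exhibit the subposet $W$, i.e.\ the vertex sets $J_i$. This cannot be waved through as a routine check, because in a full subcomplex of an order complex you have no freedom over which edges appear --- two chosen subsets span an edge if and only if they are comparable --- so the entire content of the proposition, beyond the two general theorems, is to produce explicit families of subsets of $[k+1]$ whose comparability pattern, after contracting the designated covering pairs, reproduces precisely the edges of the star-deleted join (the edge $\{\sigma_i,\sigma_{j'}\}$ absent exactly for $1\leqslant i<j\leqslant k$ with $(i,j)\neq(1,k)$, every other edge present; since both complexes are flag, matching $1$-skeleta then suffices). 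The paper supplies this data explicitly: for $k<n$ it takes $J_1=\{v_{\{1\}},v_{\{2\}}\}$, $J_i=\{v_{\{1,\dots,i,k+1\}},v_{\{2,\dots,i+1\}}\}$ for $1<i<k$ and $J_k=\{v_{\{1,\dots,k+1\}},v_{\{1,\dots,k,k+2\}}\}$ and verifies all the containments, and for $k=n$ it uses a three-element $J_i$ for each $i$ and contracts one covering edge in each, the link condition holding because $\mathcal{K}_P$ is a sphere triangulation. Until you either write down such a configuration or give an existence argument for one, the proof is incomplete at its central step.
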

	\begin{proof}
		The building set $B$ of the $n$-dimensional permutahedron $P$ contains all possible subsets of $[n+1]$.
		Let $v_{S}$ be the vertex in $\mathcal{K}_{P_B}$ corresponding to a set $S\in B\setminus [n+1]$. 
		By Proposition~\ref{prop: nested set complex}, $\{v_{S_1}, \ldots, v_{S_k}\}$ is a simplex in  $\mathcal{K}_{P_B}$ if for any $S_i, S_j \in \{S_1, \ldots, S_k\}$, either $S_i\subset S_j$, $S_j\subset S_i$. 
		From now on we denote $\mathcal{K}_{P_B}$ by $\mathcal{K}$. 
		We construct a $k$-Massey product $\langle \alpha_1, \ldots, \alpha_k\rangle \subset  H^*(\mathcal{Z}_\mathcal{K})$ by explicitly defining $J_i$ and $\alpha_i\in  \widetilde{H}^0(\mathcal{K}_{J_i})$. 
		Then we edge contract $\mathcal{K}_{J_1\cup \cdots \cup J_k}$ to a simplicial complex that by Construction~\ref{cons: simplicial complex for n-Massey} has a non-trivial Massey product. 
		For $k<n$, let 
		\begin{align*}
			&\alpha_1\in \widetilde{H}^0(\mathcal{K}_{v_{\{1\}}, v_{\{2\}}}) \\
			&\alpha_i\in  \widetilde{H}^0(\mathcal{K}_{v_{\{1, \ldots, i, k+1\}}, v_{\{2, \ldots, i+1\}} }) \text{ for } 1<i<k\\
			&\alpha_k\in  \widetilde{H}^0(\mathcal{K}_{v_{\{1, \ldots, k+1\}}, v_{\{1, \ldots, k, k+2\}} })
		\end{align*}
		%		$\alpha_1\in \widetilde{H}^0(\mathcal{K}_{v_{\{1\}}, v_{\{2\}}})$, $\alpha_i\in  \widetilde{H}^0(\mathcal{K}_{v_{\{1, \ldots, i, k+1\}}, v_{\{2, \ldots, i+1\}} })$ for $1<i<k$, and $\alpha_k\in  \widetilde{H}^0(\mathcal{K}_{v_{\{1, \ldots, k+1\}}, v_{\{1, \ldots, k, k+2\}} })$.
		so $\alpha_i$ corresponds to a class $\alpha_i\in H^3(\mathcal{Z}_\mathcal{K})$.
		In this case $|J_i|=2$, so there are no edges to contract. 
		Let $\mathcal{\hat{K}}=\mathcal{K}_{J_1\cup \cdots \cup J_k}$. 
		%and to ease notation, relabel each $J_i=\{i,i'\}$ with $i,i'$ ordered as above. 
		There is no edge $\{v_{\{1\}}, v_{\{2, \ldots, i+1\}}\}$ in $\mathcal{\hat{K}}$ for $v_{\{1\}}\in J_1$ and $v_{\{2, \ldots, i+1\}}\in J_i$ since $\{1\}\not\subset \{2, \ldots, i+1\}$. 
		Also there is no edge $\{v_{\{1, \ldots, i, k+1\}}, v_{\{2,\ldots, j+1\}} \}$ nor $\{v_{\{1, \ldots, i, k+1\}}, v_{\{1,\ldots, k, k+2\}} \}$ for $v_{\{1, \ldots, i, k+1\}}\in J_i$, $v_{\{2,\ldots, j+1\}}\in J_j$ with $1<i<j<k$ and  $v_{\{1,\ldots, k, k+2\}}\in J_k$. 
		All the other edges are in $\mathcal{\hat{K}}$. 
		That is,  $\{v_{\{1\}}, v_{\{1,\ldots,i, k+1\}} \}\in \mathcal{\hat{K}}$ and $\{v_{\{1\}}, v_{\{1,\ldots,k, k+2\}} \}\in \mathcal{\hat{K}}$ for $v_{\{1\}}\in J_1$, $v_{\{1,\ldots,i, k+1\}}\in J_i$ for any $i\leqslant k$ and $v_{\{1,\ldots,k, k+2\}}\in J_k$. 
		Similarly, $\{v_{\{1,\ldots,i, k+1\}} , v_{\{1,\ldots,j, k+1\}} \}\in \mathcal{\hat{K}}$ for $v_{\{1,\ldots,i, k+1\}}\in J_i$ and $v_{\{1,\ldots,j, k+1\}}\in J_j$ with $1<i<j\leqslant k$. 
		Also $\{v_{\{2,\ldots,i+1\}} , v_{S_j} \}\in \mathcal{\hat{K}}$ for $v_{\{2,\ldots,i+1\}}\in J_i$ and any $v_{S_j}\in J_j$ with $1\leqslant i<j\leqslant k$. 
		Therefore $\mathcal{\hat{K}}$ is obtained from the join $\mathcal{K}_{J_1}*\cdots *\mathcal{K}_{J_k}$ by star deleting at the edges
		$\{v_{\{1\}}, v_{\{2, \ldots, i+1\}}\}$, $\{v_{\{1, \ldots, i, k+1\}}, v_{\{2,\ldots, j+1\}} \}$ and $\{v_{\{1, \ldots, i, k+1\}}, v_{\{1,\ldots, k, k+2\}} \}$ 
		for $v_{\{1\}}\in J_1$, $v_{\{2, \ldots, i+1\}}\in J_i$, $v_{\{1, \ldots, i, k+1\}}\in J_i$, $v_{\{2,\ldots, j+1\}}\in J_j$ with $1<i<j<k$ and  $v_{\{1,\ldots, k, k+2\}}\in J_k$. 
		Hence by Theorem~\ref{thm: joins}, the  Massey product $\langle \alpha_1, \ldots, \alpha_k \rangle \subset H^{2k+2}(\mathcal{Z}_{\mathcal{K}})$ is non-trivial.

		For $k=n$,  
		let 
		%		**** Maybe try saying, ``let $\mathcal{K}^i=\mathcal{K}_{v...}$''. Then talk about edges between things there.
		\begin{align*}
			&\alpha_1\in \widetilde{H}^0(\mathcal{K}_{v_{\{1\}}, v_{\{2, \ldots, n+1\}}, v_{\{3, \ldots, n+1\}} }) \\
			&\alpha_i\in  \widetilde{H}^0(\mathcal{K}_{v_{\{1, \ldots, i\}}, v_{\{2, \ldots, i\}}, v_{\{3, \ldots, i+1\}} }) \text{ for } 1<i<n \\
			&\alpha_n\in  \widetilde{H}^0(\mathcal{K}_{v_{\{1, \ldots, n\}}, v_{\{2, \ldots, n\}}, v_{\{1, 3, \ldots, n+1\}} }).
		\end{align*}
		Since $|J_i|=3$ for every $i\in \{1,\ldots, n\}$, we will perform $n$ edge contractions in order to obtain a simplicial complex $\mathcal{\hat{K}}$ on $2n$ vertices. 
		There is an edge $\{v_{\{2, \ldots, n+1\}}, v_{\{3, \ldots, n+1\}}\}\in \mathcal{K}_{J_1}$ since $\{3, \ldots, n+1\}\subset \{2, \ldots, n+1\}$.
		Also there are edges $\{v_{\{1, \ldots, i\}}, v_{\{2, \ldots, i\}}\}\in \mathcal{K}_{J_i}$ for $1<i\leqslant n$. 
		Since $P$ is a simple polytope, $\mathcal{K}$ is a triangulation of a sphere so the contraction of these edges satisfy the link condition. 
		Let $\mathcal{\hat{K}}$ be obtained from 
		$\mathcal{K}_{J_1\cup \cdots \cup J_n}$
		%$\mathcal{K}_{v_{\{1\}}, v_{\{2, \ldots, n+1\}}, v_{\{3, \ldots, n+1\}}, \ldots, v_{\{1, \ldots, n\}}, v_{\{2, \ldots, n\}}, v_{\{1, 3, \ldots, n+1\}} }$
		by contracting these $n$ edges.
		Then as in the case when $k<n$, $\mathcal{\hat{K}}$ is a simplicial complex obtained from the join of $n$ pairs of disjoint vertices by star deletions as described by Construction~\ref{thm: joins}.
		Hence by Theorem~\ref{thm: joins}, there is a non-trivial $k$-Massey product in $H^*(\mathcal{Z}_{\mathcal{\hat{K}}})$. 
		By Theorem~\ref{thm: edge contractions Massey}, the Massey product $\langle \alpha_1, \ldots, \alpha_k \rangle \subset H^*(\mathcal{Z}_\mathcal{K})$ is also non-trivial.
	\end{proof}

	%\subsection{Stellohedra}
	A similar technique to that used in Proposition~\ref{prop: permutahedra} can be applied to other simple polytopes. 
	An example is the family of stellohedra: graph associahedra corresponding to star graphs, which are graphs with a central vertex  and edges attaching every other vertex to the central one.
	It was shown in \cite[Theorem~3]{Limonchenko} that there are $3$-Massey products on $3$-dimensional classes in $H^*(\mathcal{Z}_P)$ when $P$ is a $3$-dimensional stellohedron, using the classification in \cite{DenhamSuciu, LowestDegreeClassification}.
	By applying Theorems~\ref{thm: joins} and \ref{thm: edge contractions Massey}, we generalise that result by constructing  non-trivial $n$-Massey products in moment-angle manifolds over $n$-dimensional stellohedron.
	%Thus all moment-angle manifolds corresponding to stellohedra are also non-formal.
	
	\begin{prop}\label{prop: stellohedra}
		When $P$ is the $n$-dimensional stellohedron, $H^*(\mathcal{Z}_P)$ has a non-trivial $n$-Massey product.
	\end{prop}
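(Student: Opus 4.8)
The plan is to follow the strategy of Proposition~\ref{prop: permutahedra} (and of \cite[Proposition~4.1]{LimonchenkoFlagNestohedra}): exhibit a full subcomplex of $\mathcal{K}_P$ which, after contracting a carefully chosen collection of edges that satisfy the link condition, is isomorphic to a complex of the form produced by Construction~\ref{cons: simplicial complex for n-Massey}, and then apply Theorems~\ref{thm: joins} and~\ref{thm: edge contractions Massey}. The first task is to make the combinatorics explicit. Writing $P=P_{B_{\mathrm{St}_n}}$ for the $n$-dimensional stellohedron, with star graph on $[n+1]$ and centre $c$, the building set $B_{\mathrm{St}_n}$ consists of the singletons together with every subset containing $c$, and by Proposition~\ref{prop: nested set complex} the vertices of $\mathcal{K}=\mathcal{K}_P=\mathcal{N}(B_{\mathrm{St}_n})$ are the proper such sets $S\subsetneq[n+1]$, with $\{v_{S_1},\dots,v_{S_k}\}$ a simplex iff the $S_j$ form a chain under inclusion or are pairwise disjoint with union not in $B_{\mathrm{St}_n}$. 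I would record the resulting edge rules as a short lemma: among vertices $v_S$ with $c\in S$ a pair is an edge precisely when the sets are comparable; $v_{\{j\}}$ (for $j\neq c$) is adjacent to $v_S$ exactly when $j\in S$ or $S=\{j'\}$ with $j'\neq c$; and $v_{\{j\}}$ is never adjacent to $v_{\{c\}}$ or to $v_S$ with $c\in S$, $j\notin S$. These are the non-adjacencies that will be turned into star deletions.

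Next I would write down an explicit family $J_1,\dots,J_n$ of vertex sets of $\mathcal{K}$ (of size two or three, adapting the $k=n$ case of Proposition~\ref{prop: permutahedra}), built from a nested chain of subsets of $[n+1]$, each containing the centre $c$, possibly mixed with singleton vertices — this is the point where the argument genuinely departs from the permutahedron case, since the connectivity constraint of the star graph forbids the ``disjoint interval'' sets used there. The classes $\alpha_i\in\widetilde H^0(\mathcal{K}_{J_i})$ are the generators coming from the disconnectedness of $\mathcal{K}_{J_i}$, so the $J_i$ are pairwise disjoint and $\alpha_i$ corresponds to a class of degree $|J_i|+1$ in $H^*(\mathcal Z_{\mathcal K})$. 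Within each $\mathcal{K}_{J_i}$ there is a single edge (between two comparable vertices of $J_i$); because $P$ is simple, $\mathcal{K}$ triangulates a sphere, so every edge contraction automatically satisfies the link condition, and I may contract those edges to pass from $\mathcal{K}_{J_1\cup\cdots\cup J_n}$ to a complex $\widehat{\mathcal K}$ on $2n$ vertices.

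The main step is to verify that $\widehat{\mathcal K}$ is exactly the join of $n$ copies of $S^0$ with the prescribed star deletions of Construction~\ref{cons: simplicial complex for n-Massey}: labelling the two vertices of the $i$-th $S^0$ as $v_i$ (the $S_{a_i}$-vertex) and $w_i$ (the $P_{a_i}$-vertex), one needs, for each pair $i<j$, that the edges $\{v_i,v_j\}$, $\{w_i,v_j\}$, $\{w_i,w_j\}$ survive while $\{v_i,w_j\}$ is deleted whenever $(i,j)\neq(1,n)$ — and this is precisely what the edge-rule lemma should give once the family $J_i$ is chosen correctly. Granting this, Theorem~\ref{thm: joins} produces a non-trivial $n$-Massey product $\langle\alpha_1,\dots,\alpha_n\rangle\subset H^*(\mathcal Z_{\widehat{\mathcal K}})$, Theorem~\ref{thm: edge contractions Massey} promotes it to a non-trivial $n$-Massey product in $H^*(\mathcal Z_{\mathcal K_{J_1\cup\cdots\cup J_n}})$, and since $\mathcal{K}_{J_1\cup\cdots\cup J_n}$ is a full subcomplex of $\mathcal{K}_P$ the ring decomposition of Theorem~\ref{thm: full Hochster's} (the projection onto the summands indexed by $J\subseteq J_1\cup\cdots\cup J_n$ is a retraction of differential graded algebras, hence sends Massey products to Massey products) shows the product stays non-trivial in $H^*(\mathcal Z_P)$. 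The hardest part will be engineering the chain of centre-containing subsets that realises all the required incidences under the star-graph connectivity restriction, together with the bookkeeping of the $\binom{n}{2}-1$ pairwise edge conditions; once a correct family is in hand, each individual check is a one-line application of Proposition~\ref{prop: nested set complex}. A parallel, slightly simpler version of the same argument then also yields Proposition~\ref{prop: permutahedra}-style lower-arity products for stellohedra, but only the top case $n$ is needed here.
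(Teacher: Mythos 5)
Your strategy is exactly the paper's: reduce to a full subcomplex $\mathcal{K}_{J_1\cup\cdots\cup J_n}$ of $\mathcal{K}_P$, contract one edge inside each $\mathcal{K}_{J_i}$ (the link condition being automatic since $\mathcal{K}_P$ triangulates a sphere), recognise the result as a join of $n$ copies of $S^0$ with the star deletions of Construction~\ref{cons: simplicial complex for n-Massey}, and conclude via Theorems~\ref{thm: joins} and~\ref{thm: edge contractions Massey} together with Hochster's decomposition. Your combinatorial preliminaries are also correct: the graphical building set of the star graph with centre $c$ consists of the singletons together with all centre-containing subsets, and your adjacency rules for $\mathcal{N}(B)$ (centre-containing sets are adjacent iff comparable; $v_{\{j\}}$ is adjacent to $v_S$ iff $j\in S$ or $S$ is another non-central singleton) follow directly from Proposition~\ref{prop: nested set complex}.

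The gap is that you never produce the family $J_1,\dots,J_n$, and that family is the entire non-routine content of the proposition; you explicitly defer "the hardest part" to future engineering, so as written the argument only reduces the claim to an existence statement you have not verified. For the record, the paper's choice (centre labelled $1$) is $J_1=\{v_{\{2\}},v_{\{1\}}\}$, $J_i=\{v_{\{1,\ldots,i\}},\,v_{\{1,3,\ldots,i+2\}},\,v_{\{1,4,\ldots,i+2\}}\}$ for $1<i<n$, and $J_n=\{v_{\{1,3\}},\,v_{\{3\}},\,v_{\{1,2,4,\ldots,n+1\}}\}$; contracting $\{v_{\{1,3,\ldots,i+2\}},v_{\{1,4,\ldots,i+2\}}\}$ for $1<i<n$ and $\{v_{\{1,3\}},v_{\{3\}}\}$ yields the required star-deleted join of $0$-spheres. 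Note that this family is not a "nested chain of centre-containing subsets, possibly mixed with singletons" in any uniform sense — it mixes incomparable centre-containing sets within a single $J_i$ (chosen so that exactly one pair in each $J_i$ is comparable, giving the edge to contract) with non-central singletons, and the pairwise incidences across different $J_i$, $J_j$ have to be checked against your edge rules one configuration at a time. Until such a family is exhibited and those $\binom{n}{2}$ cross-incidence checks are done, the proof is an outline rather than a proof.
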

	\begin{proof}
		As in Proposition~\ref{prop: permutahedra}, we construct $\alpha_i\in \widetilde{H}^0(\mathcal{K}_{J_i})$ where $\mathcal{K}=\mathcal{K}_{P}$.
		Let the star graph associated to $P$ be labelled so that the central vertex is $1$ and the other vertices are $2, \ldots, n+1$. The building set for $P$ is
		\small{\[
		\{ \{1\}, \ldots, \{n+1\}, \{1,2\}, \ldots, \{1,n+1\}, \ldots, \{1,\ldots, n\}, \{1, \ldots, n-1, n+1\}, [n+1] \}.
		\]}
		Let 
		\begin{align*}
			&\alpha_1\in \widetilde{H}^0(\mathcal{K}_{v_{\{2\}}, v_{\{1\}} }) \\
			&\alpha_i\in  \widetilde{H}^0(\mathcal{K}_{ v_{\{1,\ldots, i\}}, v_{\{1,3,\ldots, i+2\}}, v_{\{1,4, \ldots, i+2\}} }) \text{ for } 1<i<n \\
			&\alpha_n\in  \widetilde{H}^0(\mathcal{K}_{v_{\{1,3\}}, v_{\{3\}}, v_{\{1, 2, 4, \ldots, n+1\}} }).
		\end{align*}
		By contracting the edges $\{v_{\{1,3,\ldots, i+2\}}, v_{\{1,4, \ldots, i+2\}} \}\in \mathcal{K}_{J_i}$ for $1<i<n$ and the edge $\{v_{\{1,3\}}, v_{\{3\}}\} \in \mathcal{K}_{J_n}$, we obtain a simplicial complex $\mathcal{\hat{K}}$ that is constructed from the join of $n$ disjoint points by star deletions as in Construction~\ref{cons: simplicial complex for n-Massey}.
	\end{proof}
	
	Propositions~\ref{prop: permutahedra} and \ref{prop: stellohedra} reiterate that the moment-angle manifolds associated to permutahedra and stellohedra are non-formal \cite{Limonchenko_multiwedge}.
	Also, the families of permutahedra and stellohedra are examples of geometric direct families of polytopes, whose moment-angle manifolds are studied in \cite{directfamilies_BL}. Hence, Propositions~\ref{prop: permutahedra} and \ref{prop: stellohedra} answer Problems 5.32, 5.34 and 5.35 in \cite{directfamilies_BL}, which ask if there are geometric direct families of polytopes with non-trivial higher Massey products.

	\subsection{Non-trivial indeterminacy and permutahedra}
	
	Massey products with non-trivial indeterminacy can be found in moment-angle manifolds. 
	We illustrate this in moment-angle manifolds associated with permutahedra.
	We first construct an example of a $4$-Massey product with non-trivial indeterminacy in a moment-angle complex using Theorem~\ref{thm: infinite Massey products with indeterminacy}, then find a full-subcomplex of a permutahedron that edge contracts to this example and apply Theorem~\ref{thm: edge contractions Massey}.
	
	\begin{exmp}\label{ex: non-trivial indeterminacy in 4-Massey}
		%We construct a simplicial complex by joins and star deletion, like in Construction~\ref{cons: simplicial complex for n-Massey}.
		Let $\mathcal{K}^i$ be a pair of disjoint points $J_i=\{i, i'\}$ for $i=1,\ldots, 4$ and define
		\[
		\mathcal{K}=\sd_{\{1,2'\}} \sd_{\{1,3'\}} \sd_{\{2,3'\}} \sd_{\{2,4'\}} \sd_{\{3,4'\}} \sd_{\{1',2'\}} \sd_{\{1',3'\}} \mathcal{K}^1 * \mathcal{K}^2*\mathcal{K}^3*\mathcal{K}^4.
		\]
		Let $\alpha_i=[a_i]$ and $a_i=\chi_i\in C^0(\mathcal{K}_{J_i})$.
		By Theorem~\ref{thm: infinite Massey products with indeterminacy}, $\langle \alpha_1, \alpha_2, \alpha_3, \alpha_4 \rangle \subset H^*(\mathcal{Z}_\mathcal{K})$ is non-trivial with non-trivial indeterminacy.
	\end{exmp}
	
	\begin{prop}
		There are non-trivial Massey products with non-trivial indeterminacy in moment-angle manifolds corresponding to permutahedra.
	\end{prop}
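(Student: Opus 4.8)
\end{prop}

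\begin{proof}
The plan is to realise the simplicial complex of Example~\ref{ex: non-trivial indeterminacy in 4-Massey} --- or rather a complex edge-contracting onto it --- as a full subcomplex of $\mathcal{K}_P=\partial(P^*)$ for a permutahedron $P$ of sufficiently large dimension, and then transport its Massey product to $\mathcal{Z}_P$, mirroring the argument of Example~\ref{ex: truncated octahedron with indeterminacy} for the truncated octahedron. Recall from Example~\ref{ex: non-trivial indeterminacy in 4-Massey} that the complex $\mathcal{K}$ obtained from the join of four copies of $S^0$ on vertices $\{1,1',2,2',3,3',4,4'\}$ by the listed star deletions --- the ones of Construction~\ref{cons: simplicial complex for n-Massey} together with the two extra deletions at $\{1',2'\}$ and $\{1',3'\}$ that create the indeterminacy --- carries a non-trivial $4$-Massey product $\langle\alpha_1,\alpha_2,\alpha_3,\alpha_4\rangle\subset H^*(\mathcal{Z}_\mathcal{K})$ with non-trivial indeterminacy, where $\alpha_i\in\widetilde{H}^0(\mathcal{K}_{\{i,i'\}})$.

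First I would fix a permutahedron $P$ of dimension $n$ with $n$ large enough, and describe its dual boundary via Proposition~\ref{prop: nested set complex}: $\mathcal{K}_P$ is the nested set complex of the complete building set on $[n+1]$, with one vertex $v_S$ for each proper nonempty $S\subset[n+1]$, and $\{v_{S_1},\dots,v_{S_k}\}$ spans a simplex exactly when $S_1,\dots,S_k$ form a chain under inclusion. Following the $k=n$ case of Proposition~\ref{prop: permutahedra}, I would then choose, for $i=1,\dots,4$, pairwise disjoint vertex sets $J_i\subset V(\mathcal{K}_P)$, each of the form $\{v_{S_i},v_{S_i'}\}$ possibly together with a few auxiliary vertices $v_T$ lying on chains strictly between the $S_i$-vertices, with the subsets $S_i,S_i',T$ engineered so that, after contracting every auxiliary chain edge $\{v_S,v_T\}$ with $S\subsetneq T$, the full subcomplex $(\mathcal{K}_P)_{J_1\cup\cdots\cup J_4}$ becomes precisely the star-deleted join $\mathcal{K}$ of Example~\ref{ex: non-trivial indeterminacy in 4-Massey}, including the two extra star deletions at $\{1',2'\}$ and $\{1',3'\}$. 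Because $P$ is simple, $\mathcal{K}_P$ is a triangulation of a sphere and these chain edges satisfy the link condition, so writing $\varphi\colon(\mathcal{K}_P)_{J_1\cup\cdots\cup J_4}\to\mathcal{K}$ for the composite edge contraction and letting $\alpha_i$ now denote the pullbacks along $\varphi$ of the classes of Example~\ref{ex: non-trivial indeterminacy in 4-Massey} (Construction~\ref{cons: edge contraction}), Theorem~\ref{thm: edge contractions Massey} --- whose hypothesis is supplied by Example~\ref{ex: non-trivial indeterminacy in 4-Massey} --- together with the remark following it yields a non-trivial $4$-Massey product $\langle\alpha_1,\dots,\alpha_4\rangle\subset H^*(\mathcal{Z}_{(\mathcal{K}_P)_{J_1\cup\cdots\cup J_4}})$ with non-trivial indeterminacy.

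Finally I would lift from the full subcomplex to $\mathcal{Z}_P$ itself. For $J=J_1\cup\cdots\cup J_4$, the full-subcomplex retraction $r\colon\mathcal{Z}_P\to\mathcal{Z}_{(\mathcal{K}_P)_J}$, split by the inclusion $\iota$, induces a split-injective ring homomorphism $r^*\colon H^*(\mathcal{Z}_{(\mathcal{K}_P)_J})\hookrightarrow H^*(\mathcal{Z}_P)$ compatible with the Hochster decomposition of Theorem~\ref{thm: full Hochster's}. Naturality of Massey products under ring maps gives $r^*\langle\alpha_1,\dots,\alpha_4\rangle\subset\langle r^*\alpha_1,\dots,r^*\alpha_4\rangle$ and $\iota^*\langle r^*\alpha_1,\dots,r^*\alpha_4\rangle\subset\langle\alpha_1,\dots,\alpha_4\rangle$; since $\iota^*r^*=\mathrm{id}$, non-triviality of $\langle\alpha_1,\dots,\alpha_4\rangle$ in $H^*(\mathcal{Z}_{(\mathcal{K}_P)_J})$ forces non-triviality of $\langle r^*\alpha_1,\dots,r^*\alpha_4\rangle$ in $H^*(\mathcal{Z}_P)$, and injectivity of $r^*$ on the (at least two-element) set $\langle\alpha_1,\dots,\alpha_4\rangle$ forces the lifted product to retain non-trivial indeterminacy. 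Since $P$ is simple, $\mathcal{Z}_P=\mathcal{Z}_{\mathcal{K}_P}$ is a moment-angle manifold, which proves the claim. The main obstacle is the bookkeeping in the middle step: explicitly pinning down the subsets $S_i,S_i',T\subset[n+1]$ (and the smallest admissible $n$) so that the chosen full subcomplex of the nested set complex edge-contracts, through link-condition-preserving moves, onto exactly the star-deleted join of Example~\ref{ex: non-trivial indeterminacy in 4-Massey}; this is more intricate than the triple-Massey computations of Example~\ref{ex: truncated octahedron with indeterminacy} and of the $k=n$ case of Proposition~\ref{prop: permutahedra} precisely because of the two additional deletions that produce the indeterminacy.
\end{proof}
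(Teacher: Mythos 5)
Your overall strategy is exactly the paper's: take the $8$-vertex complex of Example~\ref{ex: non-trivial indeterminacy in 4-Massey}, realise a full subcomplex of $\mathcal{K}_P$ that edge-contracts onto it through link-condition-preserving contractions, and invoke Theorem~\ref{thm: edge contractions Massey}. (Your final retraction/splitting argument for lifting from the full subcomplex to $\mathcal{Z}_P$ is also fine, though by Hochster's formula it is essentially automatic, since the Massey product lives entirely in the summands $\widetilde{H}^*(\mathcal{K}_J)$ for $J\subset J_1\cup\cdots\cup J_4$ and these are unchanged by passing to the full subcomplex.)

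However, there is a genuine gap: the entire substantive content of this proposition is the explicit choice of building-set elements, and you have deferred it rather than carried it out --- as you yourself acknowledge in your closing sentence. It is not automatic that subsets $S$ of $[n+1]$ can be arranged so that the chain/inclusion relations of Proposition~\ref{prop: nested set complex} reproduce, after contraction, exactly the seven star deletions of Example~\ref{ex: non-trivial indeterminacy in 4-Massey} (including the two extra ones at $\{1',2'\}$ and $\{1',3'\}$ that create the indeterminacy) and no others; each required missing edge $\{v_S,v_T\}$ forces $S\not\subset T$ and $T\not\subset S$, while each required present edge forces a containment, and these constraints interact. The paper resolves this with $P$ the $5$-dimensional permutahedron (building set on $[6]$), taking $J_1=\{v_{\{1\}}, v_{\{2\}}, v_{\{2,5\}}, v_{\{5\}}\}$, $J_2=\{v_{\{1,2\}}, v_{\{3\}}\}$, $J_3=\{v_{\{1,2,3\}}, v_{\{2,3\}}, v_{\{3,4\}}\}$, $J_4=\{v_{\{1,2,3,4\}}, v_{\{2,3,4\}}, v_{\{1,3,4,5\}}\}$, and contracting $\{v_{\{2\}},v_{\{2,5\}}\},\{v_{\{2,5\}},v_{\{5\}}\}\mapsto 1'$, $\{v_{\{1,2,3\}},v_{\{2,3\}}\}\mapsto 3$, $\{v_{\{1,2,3,4\}},v_{\{2,3,4\}}\}\mapsto 4$; the link condition holds because $\mathcal{K}_P$ is a sphere triangulation. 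Until you exhibit such a configuration and verify the resulting adjacencies, the proof is a plan rather than an argument.
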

	\begin{proof}
		Let $P$ be the $5$-dimensional permutahedron. 
		Denote $\mathcal{K_P}$ by $\mathcal{K}$.
		Recall that by Proposition~\ref{prop: nested set complex}, $\{v_{S_1}, \ldots, v_{S_k}\}$ is a simplex in  $\mathcal{K}$ if for any $S_i, S_j \in \{S_1, \ldots, S_k\}$, either $S_i\subset S_j$ or $S_j\subset S_i$. 
		Let
		\begin{align*}
			&J_1=\{v_{\{1\}}, v_{\{2\}}, v_{\{2,5\}}, v_{\{5\}}\} \\
			&J_2=\{v_{\{1,2\}}, v_{\{3\}}\} \\
			&J_3=\{v_{\{1,2,3\}}, v_{\{2,3\}}, v_{\{3,4\}} \} \\
			&J_4=\{v_{\{1,2,3,4\}}, v_{\{2,3,4\}}, v_{\{1,3,4,5\}}\}
		\end{align*}
		and let $\alpha_i\in \widetilde{H}^0(\mathcal{K}_{J_i})$.
		Let $\mathcal{\hat{K}}$ be the simplicial complex in Example~\ref{ex: non-trivial indeterminacy in 4-Massey}, so there is a non-trivial $4$-Massey product in $H^*(\mathcal{Z}_{\mathcal{\hat{K}}})$.
		Consider the map $\varphi\colon \mathcal{K}\to \mathcal{\hat{K}}$ that takes $J_i\mapsto \{i,i'\}$ by contracting the edges
		\begin{align*}
			\{v_{\{2\}},v_{\{2,5\}} \}, \{v_{\{2,5\}},v_{\{5\}}\} &\mapsto 1' \\
			\{v_{\{1,2,3\}}, v_{\{2,3\}}\} &\mapsto 3\\
			\{ v_{\{1,2,3,4\}}, v_{\{2,3,4\}}\} &\mapsto 4.
		\end{align*}
		%obtained from $\mathcal{K}_{J_1\cup J_2\cup J_3\cup J_4}$ by contracting the edges $\{v_{\{2\}},v_{\{2,5\}} \}, \{v_{\{2,5\}},v_{\{5\}}\}\in \mathcal{K}_{J_1}$, $\{v_{\{1,2,3\}}, v_{\{2,3\}}\} \in \mathcal{K}_{J_3}$ and $\{ v_{\{1,2,3,4\}}, v_{\{2,3,4\}}\} \in \mathcal{K}_{J_4}$.
		Since $\mathcal{K}$ is a triangulation of a sphere, these edge contractions satisfy the link condition.
		%Then $\mathcal{\hat{K}}$ is the simplicial complex in Example~\ref{ex: non-trivial indeterminacy in 4-Massey} where the first and second connected components of $\mathcal{K}_{J_i}$ map to the vertices $i$ and $i'$ from Example~\ref{ex: non-trivial indeterminacy in 4-Massey}.
		Therefore by Theorem~\ref{thm: edge contractions Massey}, there is a non-trivial $4$-Massey product $\langle \alpha_1, \alpha_2, \alpha_3, \alpha_4 \rangle \subset H^*(\mathcal{Z}_\mathcal{K})$ for $\alpha_i\in H^0(\mathcal{K}_i)$, and this $4$-Massey product has non-trivial indeterminacy.
	\end{proof}
	
	This example of a non-trivial $n$-Massey product with non-trivial indeterminacy can be reproduced in any $(n+1)$-dimensional permutahedron.

	\bibliographystyle{abbrv} 
	\bibliography{paperbibliography} %Can add other bibliographies by comma separating (no spaces) 

\end{document}